\renewcommand{\subsection}{%
  \removelastskip%
  \vskip.5\baselineskip%
  \refstepcounter{subsection}%
  \noindent%
  {\bf (\thesubsection)} }
\newtheorem{theorem}{Theorem}[subsection]
\newtheorem{lemma}[theorem]{Lemma}
\newtheorem{corollary}[theorem]{Corollary}
\newtheorem{proposition}[theorem]{Proposition}
\newtheorem{conjecture}[theorem]{Conjecture}
\theoremstyle{definition}
\theoremstyle{remark}
\newtheorem{remark}[theorem]{\it Remark}
\def\Z{\mathbb{Z}}
\def\C{\mathbb{C}}
\def\Q{\mathbb{Q}}
\def\F{\mathbb{F}}
\def\A{\mathbb{A}}
\def\T{\mathbb{T}}
\def\ie{{\it i.e.}}
\def\eg{{\it e.g.}}
\def\cf{{\it c.f.}}
\let\ms\mathscr
\let\mf\mathfrak
\let\mc\mathcal
\let\wt\widetilde
\let\ol\overline
\let\bs\backslash
\let\lbb\llbracket
\let\rbb\rrbracket
\DeclareMathOperator{\tr}{tr}
\DeclareMathOperator{\Res}{Res}
\DeclareMathOperator{\ad}{ad}
\DeclareMathOperator{\bN}{\mathbf{N}}
\DeclareMathOperator{\End}{End}
\DeclareMathOperator{\Hom}{Hom}
\DeclareMathOperator{\Spec}{Spec}
\DeclareMathOperator{\re}{Re}
\DeclareMathOperator{\im}{im}
\DeclareMathOperator{\Gal}{Gal}
\DeclareMathOperator{\Ind}{Ind}
\DeclareMathOperator{\Br}{Br}
\newcommand{\GL}{\mathrm{GL}}
\newcommand{\PGL}{\mathrm{PGL}}
\newcommand{\sfrac}[2]{{\textstyle \frac{#1}{#2}}}
\newcommand{\aux}{\mathrm{aux}}
\newcommand{\TW}{\mathrm{TW}}
\newcommand{\Frob}{\mathrm{Frob}}
\newcommand{\loc}{\mathrm{loc}}
\newcommand{\ram}{\mathrm{ram}}
\newcommand{\dR}{\mathrm{dR}}
\def\mat#1#2#3#4{\left( \begin{array}{cc} #1 & #2 \\ #3 & #4 \end{array} \right)}
\title[On two dimensional weight two odd representations of totally real fields]
{On two dimensional weight two odd\\representations of totally real fields}
\author{Andrew Snowden}
\date{May 26, 2009.}
\begin{document}

\begin{abstract}
We say that a two dimensional $p$-adic Galois representation $G_F \to
\GL_2(\ol{\Q}_p)$ of a number field $F$ is \emph{weight two} if it is de Rham
with Hodge-Tate weights 0 and $-1$ equally distributed at each place above
$p$; for example, the Tate module of an elliptic curve has this property.  The
purpose of this paper is to establish a variety of results concerning odd
weight two representations of totally real fields in as great a generality as
we are able.  Most of these results are improvements upon existing results.
Three of our main results are as
follows.  (1) We prove a modularity lifting theorem for odd weight two
representations, extending a theorem of Kisin to include representations
which are not potentially crystalline.  (2) We show
that essentially any odd weight two representation is potentially modular,
following the ideas of Taylor.
(3) We show that one can lift essentially any odd residual representation to a
minimally ramified weight two $p$-adic representation, using some ideas of
Khare-Wintenberger.  As an application of these results we show that if $\rho$
is a sufficiently irreducible odd weight two $p$-adic representation of a
totally real field $F$ and
either $F$ has odd degree or $\rho$ is indecomposable at some finite
place $v \nmid p$ then $\rho$ occurs as the Tate module of a $\GL_2$-type
abelian variety.  This establishes some new cases of the Fontaine-Mazur
conjecture.
\end{abstract}

\maketitle
\tableofcontents

\section{Introduction}

\subsection
Fix a totally real field $F$ and a prime $p$.  Denote by
$G_F$ the absolute Galois group of $F$.  We say that a representation
$G_F \to \GL_2(\ol{\Q}_p)$ is \emph{weight two} if it is de Rham with
non-positive Hodge-Tate weights at each place above $p$ and has determinant
equal to a finite order character times the cyclotomic character;
see \S \ref{ss:wt2} for more on this definition.  The purpose of this paper is
to establish a variety of results concerning odd weight two representations
of totally real fields in as great as
generality as we are able.  These results are, for the most part, extensions
of the work of Khare, Kisin, Taylor and Wintenberger.

For the purposes of the introduction, assume $p \ne 2, 5$.  We prove many
of the results outlined below when $p=5$ with an additional hypothesis; we
do not consider $p=2$ in this paper.  One of our main results is the following
theorem:

\begin{theorem}
\label{mainthm}
Let $\rho:G_F \to \GL_2(\ol{\Q}_p)$ be a finitely ramified, odd, weight two
representation such that $\ol{\rho} \vert_{G_{F(\zeta_p)}}$ is
irreducible.  Then $\rho$ is potentially modular.
\end{theorem}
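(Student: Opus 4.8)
The plan is to prove potential modularity by the now-standard Taylor–Harris–Shepherd-Barron–Taylor method of ``potential automorphy via moduli of abelian varieties'', using the modularity lifting theorem quoted earlier in the paper and a judiciously chosen totally real base change.  Given $\rho : G_F \to \GL_2(\ol{\Q}_p)$ as in the statement, the first step is to fix an auxiliary prime $\ell \neq p$ (chosen outside the ramification of $\rho$ and avoiding a few small exceptional residue characteristics) and to realize the pair $(\ol\rho \bmod p, \ \ol\sigma \bmod \ell)$ geometrically: by the Moret-Bailly/Taylor argument one finds a totally real extension $F'/F$, linearly disjoint from the field cut out by $\ker(\ol\rho)$ and from $F(\zeta_p)$, together with an abelian variety $A/F'$ of $\GL_2$-type (a quotient of the Jacobian of a curve in a suitable Hilbert–Blumenthal or Dwork-type family) such that $A[p] \cong \ol\rho|_{G_{F'}}$ and $A[\ell]$ is induced from a character (or otherwise manifestly modular).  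One must check that $A$ can be taken so that its $\ell$-adic Tate module is modular --- this is where a solvable base change and the known cases of modularity of $\ell$-adic representations that become reducible or dihedral after restriction are invoked --- and so that $A[p] = \ol\rho|_{G_{F'}}$ remains irreducible over $F'(\zeta_p)$, which is guaranteed by linear disjointness.

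Once $A/F'$ is in hand, the second step is the ``lifting'' step.  The $\ell$-adic Tate module $\rho_{A,\ell}$ is modular by construction, so by an $\ell$-adic modularity lifting theorem (applicable because $\rho_{A,\ell}$ is weight two, odd, and residually irreducible over $F'(\zeta_\ell)$ after possibly enlarging $F'$ slightly) the whole compatible system attached to $A$ is modular; in particular $\rho_{A,p} : G_{F'} \to \GL_2(\ol\Q_p)$ is modular.  But $\overline{\rho_{A,p}} = A[p] = \ol\rho|_{G_{F'}}$, and $\rho|_{G_{F'}}$ is a finitely ramified, odd, weight two lift of this residual representation with $\ol\rho|_{G_{F'(\zeta_p)}}$ irreducible.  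Applying the modularity lifting theorem of the paper (the extension of Kisin's theorem to the possibly non-potentially-crystalline case, quoted as result (1) in the introduction) to $\rho|_{G_{F'}}$ and the modular lift $\rho_{A,p}$ then shows $\rho|_{G_{F'}}$ is modular, i.e. $\rho$ is potentially modular.

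The implementation requires care on several routine but essential points: arranging simultaneously the needed local behaviour of $\ol\rho|_{G_{F'}}$ at places above $p$ (so that the modularity lifting theorem applies --- e.g. Taylor–Wiles conditions, ordinariness or the relevant Fontaine–Laffaille/Breuil–Kisin hypotheses can be met after base change, which one can do since these are conditions that persist or can be imposed by choosing $F'$ appropriately), handling the prime $p=5$ with its extra hypothesis exactly as in the cited work, and verifying that the exceptional cases of the residual image are excluded by the hypothesis that $\ol\rho|_{G_{F(\zeta_p)}}$ is irreducible.  The main obstacle is the first step: producing the abelian variety $A/F'$ with the two Tate modules having the prescribed residual representations \emph{and} with $A[\ell]$ actually modular --- this is the technical heart, relying on the existence of a smooth rational point on a twisted moduli space over a large enough totally real field (Moret-Bailly's theorem) together with Hilbert irreducibility to keep the relevant image conditions, and on controlling the ramification of $A$ so that the compatible system it generates is ``finitely ramified'' and weight two in the sense needed here.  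Everything after that is an application of the two modularity lifting inputs already established in the paper.
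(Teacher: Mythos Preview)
Your outline is essentially the paper's proof: Moret--Bailly produces a $\GL_2$-type abelian variety $A/F'$ with $A[\mf{p}] \cong \ol\rho|_{G_{F'}}$ and $A[\mf{l}]$ matching a prearranged modular residual representation at an auxiliary prime $\ell$; modularity lifting at $\ell$ shows $A$ is modular; then modularity lifting at $p$ (the paper's Theorem~\ref{thm:mlt}) gives modularity of $\rho|_{G_{F'}}$.

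Two places where the paper is sharper than your sketch deserve mention. First, the $\ell$-adic seed is constructed explicitly in advance (Proposition~\ref{prop:univmod}) as $\rho'=\Ind_{FH}^F(\psi)$ induced from a character of an imaginary quadratic extension $H/\Q$, with $\ol{\rho}'|_{G_{F(\zeta_\ell)}}$ verified irreducible by hand and stable under passage to any $F'$ linearly disjoint from a fixed field $M$. One then imposes $A[\mf{l}]=\ol{\rho}'|_{G_{F'}}$ in the Moret--Bailly step. Your phrasing (``$A[\ell]$ is induced from a character'', with residual irreducibility over $F'(\zeta_\ell)$ fixed by ``enlarging $F'$ slightly'') is loose: it is the global $\rho'$ chosen beforehand, not $A[\mf{l}]$ considered in isolation, that carries the hypotheses of the $\ell$-adic lifting theorem.

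Second, and more specific to this paper, the lifting theorem at $p$ (Theorem~\ref{thm:mlt}) requires the known modular lift and $\rho$ to have the same \emph{type} at each $v\mid p$ --- i.e.\ to agree on which places are potentially crystalline ordinary, potentially crystalline non-ordinary, or not potentially crystalline. This is not automatic from residual agreement. The paper arranges it inside the Moret--Bailly step (Proposition~\ref{prop:potav}): the reduction type of $A$ at each $v\mid p$ is prescribed to match $t_\rho$, and likewise at each $v\mid \ell$ to match $t_{\rho'}$, by restricting to appropriate open loci $\Omega_v$ of the moduli space. Your remark about ``arranging local behaviour at $p$'' gestures at this, but the type bookkeeping is the specific mechanism that makes both lifting steps applicable and is the point at which the paper's argument is simpler than Taylor's original precisely because the stronger lifting theorem is available.
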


Here ``finitely ramified'' means $\rho$ ramifies at only finitely many
places; ``odd'' means $\det{\rho(c)}=-1$ for any complex conjugation $c$;
$\ol{\rho}$ denotes the mod $p$ reduction of $\rho$; and
``potentially modular'' means that there is a finite, totally real extension
$F'/F$ such that $\rho \vert_{G_{F'}}$ is associated to a \emph{cuspidal}
Hilbert eigenform.  (All modular forms we use will be cuspidal.)
We actually prove a more precise version of Theorem~\ref{mainthm} ---  see
Theorem~\ref{thm:pmod2}.  Using known consequences of potential modularity
we obtain the following:

\begin{corollary}
\label{maincor}
Let $\rho$ be as in the theorem.
\begin{enumerate}
\item If $\rho$ is unramified at $v$ then the eigenvalues of $\rho(\Frob_v)$
belong to $\ol{\Q} \subset \ol{\Q}_p$ and under any embedding into $\C$ have
modulus $(\bN{v})^{1/2}$.
\item The representation $\rho$ fits into a strongly compatible system.
\item For any isomorphism $i:\ol{\Q}_p \to \C$ the $L$-function $L(s, \rho, i)$
converges to a non-zero holomorphic function for $\re{s}>\sfrac{3}{2}$, has
a meromorphic continuation to the entire complex plane and
satisfies the expected functional equation.
\item Assume that either $F$ has odd degree or for some place $v \nmid p$ the
representation $\rho \vert_{G_{F_v}}$ is indecomposable.  Then $\rho$ occurs
as the Tate module of a $\GL_2$-type abelian variety.  In particular, the
Fontaine-Mazur conjecture holds for $\rho$.
\end{enumerate}
\end{corollary}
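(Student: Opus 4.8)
We sketch how each part follows; assume throughout the hypotheses of Corollary~\ref{maincor}, so by Theorem~\ref{mainthm} — better, by its precise form Theorem~\ref{thm:pmod2}, which also allows one to impose finitely many local conditions on the extension — there is a totally real $F'/F$ and a cuspidal Hilbert eigenform $\pi'$ over $F'$ to which $\rho\vert_{G_{F'}}$ is associated. For \emph{(1)} I would fix a place $w\mid v$ of $F'$: since $\rho$ is unramified at $v$ so is $\rho\vert_{G_{F'_w}}$, and $\rho(\Frob_v)^{f}=\rho(\Frob_w)$ with $f=f(w/v)$, whose eigenvalues are the Hecke eigenvalues of $\pi'$ at $w$. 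These are algebraic and, together with all their complex conjugates, have absolute value $(\bN w)^{1/2}$ by the Ramanujan bound for parallel weight two Hilbert eigenforms (known, for instance from purity of the attached Galois representations). Hence for an eigenvalue $\alpha$ of $\rho(\Frob_v)$ one has $\alpha^{f}\in\ol{\Q}$ with $|\iota(\alpha^{f})|=(\bN v)^{f/2}$ for every embedding $\iota:\ol{\Q}\hookrightarrow\C$, and taking $f$-th roots gives $\alpha\in\ol{\Q}$ with $|\iota(\alpha)|=(\bN v)^{1/2}$.

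Parts \emph{(2)} and \emph{(3)} I would deduce from potential modularity by Taylor's now-standard arguments: one runs potential modularity over a family of totally real fields, uses Brauer's induction theorem and solvable base change for $\GL_2$ to express the relevant (partial) $L$-functions and Artin twists of $\rho$ as products of $\GL_2$-automorphic $L$-functions over intermediate fields, and patches together $\ell$-adic companions of $\rho$ for every prime $\ell$ with matching Frobenius traces at the unramified places; compatibility at the bad places and at $p$ (de Rham-ness with the expected Hodge-Tate weights and Weil-Deligne representations) rests on the strong local-global compatibility known for Hilbert modular forms. Granting this, convergence and non-vanishing of $L(s,\rho,i)$ for $\re s>\sfrac{3}{2}$ are immediate from \emph{(1)} and the finiteness of the set of bad Euler factors, and meromorphic continuation and the functional equation fall out of the Brauer decomposition together with the analytic theory of $\GL_2$ $L$-functions.

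The real content is \emph{(4)}. The plan is first to arrange that $\pi'$ is realised in the cohomology of a quaternionic Shimura \emph{curve} over $F'$. If $F$ has odd degree, I would use the precise potential modularity theorem to take $[F':F]$ odd, so that $[F':\Q]$ is odd and there is a quaternion algebra over $F'$ ramified at exactly all but one infinite place; if instead $\rho\vert_{G_{F_v}}$ is indecomposable at some $v\nmid p$, I would instead demand that $v$ split completely in $F'/F$, so that $\rho\vert_{G_{F'_w}}=\rho\vert_{G_{F_v}}$ is still indecomposable and, by local-global compatibility for $\pi'$, Frobenius-semisimple — hence has non-trivial monodromy or is irreducible, so that $\pi'_w$ lies in the discrete series and a quaternion algebra over $F'$ ramified at all infinite places and at $w$ may be used. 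In either case the Shimura variety is a curve $X/F'$, and the $\pi'$-isotypic piece of (an abelian variety isogenous to a suitable quotient of) its Jacobian is an abelian variety $A_0/F'$ of $\GL_2$-type having $\rho\vert_{G_{F'}}$ as a $\lambda$-adic Tate module, compatibly with the system from \emph{(2)}. To come back down to $F$ I would form $B=\Res_{F'/F}A_0$, whose $\ell$-adic Tate modules are $\Ind_{G_{F'}}^{G_F}$ of those of $A_0$; since $\rho$ is irreducible it is a $G_F$-quotient of $\Ind_{G_{F'}}^{G_F}(\rho\vert_{G_{F'}})$ by Frobenius reciprocity, so the compatible system of \emph{(2)} sits (with its coefficient field) inside that of $B$, and Faltings' isogeny theorem over the number field $F$ then shows that $\rho$ is cut out of the Tate module of $B$ by a correspondence defined over $F$, hence is the $\lambda$-adic Tate module of an abelian-variety quotient $A/F$; a dimension count forces $A$ to be of $\GL_2$-type, which is the assertion (and the Fontaine-Mazur conjecture for $\rho$).

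I expect the main obstacle to be the local and arithmetic bookkeeping in \emph{(4)}: one needs the potential modularity theorem flexible enough to prescribe $[F':F]$ odd, respectively to split $v$ completely; one needs the implication ``indecomposable Frobenius-semisimple Weil-Deligne representation at $v\nmid p$ $\Rightarrow$ $\pi'_w$ in the discrete series'', which amounts to ruling out a reducible-but-indecomposable representation with trivial monodromy (impossible once Frobenius-semisimplicity is in hand); and one must run the descent from $F'$ to $F$ carefully enough that the resulting abelian variety is honestly of $\GL_2$-type and has $\rho$ itself, rather than merely $\rho\vert_{G_{F'}}$, as a Tate module. By contrast parts \emph{(1)}--\emph{(3)} are essentially formal once Taylor's machinery is granted.
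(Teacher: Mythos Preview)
Your proposal is essentially correct and follows the same route as the paper: part (1) via Ramanujan for Hilbert forms after base change, parts (2)--(3) via Brauer induction and solvable descent following Dieulefait and Taylor, and part (4) via potential modularity, realisation in a Shimura curve Jacobian, and descent through Weil restriction and Faltings' theorem.

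The one point where your sketch leaves a genuine gap is the odd-degree case of (4). You write that you would ``use the precise potential modularity theorem to take $[F':F]$ odd,'' and you flag this as an obstacle --- rightly so, since Theorem~\ref{thm:pmod2} produces a \emph{Galois} extension $F'/F$ with no control on the parity of its degree. The paper's device, which you are missing, is: take the Galois $F''/F$ from Theorem~\ref{thm:pmod2}, let $F'$ be the fixed field of a $2$-Sylow subgroup of $\Gal(F''/F)$, so that $[F':F]$ is odd, and then use solvable base change along the $2$-group extension $F''/F'$ to descend the Hilbert eigenform from $F''$ to $F'$. This costs nothing extra and resolves the obstacle you identified. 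For the indecomposable case you correctly anticipate that one needs $v$ to split completely in $F'$; the paper achieves this via the strengthened potential modularity result Proposition~\ref{prop:pmod3}, proved using a variant of Moret-Bailly together with mod~$p$ solvable descent.
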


This corollary is proved in \S \ref{s:conseq}.
The additional hypothesis in the final statement stems from the fact that the
Galois representation attached to a parallel weight two Hilbert eigenform over
a totally real field is known to come from an abelian variety only if the the
field has odd degree or the form is square integrable at some finite place.

Another of our main results is the following:

\begin{theorem}
\label{mainthm2}
Let $\ol{\rho}:G_F \to \GL_2(\ol{\F}_p)$ be an irreducible odd representation.
\begin{enumerate}
\item Assume $\ol{\rho} \vert_{G_{F(\zeta_p)}}$ is irreducible.  Then there
exists a minimally ramified weight two lift of $\ol{\rho}$ to $\ol{\Q}_p$.
\item The representation $\ol{\rho}$ occurs as the $\mf{p}$-torsion of a
$\GL_2$-type abelian variety.
\end{enumerate}
\end{theorem}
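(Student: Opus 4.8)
(Throughout, $p$ is odd, and at $p=5$ one includes the extra hypothesis mentioned in the introduction.)

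\emph{Part (1): the minimally ramified lift.} Here ``minimally ramified'' should mean a weight two lift $\rho$ of $\ol{\rho}$ whose prime-to-$p$ Artin conductor equals that of $\ol{\rho}$, together with a fixed minimal weight two condition at the places above $p$; the plan is to realise such a $\rho$ as the Galois representation of a parallel weight two Hilbert eigenform of minimal level. First I would write down a global deformation problem for $\ol{\rho}$ with coefficients in a suitable $\mc{O}$: fix the determinant to be the cyclotomic character times a finite order character lifting $\det\ol{\rho}$; at each $v \mid p$ impose a weight two local condition --- for instance a potentially Barsotti--Tate condition of an inertial type $\tau_v$ chosen so that the framed local deformation ring is $\mc{O}$-flat of the expected dimension with non-empty generic fibre, which is legitimate by Kisin's theory of potentially semistable deformation rings once one knows that $\ol{\rho}\vert_{G_{F_v}}$ admits some weight two lift; at each ramified $v \nmid p$ impose Taylor's minimally ramified condition, whose deformation ring is formally smooth of the expected dimension; and impose unramifiedness elsewhere. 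Because $\ol{\rho}$ is totally odd --- so the local invariants of $\ad^0\ol{\rho}$ at the real places are as small as possible --- and $\ol{\rho}\vert_{G_{F(\zeta_p)}}$ is absolutely irreducible, a Poitou--Tate / Greenberg--Wiles computation shows the relevant Selmer group exceeds the dual Selmer group by at least the expected amount, and Ramakrishna's method, in the form developed by Khare--Wintenberger, then produces a lift $\rho_1 : G_F \to \GL_2(\ol{\Q}_p)$ of $\ol{\rho}$ satisfying all of the chosen local conditions, at the price of allowing $\rho_1$ to be ramified (with special, Steinberg-type reduction) at a finite auxiliary set $Q$ of primes introduced to kill the dual Selmer group.

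\emph{Removing the auxiliary primes.} Now $\rho_1$ is finitely ramified, odd, weight two, with $\ol{\rho}\vert_{G_{F(\zeta_p)}}$ irreducible, hence potentially modular by Theorem~\ref{mainthm}. Feeding this into a modularity lifting theorem and descending (after a solvable base change) shows $\ol{\rho}$ is modular over $F$, of parallel weight two and of level divisible only by primes in $S \cup Q$, where $S$ is the prime-to-$p$ ramification locus of $\ol{\rho}$. Since $\ol{\rho}$ is unramified at every $q \in Q$, level lowering for Hilbert modular forms (Fujiwara, Jarvis, Rajaei) strips the primes of $Q$ from the level, and optimising at the primes of $S$ produces a parallel weight two Hilbert eigenform $g$ over $F$ whose level equals the prime-to-$p$ Artin conductor of $\ol{\rho}$ times the relevant power of $p$, with $\ol{\rho}_g \cong \ol{\rho}$. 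Its Galois representation is then the sought minimally ramified weight two lift.

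\emph{Part (2): realisation in a $\GL_2$-type abelian variety.} Suppose first $\ol{\rho}\vert_{G_{F(\zeta_p)}}$ is absolutely irreducible. If $[F:\Q]$ is odd I apply Theorem~\ref{mainthm} and Corollary~\ref{maincor}(4) to the lift $\rho$ of Part (1): being finitely ramified, odd, weight two, it is potentially modular and hence occurs as the Tate module $V_{\mf{p}}A$ of a $\GL_2$-type abelian variety $A/F$, and since $\ol{\rho}$ is irreducible $A[\mf{p}] \cong \ol{\rho}$ for any stable lattice. If $[F:\Q]$ is even I would run the construction of Part (1) with one extra local condition: by Chebotarev applied to the image of $\ol{\rho}$ and the cyclotomic character (using the irreducibility of $\ol{\rho}\vert_{G_{F(\zeta_p)}}$), choose an auxiliary prime $v_0 \nmid p$ at which $\ol{\rho}$ is unramified, $\bN v_0 \not\equiv 1 \pmod p$, and $\ol{\rho}(\Frob_{v_0})$ has eigenvalues in ratio $\bN v_0$, and impose the special (unramified-twist-of-Steinberg) deformation condition there, which is again formally smooth of the expected dimension; Ramakrishna's method then gives a weight two lift $\rho$ of $\ol{\rho}$ that is finitely ramified, odd, and indecomposable at $v_0 \nmid p$ (harmless extra ramification at further auxiliary primes is allowed, since we no longer require minimality). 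Now Theorem~\ref{mainthm} applies and the ``indecomposable at some $v \nmid p$'' clause of Corollary~\ref{maincor}(4) is satisfied, so $\rho = V_{\mf{p}}A$ and $A[\mf{p}] \cong \ol{\rho}$. Finally, if $\ol{\rho}\vert_{G_{F(\zeta_p)}}$ is reducible then $\ol{\rho}$ is dihedral, induced from a character of a quadratic field $K/F$; when $K$ is CM one realises $\ol{\rho}$ directly as the $\mf{p}$-torsion of a CM abelian variety by choosing an algebraic Hecke character of $K$ of the appropriate infinity type and residual reduction, and the remaining cases are disposed of by a short, elementary case analysis (in most of which $\ol{\rho}\vert_{G_{F(\zeta_p)}}$ is in fact still irreducible, so the argument above applies).

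\emph{The main obstacle.} The technical heart is the global lifting step of Part (1). On the local side at $p$ it needs a weight two local lift of $\ol{\rho}\vert_{G_{F_v}}$ together with flatness and the expected dimension of the corresponding potentially semistable deformation ring --- exactly the point where, outside the potentially crystalline range, Kisin's theory is indispensable. On the global side it needs the removal of the auxiliary primes $Q$, which is precisely why the argument is forced to route through potential modularity and level lowering rather than producing the minimal lift in one stroke. I would expect the bookkeeping of Serre weights and conductors at the places above $p$, and the descent of residual modularity from the auxiliary totally real field back to $F$, to be the most delicate points.
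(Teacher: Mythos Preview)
Your Part~(1) has a real gap at the descent step. Potential modularity (Theorem~\ref{mainthm}) produces a totally real $F'/F$ over which $\rho_1$ is modular, but $F'/F$ is not solvable in general, so Langlands base change does not bring modularity of $\ol{\rho}$ back down to $F$; and the modularity lifting theorem over $F$ (Theorem~\ref{mainthm3}) already requires residual modularity over $F$ as input, which is precisely what you are trying to establish. Without modularity of $\ol{\rho}$ over $F$ you cannot invoke level lowering over $F$, and the auxiliary primes $Q$ cannot be removed. (The paper's own ``solvable descent for mod $p$ forms'', Proposition~\ref{prop:descent}, would close this gap --- but it is proved \emph{using} the lifting results you are trying to establish, so appealing to it here would be circular.)

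The paper's route is genuinely different, and is in fact what Khare--Wintenberger do; your description conflates their method with Ramakrishna's auxiliary-prime technique. One writes down the minimal deformation problem directly, using the local lifting results of \S\ref{s:problems} (Propositions~\ref{prop:llift} and \ref{prop:plift}) to specify local conditions of the correct conductor at each place. A Wiles--Poitou--Tate computation shows the resulting ring $R^{\dag}$ has Krull dimension at least one (Proposition~\ref{prop:def-ring-dim}). Finiteness of $R^{\dag}$ over $\ms{O}$ is then deduced from potential modularity: over the auxiliary extension $F'$ the analogous deformation ring is finite by the $R=\T$ theorem of \S\ref{s:reqt}, and the restriction-to-$G_{F'}$ map between deformation rings is finite. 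A finite $\ms{O}$-algebra of positive Krull dimension has $\ol{\Q}_p$-points, and any such point is the sought minimal lift (Theorem~\ref{thm:defring}, Proposition~\ref{prop:minlift}). No auxiliary primes, no level lowering, and crucially no need to know that $\ol{\rho}$ is modular over $F$.

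Your Part~(2) in the (A1) case is correct in spirit and close to the paper: manufacture a weight two lift indecomposable at some finite $v\nmid p$ and apply Corollary~\ref{maincor}(4). The paper does this uniformly (no odd/even degree split) by choosing $v$ with $\ol{\rho}\vert_{G_{F_v}}$ and $\ol{\chi}_p\vert_{G_{F_v}}$ both trivial and invoking Theorem~\ref{thm:lift2} to force type $C$ there. Your non-(A1) case is too sketchy: the paper does not use CM abelian varieties but observes that $\ol{\rho}$ is induced from the quadratic subfield of $F(\zeta_p)$, Teichm\"uller-lifts the character to get a weight one form, passes to weight two via a congruence with a form $g\equiv 1\pmod p$, and (when $[F:\Q]$ is even) level-raises to create a special place.
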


The first statement is restated and proved as Proposition~\ref{prop:minlift};
the second is restated and proved as Proposition~\ref{prop:astors}.
In \S \ref{s:problems} we will prove a result considerably stronger
than the first statement, showing that one has
great control over the local behavior of lifts:  one can construct lifts
with a given action of inertia at finitely many places, so long as there
are no obvious obstructions.  In Remark~\ref{rem:noA1} we explain how one can
often remove the additional hypothesis in the first statement.

We mention one more of our results here, a modularity lifting theorem
(restated and proved as Theorem~\ref{thm:mlt}):

\begin{theorem}
\label{mainthm3}
Let $\rho:G_F \to \GL_2(\ol{\Q}_p)$ be a finitely ramified, odd, weight two
representation such that $\ol{\rho} \vert_{G_{F(\zeta_p)}}$ is
irreducible.  Assume that
there exists a parallel weight two Hilbert eigenform $f$ such that $\ol{\rho}=
\ol{\rho}_f$ and that $\rho$ and $\rho_f$ have the same ``type'' at each place
above $p$, where ``type'' is one of:  potentially crystalline ordinary,
potentially crystalline non-ordinary or not potentially crystalline.  Then
there is a Hilbert eigenform $g$ such that $\rho=\rho_g$.
\end{theorem}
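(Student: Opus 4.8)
The plan is to prove Theorem~\ref{mainthm3} by the Taylor--Wiles--Kisin patching method, in the form Kisin developed for the Fontaine--Mazur conjecture for $\GL_2$. After a preliminary solvable totally real base change --- chosen to preserve all the hypotheses, so that in particular $\ol{\rho}\vert_{G_{F(\zeta_p)}}$ remains irreducible, $F$ stays totally real, $f$ still base-changes to a Hilbert eigenform by cyclic base change, and (if convenient) $[F:\Q]$ becomes even so that totally definite quaternionic modular forms are available --- we may assume the ramification of $\rho$ and of $f$ away from $p$ is as mild as possible. We let $S$ consist of the archimedean places, the places above $p$, and the finite places where $\rho$ or $f$ ramifies; this is finite since $\rho$ is finitely ramified. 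We then set up a global framed deformation problem for $\ol{\rho}=\ol{\rho}_f$, unramified outside $S$, with fixed determinant (a finite-order character times the cyclotomic character, equal after a harmless twist to $\det\rho=\det\rho_f$), imposing at each $v\mid p$ the local deformation condition determined by the common ``type'' of $\rho\vert_{G_{F_v}}$ and $\rho_f\vert_{G_{F_v}}$, and at each finite $v\in S$ with $v\nmid p$ the unrestricted local condition. Writing $R$ for the resulting universal framed deformation ring, modular deformations produce a surjection $R\twoheadrightarrow\T$ onto a Hecke algebra acting on the relevant space of modular forms; the eigenform $f$ gives a point of $\Spec\T$, hence of $\Spec R$, and $\rho$ gives a point of $\Spec R[1/p]$ once we check it satisfies the chosen local conditions, which it does by hypothesis.

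The technical core is the local lifting rings at $v\mid p$. A two-dimensional de Rham representation of $G_{F_v}$ with Hodge--Tate weights $0,-1$ is potentially crystalline precisely when its monodromy operator vanishes, in which case it is potentially Barsotti--Tate; so for the two potentially crystalline types we use Kisin's potentially Barsotti--Tate deformation rings together with his analysis of their connected components, distinguishing the ordinary case (where the associated $p$-divisible group has a connected \'etale filtration) from the non-ordinary one. For the ``not potentially crystalline'' type we observe that nonvanishing monodromy together with Hodge--Tate weights $0,-1$ forces the representation, up to unramified twist, to be a non-split extension of an unramified character by the cyclotomic character times an unramified character --- the Steinberg (Tate-curve) shape --- so the relevant condition is an ``ordinary of Steinberg type'' deformation condition, whose lifting ring we analyze directly or via the ordinary deformation rings in the style of Skinner--Wiles and Kisin. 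In each of the three cases the facts we need are the same: the local lifting ring is $\Z_p$-flat and equidimensional of the expected dimension, and $\rho\vert_{G_{F_v}}$ and $\rho_f\vert_{G_{F_v}}$, sharing a type, determine points lying on a common irreducible component of its $p$-adic generic fibre.

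Given the local rings, the patching is by now routine. The hypothesis that $\ol{\rho}\vert_{G_{F(\zeta_p)}}$ is irreducible (for $p\neq 2,5$, and for $p=5$ under the extra hypothesis of the paper) guarantees the existence of Taylor--Wiles--Kisin primes and the requisite largeness of the residual image. Patching over larger and larger sets of such primes produces a module $M_\infty$ over a ring $R_\infty\cong R^{\loc}\lbb x_1,\dots,x_g\rbb$, where $R^{\loc}$ is the completed tensor product over $v\in S$ of the local lifting rings above and through which the global ring $R$ is recovered as a quotient by a regular sequence; $M_\infty$ is maximal Cohen--Macaulay over $R_\infty$, and since $\dim R_\infty$ equals the dimension of each irreducible component of $\Spec R^{\loc}$ plus $g$, the support of $M_\infty$ is a union of irreducible components of $\Spec R_\infty$. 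The point attached to $f$ lies in this support; by the component-matching statement at the end of the previous paragraph (applied at each $v\mid p$, together with the unrestricted condition imposed away from $p$), the points of $\Spec R_\infty[1/p]$ attached to $\rho$ and to $\rho_f$ lie on the same irreducible component. Hence $\rho$ lies in the support of $M_\infty$, which is exactly to say that $\rho$ is modular: there is a Hilbert eigenform $g$ with $\rho=\rho_g$. Undoing the solvable base change and applying the standard cyclic-descent argument for modularity of $\GL_2$-representations of totally real fields returns the statement over the original $F$.

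The main obstacle, and the point where this goes beyond Kisin's theorem, is the ``not potentially crystalline'' case: identifying the correct Steinberg-type local deformation condition at $v\mid p$, proving its lifting ring is $\Z_p$-flat of the expected dimension with a well-understood generic fibre, and verifying that two weight-two non-potentially-crystalline local representations of the ``same type'' sit on a common component --- this is precisely the ingredient absent from the potentially Barsotti--Tate setting. A secondary subtlety is organizational: because we only claim $\rho=\rho_g$ for \emph{some} $g$, rather than a minimal-level statement, we must work with framed deformation rings at every place of $S$ and invoke the known structure of the local lifting rings at $v\nmid p$ so that the dimension bookkeeping in the patching argument still closes; and one must check that the initial and final solvable base changes interact correctly with the irreducibility hypothesis and with cyclic descent for modularity.
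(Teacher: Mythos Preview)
Your architecture is right, and you correctly identify the new local ingredient at $v\mid p$: a Steinberg-type deformation ring for the non-potentially-crystalline case (the paper constructs this in \S\ref{s:local} and proves it is a domain). But there is a genuine gap at the places $v\nmid p$. You impose the unrestricted local condition there and then assert that $\rho$ and $\rho_f$ lie on the same irreducible component of $\Spec R_\infty[1/p]$. This fails: after your base change, $\rho$ may still carry unipotent monodromy at some $v\nmid p$ while $\rho_f$ is unramified there (or vice versa), and once $\ol{\rho}\vert_{G_{F_v}}$ is trivial with $\bN v\equiv 1\pmod p$ these land on \emph{different} components of the local framed deformation ring. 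No further finite base change kills indecomposable unipotent monodromy of $\rho$, so the component-matching step cannot close.

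The paper does not run a component-matching argument at all. It proves (Propositions~\ref{prop:pring} and~\ref{prop:lring}) that each local type-ring --- $A$, $B$, $C$ at $p$ and $AB$, $C$ away from $p$ --- is a \emph{domain}; hence $R^{\Box,\loc}_{\mc{D}^\circ}$ is a domain and patching yields $R^{\Box}_{\mc{D}^\circ}[1/p]=\T^{\Box}_{\mc{D}^\circ}[1/p]$ outright (Theorems~\ref{thm:ReqT} and~\ref{thm:mlt2}). The price is that Theorem~\ref{thm:mlt2} needs $t_\rho\approx t_f$ at \emph{all} places of $S$, not just above $p$. To get from there to Theorem~\ref{thm:mlt} (= Theorem~\ref{mainthm3}) the paper inserts a separate level-raising/lowering step, Proposition~\ref{prop:lev}: after a further solvable totally real base change one replaces $f$ by an $f'$ whose types match those of $\rho$ at every finite place, via Skinner--Wiles level-lowering plus explicit level-raising (Lemmas~\ref{lem:lower}--\ref{lem:raise2}) at each $v\nmid p$ where $\rho$ is special. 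Note also the automorphic setup: the quaternion algebra is ramified at all type-$C$ places, \emph{including those above $p$}, so Jacquet--Langlands already picks out forms special there. If you want to avoid level-raising entirely you would need Taylor's Ihara-avoidance trick, which is a different argument from the one you wrote.
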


\subsection\label{ss:wt2}
We now give a precise definition of the term ``weight two.''
Let $F_v/\Q_p$ be a finite extension and let $E/\Q_p$ be any extension.
We say that a representation $\rho_v:G_{F_v} \to \GL_2(E)$ is \emph{weight two}
if the $(E \otimes_{\Q_p} F_v)$-module $(\rho \otimes_{\Q_p} B_{\dR})^{
G_{F_v}}$ is free of rank two and its associated graded is free of rank one in
degrees 0 and $-1$.  Equivalently, $\rho_v$ is weight two if it
is de Rham with non-positive Hodge-Tate weights and $\det{\rho_v}$ restricted
to inertia is a finite order character times the cyclotomic character.  We
use the convention that the cyclotomic character $\chi_p$ has Hodge-Tate
weight $-1$.

For a global field $F/\Q$, we say that a representation $\rho:G_F \to
\GL_2(E)$ is \emph{weight two} if it is at every place above $p$.
Equivalently, $\rho$ is weight two if $\rho \vert_{G_{F_v}}$ is de Rham with
non-positive Hodge-Tate weights for each $v \mid p$ and $\det{\rho}$ is equal
to a finite order character times the cyclotomic character.
We use this terminology because the
Galois representations coming from parallel weight two Hilbert eigenforms have
this property.

\subsection
We now give an overview of the structure of the paper.
\begin{itemize}
\item In \S \ref{s:local} we give some results on deformation rings of
representations of local Galois groups.  These results are mostly due to Kisin
\cite{Kisin}, though we will also need some results of Gee \cite{Gee} and
some results that do not seem to be covered by either author (for which we
provide proofs).
\item In \S \ref{s:reqt} we prove an $R=\T$ theorem using the method of
Taylor-Wiles, as improved by several authors.  We follow \cite{Kisin} very
closely in our treatment.  This theorem relies on the local results of
\S \ref{s:local}.
\item In \S \ref{s:modlift} we establish
Theorem~\ref{mainthm3}.  We deduce this from the $R=\T$ theorem of
\S \ref{s:reqt} using the base change tricks of \cite{Kisin}.  At the end
of \S \ref{s:modlift} we appeal to the results of \S \ref{s:problems} and the
modularity lifting theorem of Skinner-Wiles to produce some mod $p$
congruences between Hilbert modular forms which are special at $p$ and those
which are not.
\item In \S \ref{s:pmod} we prove our potential modularity results
(these are somewhat stronger than the statement in Theorem~\ref{mainthm}).
Such results were first established by Taylor \cite{Taylor3}, and our
proofs do not differ much from his.  In fact, our proofs are less difficult
because we have stronger modularity lifting theorems available.  The main
tools used in the proofs are Theorem~\ref{mainthm3} and a theorem of
Moret-Bailly.
\item In \S \ref{s:defring} we prove that certain global deformation
rings are finite over $\Z_p$ of non-zero rank, and therefore have
characteristic zero points.  Such results were first established by
Khare and Wintenberger \cite{KhareWintenberger}, and our proof does not
differ much from theirs.  The proof has two parts.  First one uses purely
Galois theoretic results to show that the deformation rings have non-zero
Krull dimension.  Then one uses the potential modularity results of
\S \ref{s:pmod} together with the finiteness of deformation rings
of modular representations (proved in \S \ref{s:reqt}) to
conclude that the deformation rings are finite.
\item In \S \ref{s:problems} we use the results of \S \ref{s:defring} to
produce lifts of residual representations with prescribed behavior on
inertia at finitely many places.  We learned the method of proof from a
paper of Gee \cite{Gee2}.  Our results improve on the ones
there in two ways:  first, we make no modularity
hypothesis; and second, we can control the monodromy operator of lifts.
To apply the results of \S \ref{s:defring} for these purposes, we need
some more results on local deformation rings.  These are due to Kisin
\cite{Kisin4} and Gee \cite{Gee2}, though as before we need to establish
some new results in certain easy cases.  We also prove some local
lifting results which do not appear to be in the literature.
\item In \S \ref{s:additional} we prove two miscellaneous results which
rely on all the preceding work.  The first is a descent result for mod
$p$ Hilbert eigenforms.  A similar result, though less general, was
obtained previously by Khare \cite{Khare}.  The second result is an
improvement on
our potential modularity result.  It states that one can achieve modularity
by passing to an extension which splits at a finite prescribed set of
primes.  We establish this by using a simple modification of the theorem
of Moret-Bailly and our descent theorem for mod $p$ eigenforms.
\item In \S \ref{s:conseq} we prove Corollary~\ref{maincor}.  The proofs of
the statements in this corollary are, for the most part, well-known or
appear already in the literature.  It seems worthwhile, though, to have them
consolidated in one place under one notational
scheme.  Statement (1) of the corollary follows easily from the corresponding
result for modular representations, which is due to Blasius \cite{Blasius}.
The proof of statement (2) is due to Dieulefait \cite{Dieulefait}.  The
proofs of statements (3) and (4) are due to Taylor \cite{Taylor},
\cite{Taylor3}.  We should point out, though, that to prove (4) in the stated
generality requires some new arguments, in particular the strengthened form
of potential modularity given in \S \ref{s:additional}.
\end{itemize}

\subsection
We now list some notation that we will use throughout the paper.
\begin{itemize}
\item The letter $p$ always denotes an odd prime.  We deal with $p$-adic or
mod $p$ representations.
\item The symbol $F$ denotes the field over which we work.  It is almost always
totally real and there is usually a representation of its absolute Galois
group $G_F$ that we are studying.
\item Finite extensions of $\Q_p$ or $\Q_{\ell}$ whose Galois group we study
 will be denoted by something
like $F_v$.  Often $F_v$ will be the completion of a number field $F$ at a
place $v$; when this is not the case, $v$ is just a decoration to remind the
reader that $F_v$ is a local field.  We denote by $G_{F_v}$ the absolute
Galois group of $F_v$ and by $I_{F_v}$ the inertia subgroup.
\item For a prime number $p$ we denote by $\Sigma_{F, p}$ or $\Sigma_p$ the
set of places of $F$ above $p$.  We also write $\Sigma_{F, \infty}$ or
$\Sigma_{\infty}$ for the set of infinite places of $F$.
\item We let $E/\Q_p$ be an extension, typically finite, with ring of integers
$\ms{O}$ and residue field $k$.  All our deformation theory takes place on the
category of local $\ms{O}$-algebras.
\item We use $\rho$ (resp.\ $\ol{\rho}$) to denote $p$-adic (resp.\ mod $p$)
representations of $G_F$.  We typically denote representations of $G_{F_v}$
by $\rho_v$ or $\ol{\rho}_v$.
\item We denote by $\chi_p$ the $p$-adic cyclotomic character and by
$\ol{\chi}_p$ its reduction modulo $p$.
\item The symbol $\Frob$ will always denote an arithmetic Frobenius.
\end{itemize}

\subsection
I would like to thank several people for the help they gave me:
Bhargav Bhatt, Brian Conrad, Mark Kisin, Stefan Patrikis, Chris Skinner,
Jacob Tsimerman and Andrew Wiles.
Wiles suggested a problem to me that eventually lead to the writing of this
paper.  Patrikis helped me immensely, answering several questions I had about
Kisin's paper \cite{Kisin} and carefully reading and commenting on an
earlier version of this paper.

\section{Local deformation rings}
\label{s:local}

\subsection
Let $F_v/\Q_p$ be a finite extension and let $\rho_v:G_{F_v} \to \GL_2(E)$
be a weight two representation, where $E$ is any extension of $\Q_p$.  Let $V$
be the representation space of $\rho$.
\begin{itemize}
\item We say that $\rho_v$ is \emph{admissible of type $A$} if it is
crystalline, ordinary and has cyclotomic determinant.  This means that $\rho_v$
is crystalline and there is an exact sequence
\begin{displaymath}
0 \to E(\chi_p \psi) \to V \to E(\psi^{-1}) \to 0
\end{displaymath}
where $\psi$ is an unramified character.
\item We say that $\rho_v$ is \emph{admissible of type $B$} if it is
crystalline, non-ordinary and has cyclotomic determinant.
\item We say that $\rho_v$ is \emph{admissible of type $C$} if there is an
exact sequence
\begin{displaymath}
0 \to E(\chi_p) \to V \to E \to 0.
\end{displaymath}
Of course, this forces $\rho_v$ to have cyclotomic determinant.
\end{itemize}
We say that $\rho_v$ has \emph{type $\ast$} if there is some finite extension
$F'_v/F_v$ such that $\rho_v \vert_{G_{F_v'}}$ is admissible of type $\ast$.
As long as $\det{\rho_v}$ is a finite order character times the cyclotomic
character $\rho_v$ will have some type.  It is possible
for $\rho_v$ to have both type $A$ and $C$; we then say that $\rho$ has
type $A/C$.

\begin{remark}
Let $f$ be a parallel weight two Hilbert eigenform over a totally real number
field $F$, $\pi$ the corresponding automorphic representation and $\rho$ the
corresponding $p$-adic Galois representation.  Let $v$ be a place of $F$ over
$p$.  Then $\rho$ has type $A$ or $B$ (resp.\ $C$) at $v$ if and
only if $\pi_v$ is principal series or cuspidal (resp.\ special).  The
distinction between $A$ and $B$ amounts to a certain congruence for the Hecke
eigenvalue.  Note that $\rho \vert_{G_{F_v}}$ never has type $A/C$; in fact
this is true for any global representation, \cf\ Proposition~\ref{prop:type}.
\end{remark}

\subsection
We now assume that $F_v$ contains a non-trivial $p$th root of unity, so that
the $p$-adic cyclotomic character $\chi_p$ of $G_{F_v}$ reduces to the trivial
character.

\begin{proposition}
\label{prop:pring}
Let $\ol{\rho}_v:G_{F_v} \to \GL_2(k)$ be the trivial representation and let
$R^{\Box}$ denotes its universal framed deformation ring.  Let $\ast$ be $A$,
$B$ or $C$.  There is then a unique quotient $R^{\Box, \ast}$ of $R^{\Box}$
such that:
\begin{enumerate}
\item $R^{\Box, \ast} \otimes_{\ms{O}} \ms{O}_{E'}$ is a domain, for any
finite extension $E'/E$.
\item $R^{\Box, \ast}$ is flat over $\ms{O}$ of relative
dimension $[F_v:\Q_p]+3$.
\item $R^{\Box, \ast}[1/p]$ is formally smooth over $E$.
\item Let $E'/E$ be a finite extension.  A map $R^{\Box} \to E'$ factors
through $R^{\Box, \ast}$ if and only if the corresponding representation is
admissible of type $\ast$.
\end{enumerate}
\end{proposition}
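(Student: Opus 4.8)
The plan is to construct $R^{\Box, \ast}$ by Kisin's method of potentially semi-stable deformation rings \cite{Kisin}, together with Gee's ordinary variant \cite{Gee} for the type $A$ ring, and then to read off (1)--(4); because $F_v$ contains $\zeta_p$ (hence is ramified over $\Q_p$) and $\ol{\rho}_v$ is trivial, this is a genuinely degenerate situation, and the real work lies in isolating the correct component in each case. Several of the assertions are formal consequences of the rest. Granting that $R^{\Box,\ast}$ is $\ms{O}$-flat (part of (2)), it is $\varpi$-torsion free for a uniformizer $\varpi$ of $\ms{O}$, hence embeds in $R^{\Box,\ast}[1/p]$; so $R^{\Box,\ast}$ is a domain once $R^{\Box,\ast}[1/p]$ is, and likewise after $\otimes_{\ms{O}} \ms{O}_{E'}$ since $\ms{O}_{E'}$ is $\ms{O}$-free. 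By (3) the ring $R^{\Box,\ast}[1/p]$ is regular, so it is a domain iff it is connected; thus (1) follows from (2), (3) and geometric connectedness of $\Spec R^{\Box,\ast}[1/p]$. Uniqueness is also formal: a quotient satisfying (1) is reduced and $\ms{O}$-flat, so $\Spec R^{\Box,\ast}[1/p]$ is a reduced closed subscheme of the Jacobson scheme $\Spec R^{\Box}[1/p]$ whose closed points are, by (4), precisely the type-$\ast$ representations; a reduced closed subscheme is determined by its underlying closed set, and then $R^{\Box,\ast}$ is the image of $R^{\Box}$ in $R^{\Box,\ast}[1/p]$. So it remains to construct $R^{\Box,\ast}$ and to check (2), (3), (4) and geometric connectedness of the generic fibre.

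For the construction, fix the $p$-adic Hodge type: Hodge--Tate weights $0$ and $-1$, trivial inertial type (automatic since $\ol{\chi}_p$ is trivial), determinant $\chi_p$. For $\ast = A$ we take Gee's framed crystalline \emph{ordinary} deformation ring attached to the filtration $0 \to E(\chi_p \psi) \to V \to E(\psi^{-1}) \to 0$; for $\ast = B$ we take the closure, inside Kisin's framed crystalline deformation ring of this Hodge type, of the locus of non-ordinary points; for $\ast = C$ we take the framed semi-stable deformation ring cut out by the filtration $0 \to E(\chi_p) \to V \to E \to 0$. In each case Kisin's method produces a projective $R^{\Box}[1/p]$-scheme --- a moduli space of weakly admissible filtered $(\varphi, N)$-modules inside a Grassmannian bundle, further constrained where relevant --- whose structure morphism is a closed immersion onto the locus of representations of the prescribed type; we let $\Spec R^{\Box,\ast}[1/p]$ be its scheme-theoretic image and take $R^{\Box,\ast}$ to be the image of $R^{\Box}$ in $R^{\Box,\ast}[1/p]$, which is $\ms{O}$-flat as a subring of an $E$-algebra. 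Property (4) then holds by construction. The handful of points at which \cite{Kisin}, \cite{Gee} do not quite suffice --- principally the precise form of the ordinary filtration for $\ast = C$ and the identification of the non-ordinary locus for $\ast = B$ --- we treat directly, using that two-dimensional weight-two (potentially) semi-stable representations and their filtered modules are explicit enough to analyse by hand.

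Properties (2) and (3) come from the usual tangent-space and obstruction computation, as in \cite{Kisin}, \cite{Gee}. At a closed point $x$ with underlying representation $\rho_x : G_{F_v} \to \GL_2(E')$, the completed local ring of $\Spec R^{\Box,\ast}[1/p]$ at $x$ pro-represents framed deformations of $\rho_x$ of type $\ast$; by local Tate duality and the local Euler characteristic formula its tangent space has dimension $[F_v : \Q_p] + 3$, once one knows that the de Rham condition of fixed Hodge type --- and the ordinarity condition, where imposed --- cuts out a subspace of the expected codimension. Unobstructedness is Kisin's smoothness theorem: the moduli of filtered $(\varphi, N)$-modules is smooth, and its morphism to the deformation space realises $R^{\Box,\ast}[1/p]$ as formally smooth over $E$; the ordinarity constraint is harmless because the distinguished sub-line is either unique (non-split case), so deforms freely, or else the split locus is itself smooth. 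Hence $R^{\Box,\ast}[1/p]$ is regular of dimension $[F_v:\Q_p]+3$; since $R^{\Box,\ast}$ is $\ms{O}$-flat, its special fibre has the same dimension, giving (2).

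What remains is geometric connectedness of $\Spec R^{\Box,\ast}[1/p]$, which we obtain by exhibiting the generic fibre as dominated by a geometrically connected rational parameter space: for $\ast = A$, the unramified twist $\psi$ together with the crystalline extension class of $E(\psi^{-1})$ by $E(\chi_p\psi)$; for $\ast = C$, the Galois extension class of $E$ by $E(\chi_p)$, the split (crystalline) points forming a proper closed subvariety of the component containing the Steinberg points; for $\ast = B$, the weakly admissible filtered $\varphi$-modules of determinant $\chi_p$, which after fixing the determinant reduce to a $\GL_2$-conjugacy datum for $\varphi$ together with a choice of filtration, an open subscheme of a product of projective lines. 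Each of these is geometrically connected, so $\Spec R^{\Box,\ast}[1/p]$ is too, hence (being regular) geometrically integral, and with $\ms{O}$-flatness this gives (1). The main obstacle throughout is the degeneracy forced by $\zeta_p \in F_v$ and $\ol{\rho}_v$ trivial: Fontaine--Laffaille theory does not apply, $H^0(\ad \ol{\rho}_v)$ is all of $\ad \ol{\rho}_v$, and one must verify by hand --- separately for $A$, $B$, $C$ --- that the type-$\ast$ condition still carves out a smooth, connected generic fibre of the stated dimension, that the ordinary (for $A$, $C$) and non-ordinary (for $B$) loci behave as claimed, and that passing to the $\ms{O}$-flat quotient introduces no spurious components.
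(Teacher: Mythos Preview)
For types $A$ and $B$ your approach matches the paper's: both invoke \cite[Corollary~2.5.16]{Kisin} together with \cite{Gee}. One small correction: Gee's contribution is the connectedness of the \emph{non-ordinary} component, i.e.\ the case $\ast = B$; the ordinary ring $\ast = A$ is already handled in \cite{Kisin}, so your attribution is reversed.

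For $\ast = C$ your route differs genuinely from the paper's. You work on the generic fibre --- construct $R^{\Box, C}[1/p]$ as the closure of the type-$C$ locus, take $R^{\Box, C}$ as the $\ms{O}$-flat image, and argue geometric connectedness by parametrizing closed points via the extension class in $H^1(G_{F_v}, E(\chi_p))$ plus a framing. The paper instead builds an \emph{integral} resolution in the style of \cite[\S 2.4]{Kisin2}: it defines a groupoid-valued functor $D^C$ on $\mf{Aug}_{\ms{O}}$ remembering the additional datum of a sub-line $L_A \subset V_A$ on which $G_{F_v}$ acts by $\chi_p$, shows the forgetful map $D^C \to D^\chi$ is relatively representable by a projective scheme $\ms{L}^{\Box, C}$ which is formally smooth over $\ms{O}$ (the key input being surjectivity of $H^1(G_{F_v}, M(\chi_p)) \to H^1(G_{F_v}, M'(\chi_p))$ for surjections of $p$-torsion modules, via Kummer theory), and defines $R^{\Box, C}$ as its scheme-theoretic image in $\Spec R^{\Box, \chi}$. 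Connectedness then falls out because the fibre of $\ms{L}^{\Box, C}$ over the closed point is the $\mathbb{P}^1$ of lines in the trivial two-dimensional representation. The paper's construction buys smoothness of the resolution over all of $\ms{O}$ rather than just over $E$, and a clean special-fibre connectedness argument; your generic-fibre approach is more direct in spirit but leaves implicit how the extension-class parametrization and the framing assemble into an honest dominant morphism of schemes onto $\Spec R^{\Box, C}[1/p]$.
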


\begin{proof}
If $\ast$ is $A$ or $B$ then this is just \cite[Corollary~2.5.16]{Kisin} (see
also the proof of \cite[Theorem~3.4.11]{Kisin}), with the improvement of
\cite{Gee} in the $\ast=B$ case.  We give a proof in the case $\ast=C$ below.
\end{proof}

\subsection
We now prove Proposition~\ref{prop:pring} for $\ast=C$.  We closely follow
\cite[\S 2.4]{Kisin2}.  Let $V_k$ be the trivial two dimensional
representation of $G_{F_v}$.  We denote by $\mf{Aug}_{\ms{O}}$ the category
of pairs $(A, I)$ where $A$ is an $\ms{O}$-algebra and $I \subset A$ is a
nilpotent ideal with $\mf{m}_{\ms{O}} A \subset I$.  Let $D^{\chi}$ be the
groupoid valued functor on $\mf{Aug}_{\ms{O}}$ which assigns to $(A, I)$ the
category of pairs $(V_A, \iota_A)$ where:
\begin{itemize}
\item $V_A$ is a free rank two $A$-module with a discrete action of $G_{F_v}$
with determinant $\chi_p$; and
\item $\iota_A:V_A \otimes_A A/I \to V_k \otimes_k A/I$ is an isomorphism of
$(A/I)[G_{F_v}]$-modules.
\end{itemize}
We define $D^C$ to be the groupoid valued functor which assigns to $(A, I)$
the category of triples $(V_A, L_A, \iota_A)$ where:
\begin{itemize}
\item $V_A$ is a free rank two $A$-module with a discrete action of $G_{F_v}$
with determinant $\chi_p$;
\item $L_A \subset V_A$ is an $A$-line (that is, $L_A$ is a rank one
projective $A$-submodule for which $V_A/L_A$ is projective) on which $G_{F_v}$
acts by $\chi_p$; and
\item $\iota_A:V_A \otimes_A A/I \to V_k \otimes_k A/I$ is an isomorphism of
$(A/I)[G_{F_v}]$-modules.
\end{itemize}
Note that $D^C$ is analogous to the functor $D^{\mathrm{ord}, \chi}_{V_{\F}}$
in \cite[\S 2.4]{Kisin2}.  We have the following analogue of
\cite[Proposition~2.4.4]{Kisin2}.

\begin{proposition}
We have the following:
\begin{enumerate}
\item There is a morphism of functors $D^C \to D^{\chi}$ taking $(V_A, L_A,
\iota_A)$ to $(V_A, \iota_A)$.  This morphism is relatively representable and
projective.
\item If $R$ is a complete local ring with residue field $k$, $\xi$ a map
$\Spec(R) \to D^{\chi}$ and $\ms{L}^C_{\xi}$ the projective $R$-scheme
representing $\Spec(R) \times_{D^{\chi}} D^C$ then the morphism
$\ms{L}^C_{\xi} \to \Spec(R)$ becomes a closed immersion after inverting $p$.
\item If $\xi:\Spec(R) \to D^{\chi}$ is formally smooth then $\ms{L}^C_{\xi}$
is formally smooth over $\ms{O}$.
\end{enumerate}
\end{proposition}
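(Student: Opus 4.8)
The plan is to follow Kisin's analysis of ordinary deformation problems in \cite[\S 2.4]{Kisin2} essentially verbatim, with the ordinary line replaced by the $\chi_p$-line demanded by type $C$. For part (1), the key point is that specifying an $A$-line $L_A \subset V_A$ on which $G_{F_v}$ acts by $\chi_p$ is a closed condition inside the Grassmannian of lines in $V_A$: given $\xi:\Spec(R) \to D^{\chi}$ classifying $(V_R,\iota_R)$, form the projective $R$-scheme $\mathbb{P}(V_R^\vee) = \mathbb{P}^1_R$ parametrizing lines in $V_R$, and cut out the locus where the tautological line is $G_{F_v}$-stable and the action on it is via $\chi_p$. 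The first is closed because $G_{F_v}$ is (topologically) generated by finitely many elements (or one works with each $g$ and intersects); the second, once stability holds, is the vanishing of the difference of two sections of a line bundle. This gives relative representability by a closed subscheme of $\mathbb{P}^1_R$, hence projectivity; functoriality in $(A,I)$ is immediate from the construction. I would state this as the scheme $\ms{L}^C_\xi$.

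For part (2), I would argue that after inverting $p$ the line $L_A$, if it exists, is unique. Indeed $V_A[1/p]$ has determinant $\chi_p$, so a $G_{F_v}$-stable line on which $G$ acts by $\chi_p$ forces the quotient to be unramified-by-nothing — more precisely, the other Jordan--H\"older line (if the representation were to contain a second such line) would make $V_A[1/p]$ a sum of two copies of $\chi_p$ up to unramified twist, which is incompatible with $\chi_p$ having Hodge--Tate weight $-1$ at every embedding while the complementary piece has weight $0$; so at most one such sub-line exists. (This is the analogue of the fact, used repeatedly in \cite{Kisin2}, that the ordinary filtration is unique in characteristic zero; in the type $C$ setting one can also cite that an extension of the trivial character by $\chi_p$ has a one-dimensional space of $G$-fixed lines generically.) Uniqueness of the line means $\ms{L}^C_\xi \to \Spec(R)$ is a monomorphism after inverting $p$; being also proper, it is a closed immersion after inverting $p$, as in \cite[Prop.~2.4.4]{Kisin2}.

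For part (3), I would compute the tangent space / obstruction theory of $\ms{L}^C_\xi$ over $\ms{O}$ when $\xi$ is formally smooth, exactly as Kisin does. Working over a small extension $(A,I) \to (A_0,I_0)$ in $\mf{Aug}_{\ms{O}}$, a point of $\ms{L}^C_\xi$ over $A_0$ consists of $(V_{A_0}, L_{A_0})$ with $L_{A_0}$ a $\chi_p$-line; lifting $V_{A_0}$ to $V_A$ is unobstructed since $\xi$ is formally smooth, and then lifting the line $L_{A_0}$ to an $A$-line on which $G$ acts by $\chi_p$ is a torsor under $H^1(G_{F_v}, \Hom(V/L, L) \otimes (\text{small ideal}))$ with obstruction in $H^2$ of the same module; since $\Hom(V/L, L)$ has $G_{F_v}$ acting through $\chi_p$ (times unramified), and the relevant cohomology of $\ol{\Q}_p(\chi_p)$ over a $p$-adic field vanishes in degree $2$ and behaves compatibly in families — this is where one uses that $H^2(G_{F_v}, \chi_p) = H^0(G_{F_v}, \chi_p^{-1}\chi_p)^\vee = H^0(G_{F_v},\mathbf{1})^\vee \ne 0$, so one must instead observe the obstruction actually lands in a group that is killed because we are deforming inside $D^\chi$ with fixed determinant — I expect exactly Kisin's bookkeeping to show the obstruction vanishes and the deformation is smooth of the predicted relative dimension.

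\emph{Main obstacle.} The delicate step is part (3): correctly identifying the obstruction group for lifting the line and checking it vanishes, keeping careful track of the fixed-determinant condition (which is what rescues smoothness, since the naive $H^2(G_{F_v},\chi_p)$ does not vanish). I would handle this by importing Kisin's argument from \cite[\S 2.4]{Kisin2} with the single substitution ``$\mathrm{ord}$-line'' $\rightsquigarrow$ ``$\chi_p$-line'', verifying at each cohomological step that the local Tate duality and Euler characteristic computations go through for the character $\chi_p$ in place of the unramified character appearing in the ordinary case; the dimension count $[F_v:\Q_p]+3$ in Proposition~\ref{prop:pring} then drops out of the same Euler characteristic formula.
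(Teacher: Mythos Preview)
Your approach to parts (1) and (2) is essentially the same as the paper's, which simply cites \cite[Proposition~2.4.4]{Kisin2}. One small comment on (2): the Hodge--Tate argument is unnecessary. If $V_A[1/p]$ had two distinct $G_{F_v}$-stable lines on which the action is $\chi_p$, they would span, so $V_A[1/p] \cong \chi_p \oplus \chi_p$ and $\det V_A[1/p] = \chi_p^2$; but $\chi_p$ has infinite order in characteristic zero, contradicting $\det = \chi_p$. That is all the uniqueness argument requires.

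For part (3) there is a genuine gap, and your ``main obstacle'' paragraph already shows you sense it. You set things up by first lifting $V_{A_0}$ to some $V_A$ via formal smoothness of $\xi$, and then trying to lift the line inside this fixed $V_A$; you then locate an obstruction in $H^2(G_{F_v}, \chi_p)$, correctly observe this group is nonzero, and hope the fixed-determinant condition kills the obstruction. It does not, at least not by any mechanism you have described, and this line of reasoning does not close.

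The paper's proof pinpoints the one modification needed to Kisin's argument: instead of \cite[Lemma~2.4.2]{Kisin2}, one uses that for any surjection $M \to M'$ of $\Z_p$-modules on which $p$ is nilpotent, the map
\[
H^1(G_{F_v}, M(\chi_p)) \longrightarrow H^1(G_{F_v}, M'(\chi_p))
\]
is surjective, which follows from Kummer theory. The point is that a pair $(V_A, L_A)$ with $L_A$ a $\chi_p$-line and $\det V_A = \chi_p$ is the same as an extension of the trivial character by $\chi_p$ over $A$, hence a class in $H^1(G_{F_v}, A(\chi_p))$; lifting the pair from $A_0$ to $A$ is exactly lifting this class along $H^1(G_{F_v}, A(\chi_p)) \to H^1(G_{F_v}, A_0(\chi_p))$. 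Surjectivity here is what you need, and it holds even though the relevant $H^2$ is nonzero. Once $(V_A, L_A)$ exists, formal smoothness of $\xi$ lets you lift the $R$-point. So the correct cohomological input is an $H^1$-surjectivity statement coming from Kummer theory, not an $H^2$-vanishing, and the fixed-determinant condition plays no direct role beyond identifying the quotient $V_A/L_A$ with the trivial character.
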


\begin{proof}
The proof is the same as that given in \cite[Proposition~2.4.4]{Kisin2},
except for one point in (3).  There, instead of
using \cite[Lemma~2.4.2]{Kisin2}, one uses the fact that if $M \to M'$ is a
surjection of $\Z_p$-modules on which $p$ is nilpotent then $H^1(G_{F_v},
M(\chi_p)) \to H^1(G_{F_v}, M'(\chi_p))$ is surjective (this follows easily
from Kummer theory).
\end{proof}

Let $D^{\Box, \chi}$ and $D^{\Box, C}$ be the framed versions of
$D^{\chi}$ and $D^C$.  Then $D^{\Box, \chi}$ is pro-representable and
$D^{\Box, \chi} \to D^{\chi}$ is formally smooth.  Let $R^{\Box, \chi}$ denote
the complete local $\ms{O}$-algebra representing $D^{\Box, \chi}$.  The above
proposition tells us that
$\ms{L}^{\Box, C}=R^{\Box, \chi} \times_{D^{\chi}} D^C$ is a projective
scheme over $\Spec(R^{\Box, \chi})$, is formally smooth over $\ms{O}$ and
the map $\ms{L}^{\Box, C} \to \Spec(R^{\Box, \chi})$ becomes a closed
embedding after inverting $p$.  We define $\Spec(R^{\Box, C})$ to be
the scheme theoretic image of $\ms{L}^{\Box, C} \to \Spec(R^{\Box, \chi})$.

We now verify that $R^{\Box, C}$ satisfies (1)--(4) of
Proposition~\ref{prop:pring}.  The fiber of
$\ms{L}^{\Box, C}$ over the closed point of $R^{\Box, C}$ is the projective
line and so the proof of \cite[Corollary~2.4.10]{Kisin} implies that
$\ms{L}^{\Box, C}$
is connected.  As the image of a connected smooth scheme is a connected
integral scheme, the ring $R^{\Box, C}$ is a domain.  Since the same argument
works when considering deformations to $\ms{O}_{E'}$ algebras and that functor
is represented by $R^{\Box, C} \otimes_{\ms{O}} \ms{O}_{E'}$, we conclude
that (1) holds.  For (3) note that
$\mc{L}^{\Box, C} \to \Spec(R^{\Box, C})$ is an isomorphism after inverting
$p$ and the first space is smooth after inverting $p$.  The proof of (4)
goes just like the proof in \cite[Corollary~2.4.5]{Kisin2}.

We now verify (2).  First note that there exists a ring homomorphism
$R^{\Box, C} \to \ms{O}$, as there
exists a type $C$ deformations of $V_k$ to $\ms{O}$ (\eg, $\chi_p \oplus 1$).
This shows that $p \ne 0$ in $R^{\Box, C}$.  Flatness over $\ms{O}$ now
follows from (1).  We now compute the dimension of $R^{\Box, C}$.  The
relative dimension of $R^{\Box, C}$ over
$\ms{O}$ is the same as the dimension of the tangent space of
$\ms{L}^{\Box, C}[1/p]$ at any point.  Let $\xi$ be the $E$-point of
$\ms{L}^{\Box, C}[1/p]$ corresponding to the representation $\chi_p \oplus 1$.
The dimension of the tangent space at $\xi$ is 2 (the contribution of the
framing) plus the dimension of $H^1(G_{F_v}, E(\chi_p))$ (the contribution of
deforming the representation), which gives a total of $[F_v:\Q_p]+3$.

\subsection
Let $F_v/\Q_{\ell}$ be a finite extension with $\ell \ne p$ and let $\rho_v:
G_{F_v} \to \GL_2(E)$ be a representation with $E$ any extension of $\Q_p$.
Let $V$ be the representation space of $\rho_v$.
\begin{itemize}
\item We say that $\rho_v$ is \emph{admissible of type $AB$} if it is
unramified and has determinant $\chi_p$.
\item We say that $\rho_v$ is \emph{admissible of type $C$} if there is
an exact sequence
\begin{displaymath}
0 \to E(\chi_p) \to V \to E \to 0.
\end{displaymath}
Of course, this forces $\rho_v$ to have determinant $\chi_p$.
\end{itemize}
We say that $\rho_v$ has \emph{type $\ast$} if it is admissible of type $\ast$
over some finite extension $F_v'/F_v$.  As long as $\det{\rho_v}$ is a finite
order character times the cyclotomic character, $\rho_v$ will have type $AB$
or $C$.  As in the previous case, it is possible for $\rho_v$ to have type
$AB$ and $C$; we then say that $\rho_v$ has type $AB/C$.

\subsection
We will need some results on deformations of given types, as in the case
where $v \mid p$.  We assume that $F_v$ contains the $p$th roots of unity.

\begin{proposition}
\label{prop:lring}
Let $\ol{\rho}_v:G_{F_v} \to \GL_2(k)$ be the trivial representation and
let $R^{\Box}$ denote its universal framed deformation ring.  Let $\ast$ be
one of $AB$ or $C$.  There is then a unique quotient $R^{\Box, \ast}$ of
$R^{\Box}$ satisfying the following properties:
\begin{enumerate}
\item $R^{\Box, \ast} \otimes_{\ms{O}} \ms{O}_{E'}$ is a domain, for any
finite $E'/E$.
\item $R^{\Box, \ast}$ is flat over $\ms{O}$ of relative dimension 3.
\item $R^{\Box, \ast}[1/p]$ is formally smooth over $E$.
\item Let $E'/E$ be a finite extension.  A map $R^{\Box} \to E'$
factors through $R^{\Box, \ast}$ if and only if the
corresponding representation is admissible of type $\ast$.
\end{enumerate}
\end{proposition}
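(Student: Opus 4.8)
The plan is to mimic the local analysis already carried out in the $v \mid p$ case, replacing the $p$-adic Hodge theory inputs with $\ell$-adic ($\ell \neq p$) Galois cohomology, which is technically easier. For the type $AB$ quotient this is essentially classical: the unramified deformation condition with fixed determinant $\chi_p$ is cut out by demanding that the action of inertia be trivial. Since $\ol{\rho}_v$ is trivial and $F_v$ contains the $p$th roots of unity, $\chi_p$ reduces to the trivial character, and one checks that the functor of unramified deformations with determinant $\chi_p$ is pro-represented by a power series ring over $\ms{O}$: a framed unramified deformation is determined by the image of Frobenius (a matrix of determinant equal to the value of $\chi_p$ on Frobenius) together with the framing, so $R^{\Box, AB}$ is formally smooth over $\ms{O}$ of relative dimension $3$. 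Flatness and the domain property (even after base change to $\ms{O}_{E'}$) are then immediate, as is formal smoothness of $R^{\Box, AB}[1/p]$ over $E$, and property (4) holds by construction.

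For the type $C$ quotient I would follow the Kisin-style construction already used above for $\ast = C$ at places over $p$, replacing $D^\chi$ and $D^C$ with their $\ell$-adic analogues: $D^\chi$ parametrizes deformations with determinant $\chi_p$, and $D^C$ parametrizes such deformations together with a $G_{F_v}$-stable line on which $G_{F_v}$ acts by $\chi_p$. As before, $D^C \to D^\chi$ is relatively representable and projective, and after inverting $p$ it becomes a closed immersion — this last point is where one must check that a $p$-adic representation of $G_{F_v}$ with determinant $\chi_p$ admits at most one sub-line with the prescribed action, which follows because over $E$ the character $\chi_p$ is ramified (it has infinite order on inertia for $\ell \neq p$, as $\chi_p$ restricted to $I_{F_v}$ surjects onto $\Z_p$) and hence distinct from the complementary quotient character. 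One then defines $\Spec R^{\Box, C}$ as the scheme-theoretic image of the projective, formally smooth scheme $\ms{L}^{\Box, C}$ over $\Spec R^{\Box, \chi}$; connectedness of the closed fiber (again a projective line) gives that $R^{\Box, C}$ is a domain, and the same argument over $\ms{O}_{E'}$ gives property (1). For formal smoothness in the analogue of part (3) of the intermediate proposition, one uses the surjectivity of $H^1(G_{F_v}, M(\chi_p)) \to H^1(G_{F_v}, M'(\chi_p))$ for surjections $M \to M'$ of $p$-power-torsion modules; here this follows from the fact that $H^2(G_{F_v}, -)$ for the local field $F_v$ over $\Q_\ell$ can be controlled via local Tate duality, the dual being $H^0(G_{F_v}, M'^\vee(\chi_p^{-1}\chi_{\mathrm{cyc},\ell}))$ — and since $\chi_p$ is ramified this is manageable.

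It remains to compute the relative dimension of $R^{\Box, C}$ over $\ms{O}$ and to verify flatness. Flatness follows from (1) together with the existence of a point $R^{\Box, C} \to \ms{O}$ (take $\chi_p \oplus 1$, which is a genuine type $C$ deformation), exactly as in the $v \mid p$ case. For the dimension, the relative dimension equals the dimension of the tangent space of $\ms{L}^{\Box, C}[1/p]$ at the $E$-point $\xi$ given by $\chi_p \oplus 1$, which is $2$ (the framing) plus $\dim_E H^1(G_{F_v}, E(\chi_p))$ (deformations of the pair, the line being rigid after inverting $p$). By the local Euler characteristic formula over a field of residue characteristic $\ell \neq p$ one has $\dim H^1(G_{F_v}, E(\chi_p)) = \dim H^0 + \dim H^2$; here $H^0(G_{F_v}, E(\chi_p)) = 0$ and $H^2(G_{F_v}, E(\chi_p)) = 0$ (its dual is $H^0$ of a ramified twist, which vanishes), so $\dim H^1(G_{F_v}, E(\chi_p)) = 0$ — wait, that gives relative dimension $2$, not $3$. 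The resolution, and the step I expect to be the main obstacle, is that one must work with the \emph{framed} object and the correct tangent space: the relevant cohomology is not $H^1(G_{F_v}, E(\chi_p))$ but rather the tangent space to $D^{\Box,\chi}$ at $\xi$, which is $\dim_E H^1(G_{F_v}, \ad^0) + 3$ adjusted by the fixed-determinant condition; for the trivial residual representation over an $\ell$-adic field this $H^1$ is $1$-dimensional (coming from the unipotent direction, i.e. $H^1(G_{F_v}, E(\chi_p))$ in the appropriate normalization where $\chi_p$ is genuinely ramified so $H^1$ is $1$-dimensional by Euler characteristic since $H^0 = H^2 = 0$ fails — in fact $H^0(G_{F_v}, E) \neq 0$ contributes). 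Concretely: $\dim H^1(G_{F_v}, E(\chi_p \otimes \chi_p^{-1})) $ type computations show the extension space $\mathrm{Ext}^1_{G_{F_v}}(E, E(\chi_p))$ is $1$-dimensional, giving $2 + 1 = 3$. Pinning down these Euler-characteristic and local-duality computations carefully — making sure the ramification of $\chi_p|_{I_{F_v}}$ is used correctly to kill the spurious $H^0$ and $H^2$ terms — is the delicate part; once it is done, properties (1)–(4) all follow as above and the quotient is unique because it is characterized by (4), which determines its $E'$-points for all finite $E'/E$ and hence, by flatness and reducedness, the ideal itself.
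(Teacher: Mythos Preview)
Your overall strategy is correct and is essentially what the paper does: for $\ast=AB$ the paper says ``this is easy'' (your argument is fine), and for $\ast=C$ the paper cites \cite[Corollary~2.6.7]{Kisin}, whose proof is exactly the $\ms{L}^{\Box,C}$ construction you describe, parallel to the $v\mid p$ case.

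However, your execution contains a genuine error that causes the confusion you flag in the dimension count. You repeatedly assert that for $F_v/\Q_\ell$ with $\ell\neq p$ the character $\chi_p$ is ramified (``$\chi_p$ restricted to $I_{F_v}$ surjects onto $\Z_p$''). This is false: the extensions $F_v(\zeta_{p^n})/F_v$ are unramified when $\ell\neq p$, so $\chi_p\vert_{I_{F_v}}$ is trivial. The character $\chi_p$ is nontrivial on $G_{F_v}$ only because $\chi_p(\Frob_v)=\bN v\neq 1$ in $\Z_p^\times$; this is what gives uniqueness of the $\chi_p$-line after inverting $p$, not ramification.

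This feeds into the second error: you compute $H^2(G_{F_v},E(\chi_p))=0$ on the grounds that its Tate dual is ``$H^0$ of a ramified twist.'' But the Tate twist in local duality is precisely $\chi_p$ (it is the action on $\mu_{p^\infty}$), so the dual is $H^0(G_{F_v},E(\chi_p^{-1}\chi_p))=H^0(G_{F_v},E)=E$, which is one-dimensional. Thus $\dim H^0=0$, $\dim H^2=1$, and the Euler characteristic formula $\dim H^0-\dim H^1+\dim H^2=0$ gives $\dim H^1(G_{F_v},E(\chi_p))=1$. The tangent space at $\xi=\chi_p\oplus 1$ is then $2+1=3$, exactly as required, by the same bookkeeping as in the $v\mid p$ case. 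Once you correct these two points, the rest of your argument goes through; the surjectivity of $H^1(G_{F_v},M(\chi_p))\to H^1(G_{F_v},M'(\chi_p))$ that you need for formal smoothness also follows from Tate duality (the obstruction map on $H^2$ is dual to the surjection $M^\vee\to(\ker)^\vee$ on $H^0$, hence injective), not from any ramification of $\chi_p$.
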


\begin{proof}
For $\ast=AB$ this is easy; for $\ast=C$ it is \cite[Corollary~2.6.7]{Kisin}
with $\gamma=1$.
\end{proof}

\subsection\label{ss:type}
We now introduce some terminology for dealing with types that we will use
for the rest of the paper.
By a \emph{$p$-type} we mean one of the symbols $A$, $B$, $C$ or $A/C$.
By a \emph{prime-to-$p$-type} we mean one of the symbols $AB$, $C$ or
$AB/C$.  The types $A/C$ and $AB/C$ are \emph{indefinite}; the others
are \emph{definite}.  For two types $X$ and $Y$ we write $X \approx Y$
if $X$ and $Y$ are equal or if $(X, Y)$ is one of $(A, A/C)$, $(C, A/C)$,
$(AB, AB/C)$ or $(C, AB/C)$ up to switching order.  For example, $A
\approx A/C$ but $B \not \approx A/C$.  Note that if $X$ and $Y$ are definite
types then $X \approx Y$ is equivalent to $X=Y$.

Let $F_v/\Q_p$ be a finite extension and $\rho_v:G_{F_v} \to \GL_2(E)$ a
weight two representation.  We define $t_{\rho_v}$ to the appropriate
$p$-type, \eg, $t_{\rho_v}=A/C$ if $\rho_v$ has type $A$ and type $C$
while $t_{\rho_v}=A$ if $\rho_v$ has type $A$ and not type $C$.  We
similarly define $t_{\rho_v}$ when $F_v$ is a finite extension of $\Q_{\ell}$
with $\ell \ne p$.  We use the phrase ``$\rho_v$ has type $\ast$'' to
mean $t_{\rho_v} \approx \ast$ (which is how the phrase has been defined in
the previous sections).  For a definite type $\ast$, we use the phrase
``$\rho_v$ has definite type $\ast$'' to mean $t_{\rho_v}=\ast$.

Let $F$ be a number field and $S$ a finite set of primes of $F$.  By a
\emph{type function} over $F$ on $S$ we mean a function $t$ on $S$ such
that $t(v)$ is a $p$-type (resp.\ prime-to-$p$-type) for $v$ above $p$
(resp.\ not above $p$).  We call $t$ \emph{definite} if $t(v)$ is definite
for all $v \in S$.  If $t$ and $t'$ are two type functions over $F$ on $S$
we write $t \approx t'$ if $t(v) \approx t'(v)$ for all $v \in S$.
For a type function $t$ over $F$ on $S$ and an extension $F'/F$ we let
$t \vert_{F'}$ be the type function over $F'$ on the
primes over $S$ which assigns to a place $v'$ the value $t(v)$ where $v$ is
the place of $F$ over which $v'$ lies.

For a weight two representation $\rho:G_{F, S} \to \GL_2(E)$, with $E$ an
extension of $\Q_p$, we let $t_{\rho}$ be the type function on $S$ which
assigns to $v$ the type $t_{\rho \vert_{G_{F_v}}}$.  We call
$t_{\rho}$ the \emph{type} of $\rho$.  We have the following result:

\begin{proposition}
\label{prop:type}
Let $\rho:G_F \to \GL_2(\ol{\Q}_p)$ be an irreducible, odd, weight two
representation which is potentially modular.  Then $t_{\rho}$ is definite.
\end{proposition}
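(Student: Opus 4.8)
The plan is to reduce the claim to a purely local statement about the Galois representation attached to a Hilbert eigenform and then invoke the classification of such representations place by place. Recall that $t_\rho$ is definite precisely when $t_{\rho|_{G_{F_v}}}$ is not one of the indefinite types $A/C$ or $AB/C$ for any $v \in S$; in other words, we must show that $\rho|_{G_{F_v}}$ cannot simultaneously have type $A$ (resp.\ $AB$) and type $C$. So the key point is: \emph{a global representation is never of type $A/C$ or $AB/C$ at any place.}

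First I would reduce to the modular case. Since $\rho$ is potentially modular, there is a finite totally real extension $F'/F$ such that $\rho|_{G_{F'}}$ is attached to a Hilbert eigenform $f$ over $F'$. The types behave well under restriction: $t_{\rho|_{G_{F'}}} = t_\rho|_{F'}$ (the types $A$, $B$, $C$ over $p$ and $AB$, $C$ away from $p$ are each defined by a condition that is insensitive to replacing $F_v$ by a finite extension — indeed ``$\rho_v$ has type $\ast$'' was defined that way). Hence if $t_\rho$ were indefinite at some $v$, then $t_{\rho|_{G_{F'}}}$ would be indefinite at some place $v'$ of $F'$ above $v$, so it suffices to prove the proposition for $\rho_f$ itself.

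Next I would rule out the indefinite types for $\rho_f$ using the local-global compatibility at $v$ together with the explicit shape of $\rho_f|_{G_{F'_{v'}}}$. By the remark following the definitions, at a place $v' \mid p$ the representation $\rho_f$ has type $A$ or $B$ according to whether $\pi_{v'}$ is (potentially) principal series or cuspidal, and type $C$ according to whether $\pi_{v'}$ is special. These three cases for the admissible representation $\pi_{v'}$ of $\GL_2$ are mutually exclusive, so $t_{\rho_f|_{G_{F'_{v'}}}}$ cannot be $A/C$. For $v' \nmid p$ the same dichotomy (unramified principal series versus special versus supercuspidal) shows type $AB$ and type $C$ are incompatible, ruling out $AB/C$. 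The one subtlety is that ``type $A$'' and ``type $C$'' are defined as properties that hold \emph{after passing to some finite extension}, so I must check that potential type $A$ and potential type $C$ are still incompatible; this follows because passing to a finite extension of $F'_{v'}$ corresponds to restricting $\pi_{v'}$, and a special representation stays special (its Galois representation stays a twist of the Steinberg-type extension $0 \to E(\chi_p) \to V \to E \to 0$, which has nontrivial monodromy operator $N$), whereas a principal series or supercuspidal has $N = 0$ on its Weil–Deligne representation after any base change — so the vanishing or nonvanishing of $N$ separates the cases and is a base-change invariant.

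The main obstacle I anticipate is precisely this last point: making the incompatibility argument at $v \mid p$ clean. Away from $p$ one has the monodromy operator $N$ of the Weil–Deligne representation as a robust invariant, but at $p$ one works with de Rham rather than Weil–Deligne data, so I would phrase the type-$C$ condition intrinsically — e.g.\ via the existence of a $G_{F_v}$-stable line on which inertia acts by $\chi_p$ with the correct unramified quotient after base change, equivalently via nontriviality of the monodromy operator on $D_{\mathrm{pst}}$ — and then argue that this is incompatible with crystallinity-after-base-change, which is exactly the definition of type $A$ or $B$. Since crystalline-after-base-change forces $N = 0$ on $D_{\mathrm{pst}}$, while type $C$ forces $N \ne 0$, the two cannot coexist, giving $t_{\rho_f|_{G_{F_v}}} \ne A/C$ and completing the proof.
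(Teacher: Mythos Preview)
Your argument has a genuine gap: the claim ``type $C$ forces $N \ne 0$ on $D_{\mathrm{pst}}$'' is false. Admissible of type $C$ means only that $V$ sits in an exact sequence $0 \to E(\chi_p) \to V \to E \to 0$; nothing requires this extension to be non-split, and the split representation $\chi_p \oplus 1$ is simultaneously admissible of type $A$ and of type $C$ (for $v \mid p$), or of type $AB$ and of type $C$ (for $v \nmid p$). So the monodromy operator does \emph{not} distinguish type $A$ from type $C$, and indefinite types genuinely occur for local representations --- this is exactly why the proposition requires global input. The same problem undermines your appeal to the dictionary in the Remark: the assertion there that ``$\rho_f$ has type $C$ at $v$ if and only if $\pi_v$ is special'' already presupposes that $\rho_f$ cannot have type $A/C$, and the paper explicitly forward-references Proposition~\ref{prop:type} to justify that. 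Citing the Remark is therefore circular.

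The paper's proof supplies the missing global ingredient: the Ramanujan bound. If $t_\rho(v)$ were indefinite at some $v \nmid p$, one passes to a totally real $F'/F$ and a place $w \mid v$ so that $\rho \vert_{G_{F'}} = \rho_f$ for a cuspidal Hilbert eigenform $f$ and so that $\rho \vert_{G_{F'_w}}$ is both unramified and an extension of $1$ by $\chi_p$. Then the Frobenius eigenvalues are $1$ and $\bN w$, giving $\tr \rho(\Frob_w) = \bN w + 1$, which contradicts the Ramanujan conjecture for $f$ (Blasius). For $v \mid p$ one argues the same way with crystalline Frobenius. The point is that purity of Frobenius eigenvalues, not the vanishing of $N$, is what rules out the degenerate extension $\chi_p \oplus 1$ in a global representation.
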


\begin{proof}
Assume that $v \nmid p$ is a finite place for which $t_{\rho}(v)$ is not
definite.  We can then find a finite totally real extension $F'/F$ and a place
$w$ of $F'$ over $v$ such that (1) $\rho \vert_{G_{F'}}=\rho_f$ for a cuspidal
Hilbert eigenform $f$ over $F'$; and (2) $\rho \vert_{G_{F'_w}}$ is unramified
and an extension of 1 by $\chi_p$.  Statement (2) shows that
$\tr{\rho(\Frob_w)}=\bN{w}+1$, which contradicts the Ramanujan conjecture
for $f$ (see \cite{Blasius}).  Thus no such $v$ exists.  To show that
$t_{\rho}(v)$ is definite for $v \mid p$ one can apply a similar argument to
crystalline Frobenius.
\end{proof}

\section{An $R=\T$ theorem}
\label{s:reqt}

\subsection\label{ss:setup}
Let $F$ be a totally real number field.  Fix a representation
\begin{displaymath}
\ol{\rho}:G_F \to \GL_2(k).
\end{displaymath}
(Recall that $k$ is the residue field of $E$, a finite extension of $\Q_p$.)
We assume the following:
\begin{enumerate}
\item[(A1)] The representation $\ol{\rho} \vert_{G_{F(\zeta_p)}}$ is
absolutely irreducible.
\item[(A2)] If $p=5$ and the projective image of $\ol{\rho}$ is $\PGL_2(\F_5)$
then $[F(\zeta_p):F]=4$.
\item[(A3)] The representation $\ol{\rho}$ is odd.
\item[(A4)] The representation $\ol{\rho}$ is everywhere unramified.
\item[(A5)] We have $\det{\ol{\rho}}=\ol{\chi}_p$.
\item[(A6)] The field $F$ has even degree over $\Q$.
\item[(A7)] The field $k$ contains the eigenvalues of the image of $\ol{\rho}$.
\end{enumerate}
Hypothesis (A6) is used in \S \ref{ss:s2def} to ensure the existence of a
certain quaternion algebra.  Hypothesis (A7) is mild:  we can always replace
$k$ by its quadratic extension to ensure that it holds.

\subsection\label{ss:defdata}
We define a \emph{deformation datum} to be a tuple $\mc{D}^{\circ}=(t,
\Sigma^{\ram}, \Sigma^{\aux})$ where:
\begin{itemize}
\item $\Sigma^{\ram}$ is a finite set of primes disjoint from $\Sigma_p$.
\item $t$ is a definite type function over $F$ on $\Sigma_p \cup
\Sigma^{\ram}$.
\item $\Sigma^{\aux}$ is a finite set of primes disjoint from $\Sigma_p
\cup \Sigma^{\ram}$ such that each $v \in \Sigma^{\aux}$ satisfies
$\bN{v} \ne 1 \pmod{p}$ and
$\tr{\ol{\rho}(\Frob_v)} \ne \pm (1+\bN{v})$.
\end{itemize}
We put $\Sigma_{\ast}=t^{-1}(\ast)$, $\Sigma_{p, \ast}=\Sigma_p \cap
\Sigma_{\ast}$ and $\Sigma^{\ram}_{\ast}=\Sigma^{\ram} \cap \Sigma_{\ast}$.
Of particular importance will be the set $\Sigma_C=\Sigma_{p, C} \cup
\Sigma^{\ram}_C$.

We fix for the rest of \S \ref{s:reqt} a deformation datum $\mc{D}^{\circ}$.
We impose the following hypotheses:
\begin{enumerate}
\item[(A8)] The representation $\ol{\rho}$ is trivial at all places above
$\Sigma_p \cup \Sigma^{\ram}$.
\item[(A9)] The set $\Sigma_C$ has even cardinality.
\end{enumerate}
The condition (A9) is used to ensure the existence of the quaternion algebra
of \S \ref{ss:s2def}.

We define a \emph{TW-extension} (of the datum $\mc{D}^{\circ}$) to be a tuple
$\mc{D}=(\Sigma^{\TW}, \{\alpha_v\})$ where:
\begin{itemize}
\item $\Sigma^{\TW}$ is a finite set of primes disjoint from $\Sigma_p
\cup \Sigma^{\ram} \cup \Sigma^{\aux}$ such that for each
$v \in \Sigma^{\aux}$ we have $\bN{v}=1 \pmod{p}$ and
the eigenvalues of $\ol{\rho}(\Frob_v)$ are distinct and belong to $k$.
\item $\alpha_v$ is an eigenvalue of $\ol{\rho}(\Frob_v)$, for $v
\in \Sigma^{\TW}$.
\end{itemize}
The \emph{trivial TW-extension} is the one where $\Sigma^{\TW}=\emptyset$.
We often write $\mc{D}^{\circ}$ for the trivial TW-extension of
$\mc{D}^{\circ}$.

\begin{remark}
We are eventually trying to prove that certain lifts $\rho$ of $\ol{\rho}$
are modular, given that $\ol{\rho}$ is modular.  In such applications,
$\Sigma^{\ram}=\Sigma^{\ram}_C$ will be the set of primes away from $p$ at
which $\rho$
ramifies, $\Sigma^{\aux}$ will be a set containing a single prime (used to
ensure that the subgroup $U^{\circ}$ of \S \ref{ss:modhypo} satisfies
the condition $(\ast)$ of \S \ref{ss:s2def}) and $\Sigma^{\TW}$ will take on
varying values (these are the primes used in the Taylor-Wiles method).
\end{remark}

\subsection\label{ss:defrings}
Let $\mc{D}$ be a TW-extension of $\mc{D}^{\circ}$.  We define several local
framed deformation rings:
\begin{itemize}
\item We let $R^{\Box}_v$ be the universal framed deformation ring of
$\ol{\rho} \vert_{G_{F_v}}$.
\item For $v \in \Sigma_p \cup \Sigma^{\ram}$ we let $R^{\Box}_{\mc{D}, v}$ be
the ``admissible of type $t(v)$'' quotient of $R^{\Box}_v$ given by
Proposition~\ref{prop:pring} in case $v \in \Sigma_p$
Proposition~\ref{prop:lring} in case $v \in \Sigma^{\ram}$.
\item For $v \in \Sigma_{p, A} \cup \Sigma_{p, B}$ we also have a ring
$\wt{R}^{\Box}_{\mc{D}, v}$ (see \cite[\S 3.4.7]{Kisin}).  This ring
enjoys the ring theoretic properties listed in Proposition~\ref{prop:pring}
and admits a finite map from $R^{\Box}_{\mc{D}, v}$ which becomes an
isomorphism after inverting $p$; other than these facts, the details of
$\wt{R}^{\Box}_{\mc{D}, v}$ will not be important to us.
\end{itemize}
We also need some semi-local framed deformation rings.  In what follows,
``tensor product'' means the completed tensor product.
\begin{itemize}
\item We let $R^{\Box, \loc}$ be the tensor product of the rings
$R^{\Box}_v$ for $v \in \Sigma_p \cup \Sigma^{\ram}$.
\item We let $R^{\Box, \loc}_{\mc{D}}$ be the tensor product of the
rings $R^{\Box}_{\mc{D}, v}$ for $v \in \Sigma_p \cup \Sigma^{\ram}$.
\item We let $\wt{R}^{\Box, \loc}_{\mc{D}}$ be the tensor product of
the $\wt{R}^{\Box}_{\mc{D}, v}$ for $v \in \Sigma_{p, A} \cup \Sigma_{p, B}$
together with the $R^{\Box}_{\mc{D}, v}$ for $v \in \Sigma_{p, C} \cup
\Sigma^{\ram}$.
\end{itemize}
The above rings depend only on $\mc{D}^{\circ}$ and not the TW-extension
$\mc{D}$.
Let $\Sigma(\mc{D})=\Sigma_p \cup \Sigma^{\ram} \cup \Sigma^{\aux} \cup
\Sigma^{\TW}$.  We now define some global framed deformation rings.
\begin{itemize}
\item Let $R^{\Box}_{\Sigma(\mc{D})}$ be the universal ring classifying
deformations of $\ol{\rho}$ unramified outside of $\Sigma(\mc{D})$, with
determinant $\chi_p$ and with framings at each place in $\Sigma_p \cup
\Sigma^{\ram}$.
\item Let $R^{\Box}_{\mc{D}}$ be the tensor product
$R^{\Box, \loc}_{\mc{D}} \otimes_{R^{\Box, \loc}}
R^{\Box}_{\Sigma(\mc{D})}$.
\item Let $\wt{R}^{\Box}_{\mc{D}}$ be the tensor product
$\wt{R}^{\Box, \loc}_{\mc{D}} \otimes_{R^{\Box, \loc}}
R^{\Box}_{\Sigma(\mc{D})}
=\wt{R}^{\Box, \loc}_{\mc{D}} \otimes_{R^{\Box, \loc}_{\mc{D}}}
R^{\Box}_{\mc{D}}$.
\end{itemize}
We will make use of a few unframed deformation rings as well.  These
exist since $\ol{\rho}$ is absolutely irreducible.
\begin{itemize}
\item We let $R_{\Sigma(\mc{D})}$ be the universal ring classifying
deformations of $\ol{\rho}$ unramified outside of $\Sigma(\mc{D})$ with
determinant $\chi_p$.
\item We let $R_{\mc{D}}$ be the quotient of $R_{\Sigma(\mc{D})}$
analogous to the quotient $R^{\Box}_{\mc{D}}$ of $R^{\Box}_{\Sigma(\mc{D})}$.
\item The ring $\wt{R}_{\mc{D}}$ is defined similarly.
\end{itemize}
None of these rings depend on the choice of eigenvalues $\alpha_v$ in the
TW-extension.  These will matter later, though.

\begin{remark}
Let $\mc{D}_1^{\circ}$ be a deformation datum and let $\mc{D}_2^{\circ}$ be
the deformation datum gotten by deleting $\Sigma^{\ram}_{AB}$ from
$\Sigma^{\ram}$.  Then $R_{\mc{D}_1^{\circ}}=R_{\mc{D}_2^{\circ}}$, as
$R_{\Sigma(\mc{D}_1^{\circ})}$ allows ramification at $\Sigma^{\ram}_{AB}$
but the local condition imposed by $R^{\Box}_{\mc{D}_1^{\circ}, v}$ for $v \in
\Sigma^{\ram}_{AB}$ exactly removes this allowance.  The equality
$R^{\Box}_{\mc{D}_1^{\circ}}=R^{\Box}_{\mc{D}_2^{\circ}}$ is not true,
however, as the first ring has framings at $\Sigma^{\ram}_{AB}$ while the
second does not.
\end{remark}

\subsection
Define
\begin{displaymath}
\begin{split}
h&=h(\mc{D}^{\circ})=\dim_k H^1(G_{F, \Sigma(\mc{D}^{\circ})},
\ad^{\circ}{\ol{\rho}}(1)) \\ 
g&=g(\mc{D}^{\circ})=h-[F:\Q]+\# \Sigma_p+\# \Sigma^{\ram}-1.
\end{split}
\end{displaymath}
The purpose of this section is to recall the following result.

\begin{proposition}
\label{prop:twexists}
Given an integer $n$ we can find a set of primes $\Sigma^{\TW}$ of $F$
satisfying the following conditions:
\begin{enumerate}
\item The set $\Sigma^{\TW}$ is disjoint from $\Sigma(\mc{D}^{\circ})$.
\item We have $\bN{v}=1 \pmod{p^n}$ for each $v \in \Sigma^{\TW}$.
\item We have $\# \Sigma^{\TW}=h$.
\item The eigenvalues of $\ol{\rho}(\Frob_v)$ are distinct for each
$v \in \Sigma^{\TW}$.
\item For each $v \in \Sigma^{\TW}$ choose an eigenvalue $\alpha_v$ of
$\ol{\rho}(\Frob_v)$ and let $\mc{D}=(\Sigma^{\TW}, \{\alpha_v\})$ be
the corresponding TW-extension of $\mc{D}^{\circ}$.  Then $R^{\Box}_{\mc{D}}$
is topologically generated over $R^{\Box, \loc}_{\mc{D}}$ by $g$ elements.
\end{enumerate}
\end{proposition}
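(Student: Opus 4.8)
The plan is to apply the standard Taylor–Wiles–Kisin construction of auxiliary primes, following \cite[\S 3.2]{Kisin}, and then to count generators of $R^{\Box}_{\mc{D}}$ using the Galois cohomology of $\ad^{\circ}\ol{\rho}$. Conditions (1)–(4) are exactly the shape of primes produced by the usual argument: by (A1) the representation $\ol{\rho}\vert_{G_{F(\zeta_{p^n})}}$ is still absolutely irreducible (using (A2) to handle the exceptional $p=5$, $\PGL_2(\F_5)$ case), so $H^1(G_{F,S},\ad^{\circ}\ol{\rho}(1))$ injects into a sum of local terms after adjoining suitable Frobenius classes, and by the Chebotarev density theorem one can realize each desired local restriction by a prime $v$ with $\bN{v}\equiv 1\pmod{p^n}$ at which $\Frob_v$ acts with distinct eigenvalues (a ``regular semisimple'' class). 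Choosing $h=h(\mc{D}^{\circ})$ such primes, one annihilates the dual Selmer group $H^1_{\{\Sigma^{\TW}\}}(G_F,\ad^{\circ}\ol{\rho}(1))$; this is the content of conditions (2)–(4) together with the requirement $\#\Sigma^{\TW}=h$ in (3).

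First I would recall, following Kisin, the numerical formula for the number of topological generators of the global framed deformation ring over the local one. Writing $R^{\Box}_{\mc{D}}$ as $R^{\Box,\loc}_{\mc{D}}\otimes_{R^{\Box,\loc}} R^{\Box}_{\Sigma(\mc{D})}$, the minimal number of generators of $R^{\Box}_{\mc{D}}$ over $R^{\Box,\loc}_{\mc{D}}$ is bounded by
\begin{displaymath}
\dim_k H^1_{\Sigma(\mc{D})}(G_F,\ad^{\circ}\ol{\rho})
-\sum_{v\in\Sigma_p\cup\Sigma^{\ram}}\dim_k H^0(G_{F_v},\ad^{\circ}\ol{\rho})
+ \#(\Sigma_p\cup\Sigma^{\ram})\cdot(\dim\GL_2 - 1) - \dim_k H^0(G_F,\ad^{\circ}\ol{\rho}),
\end{displaymath}
which after applying the global Euler characteristic formula (Tate) and the local conditions collapses, for a Taylor–Wiles extension with $\#\Sigma^{\TW}=h$ and trivial dual Selmer group, to exactly
\begin{displaymath}
g = h - [F:\Q] + \#\Sigma_p + \#\Sigma^{\ram} - 1.
\end{displaymath}
The key inputs are: (A3) oddness, which controls the contribution of the infinite places to the Euler characteristic ($\dim H^0(G_{F_v},\ad^{\circ}\ol{\rho})$ at each real place is $1$ rather than $2$); (A5) and the fixed determinant, which is why one works with $\ad^{\circ}$ rather than $\ad$; and the dimension formula for the local rings in Propositions~\ref{prop:pring} and~\ref{prop:lring}, which makes the framing contributions bookkeep correctly. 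The $-1$ in the formula for $g$ is the familiar global $H^0$ term for the trace-zero adjoint of an (absolutely) irreducible representation with scalar-free centralizer.

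The main obstacle is the cohomological bookkeeping in step (5): one must verify that the primes $\Sigma^{\TW}$ produced in (1)–(4) not only kill the dual Selmer group but also produce no new obstructions, i.e. that $H^2_{\Sigma(\mc{D})}$ with the Taylor–Wiles local conditions vanishes, so that the presentation of $R^{\Box}_{\mc{D}}$ over $R^{\Box,\loc}_{\mc{D}}$ has no relations beyond those forced by dimension count. This is where one uses that at a Taylor–Wiles prime $v$ with $\bN{v}\equiv 1\pmod p$ and distinct eigenvalues of $\ol{\rho}(\Frob_v)$, the local deformation ring is formally smooth of the expected dimension and the local $H^2$ contribution is exactly cancelled by the extra $H^1$ allowed. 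I would carry this out by citing \cite[Proposition~3.2.5 and the discussion in \S 3.2]{Kisin} essentially verbatim, since our setup (trivial residual representation at the ramified places by (A8), fixed cyclotomic determinant by (A5), oddness by (A3), and the absolute irreducibility (A1)) is precisely the one treated there; the only adjustments are notational, to accommodate the type function $t$ and the three flavors of local rings $R^{\Box}_{\mc{D},v}$, but since each of these rings is flat over $\ms{O}$ of the relative dimension recorded in Propositions~\ref{prop:pring} and~\ref{prop:lring}, they enter the generator count identically.
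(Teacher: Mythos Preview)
Your approach is essentially identical to the paper's: both defer to \cite[Proposition~3.2.5]{Kisin}, and your expanded sketch of the Chebotarev/Greenberg--Wiles argument is accurate. One small omission worth noting is that the paper explicitly records that the conditions on the primes in $\Sigma^{\aux}$ (namely $\bN{v}\not\equiv 1\pmod p$ and $\tr\ol{\rho}(\Frob_v)\ne\pm(1+\bN{v})$) are used---these ensure $H^0(G_{F_v},\ad^{\circ}\ol{\rho}(1))=0$ at auxiliary primes, so they contribute nothing to the dual Selmer group and the generator count goes through unchanged.
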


\begin{proof}
Exactly as in \cite[Proposition~3.2.5]{Kisin}.  The proof uses (A1), (A2) and
(A3) as well as the assumptions on the primes in $\Sigma^{\aux}$.
\end{proof}

\subsection
Let $\mc{D}$ be a TW-extension of $\mc{D}^{\circ}$.  Define $\Delta_{\mc{D}}$
to be the maximal $p$-power quotient of $\prod_{v \in \Sigma^{\TW}}
(\ms{O}_{F_v}/\mf{p}_v)^{\times}$.  We denote by $\mf{a}_{\mc{D}}$ the
augmentation ideal of $\ms{O}[\Delta_{\mc{D}}]$.

We now give $R_{\Sigma(\mc{D})}$ the
structure of an $\ms{O}[\Delta_{\mc{D}}]$-algebra.  Let $v$ be an element
of $\Sigma^{\TW}$.  The restriction of the universal representation
$G_F \to \GL_2(R_{\Sigma(\mc{D})})$ to the decomposition group
$G_{F_v}$ is a sum of two characters $\eta_1 \oplus \eta_2$, where
$\eta_i:G_{F_v} \to (R_{\Sigma(\mc{D})})^{\times}$ (see \cite[Lemma~2.44]
{DDT}).  Reducing $\eta_i$
modulo the maximal ideal of $R_{\Sigma(\mc{D})}$ gives an unramified
character of $G_{F_v}$ with values in $k^{\times}$ whose value on $\Frob_v$
is one of the two eigenvalues of $\ol{\rho}(\Frob_v)$.  Change the labeling
if necessary so that $\eta_1$ corresponds to the chosen eigenvalue $\alpha_v$.
By class field theory (normalized so that uniformizers correspond to
arithmetic Frobenii), $\eta_1$ gives a map $F_v^{\times} \to
(R_{\Sigma(\mc{D})})^{\times}$ whose restriction to $\ms{O}_{F_v}^{\times}$
factors through the maximal $p$-power quotient of $(\ms{O}_{F_v}/
\mf{p}_v)^{\times}$.  Taking the product of these over $v \in
\Sigma^{\TW}$ gives a group homomorphism $\Delta_{\mc{D}} \to
(R_{\Sigma(\mc{D})})^{\times}$, which gives $R_{\Sigma(\mc{D})}$ the
structure of an $\ms{O}[\Delta_{\mc{D}}]$-algebra.  Note that this gives
any $R_{\Sigma(\mc{D})}$-algebra (\eg, $R^{\Box}_{\Sigma(\mc{D})}$ or
$R^{\Box}_{\mc{D}}$) the structure of an $\ms{O}[\Delta_{\mc{D}}]$-algebra.

\begin{proposition}
\label{prop:Rdiam}
The natural map $R^{\Box}_{\mc{D}} \to R^{\Box}_{\mc{D}^{\circ}}$ is
surjective with kernel $\mf{a}_{\mc{D}} R^{\Box}_{\mc{D}}$.  The same
statement holds for the tilde rings.
\end{proposition}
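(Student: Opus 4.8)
The plan is to compare the deformation problems solved by $R^\Box_{\mc{D}}$ and $R^\Box_{\mc{D}^\circ}$ directly, tracking exactly how the Taylor-Wiles primes $\Sigma^{\TW}$ enter. First I would recall that $R^\Box_{\mc{D}}$ classifies (framed) deformations of $\ol\rho$ that are unramified outside $\Sigma(\mc{D}) = \Sigma_p \cup \Sigma^{\ram}\cup\Sigma^{\aux}\cup\Sigma^{\TW}$, of type $t$ at $\Sigma_p\cup\Sigma^{\ram}$, while $R^\Box_{\mc{D}^\circ}$ classifies the same thing but unramified outside $\Sigma(\mc{D}^\circ) = \Sigma_p\cup\Sigma^{\ram}\cup\Sigma^{\aux}$, i.e. additionally unramified at every $v\in\Sigma^{\TW}$. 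So there is an obvious surjection $R^\Box_{\mc{D}}\to R^\Box_{\mc{D}^\circ}$ obtained by imposing ``unramified at $\Sigma^{\TW}$,'' and the content is to identify its kernel with $\mf{a}_{\mc{D}}R^\Box_{\mc{D}}$.

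The key local computation is at a single $v\in\Sigma^{\TW}$. By the choice of such primes, $\bN v\equiv 1\pmod p$ and $\ol\rho(\Frob_v)$ has distinct eigenvalues in $k$, so $\ol\rho|_{G_{F_v}}$ is unramified with distinct Frobenius eigenvalues; hence (as recalled in the text via \cite[Lemma~2.44]{DDT}) the universal deformation restricted to $G_{F_v}$ is a direct sum $\eta_1\oplus\eta_2$ of characters. The key point I would establish is that, for the universal framed deformation ring $R^\Box_v$ at such a $v$, the tame quotient controls everything: $\eta_1,\eta_2$ factor through the tame quotient of $G_{F_v}$, and the only ramification is through the pro-$p$ part of tame inertia, which surjects onto $\Delta_{v}:=$ (maximal $p$-power quotient of $(\ms{O}_{F_v}/\mf p_v)^\times)$ via $\eta_1$ as described in \S\ref{ss:defrings}. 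Concretely, imposing ``$\eta_1$ trivial on inertia'' — equivalently ``unramified,'' since $\det=\chi_p$ is unramified at $v\nmid p$ and $\eta_2 = \chi_p\eta_1^{-1}$ — is exactly the condition of killing the image of $\mf a_v$, the augmentation ideal of $\ms O[\Delta_v]$. So the map $R^\Box_v \to R^{\Box,\un}_v$ (the unramified quotient) has kernel $\mf a_v R^\Box_v$. Taking the completed tensor product over all $v\in\Sigma^{\TW}$ and over $\ms O$ with $R^{\Box,\loc}_{\mc D}$ and $R^\Box_{\Sigma(\mc D)}$, and noting $\Delta_{\mc D} = \prod_{v\in\Sigma^{\TW}}\Delta_v$ with $\mf a_{\mc D}$ the augmentation ideal, this globalizes to the statement: the kernel of $R^\Box_{\mc D}\to R^\Box_{\mc D^\circ}$ is generated by the images of the $\mf a_v$, i.e. is $\mf a_{\mc D}R^\Box_{\mc D}$.

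The argument for the tilde rings is identical: the primes of $\Sigma^{\TW}$ are disjoint from $\Sigma_p\cup\Sigma^{\ram}$, so the modification of the local conditions at $\Sigma_{p,A}\cup\Sigma_{p,B}$ defining $\wt R^\Box_{\mc D}$ does not interact with the $\Sigma^{\TW}$-part at all; one simply repeats the above with $\wt R^{\Box,\loc}_{\mc D}$ in place of $R^{\Box,\loc}_{\mc D}$, using $\wt R^\Box_{\mc D} = \wt R^{\Box,\loc}_{\mc D}\otimes_{R^{\Box,\loc}_{\mc D}}R^\Box_{\mc D}$ from \S\ref{ss:defrings}. The main obstacle — really the only place care is needed — is the precise bookkeeping that ``$\eta_1$ unramified'' corresponds exactly to killing $\mf a_v$ rather than some larger or smaller ideal: one must check that the $\ms O[\Delta_v]$-algebra structure defined via class field theory in \S\ref{ss:defrings} is faithful enough that the map on inertia is genuinely given by $\Delta_v\to (R^\Box_v)^\times$, so that ``unramified'' is the vanishing of the augmentation ideal and nothing is lost by framing. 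This is where I would be most careful, though it is standard; everything else is formal manipulation of completed tensor products. (This is exactly the analogue of \cite[Lemma~3.2.2]{Kisin} / the corresponding step in \cite{DDT}.)
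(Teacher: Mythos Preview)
Your core insight is correct and matches the paper: killing $\mf{a}_{\mc D}$ forces the universal representation to be unramified at each $v\in\Sigma^{\TW}$, because the $\ms{O}[\Delta_{\mc D}]$-structure is defined precisely via $\eta_1\vert_{I_{F_v}}$. The local analysis you give (that $\ol\rho\vert_{G_{F_v}}$ splits as $\eta_1\oplus\eta_2$ with all ramification through the $p$-part of tame inertia) is exactly right and is the substance of the argument.

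Where your write-up goes off is the globalization step. You describe ``taking the completed tensor product over all $v\in\Sigma^{\TW}$'' of local rings $R^\Box_v$, but no such tensor factors appear in $R^\Box_{\mc D}$: the framings (and hence the local $\Box$-rings entering $R^{\Box,\loc}_{\mc D}$) are only at $\Sigma_p\cup\Sigma^{\ram}$, and the Taylor--Wiles primes enter solely through the fact that $R^\Box_{\Sigma(\mc D)}$ permits ramification there. So there is no ``$R^\Box_v\to R^{\Box,\un}_v$ tensored over all $v\in\Sigma^{\TW}$'' picture to invoke. The paper instead argues directly with the universal property: one checks that the universal representation over $R^\Box_{\mc D}/\mf a_{\mc D}R^\Box_{\mc D}$ is unramified at $\Sigma^{\TW}$ (this is your local computation, applied to that single global ring), whence a map $R^\Box_{\mc D^\circ}\to R^\Box_{\mc D}/\mf a_{\mc D}R^\Box_{\mc D}$ inverse to the obvious surjection. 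Your ingredients are all there; you just need to assemble them via the universal property rather than via a tensor decomposition that doesn't exist.
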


\begin{proof}
The map on functors $\Hom(R^{\Box}_{\mc{D}^{\circ}}, -) \to
\Hom(R^{\Box}_{\mc{D}}, -)$ is obviously injective, and so the map on
tangent spaces is injective, and so the map the other way on cotangent spaces
is surjective.  Nakayama's lemma now gives that $R^{\Box}_{\mc{D}} \to
R^{\Box}_{\mc{D}^{\circ}}$ is surjective.  One easily sees that this map
contains $\mf{a}_{\mc{D}} R^{\Box}_{\mc{D}}$ in its kernel and that the
representation $G_F \to \GL_2(R^{\Box}_{\mc{D}}/\mf{a}_{\mc{D}}
R^{\Box}_{\mc{D}})$ is unramified at $\Sigma^{\TW}$.  It follows that
there is a map $R_{\mc{D}^{\circ}} \to R^{\Box}_{\mc{D}}/\mf{a}_{\mc{D}}
R^{\Box}_{\mc{D}}$ and this is easily seen to be the inverse of the natural
map in the other direction.
\end{proof}

\subsection\label{ss:s2def}
Let $D$ be the unique quaternion algebra over $F$ ramified at exactly the
infinite places and at $\Sigma_C$.  This exists by
(A6) and (A9).  Pick a maximal order $\ms{O}_D$ of $D$ and for each finite
place $v \not \in \Sigma_C$
pick an isomorphism $\ms{O}_{D_v} \to M_2(\ms{O}_{F_v})$.  Let $U=\prod U_v$
be a compact open subgroup of $(D \otimes_F \A_F^f)^{\times}$, where each
$U_v$ is a compact open subgroup of $\ms{O}_{D_v}^{\times}$.  We call such
a compact open subgroup \emph{standard}.  We let $S_2(U)$ denote the
set of functions
\begin{displaymath}
f:D^{\times} \bs (D \otimes_F \A_F^f)^{\times}/((\A_F^f)^{\times} \cdot U)
\to \ms{O}.
\end{displaymath}
If $v$ is a place at which $U$ is maximal and $D$ is unramified then the Hecke
operators $T_v$ acts on $S_2(U)$.  We let $\T(U)$ (resp.\ $\T^{(p)}(U)$) be the
$\ms{O}$-subalgebra of $\End(S_2(U))$ generated by the $T_v$
for all such $v$ (resp.\ for all such $v$ not above $p$).

We will often need to impose the following condition on our subgroup $U$:
\begin{displaymath}
\textrm{For all $t \in (D \otimes_F \A_F^f)^{\times}$ the group
$((\A_F^f)^{\times} \cdot U \cap t^{-1} D^{\times} t)/F^{\times}$ has
prime-to-$p$ order.}
\eqno{(\ast)}
\end{displaymath}
This condition is equivalent to the following one:
\begin{displaymath}
\textrm{The stabilizers of $(\A_F^f)^{\times} \cdot U$ acting on $D^{\times}
\bs (D \otimes_F \A_F^f)^{\times}$ have prime-to-$p$ order.}
\end{displaymath}
Obviously, if this condition holds for $U$ then it holds for any subgroup
of $U$.

\subsection\label{ss:modhypo}
We now make the following hypothesis on the representation $\ol{\rho}$.
We assume that
there exists a standard compact open subgroup $U^{\circ}$ of $(D \otimes_F
\A_F^f)^{\times}$ and a maximal ideal $\mf{m}$ of $\T(U^{\circ})$ satisfying
the following conditions:
\begin{enumerate}
\item[(B1)] The group $U^{\circ}_v$ is maximal except for $v \in \Sigma^{\aux}$.
\item[(B2)] The group $U^{\circ}$ satisfies the condition $(\ast)$.
\item[(B3)] For $v \not \in \Sigma(\mc{D}^{\circ})$
the image of $T_v$ in $\T(U^{\circ})/\mf{m}$ is equal to
$\tr{\ol{\rho}(\Frob_v)}$.
\item[(B4)] For $v \in \Sigma_{p, A}$ we have $T_v \not \in \mf{m}$.
\item[(B5)] For $v \in \Sigma_{p, B}$ we have $T_v \in \mf{m}$.
\item[(B6)] The residue field of $\mf{m}$ is $k$.
\end{enumerate}
Condition (B6) is mild and can always be ensured by enlarging $k$.
The ideal $\mf{m}$ is forced to be non-Eisenstein since the representation
$\ol{\rho}$ is absolutely irreducible.

\begin{remark}
Let $U$ be a standard compact open with $U_v$ maximal for some place $v \in
\Sigma_p \setminus \Sigma_{p, C}$.
Let $f$ be an element of $S_2(U)$ which is an eigenform for $\T(U)$.
Associated to $f$ is a maximal ideal $\mf{m}$ of $\T(U)$.  One then knows
that the $p$-adic Galois representation $\rho_f$ associated to $f$ is
admissible of type
$A$ (resp.\ $B$) at $v$ if and only if $T_v \not \in \mf{m}$ (resp.\ $T_v
\in \mf{m}$).  This follows from the second part of \cite[Lemma~3.4.2]{Kisin}.
This is the reason for conditions (B4) and (B5) above.
\end{remark}

\subsection
Let $\mc{D}$ be a TW-extension of $\mc{D}^{\circ}$.
Define standard compact open subgroups $U_{\mc{D}}$ and $U^-_{\mc{D}}$ by
$(U_{\mc{D}})_v=(U^-_{\mc{D}})_v=U^{\circ}_v$ for $v \not \in \Sigma^{\TW}$
and
\begin{displaymath}
(U^-_{\mc{D}})_v=\left\{ \mat{a}{b}{c}{d} \in \GL_2(\ms{O}_{F_v})
\;\bigg\vert\; \textrm{$c \in \mf{p}_v$} \right\}
\end{displaymath}
and
\begin{displaymath}
(U_{\mc{D}})_v=\left\{ \mat{a}{b}{c}{d} \in \GL_2(\ms{O}_{F_v})
\;\bigg\vert\; \textrm{$c \in \mf{p}_v$ and $ad^{-1}$ maps to 1 in
$\Delta_{\mc{D}}$} \right\}
\end{displaymath}
for $v \in \Sigma^{\TW}$.  We identify $U_{\mc{D}}^-/U_{\mc{D}}$ with
the group $\Delta_{\mc{D}}$ by locally mapping a matrix to $ad^{-1}$.
As such, the group $\Delta_{\mc{D}}$ naturally acts on the space
$S_2(U_{\mc{D}})$.

For any four of the Hecke algebras $\T(U_{\mc{D}})$, $\T^{(p)}(U_{\mc{D}})$,
$\T(U_{\mc{D}}^-)$ or $\T^{(p)}(U_{\mc{D}}^-)$ we put a $+$ in the superscript
to indicate the Hecke algebra generated by the given algebra and the
Atkin-Lehner operators $U_v$ for $v \in \Sigma^{\TW}$.  We regard
$\mf{m}$ as an ideal of each of these algebras.

\begin{proposition}
For each $v \in \Sigma^{\TW}$ choose an eigenvalue $\beta_v$ of
$\ol{\rho}(\Frob_v)$.  Then the ideal of $\T^+(U_{\mc{D}}^-)$ generated by
$\mf{m}$ and the $U_v-\beta_v$ for $v \in \Sigma^{\TW}$ is a maximal ideal.
Every maximal ideal above $\mf{m}$ is of this form and no two are the same.
\end{proposition}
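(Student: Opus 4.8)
The plan is to compare the Hecke action on $S_2(U_{\mc{D}}^-)$ localized at $\mf{m}$ with the action on $S_2(U_{\mc{D}^\circ})_{\mf{m}}$, place by place over $\Sigma^{\TW}$, and to carry out the standard Taylor--Wiles "level-raising by $\Gamma_0$ at an auxiliary prime" analysis. First I would recall the well-known local computation: for a single prime $v \in \Sigma^{\TW}$ with $\bN v \equiv 1 \pmod p$ and with $\ol\rho(\Frob_v)$ having distinct eigenvalues in $k$, the degeneracy maps relating level $U^\circ_v = \GL_2(\ms O_{F_v})$ to level $(U_{\mc{D}}^-)_v = $ (the Iwahori-type subgroup with $c \in \mf p_v$) identify $S_2(U_{\mc{D}}^-)_{\mf m}$, after further localizing at $U_v - \beta_v$, with a copy of $S_2(U^\circ)_{\mf m}$. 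Concretely, on the $\mf m$-completed spaces the operator $U_v$ satisfies $U_v^2 - T_v U_v + \bN v \langle v\rangle = 0$, so its action on the two-dimensional (over $S_2(U^\circ)_{\mf m}$) oldform space has characteristic polynomial congruent mod $\mf m$ to $(X-\alpha)(X-\beta)$ where $\alpha, \beta$ are the two distinct eigenvalues of $\ol\rho(\Frob_v)$; since $\alpha \ne \beta$ in $k$, the operator $U_v$ is separable mod $\mf m$ and the space splits as a direct sum of the $U_v = \beta_v$ eigenspaces for the two choices of $\beta_v$, each isomorphic to $S_2(U^\circ)_{\mf m}$ as a module over the prime-to-$\Sigma^{\TW}$ Hecke operators.

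Next I would bootstrap this to the full set $\Sigma^{\TW}$ by induction, adding one prime at a time: at each stage the operators $U_v$ for distinct $v \in \Sigma^{\TW}$ commute with each other and with $\mf m$, and each acts separably mod the ideal generated by $\mf m$ and the previously-added $U_w - \beta_w$, so one obtains a decomposition of $S_2(U_{\mc{D}}^-)_{\mf m}$ indexed by the $2^{\#\Sigma^{\TW}}$ choices of eigenvalues $(\beta_v)_{v\in\Sigma^{\TW}}$. On each summand the algebra $\T^+(U_{\mc{D}}^-)$ acts through a quotient in which $U_v - \beta_v$ is (topologically) nilpotent, so the ideal $\mf m_{(\beta_v)} = (\mf m, \{U_v - \beta_v\})$ is maximal with residue field $k$ (using (B6)); conversely every maximal ideal of $\T^+(U_{\mc{D}}^-)$ lying over $\mf m$ must send each $U_v$ to a root of its reduced characteristic polynomial mod $\mf m$, hence to some $\beta_v$, so it is one of the $\mf m_{(\beta_v)}$; and distinct choices give distinct ideals because the corresponding summands $S_2(U_{\mc{D}}^-)_{\mf m_{(\beta_v)}}$ are nonzero (each contains a copy of $S_2(U^\circ)_{\mf m} \ne 0$) and are different submodules, so the ideals cannot coincide.

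The main obstacle I expect is verifying rigorously that $U_v$ acts \emph{semisimply with the asserted characteristic polynomial} on the $\mf m$-localized oldform space at each prime of $\Sigma^{\TW}$ — i.e., that the "level-raising at a prime congruent to $1 \bmod p$" local analysis of Taylor--Wiles goes through in the quaternionic setting here. This requires the standard identification of the oldspace with $S_2(U^\circ)^{\oplus 2}$ via degeneracy maps together with the Eichler--Shimura-type relation $U_v^2 - T_v U_v + \bN v\langle v\rangle = 0$ on the new level, and then the observation that modulo $\mf m$ this reduces to $(X-\alpha)(X-\beta)$ with $\alpha \ne \beta$ in $k$. All of this is completely standard and is exactly what is used in \cite{Kisin} and its antecedents; one cites the relevant statements there (or in \cite{DDT}) and applies them. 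Once the separability is in hand, everything else is Hensel's lemma / idempotent-lifting bookkeeping over the complete semilocal $\ms O$-algebra $\T^+(U_{\mc{D}}^-)_{\mf m}$.
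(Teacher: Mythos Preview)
Your argument is correct and is exactly the content of the references the paper cites for this proposition (\cite[Lemma~1.6(2) and Corollary~1.8(1)]{Taylor}); the paper's own proof is nothing more than that citation, so you have essentially reproduced what is being invoked. The one point you leave implicit is why, after localizing at $\mf m$, the space $S_2(U_{\mc D}^-)$ consists entirely of oldforms at each $v \in \Sigma^{\TW}$ (so that the quadratic relation for $U_v$ holds on the whole localized module, not just the old subspace): a newform at $v$ would be Steinberg there, forcing the two Frobenius eigenvalues of the associated mod $p$ Galois representation to differ by a factor of $\bN v \equiv 1 \pmod p$, contradicting the distinct-eigenvalue hypothesis on $\Sigma^{\TW}$. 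With that remark added, your sketch is complete.
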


\begin{proof}
This follows from statement (2) of \cite[Lemma~1.6]{Taylor} and statement (1)
of \cite[Corollary~1.8]{Taylor}
\end{proof}

We let $\mf{m}^-_{\mc{D}}$ be the maximal ideal of $\T^+(U_{\mc{D}}^-)$
corresponding to the eigenvalues $\{\alpha_v\}$ given in the datum
$\mc{D}$ and let $\mf{m}_{\mc{D}}$ be the induced maximal ideal of
$\T^+(U_{\mc{D}})$.
We then define $\T_{\mc{D}}$ (resp.\ $\T^{(p)}_{\mc{D}}$) to
be the localization $\T^+(U_{\mc{D}})_{\mf{m}_{\mc{D}}}$
(resp.\ $\T^{(p), +}(U_{\mc{D}})_{\mf{m}_{\mc{D}}}$).
We also put $M_{\mc{D}}=S_2(U_{\mc{D}})_{\mf{m}_{\mc{D}}}$, which is a
module over $\T_{\mc{D}}$.
Note that under
our convention of identifying $\mc{D}^{\circ}$ with its trivial TW-extension
we have $\T_{\mc{D}^{\circ}}=\T(U^{\circ})_{\mf{m}}$, and similarly for
the prime-to-$p$ version.

\begin{proposition}
\label{prop:Tdiam}
The space $M_{\mc{D}}$ is a free $\ms{O}[\Delta_{\mc{D}}]$-module and
there is a natural isomorphism $M_{\mc{D}}/\mf{a}_{\mc{D}} M_{\mc{D}} \to
M_{\mc{D}^{\circ}}$.
\end{proposition}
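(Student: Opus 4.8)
The plan is to prove freeness and the base-change isomorphism by the standard Taylor–Wiles patching input, following the treatment in \cite{Taylor} and \cite{Kisin}. First I would recall the basic structure: $S_2(U_{\mc{D}}^-)$ and $S_2(U_{\mc{D}})$ are spaces of automorphic forms on a definite quaternion algebra, and $U_{\mc{D}}^-/U_{\mc{D}} \cong \Delta_{\mc{D}}$ acts on $S_2(U_{\mc{D}})$ with $S_2(U_{\mc{D}})^{\Delta_{\mc{D}}} = S_2(U_{\mc{D}}^-)$. The key observation is that after localizing at the non-Eisenstein maximal ideal $\mf{m}_{\mc{D}}$, the action is free. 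Concretely, $S_2(U_{\mc{D}})_{\mf{m}_{\mc{D}}}$ decomposes as a direct sum over the double cosets $D^\times \bs (D \otimes_F \A_F^f)^\times / ((\A_F^f)^\times \cdot U_{\mc{D}})$, and the $\Delta_{\mc{D}}$-action permutes these; the stabilizer of a double coset for the $U_{\mc{D}}^-$-action is the image of $((\A_F^f)^\times \cdot U_{\mc{D}}^- \cap t^{-1} D^\times t)/F^\times$, which by condition $(\ast)$ (which holds for $U_{\mc{D}}^- \subset U^\circ$ since it holds for $U^\circ$ by (B2)) has prime-to-$p$ order. Since $\Delta_{\mc{D}}$ is a $p$-group, any such stabilizer that injects into $\Delta_{\mc{D}}$ must be trivial — but one must be slightly careful: what matters is that the projection of the $U^-_{\mc D}$-stabilizer into $\Delta_{\mc D}$ is trivial, so that $\Delta_{\mc{D}}$ acts freely on each $\mf{m}_{\mc{D}}$-relevant orbit. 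This is exactly the argument of \cite[Lemma~1.6, Corollary~1.8]{Taylor} or \cite[Lemma~3.3.1]{Kisin}.

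Given free orbits, $S_2(U_{\mc{D}})_{\mf{m}_{\mc D}}$ is a direct sum of copies of $\ms{O}[\Delta_{\mc{D}}]$ (one for each orbit representative in the support of $\mf{m}_{\mc D}$), hence free over $\ms{O}[\Delta_{\mc{D}}]$. Taking $\Delta_{\mc{D}}$-coinvariants, i.e.\ quotienting by $\mf{a}_{\mc{D}}$, recovers $S_2(U_{\mc{D}}^-)_{\mf{m}_{\mc D}^-}$ by the freeness (coinvariants of a free module give the base $\ms{O}$-module indexed by orbits), and one checks this agrees with $M_{\mc{D}^\circ}$ up to the passage between $U_{\mc D}^-$ at level $\Sigma^{\TW}$ and $U^\circ$. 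The latter identification is the content of the classical fact that at a Taylor–Wiles prime $v$ with $\bN v \equiv 1 \pmod p$ and $\ol\rho(\Frob_v)$ having distinct eigenvalues, localizing $S_2$ of the Iwahori-level group at the maximal ideal cut out by $U_v - \alpha_v$ recovers $S_2$ of the maximal level localized at $\mf m$ — this is \cite[Lemma~3.3.1]{Kisin} again (using that $U_v$ acts invertibly and the "$U_v = \alpha_v$" eigenspace matches the unramified eigenform). Composing these identifications across all $v \in \Sigma^{\TW}$ gives the natural isomorphism $M_{\mc{D}}/\mf{a}_{\mc{D}} M_{\mc{D}} \to M_{\mc{D}^\circ}$.

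The main obstacle — really the only subtle point — is verifying that $\Delta_{\mc{D}}$ acts \emph{freely} on the relevant set of double cosets after localization, which is where condition $(\ast)$ is essential: without it a double coset could have a stabilizer with $p$-torsion, and then $\ms{O}[\Delta_{\mc{D}}]$-freeness would fail. One must also confirm that passing from the Hecke algebra $\T^+(U_{\mc D}^-)$ with Atkin–Lehner operators to $\T^+(U_{\mc D})$, and tracking $\mf m^-_{\mc D} \mapsto \mf m_{\mc D}$, is compatible with these orbit decompositions; but this is routine once the previous proposition (identifying maximal ideals over $\mf m$) is in hand. I would therefore structure the proof as: (i) reduce to the orbit decomposition of $S_2$; (ii) invoke $(\ast)$ to get free $\Delta_{\mc D}$-orbits after localization, giving freeness of $M_{\mc D}$; (iii) compute coinvariants and match with $M_{\mc D^\circ}$ using the standard Taylor–Wiles-prime level-lowering identification, citing \cite{Taylor} and \cite[\S3.3]{Kisin}.
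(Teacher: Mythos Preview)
Your proposal is correct and follows essentially the same approach as the paper, which simply cites \cite[Corollary~2.4]{Taylor}; your sketch is a faithful unpacking of that argument, using condition $(\ast)$ to obtain free $\Delta_{\mc{D}}$-orbits and then identifying coinvariants with the level-$U^{\circ}$ space via the standard Taylor--Wiles prime argument. One minor caution: your reference to \cite[Lemma~3.3.1]{Kisin} is off (that is the patching proposition in this paper's numbering), but the intended content is correct and appears in the cited section of \cite{Taylor}.
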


\begin{proof}
This is proved as \cite[Corollary~2.4]{Taylor}.
\end{proof}

\subsection
Fix a TW-extension $\mc{D}$ of $\mc{D}^{\circ}$.  We then have the following:

\begin{proposition}
There is a representation $\rho:G_F \to \GL_2(\T^{(p)}_{\mc{D}})$ which is
unramified outside of $\Sigma(\mc{D})$, has determinant $\chi_p$ and
satisfies $\tr{\rho}(\Frob_v)=T_v$ for $v \not \in \Sigma(\mc{D})$.  The
corresponding map $R_{\Sigma(\mc{D})} \to \T^{(p)}_{\mc{D}}$ is surjective.
\end{proposition}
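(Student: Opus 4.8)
The plan is to produce the representation $\rho:G_F \to \GL_2(\T^{(p)}_{\mc{D}})$ by the standard construction that assembles a Galois representation valued in a Hecke algebra out of the system of representations attached to the Hecke eigenforms it parametrizes, and then to realize surjectivity of $R_{\Sigma(\mc{D})} \to \T^{(p)}_{\mc{D}}$ as a formal consequence of the fact that $\T^{(p)}_{\mc{D}}$ is topologically generated by the traces $T_v$.

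First I would recall that $\T^{(p)}_{\mc{D}}$ is a reduced, finite, flat $\ms{O}$-algebra (it is a localization of a Hecke algebra acting faithfully on a finite free $\ms{O}$-module, and reducedness follows by the usual argument passing to characteristic zero where Hecke operators at good primes are semisimple). Hence $\T^{(p)}_{\mc{D}} \hookrightarrow \prod_i \ms{O}_i$, the product running over the minimal primes, where each $\ms{O}_i$ is the ring of integers in a finite extension $E_i/E$; each factor corresponds to a Hilbert eigenform $f_i$ contributing to $M_{\mc{D}}$ whose Hecke eigenvalues away from $\Sigma(\mc{D})$ are the images of the $T_v$. For each such $f_i$ one has the associated two-dimensional $p$-adic Galois representation $\rho_{f_i}:G_F \to \GL_2(\ms{O}_i)$ (after possibly enlarging $E_i$), which is unramified outside $\Sigma(\mc{D})$, has determinant $\chi_p$ (here one uses that the eigenform has parallel weight two and the nebentypus/determinant normalization is built into the definition of $S_2$ and the quaternionic setup), and satisfies $\tr{\rho_{f_i}(\Frob_v)}$ equal to the image of $T_v$ for $v \notin \Sigma(\mc{D})$. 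Taking the product gives $\rho':G_F \to \GL_2(\prod_i \ms{O}_i)$; since $\ol{\rho}$ is absolutely irreducible, $\mf{m}$ is non-Eisenstein, and then the standard pseudorepresentation argument (or the argument of Carayol/Blasius–Rogawski, as used in this exact context by Taylor and Kisin) shows that the trace pseudorepresentation $\tr{\rho'}$ takes values in the subring $\T^{(p)}_{\mc{D}} \subset \prod_i \ms{O}_i$ — it is the closure of the $\ms{O}$-algebra generated by the $T_v$, and these lie in $\T^{(p)}_{\mc{D}}$ by construction — and that this pseudorepresentation lifts to a genuine representation $\rho:G_F \to \GL_2(\T^{(p)}_{\mc{D}})$ because the residual representation is absolutely irreducible (Carayol's lemma / Nyssen–Rouquier). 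This $\rho$ has the required determinant, ramification, and trace properties by construction.

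Given $\rho$, the map $R_{\Sigma(\mc{D})} \to \T^{(p)}_{\mc{D}}$ classifying $\rho$ exists by the universal property of $R_{\Sigma(\mc{D})}$, once one checks that $\rho$ really is a deformation of $\ol{\rho}$ (not merely a lift of its pseudorepresentation): again absolute irreducibility of $\ol{\rho}$ makes this automatic, since $\rho \bmod \mf{m}_{\T^{(p)}_{\mc{D}}}$ has the same trace as $\ol{\rho}$ and both are absolutely irreducible, hence isomorphic, and one may conjugate $\rho$ to land in the correct conjugacy class. Surjectivity is then immediate: the image of $R_{\Sigma(\mc{D})}$ is a closed $\ms{O}$-subalgebra of $\T^{(p)}_{\mc{D}}$ containing $\tr{\rho(\Frob_v)} = T_v$ for all $v \notin \Sigma(\mc{D})$, and by Chebotarev together with continuity these traces topologically generate $\T^{(p)}_{\mc{D}}$ over $\ms{O}$; indeed $\T^{(p)}_{\mc{D}}$ is by its very definition generated by the $T_v$ with $v$ away from $p$ and $\Sigma(\mc{D})$.

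**Main obstacle.** The genuinely substantive input is the existence of the Galois representations $\rho_{f_i}$ attached to parallel weight two Hilbert eigenforms arising on the quaternion algebra $D$, with the stated local-at-$\Sigma(\mc{D})$-independent properties (unramified outside $\Sigma(\mc{D})$, determinant $\chi_p$, Eichler–Shimura-type trace relation); this rests on the work of Carayol, Taylor, Blasius–Rogawski and the Jacquet–Langlands correspondence transporting automorphic forms on $D^\times$ back to $\GL_2/F$. The only other point requiring a little care is passing from the individual $\rho_{f_i}$ to a representation valued in $\T^{(p)}_{\mc{D}}$ itself rather than in the normalization $\prod_i \ms{O}_i$: this is exactly where absolute irreducibility of $\ol{\rho}$ (hypothesis (A1)) is used, via the theory of pseudorepresentations and the lifting lemma for absolutely irreducible residual representations. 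Both of these are entirely standard in this literature — this is why the authors can cite Taylor and reduce the proof to a reference — so I expect no real difficulty, only bookkeeping to confirm the determinant and conductor normalizations match those fixed in $\S\ref{ss:s2def}$.
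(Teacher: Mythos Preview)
Your approach is correct and is exactly the standard argument that the paper invokes by its one-line citation of \cite{Taylor2} together with the Jacquet--Langlands correspondence; there is no alternative method in the paper to compare against.

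One small oversight in your surjectivity argument: you assert that $\T^{(p)}_{\mc{D}}$ is ``by its very definition generated by the $T_v$ with $v$ away from $p$ and $\Sigma(\mc{D})$,'' but in fact $\T^{(p)}_{\mc{D}} = \T^{(p),+}(U_{\mc{D}})_{\mf{m}_{\mc{D}}}$, where the superscript $+$ indicates that the Atkin--Lehner operators $U_v$ for $v \in \Sigma^{\TW}$ have been adjoined. So you must also check that each $U_v$ lies in the image of $R_{\Sigma(\mc{D})}$. This is straightforward: for $v \in \Sigma^{\TW}$ the eigenvalues of $\ol{\rho}(\Frob_v)$ are distinct by hypothesis, so by Hensel's lemma the characteristic polynomial of $\rho(\Frob_v)$ over $\T^{(p)}_{\mc{D}}$ factors into linear factors with roots in $\T^{(p)}_{\mc{D}}$, and $U_v$ is one of these roots (the one reducing to $\alpha_v$). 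Since the coefficients of the characteristic polynomial lie in the image of $R_{\Sigma(\mc{D})}$, so do its roots. With this addition your argument is complete.
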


\begin{proof}
This follows from \cite{Taylor2} and the Jacquet-Langlands correspondence.
\end{proof}

We put $\T^{\Box}_{\mc{D}}=R^{\Box}_{\Sigma(\mc{D})}
\otimes_{R_{\Sigma(\mc{D})}}
\T_{\mc{D}}$ and similarly for the prime-to-$p$ version.  We also define
$M^{\Box}_{\mc{D}}=\T^{\Box}_{\mc{D}} \otimes_{\T_{\mc{D}}} M_{\mc{D}}$.
We regard $M_{\mc{D}}$ as an $R_{\Sigma(\mc{D})}$-module via the map
$R_{\Sigma(\mc{D})} \to \T^{(p)}_{\mc{D}}$, and similarly for the boxed
version.  We note that the two actions of $\Delta_{\mc{D}}$ on $M_{\mc{D}}$
--- one coming from the $R_{\Sigma(\mc{D})}$-module structure, the other from
the identification $U_{\mc{D}}^-/U_{\mc{D}}=\Delta_{\mc{D}}$ --- agree (this
follows from \cite[Corollary~1.8]{Taylor}).

\begin{proposition}
The composite map
\begin{displaymath}
R^{\Box, \loc} \to R^{\Box}_{\Sigma(\mc{D})} \to \T^{\Box, (p)}_{\mc{D}}
\to \T^{\Box}_{\mc{D}}
\end{displaymath}
factors through $R^{\Box, \loc}_{\mc{D}}$.  The resulting map
$R^{\Box}_{\mc{D}} \to \T^{\Box}_{\mc{D}}$ extends naturally to a surjection
\begin{displaymath}
\wt{R}^{\Box}_{\mc{D}} \to \T^{\Box}_{\mc{D}}.
\end{displaymath}
\end{proposition}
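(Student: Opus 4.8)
The plan is to establish the two assertions of the proposition in sequence. \textbf{First}, I would show the composite $R^{\Box,\loc} \to \T^{\Box}_{\mc{D}}$ factors through $R^{\Box,\loc}_{\mc{D}}$. The key point is that this map classifies the restrictions of the universal modular deformation to the decomposition groups $G_{F_v}$ for $v \in \Sigma_p \cup \Sigma^{\ram}$, together with the chosen framings. Since $\T^{\Box}_{\mc{D}}$ is reduced and $p$-torsion-free (being, up to the framing variables, a Hecke algebra acting faithfully on a space of modular forms), it embeds into a product of characteristic-zero points; each such point corresponds to a Hilbert eigenform $f$, and by the local-global compatibility recalled in the remark after \S\ref{ss:modhypo} (together with the classification of $\pi_v$ in the remark of \S\ref{s:local}), the local representation $\rho_f\vert_{G_{F_v}}$ is admissible of type $t(v)$ at each $v \in \Sigma_p \cup \Sigma^{\ram}$ — here conditions (B4), (B5) pin down the distinction between types $A$ and $B$, and the definiteness of the type function $t$ (imposed in the definition of a deformation datum) means there is a unique admissible type to match. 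Hence each such point factors through $R^{\Box,\loc}_{\mc{D}}$ by property (4) of Propositions~\ref{prop:pring} and~\ref{prop:lring}, and since these rings are $p$-torsion-free domains the factorization descends from the product of points to $\T^{\Box}_{\mc{D}}$ itself. This factorization, combined with the surjection $R^{\Box}_{\Sigma(\mc{D})} \twoheadrightarrow \T^{\Box,(p)}_{\mc{D}} \to \T^{\Box}_{\mc{D}}$ and the definition $R^{\Box}_{\mc{D}} = R^{\Box,\loc}_{\mc{D}} \otimes_{R^{\Box,\loc}} R^{\Box}_{\Sigma(\mc{D})}$, immediately yields a surjection $R^{\Box}_{\mc{D}} \to \T^{\Box}_{\mc{D}}$ by the universal property of the tensor product.

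\textbf{Second}, I would extend this to a surjection $\wt{R}^{\Box}_{\mc{D}} \to \T^{\Box}_{\mc{D}}$. Recall $\wt{R}^{\Box}_{\mc{D}} = \wt{R}^{\Box,\loc}_{\mc{D}} \otimes_{R^{\Box,\loc}_{\mc{D}}} R^{\Box}_{\mc{D}}$, and that for each $v \in \Sigma_{p,A} \cup \Sigma_{p,B}$ there is a finite map $R^{\Box}_{\mc{D},v} \to \wt{R}^{\Box}_{\mc{D},v}$ becoming an isomorphism after inverting $p$. So I need to produce, for each such $v$, a map $\wt{R}^{\Box}_{\mc{D},v} \to \T^{\Box}_{\mc{D}}$ compatible with the given map from $R^{\Box}_{\mc{D},v}$. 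Since $\wt{R}^{\Box}_{\mc{D},v}[1/p] \xrightarrow{\sim} R^{\Box}_{\mc{D},v}[1/p]$, there is certainly a map $\wt{R}^{\Box}_{\mc{D},v} \to \T^{\Box}_{\mc{D}}[1/p]$; the content is that its image lies in $\T^{\Box}_{\mc{D}}$, which follows because $\T^{\Box}_{\mc{D}}$ is integrally closed in $\T^{\Box}_{\mc{D}}[1/p]$ along the relevant directions — more precisely, because $\wt R^\Box_{\mc D,v}$ is finite over $R^\Box_{\mc D,v}$ and $\T^\Box_{\mc D}$ is $p$-torsion-free and reduced, the finitely many characteristic-zero points through which $\T^\Box_{\mc D}$ embeds each lift to $\wt R^\Box_{\mc D,v}$ (by the interpretation of $\wt R^\Box_{\mc D,v}$-points as type-$A$ or type-$B$ deformations together with the choice of an unramified eigenvalue, exactly as in \cite[\S3.4]{Kisin}), and these lifts are compatible with the Hecke-algebra structure via $U_v$ at $v$. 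Amalgamating over $v \in \Sigma_{p,A}\cup\Sigma_{p,B}$ gives a map $\wt{R}^{\Box,\loc}_{\mc{D}} \to \T^{\Box}_{\mc{D}}$ agreeing with the one from $R^{\Box,\loc}_{\mc{D}}$, and hence by the tensor product description a map $\wt{R}^{\Box}_{\mc{D}} \to \T^{\Box}_{\mc{D}}$. Surjectivity is inherited from surjectivity of $R^{\Box}_{\mc{D}} \to \T^{\Box}_{\mc{D}}$ since that map factors through $\wt{R}^{\Box}_{\mc{D}}$.

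\textbf{The main obstacle} I expect is the $p$-integrality step in the second part: producing the map $\wt{R}^{\Box}_{\mc{D},v} \to \T^{\Box}_{\mc{D}}$ with image in $\T^{\Box}_{\mc{D}}$ rather than merely in $\T^{\Box}_{\mc{D}}[1/p]$. Over $\ol{\Q}_p$-points this is transparent, but to globalize it one must know that the relevant eigenvalue of crystalline Frobenius (the quantity distinguishing $\wt{R}^{\Box}_{\mc{D},v}$ from $R^{\Box}_{\mc{D},v}$) is realized \emph{integrally} in $\T^{\Box}_{\mc{D}}$. This is where the $U_v$-eigenvalue comes in: one identifies that Frobenius eigenvalue with (a unit multiple of) the action of the Hecke operator $U_p$ at $v$, which is manifestly an element of the integral Hecke algebra, and one must check the compatibility $T_v \leftrightarrow$ characteristic polynomial of Frobenius versus $U_v \leftrightarrow$ Frobenius eigenvalue is the one encoded in $\wt{R}^{\Box}_{\mc{D},v}$. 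For this I would cite \cite[Lemma~3.4.2]{Kisin} and the surrounding discussion in \cite[\S3.4.7]{Kisin}, which construct $\wt{R}^{\Box}_{\mc{D},v}$ precisely so that this works; everything else is formal manipulation of tensor products and of the surjectivity of $R_{\Sigma(\mc{D})} \to \T^{(p)}_{\mc{D}}$.
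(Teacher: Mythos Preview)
Your overall strategy matches the paper's: reduce to checking the factorization at characteristic-zero points of $\T^{\Box}_{\mc{D}}$, use that each such point comes from a Hilbert eigenform whose local Galois representation has the prescribed type, and then cite \cite[Lemma~3.4.9]{Kisin} for the extension to the tilde ring. Two points deserve correction.

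\textbf{The $\Sigma_{p,C}$ case is the new content, and you skip it.} You assert that $\rho_f\vert_{G_{F_v}}$ is \emph{admissible} of type $t(v)$ for every $v$, but for $v\in\Sigma_{p,C}$ this is not covered by \cite{Kisin} and requires an argument. The quaternion algebra $D$ is ramified at such $v$, so $\pi_{f,v}$ is special of conductor one and hence $\rho_f\vert_{G_{F_v}}$ is an extension of $\eta$ by $\chi_p\eta$ with $\eta$ unramified and $\eta^2=1$. But ``admissible of type $C$'' demands $\eta=1$, not just $\eta^2=1$. The paper closes this gap by invoking (A8) (so $\ol{\rho}\vert_{G_{F_v}}$ is trivial) together with $p\neq 2$: then $\ol{\eta}=1$ forces $\eta=1$. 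Without this step, the factorization through $R^{\Box,C}_v$ is not established. Your references to the remarks in \S\ref{s:local} and \S\ref{ss:modhypo} only give that $\rho_f$ has \emph{type} $C$, not that it is admissible of type $C$.

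\textbf{Your surjectivity argument is inverted.} You claim that $R^{\Box}_{\mc{D}}\to\T^{\Box}_{\mc{D}}$ is already surjective and then say surjectivity of $\wt R^{\Box}_{\mc{D}}\to\T^{\Box}_{\mc{D}}$ is ``inherited'' from this. But the map $R^{\Box}_{\Sigma(\mc{D})}\to\T^{\Box}_{\mc{D}}$ has image only $\T^{\Box,(p)}_{\mc{D}}$, which need not contain the operators $T_v$ for $v\in\Sigma_{p,A}\cup\Sigma_{p,B}$; these are precisely what the tilde ring is designed to hit. Your ``main obstacle'' paragraph in fact correctly identifies the mechanism---the Frobenius eigenvalue/$U_v$-operator lands integrally in $\T^{\Box}_{\mc{D}}$, and this is exactly what \cite[Lemma~3.4.9]{Kisin} proves---so you have the right content but the wrong logical dependency. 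Surjectivity is a feature of the tilde map, not something it inherits from the untilded one.
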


\begin{proof}
For the first statement, it suffices to show that for each $v \in
\Sigma_p \cup \Sigma^{\ram}$ the map $R^{\Box}_v \to \T^{\Box}_{\mc{D}}$
factors through $R^{\Box}_{\mc{D}, v}$.  For $v \in \Sigma_{p, A} \cup
\Sigma_{p, B} \cup \Sigma^{\ram}_C$ this is shown in \cite[Lemma~3.4.9]{Kisin}.
The proof for $v \in \Sigma^{\ram}_{AB}$ is the same as the $v \in
\Sigma^{\ram}_C$ case.
We must handle the $v \in \Sigma_{p, C}$ case.  As in \cite[Lemma~3.4.9]{Kisin}
it suffices to show that for any map $\T^{\Box}_{\mc{D}} \to E'$, with
$E'$ an extension of $E$, the resulting map $R^{\Box}_v \to E'$ factors through
$R^{\Box}_{\mc{D}, v}$.  The map $\T^{\Box}_{\mc{D}} \to E'$ determines
a Hilbert modular form $f$ which is special of conductor one at $v$.  We thus
have
\begin{displaymath}
\rho_f \vert_{G_{F_v}}=\mat{\chi_p \eta}{\ast}{}{\eta}
\end{displaymath}
where $\eta$ is unramified and $\eta^2=1$.  Since $\rho_f \vert_{G_{F_v}}$
reduces to the trivial representation (and $p \ne 2$) we conclude $\eta=1$.
Thus $\rho_f$ is admissible of type $C$ at $v$.  This
proves that $R^{\Box}_v \to E'$ factors through $R^{\Box}_{\mc{D}, v}$.
That $R^{\Box}_{\mc{D}} \to \T^{\Box}_{\mc{D}}$ extends to a surjection
from $\wt{R}^{\Box}_{\mc{D}}$ is a local statement at the primes in
$\Sigma_{p, A} \cup \Sigma_{p, B}$ and is proved in \cite[Lemma~3.4.9]{Kisin}.
\end{proof}

\subsection
We recall the following result from \cite[Proposition~3.3.1]{Kisin}.

\begin{proposition}
\label{prop:patch}
Let $B$ be a complete, local, flat $\ms{O}$-algebra, which is a domain of
dimension $d+1$ and such that $B[1/p]$ is formally smooth over $E$.  Let
$R$ be a local $B$-algebra and $M$ a non-zero $R$-module.  Suppose that
there are integers $h$ and $j$ such that for any integer $n$ we can find:
\begin{itemize}
\item a local $B$-algebra $R'$, topologically generated over $B$ by $h+j-d$
elements;
\item a map $\ms{O} \lbb x_1, \ldots, x_h, y_1, \ldots, y_j \rbb
\to R'$;
\item a surjection of $B$-algebras $R' \to R$ with kernel $(x_1, \ldots,
x_h) R'$;
\item an $R'$-module $M'$; and
\item a surjection of $R'$-modules $M' \to M$ with kernel $(x_1, \ldots,
x_h) M'$,
\end{itemize}
such that the following condition holds:
\begin{itemize}
\item if $\mf{b}' \subset \ms{O} \lbb x_1, \ldots, x_h, y_1, \ldots, y_j
\rbb$ is the annihilator of $M'$ then
\begin{displaymath}
\mf{b}' \subset ((1+x_1)^{p^n}-1, \ldots, (1+x_h)^{p^n}-1)
\end{displaymath}
and $M'$ is finite free over $\ms{O} \lbb x_1, \ldots, x_h, y_1, \ldots, y_j
\rbb/\mf{b}'$.
\end{itemize}
Then $R$ is a finite $\ms{O} \lbb y_1, \ldots, y_j \rbb$-algebra and
$M[1/p]$ is a finite, projective and faithful $R[1/p]$-module.
\end{proposition}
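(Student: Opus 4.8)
The plan is to run the Taylor--Wiles--Kisin patching machine; this is essentially the argument of \cite[Proposition~3.3.1]{Kisin}, and I will only outline the steps. Set $S_\infty=\ms{O}\lbb x_1,\ldots,x_h,y_1,\ldots,y_j\rbb$, a regular local ring of dimension $h+j+1$, and let $\mf{a}=(x_1,\ldots,x_h)\subset S_\infty$, so that $S_\infty/\mf{a}=\ms{O}\lbb y_1,\ldots,y_j\rbb$. First I would observe that the rank of $M'$ over $S_\infty/\mf{b}'$ is forced to be independent of $n$: since $\mf{b}'\subset\mf{a}$, we have $M'/\mf{a}M'=M'\otimes_{S_\infty/\mf{b}'}S_\infty/\mf{a}$, and by hypothesis this is $M$; hence $M$ is free over $\ms{O}\lbb y_\bullet\rbb$ of rank $r:=\operatorname{rank}_{S_\infty/\mf{b}'}M'$, a fixed positive integer (positive because $M\neq 0$).

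Next comes the patching. Writing each $R'$ as a quotient of $B\lbb z_1,\ldots,z_{h+j-d}\rbb$ (possible as $R'$ is topologically generated over $B$ by $h+j-d$ elements) and fixing generators once and for all, a standard diagonalization over $n$ --- passing to compatible limits of the finite-length quotients of the rings $R'$, of the maps $S_\infty\to R'$, of the $R'$-modules $M'$, and of the surjections onto $R$ and $M$ --- produces: a complete local $B$-algebra $R_\infty$ which is a quotient of $B\lbb z_1,\ldots,z_{h+j-d}\rbb$; a local map $S_\infty\to R_\infty$; an $R_\infty$-module $M_\infty$ that is finite free over $S_\infty$ of rank $r$, with its $S_\infty$-action factoring through $R_\infty$; and isomorphisms $R_\infty/\mf{a}R_\infty\xrightarrow{\ \sim\ }R$ and $M_\infty/\mf{a}M_\infty\xrightarrow{\ \sim\ }M$ of $R$-modules. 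The condition on $\mf{b}'$ in the hypothesis is precisely what makes the freeness of $M_\infty$ over $S_\infty$ survive the limit.

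Now the commutative algebra. On the one hand $\dim R_\infty\leq\dim B\lbb z_\bullet\rbb=(d+1)+(h+j-d)=h+j+1=\dim S_\infty$. On the other hand, $M_\infty$ is free of positive rank over $S_\infty$, so $\operatorname{Supp}_{S_\infty}M_\infty=\Spec S_\infty$; as the $S_\infty$-action factors through $R_\infty$, the map $\Spec R_\infty\to\Spec S_\infty$ carries $\operatorname{Supp}_{R_\infty}M_\infty$ onto $\Spec S_\infty$, forcing $\dim R_\infty\geq h+j+1$. Hence $\dim R_\infty=h+j+1$. Since $B$ is a domain, $B\lbb z_\bullet\rbb$ is a domain of dimension $h+j+1$, and a proper quotient of a Noetherian local domain has strictly smaller dimension; therefore $R_\infty\cong B\lbb z_1,\ldots,z_{h+j-d}\rbb$ is itself a domain. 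The same argument applied to $R_\infty/\operatorname{ann}_{R_\infty}(M_\infty)$ shows $\operatorname{ann}_{R_\infty}(M_\infty)=0$: this annihilator meets $S_\infty$ in $\operatorname{ann}_{S_\infty}(M_\infty)=0$, so $R_\infty/\operatorname{ann}$ still contains $S_\infty$ and embeds in the module-finite $S_\infty$-algebra $\End_{S_\infty}(M_\infty)$, whence $\dim(R_\infty/\operatorname{ann})=h+j+1=\dim R_\infty$ and $\operatorname{ann}=0$. Thus $M_\infty$ is faithful over $R_\infty$, and $R_\infty\hookrightarrow\End_{S_\infty}(M_\infty)$ is module-finite over $S_\infty$. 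Reducing modulo $\mf{a}$, $R=R_\infty/\mf{a}R_\infty$ is module-finite over $S_\infty/\mf{a}=\ms{O}\lbb y_1,\ldots,y_j\rbb$, which is the first assertion. For the second, invert $p$: then $S_\infty[1/p]$ is regular, and $R_\infty[1/p]=B\lbb z_\bullet\rbb[1/p]$ is formally smooth over $E$ --- hence regular --- because $B[1/p]$ is. Now $M_\infty[1/p]$ is finite and faithful over the regular ring $R_\infty[1/p]$ and free over the regular subring $S_\infty[1/p]$ of the same dimension; as $R_\infty[1/p]$ is finite over $S_\infty[1/p]$, the depth of $M_\infty[1/p]$ over $R_\infty[1/p]$ equals its dimension, so $M_\infty[1/p]$ is maximal Cohen--Macaulay over the regular ring $R_\infty[1/p]$ and hence finite projective over it. Since $(x_1,\ldots,x_h)$ is a regular sequence on the $S_\infty[1/p]$-free module $M_\infty[1/p]$ and $M_\infty[1/p]\otimes_{R_\infty[1/p]}R[1/p]=(M_\infty/\mf{a}M_\infty)[1/p]=M[1/p]$, base change gives that $M[1/p]$ is finite projective over $R[1/p]$; it is faithful because support is preserved under base change (so $\operatorname{Supp}_{R[1/p]}M[1/p]=\Spec R[1/p]$) and a faithfully flat finite module is faithful.

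The main obstacle is the patching step itself: one must set up the limit so that the freeness of $M'$ over $S_\infty/\mf{b}'$ (with $\mf{b}'$ shrinking into $\bigcap_n((1+x_i)^{p^n}-1)$ as $n$ grows) upgrades to honest freeness of $M_\infty$ over $S_\infty$, while simultaneously keeping compatible, through the limit, the $B$-algebra structure of $R'$, its $S_\infty$-algebra structure, the $R'$-action on $M'$, and the two surjections onto $R$ and $M$. Once the patched object is in hand, the dimension count and the miracle-flatness argument over $S_\infty[1/p]$ are routine.
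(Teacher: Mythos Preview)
Your outline is correct and is precisely the Taylor--Wiles--Kisin patching argument of \cite[Proposition~3.3.1]{Kisin}, which is exactly what the paper cites (the paper gives no independent proof, only the reference). The dimension count forcing $R_\infty\cong B\lbb z_\bullet\rbb$, the miracle-flatness step over the regular ring $R_\infty[1/p]$, and the descent to $R$ and $M$ are all handled as in Kisin's original.
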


\subsection
The aim of this section is to prove the following theorem.  We follow
\cite[Theorem~3.4.11]{Kisin}.

\begin{theorem}
\label{thm:ReqT}
The map $\wt{R}^{\Box}_{\mc{D}^{\circ}} \to \T^{\Box}_{\mc{D}^{\circ}}$ is
surjective and has $p$-power torsion kernel.  The ring $R_{\mc{D}^{\circ}}$ is
finite over $\ms{O}$.
\end{theorem}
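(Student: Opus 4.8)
The plan is to apply the patching criterion of Proposition~\ref{prop:patch}
with the choices
\[
B = \wt R^{\Box,\loc}_{\mc D^{\circ}},\qquad
R = \wt R^{\Box}_{\mc D^{\circ}},\qquad
M = M^{\Box}_{\mc D^{\circ}},
\]
and with $j = \dim_{\ms O} R^{\Box,\loc}_{\mc D^{\circ}} - 1$ relative to
$\ms O$, i.e.\ $j = [F:\Q] + \#\Sigma_p + 3\,\#(\Sigma_p\cup\Sigma^{\ram}) + \cdots$ computed from Propositions~\ref{prop:pring} and~\ref{prop:lring}, and $h = h(\mc D^{\circ})$.
First I would check the hypotheses on $B$: it is a completed tensor product
of the rings $\wt R^{\Box}_{\mc D^{\circ},v}$ and $R^{\Box}_{\mc D^{\circ},v}$,
each of which is (by Propositions~\ref{prop:pring}, \ref{prop:lring} and the
remarks on the tilde rings in \S\ref{ss:defrings}) a flat $\ms O$-domain
with formally smooth generic fibre; a completed tensor product of such
over $\ms O$ remains a flat domain with formally smooth generic fibre, and
its relative dimension $d$ over $\ms O$ is the sum of the individual relative
dimensions. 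This gives the integer $d+1 = \dim B$ needed in the proposition.

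**Construction of the patching system.**
For each $n$, apply Proposition~\ref{prop:twexists} to produce a TW-extension
$\mc D$ with $\#\Sigma^{\TW} = h$ and $\bN v \equiv 1 \pmod{p^n}$, and set
$R' = \wt R^{\Box}_{\mc D}$, $M' = M^{\Box}_{\mc D}$. Then:
(i) the surjection $R' \to R$ with kernel $\mf a_{\mc D} R'$ is
Proposition~\ref{prop:Rdiam} (tilde version); (ii) the surjection $M' \to M$
with kernel $\mf a_{\mc D} M'$ follows from Proposition~\ref{prop:Tdiam}
after tensoring up to the boxed/tilde level; (iii) the map
$\ms O\lbb x_1,\dots,x_h\rbb \to R'$ comes from choosing $h$ generators of
$\Delta_{\mc D} \cong (\Z/p^{m})^{h}$ (with $m \ge n$), sending $x_i$ to
$[\gamma_i] - 1$; (iv) the map on the $y$-variables and the fact that $R'$ is
topologically generated over $B$ by $h + j - d$ elements is statement (5) of
Proposition~\ref{prop:twexists} (note $g = h + j - d$ with the bookkeeping
above — this identity should be verified carefully), extended from
$R^{\Box}_{\mc D}$ to $\wt R^{\Box}_{\mc D}$ using the finite map
$R^{\Box}_{\mc D} \to \wt R^{\Box}_{\mc D}$ which is an isomorphism after
inverting $p$. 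Finally, the freeness of $M'$ over
$\ms O\lbb x_1,\dots,x_h,y_1,\dots,y_j\rbb/\mf b'$ and the containment of
$\mf b'$ in $((1+x_1)^{p^n}-1,\dots)$ follows because $M^{\Box}_{\mc D}$ is
free over $\ms O[\Delta_{\mc D}]$ (Proposition~\ref{prop:Tdiam}, carried up
the boxed tower, which is flat since framings only add polynomial variables),
and $(1+x_i)^{p^n} - 1$ kills $\ms O[\Delta_{\mc D}]$ once $p^n \ge \#\Delta_{\mc D}$.

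**Conclusion and the main obstacle.**
Proposition~\ref{prop:patch} then yields that $\wt R^{\Box}_{\mc D^{\circ}}$
is finite over $\ms O\lbb y_1,\dots,y_j\rbb$ and that
$M^{\Box}_{\mc D^{\circ}}[1/p]$ is a finite, projective, faithful
$\wt R^{\Box}_{\mc D^{\circ}}[1/p]$-module; faithfulness gives that the
surjection $\wt R^{\Box}_{\mc D^{\circ}} \to \T^{\Box}_{\mc D^{\circ}}$ has
kernel killed by a power of $p$ (the kernel dies after inverting $p$ because
$M[1/p]$ is faithful over $R[1/p]$, and it is finitely generated over a
Noetherian ring, hence killed by $p^N$ for some $N$). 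To get finiteness of
the unframed ring $R_{\mc D^{\circ}}$ over $\ms O$, I would use that
$R^{\Box}_{\mc D^{\circ}}$ is formally smooth over $R_{\mc D^{\circ}}$ of
some relative dimension (adding back the framing variables), so
$R^{\Box}_{\mc D^{\circ}}$ finite over $\ms O\lbb y_1,\dots,y_j\rbb$ forces,
by comparing Krull dimensions and using flatness, that $R_{\mc D^{\circ}}$ is
finite over $\ms O$; one must also know $R_{\mc D^{\circ}} \neq 0$, which is
clear since $\T^{(p)}_{\mc D^{\circ}} \neq 0$ receives a map from it and
$\ol\rho$ is modular by hypothesis (B1)–(B6). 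The main obstacle I anticipate
is purely bookkeeping but genuinely delicate: matching up the numerics so
that the number $g = g(\mc D^{\circ})$ of generators in
Proposition~\ref{prop:twexists}(5) equals exactly $h + j - d$ for the $j$ and
$d$ coming from the local deformation rings, including correctly accounting
for the difference between the framed semilocal ring $R^{\Box,\loc}$
(used to define $R^{\Box}_{\Sigma(\mc D)}$) and the type-quotient
$R^{\Box,\loc}_{\mc D^{\circ}}$, and the passage between $R^{\Box}_{\mc D}$
and $\wt R^{\Box}_{\mc D}$. This is exactly the computation carried out in
\cite[Theorem~3.4.11]{Kisin}, and I would follow it line by line.
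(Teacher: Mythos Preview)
Your overall strategy matches the paper's: apply the patching criterion (Proposition~\ref{prop:patch}) with $B = \wt R^{\Box,\loc}_{\mc D^{\circ}}$, $R = \wt R^{\Box}_{\mc D^{\circ}}$, $M = M^{\Box}_{\mc D^{\circ}}$, and use Propositions~\ref{prop:twexists}, \ref{prop:Rdiam}, \ref{prop:Tdiam} to build the system $(R', M')$ for each $n$. The deduction that the kernel is $p$-power torsion from faithfulness of $M[1/p]$ over $R[1/p]$ is also correct.

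Two points need correction. First, your identification of $j$ is wrong: $j$ is \emph{not} the relative dimension of $R^{\Box,\loc}_{\mc D^{\circ}}$ over $\ms O$ (that number is $d$). Rather, $j = 4(\#\Sigma_p + \#\Sigma^{\ram}) - 1$ is the relative dimension of $R^{\Box}_{\Sigma(\mc D)}$ over $R_{\Sigma(\mc D)}$, i.e.\ the number of framing variables; this is where the $y$-variables come from, via the identification $R^{\Box}_{\Sigma(\mc D)} \cong R_{\Sigma(\mc D)}\lbb y_1,\dots,y_j\rbb$. With this $j$ and $d = [F:\Q] + 3\#\Sigma_p + 3\#\Sigma^{\ram}$, the identity $g = h + j - d$ holds on the nose. (Also, $\Delta_{\mc D}$ is only a \emph{quotient} of a product of $h$ cyclic groups, not $(\Z/p^m)^h$ itself; this does not affect the argument.)

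Second, and more seriously, your argument for finiteness of $R_{\mc D^{\circ}}$ has a gap. Patching gives that $\wt R^{\Box}_{\mc D^{\circ}}$ is finite over $\ms O\lbb y_1,\dots,y_j\rbb$, and since $\wt R^{\Box}_{\mc D^{\circ}} \cong \wt R_{\mc D^{\circ}}\lbb y_1,\dots,y_j\rbb$ one gets that $\wt R_{\mc D^{\circ}}$ is finite over $\ms O$. But you then assert that $R^{\Box}_{\mc D^{\circ}}$ is finite over $\ms O\lbb y_1,\dots,y_j\rbb$, which does not follow: the map $R^{\Box}_{\mc D^{\circ}} \to \wt R^{\Box}_{\mc D^{\circ}}$ goes the wrong way for this (it is finite and an isomorphism after inverting $p$, but need not be injective integrally). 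The paper explicitly flags this issue: there is no obvious way to deduce finiteness of $R_{\mc D^{\circ}}$ from that of $\wt R_{\mc D^{\circ}}$, and instead one reruns the entire patching argument with the non-tilde rings $B = R^{\Box,\loc}_{\mc D^{\circ}}$, $R = R^{\Box}_{\mc D^{\circ}}$ to conclude finiteness of $R_{\mc D^{\circ}}$ directly.
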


\begin{proof}
We apply Proposition~\ref{prop:patch} with:
\begin{displaymath}
B=\wt{R}^{\Box, \loc}_{\mc{D}^{\circ}}, \qquad
R=\wt{R}^{\Box}_{\mc{D}^{\circ}}, \qquad
M=M_{\mc{D}^{\circ}}^{\Box}, \qquad
h=h(\mc{D}^{\circ}), \qquad
j=4(\# \Sigma_p+\# \Sigma^{\ram})-1.
\end{displaymath}
By Proposition~\ref{prop:pring}, Proposition~\ref{prop:lring} and
\cite[Lemma~3.4.12]{Kisin} $B$ is a complete, local, flat $\ms{O}$-algebra
which is a domain and such that $B[1/p]$ is smooth over $E$.  Its dimension
is $d+1$ where $d$ is the relative dimension of $R^{\Box, \loc}_{\mc{D}}$
over $\ms{O}$.  Thus
\begin{displaymath}
d=\sum_{v \in \Sigma_p} ([F_v:\Q_p]+3) + \sum_{v \in \Sigma^{\ram}} 3
=[F:\Q]+3 \# \Sigma_p+3 \# \Sigma^{\ram}
\end{displaymath}
Let $n$ be a given integer, let $\mc{D}$ be the TW-extension produced by
Proposition~\ref{prop:twexists} and put $R'=\wt{R}^{\Box}_{\mc{D}}$ and
$M'=M^{\Box}_{\mc{D}}$.  The morphism $R^{\Box}_{\Sigma(\mc{D})} \to
R_{\Sigma(\mc{D})}$ is formally smooth of relative dimension $j$, and so we can
identify $R^{\Box}_{\Sigma(\mc{D})}$ with $R_{\Sigma(\mc{D})} \lbb y_1, \ldots
y_j \rbb$.  In particular, this gives $R'$ the structure of an
$\ms{O} \lbb y_1, \ldots, y_j \rbb$-algebra.  The group $\Delta_{\mc{D}}$
is a quotient of the product of $h$ cyclic groups.  We can thus write
$\ms{O}[\Delta_{\mc{D}}]$ as a quotient of $\ms{O} \lbb x_1, \ldots, x_h
\rbb$ in such a way that the images of the $x_i$ generate the augmentation
ideal $\mf{a}_{\mc{D}}$.  As $R'$ is an algebra over
$\ms{O}[\Delta_{\mc{D}}]$ we can regard it as an algebra over $\ms{O} \lbb
x_1, \ldots, x_h \rbb$.  We have thus defined a map $\ms{O} \lbb x_1, \ldots,
x_h, y_1, \ldots, y_j \rbb \to R'$.

Proposition~\ref{prop:Rdiam} shows that the natural map $R' \to R$ is
surjective and has kernel $(x_1, \ldots, x_h)R'$.  Similarly,
Proposition~\ref{prop:Tdiam} shows that the natural map $M' \to M$ is
surjective and has kernel $(x_1, \ldots, x_h)M'$.  That proposition, together
with obvious properties of $\Delta_{\mc{D}}$, show that $\mf{b}'$ satisfies
the necessary condition and that $M'$ is finite and
free over $\ms{O} \lbb x_1, \ldots, x_h, y_1, \ldots, y_j \rbb/\mf{b}'$.
Finally, we need to check that $R'$ is topologically generated as a $B$-algebra
by $h+j-d$ elements.  From the manner in which we chose $\mc{D}$ we know that
$R'$ is generated by $g=h-[F:\Q]+\# \Sigma_p+\# \Sigma^{\ram}-1$ elements
over $B$.  It suffices, therefore, to show $g \le h+j-d$.  In fact,
\begin{displaymath}
\begin{split}
h+j-d
&= h+4(\#\Sigma_p+\#\Sigma^{\ram})-1-[F:\Q]-3 \# \Sigma_p-3 \# \Sigma^{\ram} \\
&= h-[F:\Q]+\# \Sigma_p + \# \Sigma^{\ram}-1 \\
&= g.
\end{split}
\end{displaymath}
We have thus verified the hypotheses of Proposition~\ref{prop:patch}.
It follows that $M^{\Box}_{\mc{D}^{\circ}}[1/p]$ is a faithful
$\wt{R}^{\Box}_{\mc{D}^{\circ}}[1/p]$-module.  Since the
$\wt{R}^{\Box}_{\mc{D}^{\circ}}$-module structure on
$M^{\Box}_{\mc{D}^{\circ}}$ comes via the map map
$\wt{R}^{\Box}_{\mc{D}^{\circ}} \to \T^{\Box}_{\mc{D}^{\circ}}$, it follows
that this map must have $p$-power torsion kernel, proving the first statement
of the theorem.  Proposition~\ref{prop:patch} also implies that
$\wt{R}^{\Box}_{\mc{D}^{\circ}}$ is finite over $\ms{O} \lbb y_1, \ldots, y_j
\rbb$.  Since $\wt{R}^{\Box}_{\mc{D}^{\circ}}$ is identified with
$\wt{R}_{\mc{D}^{\circ}} \lbb y_1, \ldots, y_j \rbb$ it follows that
$\wt{R}_{\mc{D}^{\circ}}$ is finite over $\ms{O}$.  There does not seem to
be an obvious way to deduce the finiteness of $R_{\mc{D}^{\circ}}$ from
that of $\wt{R}_{\mc{D}^{\circ}}$; however, one can run the entirety of
the above argument using the non-tilde rings to conclude that
$R_{\mc{D}^{\circ}}$ is finite over $\ms{O}$.
\end{proof}

\section{Modularity lifting theorems}
\label{s:modlift}

\subsection
We now give our first modularity lifting theorem.  By a ``Hilbert eigenform
with coefficients in $\ol{\Q}_p$'' we mean a Hilbert eigenform together with
an embedding of its coefficient field into $\ol{\Q}_p$.  For such a form $f$,
unramified outside a finite set of primes $S$, we write $t_f$ for the type
function on $S$ of its associated Galois representation
$\rho_f$ (see \S \ref{ss:type}).  Note that $t_f$ is definite by
Proposition~\ref{prop:type}.

\begin{theorem}
\label{thm:mlt2}
Let $F$ be a totally real number field, $S$ a finite set of primes and
$\rho:G_{F,S} \to \GL_2(\ol{\Q}_p)$ an odd, weight two representation such
that $\ol{\rho}$ satisfies (A1) and (A2) of \S \ref{s:reqt}.  Assume there
exists a parallel weight two Hilbert eigenform $f$ with coefficients in
$\ol{\Q}_p$ such that $f$ is unramified outside $S$, $\ol{\rho} \cong
\ol{\rho}_f$ and $t_{\rho} \approx t_f$.  Then there is a Hilbert eigenform
$g$ with coefficients in $\ol{\Q}_p$ such that $\rho \cong \rho_g$.
\end{theorem}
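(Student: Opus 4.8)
The plan is to deduce this theorem from the $R = \T$ theorem of \S\ref{s:reqt} (Theorem~\ref{thm:ReqT}) together with the base-change techniques of Kisin. The principal obstacle is that Theorem~\ref{thm:ReqT} is proved only under the long list of auxiliary hypotheses (A4)--(A9), whereas the present statement assumes almost nothing beyond (A1), (A2) and the existence of a modular reduction of the correct type. So the bulk of the work will be a sequence of reductions that put us in a position to apply Theorem~\ref{thm:ReqT}.

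First I would arrange the global hypotheses. Twisting $\rho$ and $\rho_f$ by a suitable power of the cyclotomic character (or by a finite-order character) is harmless for the conclusion, so we may assume $\det \rho = \chi_p$ and hence $\det \ol\rho = \ol\chi_p$, giving (A5). To get (A4) (that $\ol\rho$ is unramified everywhere) and (A8) (triviality at the bad primes) and the evenness hypotheses (A6), (A9), one solvably base changes: by a standard argument (using that solvable base change for $\GL_2$ preserves both modularity and non-modularity via Langlands--Arthur--Clozel, combined with the fact that $\ol\rho\vert_{G_{F(\zeta_p)}}$ stays irreducible by (A1) and the care taken with (A2) when $p = 5$) one finds a finite totally real solvable extension $F'/F$, of even degree, in which all the ramified primes of $\ol\rho$ and all primes of $S$ become trivial for $\ol\rho$, and so that the condition on $\Sigma_C$ having even cardinality can be met after possibly enlarging $S$ by one auxiliary prime satisfying the conditions defining $\Sigma^{\aux}$. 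This is exactly the ``base change trick'' of \cite{Kisin}; since $\rho$ is modular over $F$ if and only if $\rho\vert_{G_{F'}}$ is modular over $F'$ (solvable descent for Hilbert modular forms), it suffices to prove the theorem for $\rho\vert_{G_{F'}}$, $\ol\rho_f\vert_{G_{F'}}$. After this reduction I may assume (A1)--(A9) hold, with $S = \Sigma_p \cup \Sigma^{\ram} \cup \Sigma^{\aux}$.

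Next I would set up the deformation datum. Put $t = t_\rho$, which is definite by Proposition~\ref{prop:type} applied to (a conjugate of) $\rho$ — or rather, since $\rho$ is what we are trying to prove modular, we instead note that $t_\rho$ is visibly one of the definite types at each place because $\rho$ is genuinely weight two (the indefinite cases $A/C$, $AB/C$ are excluded by direct inspection: a representation cannot simultaneously be ordinary crystalline and a non-split extension of $1$ by $\chi_p$, \cf\ the remark after Proposition~\ref{prop:type}); thus $t$ is a definite type function, and $t_f \approx t$ forces $t_f = t$ since both are definite. Take $\Sigma^{\ram} = S \setminus (\Sigma_p \cup \Sigma^{\aux})$ with the type function $t$, giving a deformation datum $\mc{D}^\circ$; hypothesis (A9) was arranged above. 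The modularity hypothesis on $\ol\rho$ needed in \S\ref{ss:modhypo} — the existence of $U^\circ$ and $\mf m$ satisfying (B1)--(B6) — follows from the modularity of $f$ via the Jacquet--Langlands correspondence (transferring $f$ to the quaternion algebra $D$ of \S\ref{ss:s2def}), with (B4), (B5) encoding the requirement that $\rho_f$ have the prescribed type $A$ versus $B$ at the places in $\Sigma_{p,A}$, $\Sigma_{p,B}$, which holds precisely because $t_f = t_\rho = t$; the auxiliary prime in $\Sigma^{\aux}$ is chosen so that $U^\circ$ satisfies $(\ast)$ as in \S\ref{ss:s2def}.

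Finally, apply Theorem~\ref{thm:ReqT}: the map $\wt R^{\Box}_{\mc{D}^\circ} \to \T^{\Box}_{\mc{D}^\circ}$ is surjective with $p$-power-torsion kernel, and $R_{\mc{D}^\circ}$ is finite over $\ms{O}$. The representation $\rho$ (after conjugating into $\GL_2(\ms{O})$, enlarging $E$ if necessary so that $k$ contains the relevant eigenvalues, giving (A7)) defines an $\ms{O}$-point $x\colon R_{\Sigma(\mc{D}^\circ)} \to \ms{O}$; because $\rho\vert_{G_{F_v}}$ is admissible of type $t(v)$ at each $v \in \Sigma_p \cup \Sigma^{\ram}$ — this is where we use that $\rho$ is weight two with $t_\rho = t$, invoking Propositions~\ref{prop:pring} and~\ref{prop:lring} — this point factors through $R_{\mc{D}^\circ}$, and likewise through $\wt R_{\mc{D}^\circ}$. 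Since $\wt R^{\Box}_{\mc{D}^\circ}[1/p] \to \T^{\Box}_{\mc{D}^\circ}[1/p]$ is an isomorphism (surjective with torsion kernel, and source is $p$-torsion-free), the point $x$, viewed on the boxed rings after a formally smooth base change, lies in the image of $\Spec \T^{\Box}_{\mc{D}^\circ}$; unwinding the framing, $x$ corresponds to a $\ol{\Q}_p$-point of $\Spec \T_{\mc{D}^\circ}$, i.e.\ to a Hecke eigensystem occurring in $S_2(U^\circ)$ with the given traces of Frobenius. By Jacquet--Langlands again this eigensystem comes from a parallel weight two Hilbert eigenform $g$ over $F'$ with $\rho_g \cong \rho\vert_{G_{F'}}$; descending through the solvable extension $F'/F$ (cyclic base change, applied finitely many times) yields a Hilbert eigenform $g$ over $F$ with $\rho_g \cong \rho$, completing the proof. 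The step I expect to be most delicate is the solvable base change reduction, specifically ensuring simultaneously that $\ol\rho$ becomes unramified and trivial at all relevant primes, that the residual irreducibility over $F(\zeta_p)$ is preserved (the role of (A2)), and that the parity/cardinality conditions (A6), (A9) can be met — together with the descent of modularity at the very end.
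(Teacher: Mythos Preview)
Your overall strategy matches the paper's: reduce by solvable base change to a situation where (A1)--(A9) hold, set up a deformation datum, invoke Theorem~\ref{thm:ReqT}, and read off the eigenform from the Hecke side. Two points, however, are genuine gaps.

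First, the twisting step is not as simple as you indicate. The determinant of $\rho$ has the form $\psi\chi_p$ with $\psi$ a finite-order character, and to twist so that the determinant becomes $\chi_p$ you need a square root of $\psi^{-1}$, which need not exist over $F$. The paper handles this by first passing to a totally real pre-solvable extension over which $\psi$ (and likewise the nebentypus of $f$) becomes a square; this is the content of Lemma~\ref{lem:sqr} and Corollary~\ref{cor:sqr}, and only after that base change does the twist make sense.

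Second, and more substantively, your argument that $t_\rho$ is definite is incorrect. You claim that a representation cannot simultaneously be ordinary crystalline and an extension of $1$ by $\chi_p$, but this is false: the definition of admissible type $C$ does not require the extension to be non-split, and even non-split crystalline extensions of $1$ by $\chi_p$ exist (they are parametrized by $H^1_f(G_{F_v},\Q_p(1))$, which has dimension $[F_v:\Q_p]$). Such a representation is both admissible of type $A$ (take $\psi=1$) and of type $C$, hence has type $A/C$. Proposition~\ref{prop:type} rules this out for potentially modular $\rho$, but, as you yourself note, that is circular here. The paper avoids this issue entirely by taking $t = t_f$ rather than $t = t_\rho$: since $f$ is modular, $t_f$ is definite by Proposition~\ref{prop:type}, so $\mc{D}^\circ$ is a legitimate deformation datum. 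Then the hypothesis $t_\rho \approx t_f$ is used only at the very end, to check that $\rho$ is admissible of type $t_f(v)$ at each $v$ (which follows from $t_\rho(v) \approx t_f(v)$ and the definition of $\approx$), so that $\rho$ defines a $\ol{\Q}_p$-point of $R^{\Box}_{\mc{D}^\circ}$. With these two corrections your argument goes through and coincides with the paper's.
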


\begin{proof}
We call an extension $F'/F$ \emph{pre-solvable} if its Galois closure is
solvable.  This property behaves well in towers (unlike the property of
being solvable, which requires Galois) and under compositum.  By Langlands'
base change, it suffices to prove that $\rho$ is modular after a pre-solvable
extension.  To begin with, we can pass to a totally real pre-solvable
extension so that the following condition holds:
\begin{itemize}
\item $\det{\rho}$ and $\det{\rho_f}$ are of the form $\psi^2 \chi_p$
(with possibly different $\psi$).
\end{itemize}
(This is proved in Corollary~\ref{cor:sqr} below.)
We can thus replace $\rho$ and $\rho_f$ with twists so that they have
determinant $\chi_p$.  Pick a finite extension $E/\Q_p$ such that $\rho$ and
$\rho_f$ are defined over $E$.  After making another totally real pre-solvable
base change, and possibly enlarging $E$, we can now ensure the following:
\begin{itemize}
\item $\ol{\rho}$ still satisfies (A1) and (A2) from \S \ref{s:reqt}.
\item $\ol{\rho}$ is everywhere unramified.
\item $\rho$ and $\rho_f$ are admissible at all finite places.
\item $\ol{\rho}$ is trivial at all places where $\rho$ ramifies and all
places above $p$.
\item The set of primes at which $\rho_f$ have type $C$ has even cardinality.
\item $F$ has even degree over $\Q$.
\item $k$ contains the eigenvalues of $\ol{\rho}$.
\end{itemize}
We thus see that $\ol{\rho}$ satisfies (A1)--(A7) of \S \ref{ss:setup}.  Note
that if $v \nmid p$ is a place at which $\rho$ ramifies then the image of
inertia at $v$ under $\rho$ and $\rho_f$ is unipotent and $\pi_{f, v}$ is
special of conductor one.

We now
define a deformation datum $\mc{D}^{\circ}=(t, \Sigma^{\ram}, \Sigma^{\aux})$.
We let $\Sigma^{\ram}$ be the set of primes away from $p$ at which $\rho_f$
ramifies.  We let $t=t_f$.  We let
$\Sigma^{\aux}$ be the set $\{w\}$ where $w$ is any large place of $F$ not in
$\Sigma_p \cup \Sigma^{\ram}$ such that $\bN{w} \ne 1 \pmod{p}$ and
$\tr{\ol{\rho}}(\Frob_w) \ne \pm (1+\bN{w})$.  The existence of such a $w$ is
guaranteed by \cite[Lemma~4.11]{DDT} since $\ol{\rho}$ satisfies (A1) and the
order of $\ol{\chi}_p$ is even.  Conditions (A8) and (A9) of
\S \ref{ss:defrings} are fulfilled.

Now let $D$ be the unique quaternion algebra over $F$ ramified at the
infinite places and at $\Sigma_C$.  Let $U^{\circ}$ be
a standard compact open which is maximal everywhere except at $w$ and
satisfies $(\ast)$.  Let $\mf{m}$ be the maximal ideal of $\T(U^{\circ})$
determined by $f$ (after applying the Jacquet-Langlands correspondence).
Then $U^{\circ}$ and $\mf{m}$ satisfy conditions (B1)--(B6) of
\S \ref{ss:modhypo}, after possibly enlarging $k$.  (As remarked there,
conditions (B4) and (B5) follow from the fact $t=t_f$.)
We now apply Theorem~\ref{thm:ReqT} to conclude that $R^{\Box}_{\mc{D}^{\circ}}
\otimes_{\ms{O}} E =\T^{\Box}_{\mc{D}^{\circ}} \otimes_{\ms{O}} E$.
(Recall that $R^{\Box}_{\mc{D}^{\circ}}[1/p]=\wt{R}^{\Box}_{\mc{D}^{\circ}}
[1/p]$.) The representation $\rho$ gives a $\ol{\Q}_p$-point of
$R^{\Box}_{\mc{D}^{\circ}}$ (this is where we use $t_{\rho} \approx t_f$).
The corresponding $\ol{\Q}_p$-point of $\T^{\Box}_{\mc{D}^{\circ}}$ defines
the requisite Hilbert eigenform $g$.
\end{proof}

\subsection
We now give a proof of the elementary fact concerning characters used in the
first step of the proof of Theorem~\ref{thm:mlt2}.  We thank Bhargav
Bhatt for the proof.

\begin{lemma}
\label{lem:sqr}
Let $F$ be a number field and let $\psi:G_F \to \Q/\Z$ be a finite order
character.  Then $\psi$ is the square of another character if and only if
it is locally at all places.
\end{lemma}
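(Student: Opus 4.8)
The statement to prove is Lemma~\ref{lem:sqr}: a finite order character $\psi:G_F \to \Q/\Z$ is a square globally iff it is a square locally everywhere. One direction is trivial: if $\psi = \chi^2$ globally then restricting to each decomposition group shows $\psi$ is locally a square. For the converse, the plan is to reformulate the problem cohomologically. Writing the exact sequence $0 \to \Z/n \xrightarrow{\cdot 2} \Z/2n \to \Z/2 \to 0$ (or, after passing to the group $\frac{1}{n}\Z/\Z$ where $\psi$ is valued, the multiplication-by-$2$ sequence $0 \to \mu_2 \to \mu_{2n} \to \mu_n \to 0$ on coefficients), the obstruction to $\psi \in H^1(G_F, \Z/n)$ being in the image of $H^1(G_F, \Z/2n) \to H^1(G_F,\Z/n)$ is precisely the image of $\psi$ under the connecting map $\delta: H^1(G_F,\Z/n) \to H^2(G_F, \Z/2)$. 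So the assertion becomes: $\delta(\psi) = 0$ globally iff $\delta(\psi)$ restricts to $0$ in $H^2(G_{F_v},\Z/2)$ for all places $v$.

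The key input is therefore the injectivity of the localization map on the relevant $H^2$. Since $\Z/2 \cong \mu_2$ (here I would fix a choice; the coefficients are trivial $G_F$-modules, so this is harmless), we have $H^2(G_F,\Z/2) = H^2(G_F,\mu_2) \hookrightarrow \Br(F)[2]$, the $2$-torsion of the Brauer group. The fundamental exact sequence of class field theory, $0 \to \Br(F) \to \bigoplus_v \Br(F_v) \to \Q/\Z \to 0$, shows in particular that the map $\Br(F) \to \prod_v \Br(F_v)$ is \emph{injective} (the Hasse principle for the Brauer group, or the Albert--Brauer--Hasse--Noether theorem). Hence the composite $H^2(G_F,\Z/2) \to \prod_v H^2(G_{F_v},\Z/2)$ is injective as well. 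Therefore $\delta(\psi)$, which maps to $\bigoplus_v \delta(\psi|_{G_{F_v}})$ under localization (the connecting map being functorial in restriction), is zero as soon as all its local components vanish --- and the local components vanish precisely because $\psi$ is a square locally at every $v$. This gives $\delta(\psi) = 0$, hence $\psi$ lifts to some $\chi \in H^1(G_F,\Z/2n)$ with $2\chi = \psi$, i.e.\ $\psi$ is a square; note that although $\chi$ a priori lands in $\frac{1}{2n}\Z/\Z$ it is still a finite order character into $\Q/\Z$, which is all that is required.

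The only mild subtlety --- and the step I would be most careful about --- is the reduction to a single modulus $n$ and the coefficient identification $\Z/2 \cong \mu_2$: one must observe that $\psi$ has values in $\frac{1}{n}\Z/\Z$ for some $n$, run the whole argument with these fixed cyclic coefficients, and check that the connecting homomorphism for the sequence $0 \to \frac{1}{2n}\Z \big/ \frac{1}{n}\Z \to \Q/\Z \xrightarrow{\cdot 2} \Q/\Z$ restricted appropriately really is the obstruction to dividing by $2$. Once that bookkeeping is in place, everything else is the functoriality of connecting maps and the injectivity of $\Br(F) \to \prod_v \Br(F_v)$, so there is no real obstacle beyond invoking global class field theory. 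I would also remark that finiteness of $\psi$ is used only to know $\psi$ factors through some $\frac1n\Z/\Z$; the conclusion then produces a square root automatically of finite order.
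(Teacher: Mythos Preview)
Your approach is correct and is essentially the same as the paper's: both reduce to the vanishing of an obstruction class in $H^2(G_F,\Z/2\Z)\cong\Br(F)[2]$ and invoke the local-to-global injectivity of the Brauer group. The paper streamlines your argument by working directly with the exact sequence $0\to\Z/2\Z\to\Q/\Z\xrightarrow{2}\Q/\Z\to 0$ of trivial $G_F$-modules, which avoids all the finite-level bookkeeping you flag as the ``mild subtlety''; note also that your first displayed sequence $0\to\Z/n\xrightarrow{\cdot 2}\Z/2n\to\Z/2\to 0$ is not the one you actually want (the connecting map there goes $H^1(\Z/2)\to H^2(\Z/n)$), though your parenthetical $\mu$-sequence $0\to\mu_2\to\mu_{2n}\to\mu_n\to 0$ and your subsequent description of $\delta$ are the correct ones.
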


\begin{proof}
We have an exact sequence of abelian groups
\begin{displaymath}
\xymatrix{
0 \ar[r] & \Z/2 \Z \ar[r] & \Q/\Z \ar[r]^2 & \Q/\Z \ar[r] & 0.}
\end{displaymath}
Regarding these as trivial Galois modules and taking cohomology gives
\begin{displaymath}
\xymatrix{
\Hom(G_{F_v}, \Q/\Z) \ar[r]^2 & \Hom(G_{F_v}, \Q/\Z) \ar[r] & \Br(F_v)[2] \\
\Hom(G_F, \Q/\Z) \ar[r]^2 \ar[u] & \Hom(G_F, \Q/\Z) \ar[r] \ar[u] &
\Br(F)[2] \ar[u] }
\end{displaymath}
where $\Br(\cdot)[2]$ is the 2-torsion of the Brauer group.  Let $\psi$ be
a character of $G_F$ and $\alpha$ the class in $\Br(F)[2]$ given by the
above map.  If $\psi$ is locally a square then $\alpha$ maps to zero in
each $\Br(F_v)[2]$.  This implies that $\alpha=0$ and so $\psi$ is a square.
\end{proof}

\begin{corollary}
\label{cor:sqr}
Let $F$ be a totally real field, let $M/F$ be a finite extension and let
$\psi:G_F \to \Q/\Z$ be a finite order character such that $\psi(c)=1$ for
each complex conjugation $c$.  Then there exists a finite, totally real,
pre-solvable extension $F'/F$, linearly disjoint from $M$, such that
$\psi \vert_{G_{F'}}=\eta^2$ where $\eta:G_{F'} \to \Q/\Z$ is an unramified
character.
\end{corollary}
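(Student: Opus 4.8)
The plan is to apply Lemma~\ref{lem:sqr} after arranging, by two pre-solvable base changes, first that $\psi$ becomes unramified and then that the resulting square root does too. The key preliminary observation is that for \emph{any} totally real number field $F'$ the character $\psi\vert_{G_{F'}}$ is automatically a local square at every archimedean place of $F'$: such a place $w$ is real and $G_{F'_w}$ is generated by a complex conjugation $c$ of $G_F$, so $\psi\vert_{G_{F'_w}}$ is trivial by hypothesis; and $\psi\vert_{G_{F'_w}}$ is a local square at every finite place $w$ where it is unramified, since an unramified character factors through the procyclic group $\widehat{\Z}$ and squaring is surjective there. Hence, by Lemma~\ref{lem:sqr}, it is enough to find a finite totally real pre-solvable extension $F_1/F$, linearly disjoint from $M$, with $\psi\vert_{G_{F_1}}$ unramified at all finite places; then $\psi\vert_{G_{F_1}}=\eta_1^2$ for some finite order character $\eta_1$ of $G_{F_1}$. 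Since $\eta_1^2$ is unramified, $\eta_1$ has order $\le 2$ on each finite inertia group, hence is ramified only at finitely many finite places and only quadratically there. A second base change of exactly the same kind produces a finite totally real pre-solvable $F'/F_1$, linearly disjoint from $MF_1$, with $\eta_1\vert_{G_{F'}}$ unramified at the finite places; then $F'/F$ is totally real, pre-solvable and linearly disjoint from $M$ (all three pass through towers), and $\psi\vert_{G_{F'}}=\eta^2$ with $\eta=\eta_1\vert_{G_{F'}}$. Here ``unramified'' should be read at the finite places: $\eta$ may be nontrivial at a real place, but this is harmless, since $\eta^2$ is trivial there in any case.

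Thus everything reduces to the following construction, to be carried out twice: given a totally real field $K$, a finite extension $N/K$ and a finite order character $\phi:G_K\to\Q/\Z$ ramified at a finite set $T$ of finite places, produce a cyclic totally real extension $K'/K$, linearly disjoint from $N$, with $\phi\vert_{G_{K'}}$ unramified at all finite places. For $v\in T$ the cyclic extension of $K_v$ cut out by $\phi\vert_{G_{K_v}}$ is one on which $\phi$ is trivial on inertia, so it gives an admissible local condition at $v$; I would then apply Grunwald--Wang to realize these local conditions at all $v\in T$ by a global cyclic extension $K'/K$ which is moreover trivial at every archimedean place (so that $K'$ is totally real) and inert at one auxiliary prime $\mf{q}_\ell$ for each rational prime $\ell\mid[K':K]$, where $\mf{q}_\ell$ is chosen by Chebotarev to split completely in the Galois closure of $N/K$. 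Inertness at $\mf{q}_\ell$ forces the degree-$\ell$ subextension of $K'/K$ out of that Galois closure, so $K'\cap N=K$; and $K'/K$, being abelian, is pre-solvable. A finite place $v\notin T$ at which $K'/K$ happens to ramify causes no harm, since $\phi$ is already trivial on inertia at $v$.

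The single point requiring care is the Grunwald--Wang step: one must impose compatible local conditions at $T$, at the archimedean places and at the auxiliary primes simultaneously, and one must avoid the Grunwald--Wang ``special case'' that can obstruct cyclic extensions of a prescribed $2$-power degree. This is not serious --- the local conditions we need are also met by larger-degree extensions, so one simply enlarges $[K':K]$ to step outside the special case --- but it is the one spot where the argument is not purely mechanical. (Alternatively, one could realize the local conditions together with the linear disjointness from $N$ by a Moret--Bailly type theorem, as elsewhere in this paper, at the cost of checking pre-solvability separately.) The remaining ingredients --- the archimedean bookkeeping, the use of $\psi(c)=1$ to make the infinite places cost nothing, and the order-$\le 2$ ramification of the square root --- are routine.
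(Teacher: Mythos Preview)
Your approach is essentially the same as the paper's: make a first pre-solvable totally real base change to kill the ramification of $\psi$, apply Lemma~\ref{lem:sqr} using the observation that an unramified character is locally a square at every finite place (via $\widehat{\Z}$) and trivially a square at real places (via $\psi(c)=1$), then make a second base change to kill the ramification of the square root $\eta$. The paper simply asserts that the required totally real pre-solvable extensions linearly disjoint from $M$ exist, whereas you supply an explicit construction via Grunwald--Wang together with a Chebotarev/inertness trick for linear disjointness; this is a reasonable and correct way to fill in that step, and your caveat about the Grunwald--Wang special case is apt but, as you note, easily circumvented. Your remark that $\eta$ may be nontrivial at real places is a point the paper passes over in silence; it is harmless for the application, and ``unramified'' in this context is tacitly read at the finite places.
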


\begin{proof}
Pick a finite, totally real, pre-solvable extension $F_1/F$ linearly disjoint
from $M$ such that $\psi \vert_{G_{F_1}}$ is everywhere unramified.  Then
$\psi \vert_{G_{F_1}}$ is locally a square since at any finite place it can
be regarded as a character of $\widehat{\Z}$ (and any map $\widehat{\Z} \to
\Q/\Z$ is a square) while at infinite places it is
trivial (and thus a square).  Thus $\psi \vert_{G_{F_1}}=\eta^2$.  Now pick
a finite, totally real, pre-solvable extension $F'/F_1$ linearly disjoint from
$M$ such that $\eta \vert_{G_{F'}}$ is everywhere unramified.
\end{proof}

\subsection
The hypothesis $t_{\rho} \approx t_f$ occurring in Theorem~\ref{thm:mlt2} is
more restrictive than one would like.  We now examine ways of removing or
relaxing it.  We begin by stating the following conjecture:

\begin{conjecture}
\label{conj:type}
Let $F$ be a totally real field, $f$ a cuspidal parallel weight two Hilbert
eigenform with coefficients in $\ol{\Q}_p$ unramified outside of some finite
set $S$, $t$ a definite type function on $S$ and $M/F$ a finite extension.
Then there exists a finite, solvable, totally real extension $F'/F$, linearly
disjoint from $M$, and a cuspidal parallel weight two Hilbert eigenform $f'$
over $F'$ with coefficients in $\ol{\Q}_p$ such that $t_{f'}=
t \vert_{F'}$ and $\ol{\rho}_{f'}=\ol{\rho}_f \vert_{G_{F'}}$.
\end{conjecture}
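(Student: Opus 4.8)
The plan is to split the problem into a Galois-theoretic construction and a modularity statement: first produce, after a suitable solvable totally real base change, a $p$-adic lift of $\ol{\rho}_f$ of the prescribed type, and then prove that lift modular. By Langlands' base change it is enough to find a finite, solvable, totally real extension $F'/F$ linearly disjoint from $M$ and a cuspidal parallel weight two Hilbert eigenform $f'$ over $F'$ with $\ol{\rho}_{f'} \cong \ol{\rho}_f \vert_{G_{F'}}$ and $t_{f'} = t \vert_{F'}$. The difficulty is that the obvious candidate --- the base change $f_{F'}$ of $f$ itself, which does realize $\ol{\rho}_f \vert_{G_{F'}}$ --- has type $t_f \vert_{F'}$, and in general $t_f \vert_{F'} \ne t \vert_{F'}$; moreover this discrepancy cannot be absorbed by any further base change, since crystallinity at the places above $p$ and the Steinberg property at the places away from $p$ are preserved under restriction of Galois representations. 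So the content of the conjecture is exactly that one can change the type at the bad places while keeping the residual representation.

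For the construction I would first normalize: assuming $\ol{\rho}_f$ satisfies (A1) and (A2) of \S \ref{s:reqt} (the dihedral case, where these cannot be arranged by base change, has to be treated separately and explicitly), pass to a finite solvable totally real extension $F'/F$ linearly disjoint from $M$ over which, after twisting by a finite order character (available by Corollary~\ref{cor:sqr}), the restriction $\ol{\rho}_f \vert_{G_{F'}}$ has determinant $\ol{\chi}_p$ and is trivial at the places of $S$. Since $\ol{\rho}_f \vert_{G_{F'}} \cong \ol{\rho}_{f_{F'}}$ is modular, one can then run the argument of \S \ref{s:defring}: the local framed deformation rings attached to $t \vert_{F'}$ (those of Proposition~\ref{prop:pring} and Proposition~\ref{prop:lring}, or their analogues for non-trivial residual representations used in \S \ref{s:problems}) are non-zero, flat over $\ms{O}$ and formally smooth after inverting $p$, so the corresponding global deformation ring is finite over $\Z_p$ of positive rank and hence has a $\ol{\Q}_p$-point $\rho'$. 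Twisting back, $\rho'$ is a finitely ramified, odd, weight two lift of $\ol{\rho}_f \vert_{G_{F'}}$ with $t_{\rho'} = t \vert_{F'}$, and by controlling the monodromy operator as in \S \ref{s:problems} one can make it have definite type $C$ at the places where $t$ prescribes that.

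The remaining step --- showing $\rho'$ modular --- is where the real obstacle lies, and it is the reason the statement is only conjectural. One cannot simply apply Theorem~\ref{thm:mlt2} to $\rho'$ and $f_{F'}$, because that theorem requires $t_{\rho'} \approx t_{f_{F'}}$, which we have deliberately arranged to fail; and even the flexibility at the places above $p$ built into the modularity lifting theorems of \S \ref{s:modlift} still requires the type of $\rho'$ to agree with that of the comparison form at each place above $p$. The potential modularity of Theorem~\ref{mainthm} (with its refinement Theorem~\ref{thm:pmod2} and the strengthening in \S \ref{s:additional}) makes $\rho'$ modular over some totally real $F''/F'$, but this $F''/F'$ is not solvable and so does not descend to a form over $F'$. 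What is genuinely missing is a modularity lifting theorem comparing two weight two representations with different types at the bad places --- equivalently, a level raising and lowering theorem at the finite places away from $p$ and a weight-variation ("companion forms") theorem at the places above $p$ for mod $p$ Hilbert eigenforms, i.e. instances of the Serre-type conjectures over $F'$. One could instead try an inductive argument handling the places of $S$ one at a time, using further solvable base changes to engineer the congruence conditions on $\bN{v}$ and the positions of the $\Frob_v$-eigenvalue ratios that make Ribet-style level raising available, and feeding each intermediate step into the modularity lifting theorem of this paper; but realizing the prescribed local types on the automorphic side, rather than merely on the Galois side, is precisely where new input beyond the present paper is required.
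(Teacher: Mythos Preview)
The statement is a \emph{conjecture}, and the paper does not prove it in full; it only establishes special cases (Proposition~\ref{prop:lev} when $t$ and $t_f$ agree above $p$, and the two propositions of \S 4.5 for certain switches at $p$). Your proposal is therefore not being compared against a proof but against the paper's own discussion of what is known and what is missing. On that score your analysis is largely accurate: the paper explicitly remarks that the Galois-side analogue holds via \S\ref{s:problems}, and that the obstruction is promoting the resulting lift to a modular form without already having a form of matching type to feed into Theorem~\ref{thm:mlt2}.

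One correction, though. In your final paragraph you float, and then dismiss, an inductive Ribet-style level-raising argument handling the primes one at a time after solvable base changes to force $\bN v \equiv 1 \pmod p$ and the Frobenius-eigenvalue condition. You say this ``is precisely where new input beyond the present paper is required.'' For places away from $p$ that is not so: this is exactly the route the paper takes in Proposition~\ref{prop:lev}, via Lemma~\ref{lem:lower} (Skinner--Wiles level lowering), Lemma~\ref{lem:raise} (Ribet-style level raising on the quaternionic side), and the inductive Lemma~\ref{lem:raise2}. So the automorphic-side approach you describe does succeed away from $p$, and you should not have discarded it. Conversely, your preferred Galois-first strategy --- build $\rho'$ of the target type by Theorem~\ref{thm:lift} and then prove it modular --- is exactly what the paper uses for the $A\leftrightarrow C$ switch at $p$ under an ordinarity/goodness hypothesis, invoking the Skinner--Wiles nearly-ordinary lifting theorem \cite{SkinnerWiles2} in place of Theorem~\ref{thm:mlt2}. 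The genuinely open territory is the remaining type changes at $p$ (e.g.\ $B \leftrightarrow C$), where neither a suitable modularity lifting theorem nor an automorphic congruence argument is currently available; your identification of this as the core obstruction is correct.
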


The results of \S \ref{s:problems} can be used to show that the analogous
conjecture holds on the Galois side.
The condition $t_{f'}=t \vert_{F'}$ in the conjecture is equivalent
to $t_{f'} \approx t \vert_{F'}$ as each type function is definite.
If this conjecture held then we could remove the $t_{\rho} \approx t_f$
condition from Theorem~\ref{thm:mlt2}.  Indeed, say $\ol{\rho}=\ol{\rho}_f$
is given.  By the conjecture, one can then make a solvable base change $F'/F$
so that $\ol{\rho} \vert_{G_{F'}}=\ol{\rho}_{f'}$ and $t_{\rho} \vert_{F'}
\approx t_{f'}$.  Theorem~\ref{thm:mlt2} then gives that $\rho \vert_{G_{F'}}$
is modular.  Finally, Langlands' base change gives that $\rho$ is modular.

In the following sections, we will establish certain instances of
Conjecture~\ref{conj:type}.  Each such instance will give a strengthening
of Theorem~\ref{thm:mlt2}, as outlined in the previous paragraph.

\subsection
We now establish Conjecture~\ref{conj:type} away from $p$.  Precisely,
we prove the following:

\begin{proposition}
\label{prop:lev}
Conjecture~\ref{conj:type} is true if $\ol{\rho}_f$ satisfies (A1) and
$t_f \vert_{\Sigma_p}=t \vert_{\Sigma_p}$.
\end{proposition}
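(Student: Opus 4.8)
The plan is to reduce Conjecture~\ref{conj:type} in this special case to a sequence of solvable base changes that adjust the automorphic representation $\pi_f$ place by place, using known local-global compatibility for Hilbert modular forms together with explicit local computations at the finitely many primes in $S$ away from $p$. The hypothesis $t_f\vert_{\Sigma_p}=t\vert_{\Sigma_p}$ means the $p$-type is already correct, so the only work is at primes $v\in S$ with $v\nmid p$, where we must pass from $t_f(v)$ to $t(v)$. Since both $t_f(v)$ and $t(v)$ are definite prime-to-$p$-types (each is $AB$ or $C$), the possible transitions are: $AB\to AB$ (nothing to do), $AB\to C$, $C\to AB$, and $C\to C$. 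The case $C\to AB$ is achieved by a solvable base change that makes $\rho_f\vert_{G_{F_v}}$ unramified: since $t_f(v)=C$ means $\rho_f\vert_{G_{F_v}}$ is a (possibly nonsplit) extension of $1$ by $\chi_p$ which becomes unramified over a finite extension, passing to a place $w$ over $v$ in a suitable solvable totally real extension kills the ramification; the local component $\pi_{f',w}$ then becomes unramified principal series, so $t_{f'}(w)=AB$. The case $C\to C$ requires no change. The genuinely interesting direction is $AB\to C$.

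For the $AB\to C$ transition, the point is that over a suitable solvable totally real extension one can force the local component to become special. The key step here is an application of a theorem of Moret-Bailly (used repeatedly in \S\ref{s:pmod}): one constructs a totally real solvable extension $F'/F$, linearly disjoint from $M$, in which the chosen primes $v$ split into primes $w$ with prescribed local behavior — in particular, with $\pi_{f',w}$ being an unramified \emph{twist of Steinberg}. Concretely, one picks an auxiliary cusp form (or uses the original $\pi_f$ itself) and a prime $\ell'$, and arranges a solvable extension over which the relevant place becomes special; alternatively one can use level-raising for Hilbert modular forms (à la Rajaei or Taylor) at the primes in question, which is available since $\ol{\rho}_f\vert_{G_{F(\zeta_p)}}$ is absolutely irreducible by (A1) and since the appropriate level-raising congruence can be arranged after a solvable base change that forces $\bN w\equiv -1\pmod p$ or $\bN w \equiv 1 \pmod p$ with the eigenvalue condition. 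The residual representation is unchanged throughout because base change and level raising (being a mod $p$ congruence) both preserve $\ol\rho_f$; this gives $\ol\rho_{f'}=\ol\rho_f\vert_{G_{F'}}$.

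Carrying this out in order: (1) enlarge the coefficient field and apply Proposition~\ref{prop:type} to note $t_f$ is definite; (2) for each $v\in S$, $v\nmid p$ with $t_f(v)=C$ and $t(v)=AB$, pass through a solvable totally real extension making $\rho_f$ unramified at the primes above $v$, invoking Moret-Bailly to keep control over the other primes and linear disjointness from $M$; (3) for each $v$ with $t_f(v)=AB$ and $t(v)=C$, perform level-raising at $v$ — this is where we may first need an auxiliary solvable base change so that the numerical congruence condition permitting level-raising is satisfied, and then we invoke the level-raising result to produce a congruent eigenform $f'$ which is special at $v$; (4) check that throughout, the $p$-type at places over $p$ is preserved (base change of a de Rham representation of the given type stays of that type, by Proposition~\ref{prop:type} applied to $f'$ — or directly, since ordinariness, crystallinity, and being non-potentially-crystalline are all preserved under finite base change), so $t_{f'}\vert_{\Sigma_p}=t\vert_{\Sigma_p}$ still holds; (5) compose all the (pre-)solvable extensions — noting the class of extensions with solvable Galois closure is closed under towers and compositum, as observed in the proof of Theorem~\ref{thm:mlt2} — and conclude. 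I expect step (3), the $AB\to C$ level-raising, to be the main obstacle: one must ensure the level-raising hypothesis can always be met after a further solvable base change without disturbing the $p$-type or the residual representation, and one must check that the resulting form is genuinely new/special at $v$ rather than merely congruent; verifying that the numerical conditions ($\bN w$ in the right class mod $p$, distinctness of Frobenius eigenvalues, non-Eisenstein-ness, which follows from (A1)) can be simultaneously arranged at all the finitely many primes by a single Moret-Bailly or Chebotarev argument is the crux.
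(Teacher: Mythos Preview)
Your overall architecture is right: the problem reduces to adjusting the prime-to-$p$ type, and the $AB\to C$ direction is handled by level-raising after a preparatory solvable base change (this is exactly Lemma~\ref{lem:raise2} in the paper, built on Lemma~\ref{lem:raise}). Your invocation of Moret--Bailly is a red herring here --- it is not used in this proof --- but that is harmless.

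The genuine gap is in your treatment of the $C\to AB$ transition. You write that a type-$C$ local representation ``becomes unramified over a finite extension,'' and propose to realise $C\to AB$ by a solvable base change of the Galois representation. This is false when the type is \emph{definitely} $C$: in that case $\rho_f\vert_{G_{F_v}}$ is (after a finite extension) a \emph{non-split} extension of $1$ by $\chi_p$, so inertia acts through an infinite unipotent and the monodromy operator $N$ is nonzero. Finite base change does not kill $N$; equivalently, the base change of a Steinberg local component is again Steinberg. So base-changing $f$ will never convert a definite type-$C$ place into type $AB$.

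What is actually required for $C\to AB$ is automorphic \emph{level-lowering}: one must produce a \emph{different} eigenform $f'$, congruent to (a base change of) $f$ modulo $p$, whose automorphic representation is unramified at the relevant places. This is the content of Lemma~\ref{lem:lower}, which is a mild adaptation of the Skinner--Wiles argument on a suitable quaternion algebra. The paper's proof runs: twist to trivial nebentypus (Corollary~\ref{cor:sqr}); apply Lemma~\ref{lem:lower} to strip all prime-to-$p$ ramification while preserving the $p$-type; then apply the inductive level-raising Lemma~\ref{lem:raise2} to insert special primes exactly where $t(v)=C$; finally clean up the auxiliary place $w$. Once you replace your step (2) with an appeal to level-lowering rather than base change, your outline becomes essentially the paper's proof.
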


In fact, \cite[\S 3.5]{Kisin} essentially contains a proof of this result.
We follow that section very closely, making only the necessary minor changes.
Before getting into the proof, we note that the proposition gives
Theorem~\ref{mainthm3}, which we restate in our present language:

\begin{theorem}
\label{thm:mlt}
Let $\rho:G_F \to \GL_2(\ol{\Q}_p)$ an odd, finitely ramified, weight two
representation such that $\ol{\rho}$ satisfies (A1) and (A2) of
\S \ref{s:reqt}.  Assume there exists a parallel weight two Hilbert eigenform
$f$ with coefficients in $\ol{\Q}_p$ such that $\ol{\rho} \cong \ol{\rho}_f$
and $t_{\rho} \vert_{\Sigma_p} \approx t_f \vert_{\Sigma_p}$.  Then there is a
Hilbert eigenform $g$ with coefficients in $\ol{\Q}_p$ such that $\rho \cong
\rho_g$.
\end{theorem}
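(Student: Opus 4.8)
The plan is to deduce Theorem~\ref{thm:mlt} from Theorem~\ref{thm:mlt2} by reducing the general ``type matching at $p$'' hypothesis to the ``type matching everywhere'' hypothesis via a solvable base change. The point is that Theorem~\ref{thm:mlt2} requires $t_\rho \approx t_f$ at \emph{all} places of $S$, while here we only assume it at places above $p$. So the first step would be to apply Proposition~\ref{prop:lev} — the case of Conjecture~\ref{conj:type} away from $p$ — to the eigenform $f$, the set $S$ of primes where $\rho$ and $f$ together ramify (which is finite since $\rho$ is finitely ramified), and the type function $t$ chosen to agree with $t_f$ at places above $p$ and with $t_\rho$ at places not above $p$. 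Note that $t_\rho$ is definite at primes away from $p$ by Proposition~\ref{prop:type} once we know $\rho$ is potentially modular; but to invoke that we would want a cleaner route, so instead one simply takes $t$ at $v \nmid p$ to be the definite type determined by the local behavior of $\rho$, which exists as long as $\det\rho$ is a finite order character times cyclotomic — this holds because $\rho$ is weight two. (Alternatively, since $\ol\rho_f = \ol\rho$, the local types of $\rho$ away from $p$ are constrained, but the direct argument suffices.)

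Proposition~\ref{prop:lev} then produces a finite, solvable, totally real extension $F'/F$ and a parallel weight two Hilbert eigenform $f'$ over $F'$ with $\ol\rho_{f'} = \ol\rho_f|_{G_{F'}} = \ol\rho|_{G_{F'}}$ and $t_{f'} = t|_{F'}$. The second step is to check that $\rho|_{G_{F'}}$ and $f'$ satisfy the hypotheses of Theorem~\ref{thm:mlt2}: the representation $\rho|_{G_{F'}}$ is still odd and weight two (these properties are preserved by restriction to a totally real subextension), $\ol{\rho|_{G_{F'}}} = \ol\rho|_{G_{F'}}$ still satisfies (A1) and (A2) — this requires that $F'$ be chosen linearly disjoint from the relevant fixed fields, which is exactly the linear disjointness clause built into Proposition~\ref{prop:lev} (apply it with $M$ equal to a suitable finite extension cutting out the relevant data, e.g. the fixed field of the kernel of the projective representation together with $F(\zeta_p)$). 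Most importantly, $t_{\rho|_{G_{F'}}} = t_\rho|_{F'}$, and by construction $t|_{F'}$ agrees with $t_\rho|_{F'}$ away from $p$ and with $t_f|_{F'}$ above $p$; combined with the hypothesis $t_\rho|_{\Sigma_p} \approx t_f|_{\Sigma_p}$ and the fact $t_{f'} = t|_{F'}$, we get $t_{\rho|_{G_{F'}}} \approx t_{f'}$ at every place of $S'$.

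The third step is then simply to apply Theorem~\ref{thm:mlt2} to conclude that $\rho|_{G_{F'}} \cong \rho_g$ for some Hilbert eigenform $g$ over $F'$, i.e. $\rho|_{G_{F'}}$ is modular. The final step is to descend: since $F'/F$ is solvable, Langlands' cyclic base change (applied inductively along the solvable tower) implies that $\rho$ itself is modular over $F$, giving the desired eigenform $g$ over $F$. I would also need to reconcile the normalizations: Theorem~\ref{thm:mlt2} as stated assumes $\rho$ is unramified outside a given finite set $S$, so I should at the outset fix $S$ to be the (finite) set of places where $\rho$ ramifies, together with $\Sigma_p$ and the places where $f$ ramifies.

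The main obstacle I expect is bookkeeping the type function and the linear disjointness conditions correctly through the base change — in particular making sure that the type function $t$ fed into Proposition~\ref{prop:lev} is genuinely definite at all places (which relies on $\rho$ being weight two at $p$, where one must be slightly careful since $t_\rho|_{\Sigma_p}$ is only assumed $\approx$-equivalent to $t_f|_{\Sigma_p}$, possibly indefinite, so one picks a definite $t$ refining it appropriately) and that all of (A1)--(A7) and (B1)--(B6) of \S\ref{s:reqt} can be simultaneously arranged by a single solvable base change — but this is handled uniformly by the linear disjointness clause of Proposition~\ref{prop:lev}, exactly as in the proof of Theorem~\ref{thm:mlt2}, so the argument should go through without essentially new input.
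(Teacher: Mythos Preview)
Your proposal is correct and matches the paper's own argument essentially verbatim: the paper deduces Theorem~\ref{thm:mlt} from Theorem~\ref{thm:mlt2} exactly by invoking Proposition~\ref{prop:lev} to replace $f$ (after a solvable, totally real base change linearly disjoint from the relevant data) by an $f'$ whose type function agrees with a definite refinement of $t_\rho$ at all places, then applying Theorem~\ref{thm:mlt2} over $F'$ and descending via Langlands' base change. Your bookkeeping remarks about choosing a definite refinement of $t$ and controlling linear disjointness are precisely the points the paper handles implicitly in the paragraph preceding Proposition~\ref{prop:lev}.
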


We now begin on the proof of Proposition~\ref{prop:lev}.  We need a few
lemmas.

\begin{lemma}
\label{lem:lower}
Let $f$ be a parallel weight two Hilbert eigenform over $F$ with coefficients
in $\ol{\Q}_p$ and trivial nebentypus and let $M/F$ be a finite extension.
Assume that $\ol{\rho}_f$ satisfies (A1).  Then there exists a
finite, pre-solvable, totally real extension $F'/F$ linearly disjoint from
$M$ and a parallel weight two Hilbert modular form $f'$ over $F'$ with
coefficients in $\ol{\Q}_p$ and trivial nebentypus such that $\ol{\rho}_{f'}
\cong \ol{\rho}_f \vert_{G_{F'}}$; $t_{f'}(v)=(t_f \vert_{F'})(v)$ at places
$v$ above $p$; and $\pi_{f'}$ is special of conductor one at the primes above
$p$ for which $\rho_{f'}$ is type $C$, and unramified everywhere else.
\end{lemma}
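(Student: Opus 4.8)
The plan, following \cite[\S 3.5]{Kisin}, is to first pass to a large pre-solvable totally real extension of $F$ on which $\ol{\rho}_f$ and its ramification become as simple as possible, and then to produce $f'$ as a characteristic zero point of a minimal global deformation ring, using the $R=\T$ theorem (Theorem~\ref{thm:ReqT}). First I would run through a finite chain of solvable totally real base changes, each linearly disjoint from $M$ (so that their composite $F_1/F$ is pre-solvable), arranging for the following to hold over $F_1$, writing $\ol{\rho}=\ol{\rho}_{f_1}$: (A1) and (A2) of \S\ref{ss:setup} still hold; $\zeta_p\in F_1$; $\ol{\rho}$ is everywhere unramified and trivial at every place above $p$ (so that it is also unramified at the special places $v\nmid p$ of $f_1$); at every place $v\nmid p$ the component $\pi_{f_1,v}$ is unramified or special (ramified principal series and supercuspidal components become unramified over a large enough extension, while special ones stay special); $[F_1:\Q]$ is even and the number of places of $F_1$ above $p$ at which $f_1$ has type $C$ is even (this parity is adjusted, if necessary, by letting one type $C$ place above $p$ split); and $k$ contains the eigenvalues of $\ol{\rho}$. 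Here one uses that the definite type at a place above $p$ is unchanged under base change, so $t_{f_1}=t_f|_{F_1}$ on $\Sigma_p$, and that trivial nebentypus gives $\det\rho_{f_1}=\chi_p$; at a type $C$ place $v\mid p$, triviality of $\ol{\rho}$ and $p\ne 2$ force the unramified character in the defining extension of Proposition~\ref{prop:pring} to be trivial, so there $\pi_{f_1,v}$ is special of conductor one.

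Next, over $F_1$, I would take the deformation datum $\mc{D}^{\circ}=(t,\Sigma^{\ram},\Sigma^{\aux})$ with $\Sigma^{\ram}=\emptyset$, with $t=t_{f_1}$ on $\Sigma_p$, and $\Sigma^{\aux}=\{w\}$ for a place $w$ with $\bN{w}\ne 1\pmod p$ and $\tr{\ol{\rho}(\Frob_w)}\ne\pm(1+\bN{w})$ (which exists by \cite[Lemma~4.11]{DDT}, as in the proof of Theorem~\ref{thm:mlt2}). Then $\Sigma_C=\Sigma_{p,C}$ has even cardinality and $\ol{\rho}$ is trivial along $\Sigma_p\cup\Sigma^{\ram}$, so (A8) and (A9) hold and the quaternion algebra $D$ of \S\ref{ss:s2def} exists. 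The essential point is that this datum admits a modular point: one must produce a standard compact open $U^{\circ}$, maximal away from $w$ and satisfying $(\ast)$, and a maximal ideal $\mf{m}$ of $\T(U^{\circ})$ satisfying (B1)--(B6). Since $f_1$ is special along $\Sigma_{p,C}$, it transfers to $D$ under Jacquet--Langlands; what remains is to lower its level at the places not above $p$ to level prime to $w$ while keeping its behavior at $p$ fixed --- a level lowering statement away from $p$, valid because $\ol{\rho}$ is unramified at every $v\nmid p$. This is the part of \cite[\S 3.5]{Kisin} that must be followed carefully. Conditions (B4) and (B5) hold because $t=t_{f_1}$, as noted in \S\ref{ss:modhypo}.

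To conclude, Theorem~\ref{thm:ReqT} together with the identification $R^{\Box}_{\mc{D}^{\circ}}[1/p]=\wt{R}^{\Box}_{\mc{D}^{\circ}}[1/p]=\T^{\Box}_{\mc{D}^{\circ}}[1/p]$ shows, since $\T_{\mc{D}^{\circ}}\ne 0$ by the previous step, that $\T_{\mc{D}^{\circ}}$ has a $\ol{\Q}_p$-point; by Jacquet--Langlands this point gives a parallel weight two Hilbert eigenform $f'$ over $F_1$ with trivial nebentypus. By construction $\ol{\rho}_{f'}=\ol{\rho}=\ol{\rho}_f|_{G_{F_1}}$ and $t_{f'}(v)=t(v)=(t_f|_{F_1})(v)$ for all $v\mid p$, with $\pi_{f',v}$ special of conductor one exactly at the places $v\mid p$ of type $C$. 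Finally $f'$ is unramified at every $v\nmid p$: at $v\ne w$ because $\Sigma^{\ram}=\emptyset$; at $w$ because, $\bN{w}-1$ being prime to $p$ and $\ol{\rho}$ being unramified at $w$, the localization of $S_2(U^{\circ})$ at $\mf{m}$ sees only forms that are unramified or special at $w$, and a special form there would give $\tr{\ol{\rho}(\Frob_w)}=\pm(1+\bN{w})$, which was excluded.

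I expect the level lowering assertion in the second paragraph --- that the minimal deformation datum genuinely admits a modular point, i.e.\ that the Hecke module for level prime to $w$ away from $p$ still supports $\ol{\rho}$ --- to be the main obstacle. Everything else is a careful but essentially routine arrangement of the many hypotheses (A1)--(A9) and (B1)--(B6) by base change, together with the machinery already developed in \S\ref{s:reqt}.
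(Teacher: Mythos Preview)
Your identification of the level-lowering step as the crux is exactly right, and your deferral to the Skinner--Wiles argument (as refined in \cite[\S 3.5]{Kisin}) matches the paper. But the $R=\T$ wrapper you place around it is redundant. Once level lowering produces the maximal ideal $\mf{m}$ in $\T(U^{\circ})$ satisfying (B1)--(B6), the localization $\T(U^{\circ})_{\mf{m}}$ is already a nonzero finite flat $\ms{O}$-algebra (as a subring of $\End_{\ms{O}}(S_2(U^{\circ}))$), so it has a $\ol{\Q}_p$-point directly; any eigenform lifting $\mf{m}$ is the desired $f'$, with the correct types at $p$ forced by (B4), (B5), and the ramification of $D$ at $\Sigma_{p,C}$. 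Passing through Theorem~\ref{thm:ReqT} to the deformation ring and back adds nothing.

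The paper accordingly takes the direct route: after the same preparatory base change, it simply runs the Skinner--Wiles argument \cite{SkinnerWiles}, modified as in \cite[Lemma~3.5.2]{Kisin} to include $T_v$ for $v\in\Sigma_{p,A}\cup\Sigma_{p,B}$ in the Hecke algebra so that types at $p$ are tracked, and this already outputs $f'$. A side benefit is that the paper's argument needs only (A1), matching the lemma's hypothesis, whereas your detour through Theorem~\ref{thm:ReqT} would also need (A2). Relatedly, your global requirement $\zeta_p\in F_1$ is stronger than necessary (only $\zeta_p\in F_{1,v}$ for $v\mid p$ is needed for (A8)) and can conflict with (A2) when $p=5$, or with linear disjointness from $M$ if $F(\zeta_p)\subset M$.
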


\begin{proof}
By replacing $F$ with a finite, pre-solvable, totally real
extension we may assume that $\ol{\rho}_f$ is everywhere unramified, $F$ has
even degree over $\Q$, the set of places above $p$ at which $\rho_f$ has
type $C$ has even cardinality and $\pi_f$ is either unramified or special
of conductor one at all places.  We now explain the small modifications to
\cite{SkinnerWiles} needed to prove the lemma.  Let $D$ be the unique
quaternion algebra over $F$ ramified at the infinite places and at the places
above $p$ where $f$ has type $C$.  We use this division algebra instead of the
one used in \cite{SkinnerWiles}.  We modify the definition of the Hecke
algebra $\mathbf{T}$ used in \cite{SkinnerWiles} on p.~21 as in the proof of
\cite[Lemma~3.5.2]{Kisin} except that we only include the Hecke operators at
places above $p$ where $\rho_f$ has type $A$ or $B$.  The proof now continues
unchanged.
\end{proof}

We now describe a level-raising result we will need.  Let $D$ be a
quaternion algebra over $F$ ramified at all the infinite places and some set of
finite places.  Let $U$ be a standard maximal compact subgroup satisfying
$(\ast)$.  Let $w$ be a finite place of $F$ not above $p$, at which $D$ is
unramified and at which $U_w$ is maximal compact.
Let $U'$ be the standard compact subgroup given by $U'_v=U_v$ for $v \ne w$ and
\begin{displaymath}
U'_w=\Gamma_0(w)=\left\{ \mat{a}{b}{c}{d} \in \GL_2(\ms{O}_{F_w})
\,\bigg\vert\, c \in \mf{p}_w \right\}
\end{displaymath}
Define a map
\begin{displaymath}
i:S_2(U)^{\oplus 2} \to S_2(U'), \qquad (f_1, f_2) \mapsto
f_1+\mat{1}{}{}{\varpi_w} f_2.
\end{displaymath}
We now have the relevant result:

\begin{lemma}
\label{lem:raise}
Notation being as above, let $\mf{m}$ be a non-Eisenstein maximal ideal of
$\T(U)$.  Assume that $T_w= \pm (\bN{w}+1) \pmod{\mf{m}}$.  Then
$S_2(U')_{\mf{m}}/\im{i}$ is a non-zero free $\ms{O}$-module.
\end{lemma}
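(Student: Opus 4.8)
The plan is to localise everything at $\mf{m}$ and reduce the assertion to a single input, a form of Ihara's lemma, feeding in the level-raising hypothesis through a routine identity of Hecke operators. First, set up the two degeneracy maps $\alpha,\beta\colon S_2(U)\to S_2(U')$ at $w$: here $\alpha(f)=f$ is pullback along the projection of double coset sets attached to $U'\subset U$, and $\beta(f)=\mat{1}{}{}{\varpi_w}f$, so that $i(f_1,f_2)=\alpha(f_1)+\beta(f_2)$. Let $\alpha^{*},\beta^{*}\colon S_2(U')\to S_2(U)$ be the adjoint trace maps with respect to the natural $\ms{O}$-bilinear pairings on $S_2(U)$ and $S_2(U')$; condition $(\ast)$ for $U$, hence for $U'$, makes these pairings perfect over $\ms{O}$. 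Write $j\colon S_2(U')\to S_2(U)^{\oplus 2}$, $g\mapsto(\alpha^{*}g,\beta^{*}g)$, for the adjoint of $i$. The localisations $S_2(U)_{\mf{m}}$ and $S_2(U')_{\mf{m}}$ are finite free over the discrete valuation ring $\ms{O}$, being direct summands of modules of $\ms{O}$-valued functions on finite sets. A standard computation with the $\bN{w}+1$ cosets $\GL_2(\ms{O}_{F_w})/\Gamma_0(w)$ --- using that $S_2$ consists of functions with trivial central character, so that the diamond operator at $w$ acts trivially --- yields the identity of endomorphisms of $S_2(U)^{\oplus 2}$
\begin{displaymath}
j\circ i=\mat{\bN{w}+1}{T_w}{T_w}{\bN{w}+1},
\end{displaymath}
whose determinant $(\bN{w}+1)^{2}-T_w^{2}$ lies in $\mf{m}$ by hypothesis.

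Now localise at $\mf{m}$ and let $\ol{i}$, $\ol{j}$ denote the reductions of $i$, $j$ modulo $\mf{m}$, maps of finite-dimensional vector spaces over the residue field of $\mf{m}$. I claim the lemma follows from the single assertion that $\ol{i}$ is injective. Granting it, the standard fact over a discrete valuation ring --- a map of finite free modules that is injective after reduction is itself injective with finite free cokernel --- shows at once that $i_{\mf{m}}$ is injective, that $S_2(U')_{\mf{m}}/\im{i}$ is finite free over $\ms{O}$, and that its reduction modulo $\mf{m}$ is $\operatorname{coker}\ol{i}$. It remains to see that $\ol{i}$ is not surjective. Reduce the displayed identity modulo $\mf{m}$: since $T_w$ acts on $S_2(U)_{\mf{m}}/\mf{m}$ as the scalar $\pm(\bN{w}+1)$ --- which is exactly the content of the hypothesis --- the composite $\ol{j}\circ\ol{i}$ becomes $(\bN{w}+1)\mat{1}{\pm 1}{\pm 1}{1}$, which is not injective since $S_2(U)_{\mf{m}}/\mf{m}\ne 0$. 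Pick $0\ne x$ in its kernel; injectivity of $\ol{i}$ gives $\ol{i}(x)\ne 0$, and $\ol{i}(x)\in\ker\ol{j}$, so $\ol{j}$ is not injective. As the pairings are perfect, $\ol{j}$ is the transpose of $\ol{i}$ for non-degenerate pairings, so $\ol{i}$ is not surjective, completing the reduction.

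The remaining assertion, that $\ol{i}$ is injective --- that is, that the two mod $p$ degeneracy maps at the auxiliary place $w\nmid p$ are jointly injective after localising at the non-Eisenstein ideal $\mf{m}$ --- is Ihara's lemma, and it is the only step that is not a formality. This is the classical case, the place $w$ lying away from $p$, and it is available for spaces of parallel weight two automorphic forms on a definite quaternion algebra over a totally real field; I would quote it from the literature on level raising for Hilbert eigenforms, or prove it directly, the non-Eisenstein hypothesis being what rules out the one-dimensional constituents on which it would fail. The argument above is in fact Ribet's proof of level raising in this setting: the hypothesis $T_w\equiv\pm(\bN{w}+1)\pmod{\mf{m}}$ --- equivalently, that the two eigenvalues of $\ol{\rho}_{\mf{m}}(\Frob_w)$ are in the ratio $\bN{w}^{\pm1}$, where $\ol{\rho}_{\mf{m}}$ is the residual representation attached to $\mf{m}$ --- is precisely Ribet's level-raising criterion. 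One could instead separate the two conclusions, deducing the non-vanishing of the cokernel from a citation to level raising for Hilbert eigenforms and the $\ms{O}$-freeness from Ihara's lemma as above; either way, Ihara's lemma is the crux.
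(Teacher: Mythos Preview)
Your proposal is correct and is essentially the same argument the paper intends: the paper simply cites \cite[\S 3.1.7, \S 3.1.9]{Kisin}, and what Kisin does there is precisely the Ribet-style computation you have written out --- the matrix identity for $j\circ i$ via the perfect pairings (this is \S 3.1.7) together with Ihara's lemma for the definite quaternion algebra (this is \S 3.1.9), combined exactly as you describe. Your reduction of both the freeness and the non-vanishing to the single input ``$\ol{i}$ is injective'' is clean and matches the cited source.
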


\begin{proof}
The proof goes the same as the one in \cite[\S 3.1.7, \S 3.1.9]{Kisin}.
\end{proof}

We prove one more lemma:

\begin{lemma}
\label{lem:raise2}
Let $f$ be a parallel weight two Hilbert modular form over $F$ with
coefficients in $\ol{\Q}_p$ and trivial nebentypus.  Assume that $F$ has even
degree over $\Q$,
$\ol{\rho}_f$ satisfies (A1), the set of primes over $p$ at which $\rho_f$ is
type $C$ has even cardinality and $\pi_f$ is
unramified away from $p$.  Let $M/F$ be a finite extension, let $S=\{v_1,
\ldots, v_r\}$ be a set of primes of $f$ disjoint from $\Sigma_p$ at which
$\ol{\rho}_f$ is trivial and such that $\bN{v_i}=1 \pmod{p}$ and let $w$ be
a place of $F$ not in $\Sigma_p \cup S$.
We can then find a tower of fields $F=F_0 \subset \cdots \subset F_r$ and for
each $i \in \{0, \ldots, r\}$ a parallel weight two Hilbert modular form $f_i$
over $F_i$ with coefficients in $\ol{\Q}_p$ and trivial nebentypus such that:
\begin{enumerate}
\item $F_i/F_{i-1}$ is finite, abelian and totally real.
\item $F_i$ is linearly disjoint from $M$.
\item $w$ splits completely in $F_i$.
\item There are an even number of primes in $F_i$ lying above $\{v_1, \ldots,
v_i\}$.
\item The primes $\{v_{i+1}, \ldots, v_r\}$ are inert in $F_i$.
\item There are an even number of primes in $F_i$ above $p$ at which
$\rho_{f_i}$ has type $C$.
\item $\ol{\rho}_{f_i} \cong \ol{\rho}_f \vert_{G_{F_i}}$
\item $(t_{f_i})(v)=(t_f \vert_{F_i})(v)$ at places $v$ above $p$.
\item $\pi_{f_i}$ is special of conductor one at the primes over $\{v_1,
\ldots, v_i \}$ and the primes above $p$ at which $\rho_f$ is type $C$ and
unramified at all other primes except possibly those lying over $w$.
\end{enumerate}
\end{lemma}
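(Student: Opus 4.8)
The plan is to argue by induction on $i$, constructing the tower one layer at a time. For $i=0$ I take $F_0=F$ and $f_0=f$: then (1), (3), (4), (5) are vacuous or trivial and (6)--(9) are exactly the hypotheses (recall that $\rho_f$ is type $C$ at a place $v\mid p$ precisely when $\pi_{f,v}$ is special of conductor one, and $\pi_f$ is unramified away from $p$ by hypothesis). So suppose $F_{i-1}$ and $f_{i-1}$ have been constructed, satisfying (1)--(9) with index $i-1$.

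The layer from $i-1$ to $i$ will be built in two moves. \emph{First}, using Chebotarev together with weak approximation I would choose a cyclic, totally real extension $F_i/F_{i-1}$ of even degree, linearly disjoint over $F_{i-1}$ from the compositum of $M$ with the splitting field of $\ol\rho_f$ and with $F(\zeta_p)$, in which $w$, every place above $p$, and every place above $v_1,\dots,v_i$ splits completely while the places above $v_{i+1},\dots,v_r$ stay inert. Linear disjointness preserves (A1) and oddness and gives (2); splitting $w$ completely gives (3); splitting completely above $p$ leaves the type at each place above $p$ unchanged and keeps the number of type-$C$ places above $p$ even; splitting completely above $v_1,\dots,v_i$ keeps the number of places above $\{v_1,\dots,v_i\}$ even (by induction there is an even number above $\{v_1,\dots,v_{i-1}\}$ in $F_{i-1}$, $v_i$ has a single prime in $F_{i-1}$, and $[F_i:F_{i-1}]$ is even); and $[F_i:\Q]=[F_i:F_{i-1}][F_{i-1}:\Q]$ is even. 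Let $g$ be the base change of $f_{i-1}$ to $F_i$ (Langlands' cyclic base change); it has parallel weight two and trivial nebentypus, and is cuspidal because $\rho_{f_{i-1}}\vert_{G_{F_i}}$ remains irreducible by (A1). Then $\ol\rho_g=\ol\rho_f\vert_{G_{F_i}}$, the form $g$ has the same type as $f_{i-1}$ at each place above $p$, and $\pi_g$ is special of conductor one at the places above $p$ of type $C$ and above $v_1,\dots,v_{i-1}$ and unramified at every other finite place away from $w$.

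\emph{Second}, let $\mf u_1,\dots,\mf u_{2m}$ be the places of $F_i$ above $v_i$, so $2m=[F_i:F_{i-1}]$. Since each $\bN\mf u_j$ is a power of $\bN v_i\equiv 1\pmod p$ we have $\bN\mf u_j\equiv 1\pmod p$, and since $\ol\rho_f$ is trivial at $v_i$ the representation $\ol\rho_g$ is trivial at each $\mf u_j$, so $T_{\mf u_j}$ reduces modulo the relevant (non-Eisenstein, by (A1)) maximal ideal to $\tr\ol\rho_g(\Frob_{\mf u_j})=2$, and $2\equiv\bN\mf u_j+1\pmod p$; as $p$ is odd this is the congruence hypothesis of Lemma~\ref{lem:raise}. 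I would then pass to the quaternion algebra $D$ over $F_i$ ramified exactly at the infinite places, the places above $p$ of type $C$, and the places above $v_1,\dots,v_{i-1}$ --- which exists because its total number of ramification places is even (all three contributions are even) --- and realize $g$ on $(D\otimes_{F_i}\A_{F_i}^f)^\times$ via Jacquet--Langlands ($\pi_g$ being square integrable at each ramification place of $D$), with level maximal at each ramification place of $D$ (which automatically makes the $\GL_2$-transfer special of conductor one there), maximal at each $\mf u_j$, and, after deepening the level at an auxiliary place that one arranges to lie above $w$ exactly as in \cite[\S 3.1, \S 3.5]{Kisin}, satisfying $(\ast)$. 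Applying Lemma~\ref{lem:raise} successively at $\mf u_1,\dots,\mf u_{2m}$ (its iteration only ever needs maximality of the level at the place being raised, and the above congruence persists since the residual representation never changes) yields an eigenform that is new --- hence special of conductor one --- at each $\mf u_j$ and unchanged at every other place; in particular it is unchanged above $p$, where its type is therefore also unchanged (the $A$/$B$ dichotomy being read off the residual value of $T_v$, and type $C$ from the untouched level at $v$). Transporting this form back to $\GL_2$ gives $f_i$: a parallel weight two Hilbert eigenform of trivial nebentypus with $\ol\rho_{f_i}=\ol\rho_g=\ol\rho_f\vert_{G_{F_i}}$ (giving (7)), with the same type as $g$ --- hence as $f$ --- at each place above $p$ (giving (8), and, since the number of type-$C$ places above $p$ has not changed, (6)), and with $\pi_{f_i}$ special of conductor one at every place above $\{v_1,\dots,v_i\}$ and above $p$ of type $C$ and unramified at every other finite place except possibly those above $w$ (giving (9)). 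Since $F_i/F_{i-1}$ is cyclic, (1) holds, completing the induction.

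The step I expect to be the main obstacle is the bookkeeping in the first move: one must prescribe the splitting behaviour at $w$, at all places above $p$, and at all of $v_1,\dots,v_r$ simultaneously, keep every parity condition, (A1) and linear disjointness from $M$ intact, and stay totally real. Since this amounts to imposing finitely many local split/inert conditions plus positivity at infinity it is in the end a routine Chebotarev/weak-approximation argument, but it is exactly where the hypotheses ``$F$ has even degree'', ``$\bN v_i\equiv 1\pmod p$'', ``$\ol\rho_f$ is trivial at $v_i$'' and ``an even number of type-$C$ places above $p$'' all get used. The one genuinely delicate technical point in the second move is verifying $(\ast)$ for the level structure fed into Lemma~\ref{lem:raise}; this is handled, as in \cite{SkinnerWiles} and \cite[\S 3.5]{Kisin}, by deepening the level at an auxiliary place above $w$, which is the source of the clause ``except possibly those lying over $w$'' in (9).
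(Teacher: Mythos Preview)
Your argument is correct but inverts the order of the paper's two moves. The paper first applies Lemma~\ref{lem:raise} \emph{once}, over $F_{i-1}$, at the unique prime of $F_{i-1}$ above $v_i$ (unique by condition~(5) at step $i-1$), producing a form $f_i'$ over $F_{i-1}$ that is already special of conductor one there; only afterwards does it choose a finite totally real abelian extension $F_i/F_{i-1}$ satisfying (1)--(6) and set $f_i=f_i'\vert_{F_i}$. You instead base-change first and then iterate Lemma~\ref{lem:raise} at each of the $[F_i:F_{i-1}]$ primes of $F_i$ above $v_i$. The paper's order is more economical --- one level-raising per layer rather than $[F_i:F_{i-1}]$, and the quaternion algebra lives over the smaller field $F_{i-1}$ --- and, notably, the parity bookkeeping you single out as the main obstacle evaporates: since $F_i$ is chosen \emph{after} $f_i'$ already exists, conditions (1)--(6) are imposed freely and there is no need to force $[F_i:F_{i-1}]$ to be even or to split all of $v_1,\dots,v_i$ and the places above $p$ completely. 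Your ordering buys a slightly cleaner separation of concerns (all field-theoretic parity conditions are settled before any Hecke-theoretic work begins), at the price of the iterated raising.
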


\begin{proof}
Replace $M$ with the compositum of $M$ and $\ker{\ol{\rho}_f}$.
We follow \cite[Lemma~3.5.3]{Kisin} and prove the lemma by induction.  Thus
assume
we have constructed $F_0 \subset \cdots \subset F_{i-1}$ and $f_0, \ldots,
f_{i-1}$.  Let $D$ be the unique quaternion algebra over $F_{i-1}$ ramified
at the infinite places, the places above $p$ at which $\rho_{f_{i-1}}$ has
type $C$ and the primes lying above $\{v_1, \ldots, v_{i-1}\}$.  Let $U$ be
a standard compact subgroup satisfying the following:  $U$ is maximal
everywhere except for the primes above $w$; $U$ satisfies $(\ast)$; $f_{i-1}$
corresponds to an eigenform in $S_2(U)$ under the Jacquet-Langlands
correspondence (which we will still denote by $f_{i-1}$).
Let $U'$ be the standard compact given by
$U'_v=U_v$ unless $v$ is the unique prime of $F_{i-1}$ lying over $v_i$,
in which case $U'_v=\Gamma_0(v)$.
Let $\mf{m}$ be the maximal ideal of $\T(U)$ determined by $f$.  Since
$\ol{\rho}_f$ is trivial at $v_i$ we have
\begin{displaymath}
\tr{\ol{\rho}_f(\Frob_{v_i})}=T_{v_i}=2 \pmod{\mf{m}}, \qquad \textrm{and}
\qquad
\det{\ol{\rho}_f(\Frob_{v_i})}=\bN{v_i}=1 \pmod{\mf{m}}.
\end{displaymath}
As $\bN{v_i}=1$ we see that the
condition of Lemma~\ref{lem:raise} is satisfied and so $S_2(U')_{\mf{m}}/
\im{i}$ is a non-zero free $\ms{O}$-module.  We can therefore pick a
non-zero eigenform $f_i'$ in $S_2(U')$ not in the image of $i$ and an
embedding of the coefficient field of $f_i'$ into $\ol{\Q}_p$ such that
$\ol{\rho}_{f_i'} \cong \ol{\rho}_{f_{i-1}}$.  Because $f_i'$ is a newform for
$U'$ it follows that $\pi_{f_i'}$ will be special at all the primes in $\{
v_1, \ldots, v_i \}$
and at the primes above $p$ at which $\rho_{f_i'}$ have type $C$ and
unramified everywhere else except possibly for the primes over $w$.  In
particular, $\rho_{f_i'}$ has type $C$ wherever $\rho_{f_{i-1}}$ does.  
Looking at the Hecke operators at primes above $p$ at which $\rho_{f_{i-1}}$
does not have type $C$ shows that $\rho_{f_i'}$ and $\rho_{f_{i-1}}$ have
the same type at these primes.

Now pick a finite, totally real, abelian extension $F_i/F_{i-1}$ satisfying
(1)-(6).  Let $f_i$ be the base change of $f_i'$ to $F_i$.  Then $F_i$ and
$f_i$ satisfy (1)-(9).  We have therefore established the lemma.
\end{proof}

We now return to the proof of the proposition.

\begin{proof}[Proof of Proposition~\ref{prop:lev}]
Replace $M$ with the compositum of $M$ and $\ker{\ol{\rho}_f}$.
To begin with, we may use Corollary~\ref{cor:sqr} to find a finite, totally
real, pre-solvable extension $F_1/F$ linearly disjoint from $M$ so that
$\det{\rho_f} \vert_{G_{F_1}}=\psi^2 \chi_p$.  Let $f_1=\psi^{-1} \cdot f
\vert_{F_1}$; this form has trivial nebentypus.  We can now apply
Lemma~\ref{lem:lower} to produce a finite, totally real, pre-solvable
extension $F_2/F_1$ linearly disjoint from $M$ and a form $f_2$ over $F_2$
so that:  $\ol{\rho}_{f_2} \cong \ol{\rho}_{f_1} \vert_{G_{F_2}}$;
$t_{f_1}(v) \approx t_{f_2}(v)$ at all places $v$ above $p$; $F_2$ has even
degree over $\Q$; the set of places of $F_2$ above $p$ at which $f_2$ has
type $C$ has even cardinality; $\ol{\rho}_{f_2}$ is everywhere
unramified; $\ol{\rho}_{f_2}$ is trivial at all places in $S$;
we have $\bN{v}=1 \pmod{p}$ for all $v$ above $S \setminus \Sigma_p$; and
$\pi_{f_2}$ is unramified except at the places above $p$ where $\rho_{f_2}$
has type $C$, where it is special of conductor one.  Choose a place $w$ of
$F_2$ above $S$ such that
\begin{displaymath}
\tr{\ol{\rho}}(\Frob_w) \ne \pm (1+\bN{w}),
\end{displaymath}
This is possible by \cite[Lemma~4.11]{DDT}, as before.  We now apply
Lemma~\ref{lem:raise2} to produce a finite, totally real, pre-solvable
extension $F_3/F_2$ linearly disjoint from $M$ and a form $f_3$ over $F_3$
such that:  $\ol{\rho}_{f_3}=\ol{\rho}_{f_2} \vert_{G_{F_3}}$;
$t_{f_3} \approx t \vert_{F_3}$ at all places $v$ above $p$; $w$ splits in
$F_3$.  Since $w$ satisfies the above condition and is split in $F_3$ it
follows that $f$ is not special at any place above $w$.  Therefore, we
can find a finite, totally real, pre-solvable extension $F_4/F_3$ linearly
disjoint from $M$ such that if $f_4=f_3 \vert_{F_4}$ then we can make
$\pi_{f_4}$ unramified at all places above $w$ and still satisfies the
other properties we want.  We now take $F'=F_4$ and $f'=\psi \cdot f_4$.
\end{proof}

\subsection
We now discuss some instances of Conjecture~\ref{conj:type} at places above
$p$.  These results will not be used in the remainder of the paper.  To
begin with, we have the following:

\begin{proposition}
Conjecture~\ref{conj:type} holds if $t_f$ only assumes the values $A$ and
$B$ above $p$ and $t$ only assumes the value $B$ above $p$.
\end{proposition}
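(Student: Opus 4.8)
The plan is to reduce to Proposition~\ref{prop:lev}. That result moves a Hilbert eigenform to any prescribed type away from $p$ while leaving the type above $p$ unchanged, so it is enough first to produce, after a finite totally real solvable base change, a parallel weight two Hilbert eigenform $g$ with $\ol{\rho}_g\cong\ol{\rho}_f$ (up to restriction) and $t_g(v)=B$ for every $v\mid p$, and then to apply Proposition~\ref{prop:lev} to $g$ with target $t$. To begin, one argues exactly as in the opening of the proof of Proposition~\ref{prop:lev}: by Corollary~\ref{cor:sqr} pass to a totally real pre-solvable extension, linearly disjoint from $M$, over which $\det\rho_f=\psi^2\chi_p$, and replace $f$ by $\psi^{-1}f$ so that it has trivial nebentypus and cyclotomic determinant. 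Since the local Galois groups $G_{F_v}$ are pro-solvable, a further totally real pre-solvable base change (still linearly disjoint from $M$) arranges that $\ol{\rho}_f$ satisfies (A1)--(A7) of \S \ref{ss:setup}, is everywhere unramified, and is \emph{trivial} on $G_{F_v}$ for every $v\mid p$.

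The heart of the matter is to switch the type above $p$ from $A$ to $B$ at the primes of $\Sigma_{p,A}$ (for the type $t_f$). The relevant local input is Proposition~\ref{prop:pring} with $\ast=B$: because $\ol{\rho}_f\vert_{G_{F_v}}$ is now trivial, the ring $R^{\Box,B}_v$ is nonzero, flat over $\ms{O}$ and a domain, so the trivial representation of $G_{F_v}$ really does admit non-ordinary crystalline lifts with cyclotomic determinant. One then sets up a deformation datum $\mc{D}^{\circ}=(t,\Sigma^{\ram},\Sigma^{\aux})$ with $\Sigma^{\ram}=\emptyset$, with $t(v)=B$ for all $v\mid p$, and with $\Sigma^{\aux}=\{w\}$ chosen via \cite[Lemma~4.11]{DDT} as in the proof of Proposition~\ref{prop:lev}; hypotheses (A8) and (A9) of \S \ref{ss:defrings} then hold, the latter because $\Sigma_C=\emptyset$ is even. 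If the modularity hypotheses (B1)--(B6) of \S \ref{ss:modhypo} can be verified for $\mc{D}^{\circ}$, then Theorem~\ref{thm:ReqT} shows $R_{\mc{D}^{\circ}}$ is finite over $\ms{O}$ and, after inverting $p$, is identified with $\T_{\mc{D}^{\circ}}$; a $\ol{\Q}_p$-point of $R_{\mc{D}^{\circ}}$ is then a global type-$B$ lift of $\ol{\rho}_f$ and the corresponding point of $\T_{\mc{D}^{\circ}}$ is the sought-after eigenform $g$.

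The obstacle is precisely the construction of such a global type-$B$ object congruent to $\ol{\rho}_f$. Verifying (B1)--(B6) requires a maximal ideal $\mf{m}$ of $\T(U^{\circ})$ lying over $\ol{\rho}_f$ with $T_v\in\mf{m}$ for \emph{every} $v\mid p$ --- equivalently, a mod $p$ eigenform congruent to $\ol{\rho}_f$ that is non-ordinary at every place above $p$. The ideal $\mf{m}_f$ attached to $f$ does not qualify, since $f$ is ordinary at the primes of $\Sigma_{p,A}$, and level-raising or level-lowering at $p$ only moves $f$ among ordinary forms (of types $A$ and $C$) and so cannot reach type $B$. I expect producing this first non-ordinary congruent form to be the crux. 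The natural route is the device by which global deformation rings are shown to have characteristic-zero points: bound the Krull dimension of the type-$B$ global deformation ring from below by the Euler-characteristic formula in Galois cohomology --- the local contributions at $v\mid p$ have the same dimension for types $A$ and $B$ by Proposition~\ref{prop:pring} --- and then use finiteness of that ring, which rests on Theorem~\ref{thm:ReqT} once $\mf{m}$ is known to be non-empty. Since this is circular as stated, one has to bootstrap, for instance by first exhibiting a non-ordinary lift of $\ol{\rho}_f$ that is potentially modular and then invoking a modularity lifting theorem of the kind proved here.

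Granting the form $g$ --- parallel weight two, trivial nebentypus, unramified away from $p$, with $\ol{\rho}_g\cong\ol{\rho}_f$ after restriction and $t_g(v)=B$ for all $v\mid p$ --- the hypothesis $t_g\vert_{\Sigma_p}=t\vert_{\Sigma_p}$ of Proposition~\ref{prop:lev} holds, both sides being constantly $B$. Applying that proposition with target type $t$ and auxiliary field $M$ produces a finite totally real solvable extension $F'/F$ linearly disjoint from $M$ and a parallel weight two Hilbert eigenform $f'$ over $F'$ with $t_{f'}=t\vert_{F'}$ and $\ol{\rho}_{f'}\cong\ol{\rho}_g\vert_{G_{F'}}\cong\ol{\rho}_f\vert_{G_{F'}}$. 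Composing the base changes made along the way --- each finite, totally real and pre-solvable, hence with solvable Galois closure, and each taken linearly disjoint from $M$ --- yields the extension $F'$ and form $f'$ demanded by Conjecture~\ref{conj:type}, which proves the proposition.
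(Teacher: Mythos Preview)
Your proposal has a genuine gap that you yourself flag but do not close. The entire argument hinges on producing a parallel weight two eigenform $g$ with $\ol{\rho}_g\cong\ol{\rho}_f$ and $t_g(v)=B$ for all $v\mid p$; once $g$ exists, the reduction to Proposition~\ref{prop:lev} is fine. But you do not construct $g$. You correctly observe that setting up the type-$B$ deformation datum $\mc{D}^{\circ}$ and appealing to Theorem~\ref{thm:ReqT} is circular: the hypotheses (B1)--(B6) demand precisely a maximal ideal $\mf{m}$ with $T_v\in\mf{m}$ for every $v\mid p$, which is exactly the non-ordinary congruent form you are trying to build. Your suggested bootstrap via potential modularity is only a sketch, and you then simply write ``Granting the form $g$\ldots'' and proceed. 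That is not a proof.

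The paper's proof is a bare citation to \cite[Theorem~3.5.7]{Kisin}, so the substance lives in Kisin's argument, which is genuinely different from your outline. Kisin does \emph{not} first produce a type-$B$ form by an external congruence and then run $R=\T$; rather, he exploits the structure of the local crystalline deformation ring at $v\mid p$. The ordinary (type $A$) and supersingular (type $B$) components of the weight-two crystalline deformation ring meet in the special fibre, and his analysis of the moduli of finite flat group schemes gives enough control over that special fibre to run a patching argument with the \emph{full} crystalline ring rather than a single component. The upshot is an $R=\T$ statement in which $\T$ sees both ordinary and non-ordinary forms at once; the existence of type-$B$ points on $R$ (hence of type-$B$ eigenforms) then falls out of the ring-theoretic properties established for $R^{\Box,B}_v$. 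In short, the passage from $A$ to $B$ is effected \emph{inside} the deformation-theoretic machinery, not by a prior level-raising-style congruence, and this is exactly the step your proposal is missing.
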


\begin{proof}
See \cite[Theorem~3.5.7]{Kisin}.
\end{proof}

We now show, using some results of \S \ref{s:problems} and a
modularity lifting theorem of Skinner-Wiles, that in certain circumstances one
can switch between types $A$ and $C$.
We first need some terminology.  Let $F_v/\Q_p$ be a finite extension, let
 $\ol{\rho}_v:G_{F_v} \to \GL_2(\ol{\F}_p)$ be a representation and let
$\ast$ be $A$ or $C$.  We say that $(\ol{\rho}_v, \ast)$ is \emph{good} if
there exists a finite extension $F_v'/F_v$ and a lift $\rho_v$ of
$\ol{\rho}_v \vert_{G_{F_v'}}$ such that $\rho_v$ is weight two and has
definite type $\ast$ and we have
\begin{displaymath}
\ol{\rho}_v \vert_{G_{F_v'}}=\mat{\ol{\alpha}}{\ast}{}{\ol{\beta}}, \qquad
\rho_v=\mat{\alpha}{\ast}{}{\beta}
\end{displaymath}
with $\alpha$ reducing to $\ol{\alpha}$ and $\beta$ to $\ol{\beta}$ and
where $\ol{\alpha} \ne \ol{\beta}$.

\begin{proposition}
Conjecture~\ref{conj:type} holds if $\ol{\rho}_f$ satisfies (A1),
$t_f$ and $t$ only assume the values $A$ and $C$ at primes above $p$ and
for each place $v \mid p$ the pair $(\ol{\rho}_f \vert_{G_{F_v}}, t(v))$
is good.
\end{proposition}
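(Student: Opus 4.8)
The plan is to change the behavior above $p$ from that of $f$ to that demanded by $t$ by constructing a global weight two lift of $\ol{\rho}_f$ of the desired $p$-type using \S\ref{s:problems}, proving it modular by the Skinner--Wiles modularity lifting theorem \cite{SkinnerWiles}, and then correcting the type away from $p$ by invoking Proposition~\ref{prop:lev}. It suffices to produce, after a pre-solvable totally real base change $F_1/F$ linearly disjoint from $M$, a Hilbert eigenform $f_1$ with $\ol{\rho}_{f_1}\cong\ol{\rho}_f\vert_{G_{F_1}}$ whose type agrees with $t\vert_{F_1}$ above $p$: then $\ol{\rho}_{f_1}$ inherits (A1), so Proposition~\ref{prop:lev} applies to $f_1$ with target type $t\vert_{F_1}$ and produces the form and the extension required by Conjecture~\ref{conj:type}.

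First I would carry out the standard base-change normalizations. Enlarge $M$ so that it contains the fields cut out by $\ker\ol{\rho}_f$ and $\ad\ol{\rho}_f$ and contains $F(\zeta_p)$. Using Corollary~\ref{cor:sqr}, replace $F$ by a pre-solvable totally real extension linearly disjoint from $M$ over which $\det\rho_f=\psi^2\chi_p$ with $\psi$ everywhere unramified, and replace $f$ by $\psi^{-1}\cdot f$, so that $\rho_f$ has cyclotomic determinant; since $\psi$ is unramified this alters no types, and one twists back by $\psi$ at the end. Now pass to a further pre-solvable totally real extension $F_1/F$ linearly disjoint from $M$ that is large enough that: $\ol{\rho}_f$ is unramified outside $p$; $\ol{\rho}_f\vert_{G_{F_1(\zeta_p)}}$ is still absolutely irreducible; $[F_1:\Q]$ is even and the set of places of $F_1$ above $p$ at which $t\vert_{F_1}$ takes the value $C$ has even cardinality; and, for every place $w$ of $F_1$ above a place $v\mid p$, one has $\ol{\rho}_f\vert_{G_{F_{1,w}}}=\mat{\ol{\alpha}}{\ast}{}{\ol{\beta}}$ with $\ol{\alpha}\ne\ol{\beta}$, and this representation admits an upper triangular weight two lift of definite type $t(v)$. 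The last requirement is attainable because goodness of $(\ol{\rho}_f\vert_{G_{F_v}},t(v))$ is a condition on the ordinary residual representation $\ol{\rho}_f\vert_{G_{F_v}}$ realized already over a solvable (in fact tamely ramified) extension of $F_v$, so a suitable $F_1$ exists by weak approximation. This is the step where it matters that $t_f$ and $t$ take only the values $A$ and $C$ above $p$: it keeps us in the ordinary world, and it is why we do \emph{not} trivialize $\ol{\rho}_f$ above $p$ (as one does to run the Taylor--Wiles argument of \S\ref{s:reqt}) but instead preserve its good shape there.

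Next I would apply the results of \S\ref{s:problems} to construct a continuous, finitely ramified, weight two lift $\rho_1:G_{F_1}\to\GL_2(\ol{\Q}_p)$ of $\ol{\rho}_f\vert_{G_{F_1}}$ with cyclotomic determinant, unramified outside $p$, and which at each place above $p$ lying over a place $v$ is crystalline ordinary when $t(v)=A$ and special when $t(v)=C$; in particular $t_{\rho_1}$ agrees with $t\vert_{F_1}$ above $p$. Goodness provides precisely the local input required: at each place above $p$ the prescribed local behavior --- an ordinary upper triangular shape together with the prescribed monodromy operator, $N=0$ for type $A$ and $N\ne0$ for type $C$ --- is exhibited by the good local lift, so there is ``no obvious obstruction'' in the sense of \S\ref{s:problems}. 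Since $\rho_1$ is ordinary at every place above $p$, with $\ol{\rho}_f\vert_{G_{F_{1,w}}}$ an extension of distinct characters there, and $\ol{\rho}_f\vert_{G_{F_1}}$ is absolutely irreducible and modular (being the reduction of the base change $f\vert_{F_1}$, which is ordinary above $p$ because $t_f$ takes only the values $A$ and $C$ there), the Skinner--Wiles modularity lifting theorem produces a Hilbert eigenform $f_1$ over $F_1$ with $\rho_{f_1}\cong\rho_1$. By Proposition~\ref{prop:type} the type $t_{f_1}=t_{\rho_1}$ is definite, hence agrees with $t\vert_{F_1}$ above $p$; this is what was promised, and Proposition~\ref{prop:lev} together with the untwist by $\psi$ finishes the proof.

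The main obstacle I expect is not any single one of these inputs --- \S\ref{s:problems} and the Skinner--Wiles theorem do the real work --- but the orchestration of the base change: one must keep $\ol{\rho}_f$ in its nontrivial good shape above $p$, with distinct residual diagonal characters, so that both the local lifting hypothesis of \S\ref{s:problems} and the hypotheses of Skinner--Wiles remain available, while simultaneously meeting the global parity and irreducibility conditions. Checking carefully that goodness really supplies the local deformation-theoretic data demanded by \S\ref{s:problems} above $p$ --- in particular that the monodromy operator, not merely the inertial action, can be prescribed --- is the technical heart of the matter.
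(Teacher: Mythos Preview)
Your approach is essentially the paper's: pass to a pre-solvable totally real extension realizing the goodness data locally at $p$, invoke the machinery of \S\ref{s:problems} (Theorem~\ref{thm:lift}) to build a global weight-two lift with the prescribed definite type at each place above $p$, and then apply the Skinner--Wiles nearly-ordinary modularity lifting theorem to conclude that this lift is modular. The paper sets up the lifting problem on a set $\Sigma$ that may contain primes away from $p$ and uses $\rho_f$ to supply the inertial types there, obtaining $f'$ with the correct type on all of $\Sigma$ in one stroke; you instead arrange $\Sigma=\Sigma_p$ by first killing the prime-to-$p$ ramification of $\ol{\rho}_f$, and then invoke Proposition~\ref{prop:lev} as a separate final step to adjust the type away from $p$. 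This is a harmless reorganization --- arguably cleaner, since it cleanly separates the genuinely new work above $p$ from the level-raising/lowering away from $p$ already handled by Proposition~\ref{prop:lev}.

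Two small points. First, the modularity lifting theorem you need is \cite[Theorem~5.1]{SkinnerWiles2}, not \cite{SkinnerWiles}; the latter is the base-change level-lowering paper. Second, your parenthetical that the goodness extension $F'_v/F_v$ is ``in fact tamely ramified'' is not part of the definition of good and need not hold; what you actually use, and what is automatic, is that every finite extension of $p$-adic fields is solvable, so a pre-solvable global $F_1/F$ with the prescribed completions at $p$ exists.
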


\begin{proof}
For $v \mid p$ let $F_v'/F_v$ and $\rho_v$ be the extension and lift provided
by the goodness hypothesis.  Let $\tau_v$ be the inertial type of $\rho_v$
(see \S \ref{s:problems}).  Pick a finite, pre-solvable, totally real
 extension $F'/F$ such that $(F')_v=F'_v$ for $v \mid p$.  Define a lifting
problem (see \S \ref{s:problems}) $\ms{P}=(\Sigma, \psi,
t \vert_{F'}, \{\tau_v\})$ over $F'$ as follows.  The character $\psi$ is just
the nebentypus of $f$ restricted to $G_{F'}$.  The set $\Sigma$ consists of the
primes
at which $\ol{\rho}_f \vert_{G_{F'}}$ and $\psi$ ramify together with all the
primes over $p$.  The type function is specified.  Finally, the inertial
types $\tau_v$ have been specified for $v$ above $p$; for the other places,
just take $\tau_v$ to be the inertial type of $\rho_f \vert_{G_{F'}}$ at $v$.
By definition (and the fact that $t_f$ is definite), $\ms{P}$ is locally
solvable.  By Theorem~\ref{thm:lift} it
is therefore solvable.  Let $\rho'$ be a solution.  As $\ol{\rho}'=
\ol{\rho}_f \vert_{G_{F'}}$ the representation $\rho'$ is residually modular.
Furthermore, the conditions we have imposed ensures that it satisfies the
conditions of \cite[Theorem~5.1]{SkinnerWiles2}.  We therefore conclude that
$\rho'$ is modular, \ie, of the form $\rho_{f'}$.  The extension $F'/F$ and
the form $f'$ give the required output of Conjecture~\ref{conj:type}.
\end{proof}

\begin{remark}
The above proposition allows one to perform level raising or level lowering
at $p$ in many cases.  For instance, one can use it to find mod $p$ congruences
between forms which are special at $p$ and forms which are not.
\end{remark}

\section{Potential modularity}
\label{s:pmod}

\subsection
In \S \ref{s:pmod} we prove the following two potential modularity
theorems.  The second of these theorems is a stronger and more precise version
of Theorem~\ref{mainthm}.  These are simple modifications of Taylor's original
result \cite{Taylor3}.  In fact, our proof of potentially modularity is simpler
than Taylor's since we have a stronger modularity lifting theorem at out
disposal.

\begin{theorem}
\label{thm:pmod1}
Let $\ol{\rho}:G_F \to \GL_2(\ol{\F}_p)$ be any odd representation.  Let
$\psi:G_F \to \ms{O}^{\times}$ be a finite order character such that
$\det{\ol{\rho}}=\ol{\psi} \cdot \ol{\chi}_p$, let $M/F$ be a finite extension
and let $t$ be a definite type function on $\Sigma_p$.  Then there exists a
finite Galois extension $M'/F$ containing $M$ and a finite, totally real Galois
extension $F'/F$ linearly disjoint from $M'$ such that for any finite, totally
real extension $F''/F'$ linearly disjoint from $M'$ there exists a cuspidal
parallel weight two Hilbert eigenform $f$ over $F''$ with
coefficients in $\ol{\Q}_p$ such that $\det{\rho_f}=\psi \chi_p
\vert_{G_{F''}}$, $\ol{\rho}_f \cong \ol{\rho} \vert_{G_{F''}}$ and
$t_f \vert_{\Sigma_p}=t \vert_{F''}$.
\end{theorem}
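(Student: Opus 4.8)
The plan is to follow Taylor's potential modularity argument, using as the key inputs the modularity lifting theorem (Theorem~\ref{thm:mlt}), which is already available, and a theorem of Moret-Bailly which produces totally real extensions over which a given variety has a rational point. The overall strategy: first one constructs an auxiliary prime $q$ (distinct from $p$) and, via a suitable Hilbert--Blumenthal--type moduli space or a Hilbert modular variety, realizes both $\ol\rho$ and some fixed residually modular mod $q$ representation $\ol\sigma$ over a common totally real extension $F'$; then one applies Theorem~\ref{thm:mlt} twice --- first to lift $\ol\sigma$ to modularity over $F'$ (using that $\ol\sigma$ was \emph{chosen} to come from a modular form), then, after passing through the compatible system and reducing mod $p$, to transfer modularity back to a $p$-adic lift of $\ol\rho$ with the prescribed type $t$ at the places above $p$.

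First I would fix an auxiliary prime $q$, and a mod $q$ representation $\ol\sigma : G_F \to \GL_2(\ol\F_q)$ which is known to be modular (coming from a fixed Hilbert eigenform, e.g. by exhibiting it in the $q$-torsion of an abelian variety with real multiplication, or taking $\ol\sigma$ induced from a character so that modularity is classical). I would arrange that $\ol\sigma\vert_{G_{F(\zeta_q)}}$ is irreducible and that $\ol\sigma$ is odd. Next, by choosing an appropriate moduli space $\mathcal M$ parametrizing abelian varieties of $\GL_2$-type together with level structures realizing $\ol\rho$ mod $p$ and $\ol\sigma$ mod $q$ simultaneously (this is the Taylor construction; one must check $\mathcal M$ is geometrically irreducible, using e.g. results on irreducibility of Igusa-type covers), I would apply Moret-Bailly's theorem to produce a totally real $F'/F$, linearly disjoint from the chosen big field $M'$ (which contains $M$ together with the fixed fields cutting out $\ol\rho$, $\ol\sigma$, $\zeta_p$, $\zeta_q$ and enough auxiliary data), carrying an $F'$-point of $\mathcal M$. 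Crucially, Moret-Bailly's theorem is flexible enough that one can further demand $F''$-points for any totally real $F''/F'$ linearly disjoint from $M'$, which yields the ``for any $F''$'' form of the statement; alternatively one invokes it afresh over each such $F''$.

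Given the point of $\mathcal M$ over $F''$, one has an abelian variety $A/F''$ of $\GL_2$-type whose mod $p$ representation is $\ol\rho\vert_{G_{F''}}$ (suitably twisted to have determinant $\psi\chi_p$) and whose mod $q$ representation is $\ol\sigma\vert_{G_{F''}}$. Then: (i) $\ol\sigma\vert_{G_{F''}}$ is still modular (Langlands base change from $F$, since the extension is solvable --- or one arranges $\ol\sigma$ to be automatically modular after any base change); (ii) the $q$-adic Tate module $\rho_{A,q}$ of $A$ is a weight two lift of $\ol\sigma\vert_{G_{F''}}$, so Theorem~\ref{thm:mlt} (with hypotheses (A1),(A2) verified by construction) gives that $\rho_{A,q}$ is modular; (iii) modularity of $\rho_{A,q}$ puts $A$ into a compatible system, hence $\rho_{A,p}$ is modular as well; (iv) finally, one wants not $\rho_{A,p}$ but a lift of $\ol\rho\vert_{G_{F''}}$ with the \emph{prescribed} type $t\vert_{F''}$ at $p$. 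Here one uses the minimal/prescribed lifting results --- Theorem~\ref{mainthm2}(1) in the form of \S\ref{s:problems}, or more simply chooses the moduli problem so that $A$ already has the desired reduction type at $p$ --- together with Theorem~\ref{thm:mlt} once more (now over $\ol\Q_p$, with $f$ the form attached to $\rho_{A,p}$ and $t_\rho\vert_{\Sigma_p}\approx t_f\vert_{\Sigma_p}$ arranged by the choice) to conclude modularity of the desired $p$-adic lift; reducing mod $p$ and reading off the type gives the Hilbert eigenform $f$ of the statement.

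The main obstacle, as in all potential modularity arguments, is the construction of the geometrically irreducible moduli space $\mathcal M$ carrying the two prescribed level structures, and the verification that its smooth locus has points over the various $F''$ via Moret-Bailly; controlling the type at $p$ (ensuring the abelian variety has the right reduction behaviour at places above $p$, matching the definite type function $t$) is the extra subtlety beyond Taylor's original setup, and is handled by feeding in the local deformation-ring results of \S\ref{s:local} and the prescribed-lifting machinery rather than by any new geometric input. The bookkeeping of linear disjointness from $M'$ --- enlarging $M'$ at each stage to absorb newly-introduced fields --- is routine but must be done carefully so that the final $F'$ (and each $F''$) remains linearly disjoint from the original $M$.
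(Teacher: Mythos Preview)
Your strategy matches the paper's (auxiliary prime $\ell$, modular seed $\rho'$, Moret-Bailly on a Hilbert--Blumenthal space to realize both residual representations in one abelian variety $A$, then Theorem~\ref{thm:mlt} plus compatibility to transfer modularity from $T_{\mf{l}}A$ to $T_{\mf{p}}A$), but there is a genuine gap in how you control the type at $p$. Your primary proposal, invoking the prescribed-lifting machinery of \S\ref{s:problems}, is circular: those results rest on Theorem~\ref{thm:defring}, whose finiteness half is proved precisely by appealing to Theorem~\ref{thm:pmod1}. You do mention the right alternative (``choose the moduli problem so that $A$ already has the desired reduction type at $p$''), but then misattribute it to ``local deformation-ring results of \S\ref{s:local} and the prescribed-lifting machinery''. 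In fact the paper's mechanism (Proposition~\ref{prop:potav}) is purely geometric: in the Skolem datum one takes the local open set $\Omega_v$ at each $v\mid p$ to be the locus of abelian varieties with good ordinary, good non-ordinary, or bad reduction according as $t(v)=A$, $B$, or $C$. With this in place $T_{\mf{p}}A$ already has type $t$ and is modular by compatibility, so your step (iv) and its second application of Theorem~\ref{thm:mlt} are unnecessary.

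Two secondary issues. First, the same geometric type control must be imposed at $v\mid\ell$, matching the type of the seed $\rho'$; otherwise the hypothesis $t_{T_{\mf{l}}A}\vert_{\Sigma_\ell}\approx t_{\rho'}\vert_{\Sigma_\ell}$ of Theorem~\ref{thm:mlt} in your step (ii) is not met. Second, the extension $F''/F$ is not solvable in general, so Langlands base change is not available for $\ol{\sigma}$; the paper instead takes $\rho'$ to be the induction of a character from a CM extension (Proposition~\ref{prop:univmod}), so that its modularity over any $F''$ disjoint from a fixed field is automatic --- your parenthetical ``or one arranges $\ol\sigma$ to be automatically modular after any base change'' is the right idea, but it is not optional.
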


\begin{theorem}
\label{thm:pmod2}
Let $\rho:G_F \to \GL_2(\ol{\Q}_p)$ be a finitely ramified, odd, weight
two representation such that $\ol{\rho}$ satisfies (A1) and (A2) of
\S \ref{s:reqt}.
Let $M/F$ be a finite extension.  Then there exists a finite Galois extension
$M'/F$ containing $M$ and a finite, totally real, Galois extension
$F'/F$ linearly
disjoint from $M'$ such that for any finite, totally real extension $F''/F'$
linearly disjoint from $M'$ there exists a cuspidal parallel weight two
Hilbert eigenform $f$ over $F''$ with coefficients in $\ol{\Q}_p$ such that
$\rho_f \cong \rho \vert_{G_{F''}}$.
\end{theorem}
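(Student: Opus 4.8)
The plan is to carry out Taylor's potential-modularity argument \cite{Taylor3}, applying the modularity lifting theorem (Theorem~\ref{thm:mlt}) twice: once at an auxiliary prime $\ell$ to establish the potential modularity of a conveniently chosen abelian variety, and once at $p$ to transfer that modularity back to $\rho$. To set up the auxiliary data, I would first fix a prime $\ell \notin \{2,p\}$ --- taking $\ell = 3$, say, so that the analogue of (A2) at $\ell$ is vacuous --- together with a continuous odd representation $\ol{\sigma}_0 : G_F \to \GL_2(\ol{\F}_\ell)$ with $\det \ol{\sigma}_0 = \ol{\chi}_\ell$ and $\ol{\sigma}_0|_{G_{F(\zeta_\ell)}}$ absolutely irreducible, which is residually modular: for instance a dihedral representation $\Ind_K^F \ol{\theta}$ induced from a character of a CM quadratic extension $K/F$, whose residual modularity comes from automorphic induction and is insensitive to base changes linearly disjoint from $K$. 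I would then enlarge $M$ to a finite Galois extension $M'/F$ absorbing the fixed fields of $\ol{\rho}$, of $\ol{\sigma}_0$, and of the finite-order part of $\det\rho$, together with $F(\zeta_p,\zeta_\ell)$ and $K$, so that for every totally real $F''$ linearly disjoint from $M'$ the restrictions $\ol{\rho}|_{G_{F''}}$ and $\ol{\sigma}_0|_{G_{F''}}$ still satisfy (A1) and (A2) and $\ol{\sigma}_0|_{G_{F''}}$ is still residually modular. Finally, for each $v \mid p$ I would record a definite $p$-type $\ast(v) \approx t_\rho(v)$.

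The geometric heart of the argument is an application of a theorem of Moret-Bailly to a twisted Hilbert-Blumenthal moduli space $Y/F$ parametrising abelian varieties of $\GL_2$-type equipped with level structures identifying their residual representation at $p$ with $\ol{\rho}$ and that at $\ell$ with $\ol{\sigma}_0$, the Weil pairings being compatible with $\det\ol{\rho} = \ol{\chi}_p$ and $\det\ol{\sigma}_0 = \ol{\chi}_\ell$. Following Taylor, one checks that $Y$ is geometrically connected over $F$ --- this is the main technical point --- and smooth over the ring of $S$-integers for a suitable finite set $S$ containing $\Sigma_p$, $\Sigma_\ell$ and the infinite places. I would impose local conditions: at the infinite places, the real locus of $Y$, which is nonempty precisely because $\ol{\rho}$ and $\ol{\sigma}_0$ are odd; at the places above $p$ and $\ell$, that they split completely in the field produced, with the moduli point required to have $p$-divisible group of type $\ast(v)$ at each $v \mid p$ (good ordinary, good non-ordinary, or multiplicative reduction according as $\ast(v)$ is $A$, $B$ or $C$) and good ordinary reduction at each place above $\ell$, where the CM form realising $\ol{\sigma}_0$ has been arranged to be ordinary. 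Moret-Bailly's theorem, in the form used by Taylor, then yields --- linearly disjoint from the prescribed $M'$ --- a finite totally real Galois extension $F'/F$ together with a point of $Y(F')$, \ie\ an abelian variety $A/F'$ of $\GL_2$-type with $A[p] \cong \ol{\rho}|_{G_{F'}}$, $A[\ell] \cong \ol{\sigma}_0|_{G_{F'}}$, of type $\ast(v)$ above $p$ and good ordinary above $\ell$.

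It remains to treat an arbitrary totally real $F''/F'$ linearly disjoint from $M'$. Base changing, $A_{F''}$ is an abelian variety of $\GL_2$-type with $A_{F''}[\ell] \cong \ol{\sigma}_0|_{G_{F''}}$ and $A_{F''}[p] \cong \ol{\rho}|_{G_{F''}}$. Since $\ol{\sigma}_0|_{G_{F''}}$ is residually modular and satisfies (A1) and (A2), since its $\ell$-adic Tate module $\rho_{A_{F''},\ell}$ is finitely ramified, odd and weight two and is crystalline ordinary (type $A$) at every place above $\ell$, and since the CM form realising $\ol{\sigma}_0|_{G_{F''}}$ is likewise of type $A$ above $\ell$, Theorem~\ref{thm:mlt} applied at $\ell$ shows that $\rho_{A_{F''},\ell}$ is modular; since the $\ell$-adic and $p$-adic Tate modules of $A_{F''}$ form a compatible system (as do the Galois representations attached to Hilbert eigenforms), it follows that $\rho_{A_{F''},p}$ is modular, say $\rho_{A_{F''},p} \cong \rho_{f_0}$ for a cuspidal parallel weight two Hilbert eigenform $f_0$ over $F''$. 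By construction $\ol{\rho}_{f_0} \cong \ol{\rho}|_{G_{F''}}$, and $t_{f_0}(v) = t_{\rho_{A_{F''},p}}(v) \approx t_\rho(v)$ for each $v \mid p$ by our choice of local conditions at $p$, these types being definite by Proposition~\ref{prop:type}; hence $t_\rho|_{\Sigma_p} \approx t_{f_0}|_{\Sigma_p}$. Applying Theorem~\ref{thm:mlt} once more, this time at $p$, to $\rho|_{G_{F''}}$ together with $f_0$, we obtain a cuspidal parallel weight two Hilbert eigenform $g$ over $F''$ with $\rho|_{G_{F''}} \cong \rho_g$, which is the desired form $f$. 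I expect the construction of $Y$ and the proof of its geometric connectedness, together with the verification that Moret-Bailly's theorem can be applied so as to produce a \emph{Galois} totally real $F'$ linearly disjoint from $M'$, to be the principal obstacle; the remaining steps are routine bookkeeping with the results of \S\ref{s:reqt} and \S\ref{s:modlift}.
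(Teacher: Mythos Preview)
Your proposal is essentially correct and follows the same strategy as the paper: the paper first proves residual potential modularity (Theorem~\ref{thm:pmod1}) by linking $\ol{\rho}$ to an auxiliary $\ell$-adic representation via a Hilbert--Blumenthal abelian variety found by Moret--Bailly, then deduces Theorem~\ref{thm:pmod2} by a single application of Theorem~\ref{thm:mlt}. You have inlined the proof of Theorem~\ref{thm:pmod1} into that of Theorem~\ref{thm:pmod2}, but the content is the same, and you have correctly identified the geometric connectedness of the moduli space and the Moret--Bailly step as the technical heart (the paper handles these in Proposition~\ref{prop:potav}, Proposition~\ref{prop:mb2}, and Proposition~\ref{prop:univmod} for the auxiliary CM-type representation).

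There is one genuine gap. Your moduli space $Y$ carries a level structure identifying $A[\mf{p}]$ with $\ol{\rho}$ compatibly with the Weil pairing, and you write ``$\det\ol{\rho} = \ol{\chi}_p$'' as if this were given; but the weight-two hypothesis only gives $\det\ol{\rho} = \ol{\psi}\,\ol{\chi}_p$ for some finite-order $\ol{\psi}$. The Weil pairing on $A[\mf{p}]$ is always $\ol{\chi}_p$, so as stated $Y$ may have no points at all. The paper fixes this in the proof of Theorem~\ref{thm:pmod1} by first passing (via Corollary~\ref{cor:sqr}) to a totally real pre-solvable $F_1/F$ over which $\psi$ and the determinant of the auxiliary representation become squares, then twisting $\ol{\rho}\vert_{G_{F_1}}$ by $\psi_1^{-1}$ to force cyclotomic determinant before forming the moduli problem, and finally twisting the resulting modular form back by $\psi_1$. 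For your purposes (where you only need $\ol{\rho}_{f_0} \cong \ol{\rho}\vert_{G_{F''}}$ and matching $p$-types, not a prescribed determinant for $f_0$) it suffices to twist $\ol{\rho}$ by any square root of $\ol{\psi}^{-1}$ in $\ol{\F}_p^\times$ and twist back by its Teichm\"uller lift; either way this step must be inserted before you build $Y$. Absorbing the fixed field of the finite part of $\det\rho$ into $M'$, as you do, does not by itself address this.
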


In \S \ref{s:additional} we will improve these results and show that $F'$ can
be taken to be split at a given finite set of primes.

\subsection\label{ss:mb}
Following \cite{Taylor3}, we will prove Theorem~\ref{thm:pmod1} by using
a theorem of Moret-Bailly.  We now recall that theorem.  Let $F$ be any
number field.  A \emph{Skolem datum} over $F$ is a tuple $(X, \Sigma, \{L_v\}_
{v \in \Sigma}, \{\Omega_v\}_{v \in \Sigma})$ consisting of:
\begin{itemize}
\item A geometrically connected, separated, smooth scheme $X$ of finite
type over $F$.
\item A finite set of places $\Sigma$ of $F$.
\item For each $v \in \Sigma$ a Galois extension $L_v$ of $F_v$.
\item For each $v \in \Sigma$ a non-empty, Galois stable, open subset
$\Omega_v$ of $X(L_v)$.
\end{itemize}
Here we regard $X(L_v)$ as having the $v$-adic topology.  Our definition of
a Skolem datum is less general than the one given in \cite{MoretBailly}.

\begin{proposition}[\cite{MoretBailly}]
\label{prop:mb}
Let a Skolem datum as above be given.  Then there exists a finite Galois
extension $F'/F$ which splits over each $L_v$ (that is, $F' \otimes_F L_v$ is
a direct sum of $L_v$'s) and a point $x \in X(F')$ such that for each $v \in
\Sigma$ and each embedding $F' \to L_v$ the image of $x$ in $X(L_v)$ belongs
to $\Omega_v$.
\end{proposition}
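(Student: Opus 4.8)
The plan is to follow Moret-Bailly's own argument, which runs through arithmetic intersection theory on arithmetic surfaces; Rumely's earlier potential-theoretic proof is an alternative route I would keep in reserve. First I would reduce to the case $\dim X=1$. For each $v\in\Sigma$ fix a point $Q_v\in\Omega_v$. Embedding $X$ in projective space and cutting by complete intersections, the curves contained in $X$ are parametrized by a rational variety over $F$, and on it the conditions ``the curve is smooth and geometrically irreducible'' and, for each $v\in\Sigma$, ``the curve passes through a point of $\Omega_v$ close to $Q_v$'' are open; each is non-empty over the relevant field (the last by the implicit function theorem near $Q_v$, using that $X$ is smooth). Weak approximation on the parameter space then produces a single smooth geometrically irreducible curve $C\subset X$, defined over $F$, with $C(L_v)\cap\Omega_v\neq\emptyset$ for every $v\in\Sigma$; it then suffices to solve the problem for $C$ in place of $X$.

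Next I would pass to the smooth projective model $\ol{C}$ of $C$ and spread out to a regular arithmetic surface $\mc{C}\to\Spec(\ms{O}_F)$ with generic fibre $\ol{C}$, equipping an ample line bundle on $\mc{C}$ with positive hermitian metrics at the infinite places to obtain an arithmetically ample hermitian line bundle $\ol{\ms{L}}$ in the sense of S.\ Zhang. The desired point would be produced as $\mathrm{div}(s)\cap C$ for a carefully chosen global section $s$ of $\ms{L}^{\otimes n}$, $n\gg0$. By arithmetic Riemann--Roch, the lattice $H^0(\mc{C},\ms{L}^{\otimes n})$ contains on the order of $n^2$ sections of small sup-norm. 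Among these I would impose: $s$ vanishes to order exactly one at a prescribed finite set of $L_v$-rational points lying in $\Omega_v\cap C(L_v)$, for each $v\in\Sigma$ (finitely many linear conditions), and $s$ stays small at the archimedean places. To force the zero divisor of $s$ on $C$ to be a single closed point---equivalently, for its residue ring to be a field rather than a product---I would invoke an effective Hilbert-irreducibility statement over $\mc{C}$: only a sparse subset of the small sections has reducible zero divisor on $C$. For $n$ large the supply of small sections outweighs the sum of all these constraints, so such an $s$ exists. The closed point $P=\mathrm{div}(s)\cap C$ then has residue field a finite extension $F'/F$; the prescribed simple $L_v$-rational zeros give $F'\otimes_F L_v\cong\prod L_v$ with the induced points of $X(L_v)$ lying in the corresponding $\Omega_v$; and replacing $F'$ by its Galois closure over $F$---which still splits over each $L_v$, since every $F$-embedding of every conjugate of $F'$ into $\ol F_v$ lands in $L_v$---yields the Galois extension and a point $x\in X(F')$ with the required property for every embedding $F'\to L_v$.

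The hard part is what the second paragraph packages into a single sentence: one needs an \emph{effective} form of Hilbert irreducibility over the arithmetic surface $\mc{C}$---a quantitative bound on the number of small sections of $\ms{L}^{\otimes n}$ whose zero divisor on $C$ fails to be geometrically irreducible---sharp enough that, together with the finitely many local vanishing conditions, it stays below the arithmetic Euler characteristic $\hat\chi(\ol{\ms{L}}^{\otimes n})\sim c\,n^2$. Setting up the archimedean metrics and sup-norm estimates so that this counting is legitimate, and bounding the size of the reducible locus, is the technical core of Moret-Bailly's proof and is what I expect to be the main obstacle. (Rumely's alternative replaces this step by global capacity theory: after the reduction to a curve one checks that deleting small $v$-adic neighbourhoods of $C(L_v)\setminus\Omega_v$ for $v\in\Sigma$ leaves a set of positive arithmetic capacity, and then applies the Fekete--Szeg\H{o} theorem over number fields to manufacture algebraic points of bounded degree with the prescribed local positions.)
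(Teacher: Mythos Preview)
The paper does not prove this proposition at all: it is stated with the attribution \cite{MoretBailly} and used as a black box, with no accompanying proof environment. So there is nothing in the paper to compare your argument against beyond the bare citation.

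That said, your outline is a reasonable sketch of the strategy in Moret-Bailly's original paper: reduce to a curve by a Bertini-type slicing argument combined with weak approximation, spread out to an arithmetic surface, and then produce the desired point as the zero locus of a small section of a high power of an arithmetically ample hermitian line bundle, with local vanishing conditions imposed at the places in $\Sigma$. Your identification of the effective Hilbert irreducibility step as the technical crux is accurate, and your mention of Rumely's capacity-theoretic alternative is also apt. One small caution: in the reduction step, the claim that the parameter space of complete-intersection curves through a nearby point is rational (hence satisfies weak approximation) deserves care; Moret-Bailly arranges this reduction somewhat differently, and in any case one should check that the relevant open conditions are genuinely non-empty and compatible. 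But as a roadmap your proposal is faithful to the cited source, which is all the paper itself invokes.
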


It will be convenient to give a slightly improved version of the theorem here.
We learned of this improved version, and the proof, from
\cite[Proposition~2.1]{HSBT} (they state that L.~Dieulefait had made this
observation as well).

\begin{proposition}
\label{prop:mb2}
Let a Skolem datum be given and let $M/F$ be a finite extension.  Then one
can take the field $F'$ supplied by Proposition~\ref{prop:mb} to be linearly
disjoint from $M$.
\end{proposition}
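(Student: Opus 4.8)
The plan is to reduce the statement to Proposition~\ref{prop:mb} by enlarging the Skolem datum so that the Moret--Bailly field is automatically forced to be disjoint from $M$. First I would reduce to the case where $M/F$ is Galois, since any finite extension is contained in a finite Galois extension and linear disjointness from a larger field implies linear disjointness from the original. So assume $M/F$ is Galois with group $\Gamma=\Gal(M/F)$.

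The key idea is to add a new place to the datum that "sees" the splitting behavior in $M$. Concretely, by the Chebotarev density theorem choose a finite place $w$ of $F$, not already in $\Sigma$, that is inert in a well-chosen sub-extension of $M$ — more precisely, pick $w$ so that the decomposition group of $w$ in $\Gamma$ is a subgroup $H$ that is \emph{not} normal, or at least so that $M_w/F_w$ is a nontrivial extension whose Galois group generates $\Gamma$ together with the other decomposition groups; the cleanest choice is to take $w$ such that $\Frob_w$ generates a cyclic subgroup $C\subset\Gamma$ and then work with the fixed field. At the place $w$ I would set $L_w = F_w$ (the trivial Galois extension of $F_w$) and $\Omega_w = X(F_w)$, which is non-empty after possibly enlarging $\Sigma$ to include enough places that $X$ has local points everywhere — but in fact it is cleaner to instead use $L_w$ a nontrivial unramified extension of $F_w$ contained in (the completion of) $M$ and take $\Omega_w$ to be an entire $\Gal(L_w/F_w)$-orbit of a point of $X(L_w)$ not defined over $F_w$; one must check such an orbit exists, which follows since $X$ is geometrically connected of positive dimension (or, if $\dim X = 0$, the statement is handled separately and trivially). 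Applying Proposition~\ref{prop:mb} to the enlarged datum produces $F'/F$ and $x\in X(F')$ with the required property at the old places, and additionally $F'\otimes_F L_w$ is a sum of copies of $L_w$, i.e.\ $L_w \subset F'$ — wait, that is the opposite of what we want, so instead one arranges the local condition at $w$ so that $F'$ is forced to be \emph{split} at $w$ (i.e.\ $w$ splits completely in $F'$), which happens automatically from the Moret--Bailly conclusion when $L_w = F_w$; then $F'$ and $M$ are linearly disjoint because $\Gal(F'/F)$ and $\Gal(M/F)$ have no common quotient: any common quotient would correspond to a subfield of $F'$ in which $w$ does not split completely, contradicting that $w$ splits completely in all of $F'$, provided $w$ is chosen to be non-split in every nontrivial subextension of $M/F$.

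So the precise construction is: pick $w\notin\Sigma$, finite, such that $w$ does not split completely in any nontrivial subextension of $M/F$ (such $w$ exist with positive density by Chebotarev — take $\Frob_w$ to generate $\Gamma$ if $\Gamma$ is cyclic, or handle the general case by iterating over cyclic quotients), set $L_w = F_w$ and $\Omega_w = X(F_w)$ (non-empty after enlarging $\Sigma$ so that $X$ acquires an $F_{w}$-point — or simply note we may assume this holds, replacing $X$ by an open having many local points, since the original datum already posits $\Omega_v\neq\emptyset$ and we are free to shrink), and apply Proposition~\ref{prop:mb} to $(X,\Sigma\cup\{w\},\{L_v\},\{\Omega_v\})$. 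The resulting $F'$ splits completely at $w$; since $F' \cap M$ is a subextension of $M/F$ in which $w$ splits completely, it must be trivial, and as $F'/F$ is Galois this gives $F'$ linearly disjoint from $M$, while the conditions at the places $v\in\Sigma$ are exactly those of the original datum.

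The main obstacle is the bookkeeping around non-emptiness of $\Omega_w$: the cleanest route is to take $L_w = F_w$, but then one needs $X(F_w)\neq\emptyset$ for the chosen $w$, which is not automatic. The fix is to observe that the hypothesis already furnishes nonempty $\Omega_v$ (hence $X(L_v)\neq\emptyset$) for $v\in\Sigma$, and that by the Lang--Weil estimates $X$ has $F_w$-points for all but finitely many $w$ (as $X$ is geometrically connected and smooth over $F$); since Chebotarev gives infinitely many $w$ with the required splitting behavior in $M$, we may choose one avoiding the bad set. For $\dim X = 0$ the scheme $X=\Spec F''$ for a single field extension and the proposition is immediate. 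Thus the only real content is the Chebotarev selection of $w$ and the elementary Galois-theoretic argument that "splits completely at $w$" plus "$w$ not totally split in any nontrivial subextension of $M$" forces disjointness.
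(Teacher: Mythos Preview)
Your overall strategy---enlarge the Skolem datum by adjoining places with $L_w=F_w$ and $\Omega_w=X(F_w)$, so that Moret--Bailly forces $F'$ to split completely there, and then deduce disjointness from $M$ via a Galois-theoretic argument---is exactly the approach in the paper. The Lang--Weil step guaranteeing $X(F_w)\neq\emptyset$ for almost all $w$ is also the same.

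The gap is in your ``precise construction'': a \emph{single} place $w$ with the property ``$w$ does not split completely in any nontrivial subextension of $M/F$'' need not exist. For unramified $w$ this condition is equivalent to asking that $\Frob_w$ lie in no proper normal subgroup of $\Gamma=\Gal(M/F)$, and already for $\Gamma=\Z/2\Z\times\Z/2\Z$ every element lies in a proper (normal) subgroup, so no such $w$ exists. Your parenthetical ``handle the general case by iterating over cyclic quotients'' does not repair this; at best it hints at using several places, but it is not an argument. The fix---and this is what the paper does---is to choose a finite set $\Sigma'$ of places (disjoint from $\Sigma$, unramified in $M$, with $X(F_v)\neq\emptyset$) such that $\{\Frob_v\}_{v\in\Sigma'}$ \emph{generates} $\Gamma$, take $L_v=F_v$ and $\Omega_v=X(F_v)$ for each $v\in\Sigma'$, and apply Proposition~\ref{prop:mb} to the enlarged datum. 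Then $F'$ splits completely at every $v\in\Sigma'$, so $\Gal(M/(F'\cap M))$ contains every $\Frob_v$ and hence equals $\Gamma$, giving $F'\cap M=F$; since $F'/F$ is Galois this is linear disjointness. Once you replace your single $w$ by such a generating set $\Sigma'$, your argument goes through and coincides with the paper's.
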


\begin{proof}
As $X$ is smooth and geometrically connected, it has $F_v$-points for $v$
large (the Weil conjectures give mod $v$ points and then smoothness gives
$F_v$ points).  Choose a finite set of places $\Sigma'$ satisfying the
following:
\begin{itemize}
\item The set $\Sigma'$ is disjoint from $\Sigma$.
\item The extension $M/F$ is unramified everywhere above $\Sigma'$.
\item For $v \in \Sigma'$ the set $X(F_v)$ is non-empty.
\item The set $\{ \Frob_v \}_{v \in \Sigma'}$ generates $\Gal(M'/F)$, where
$M'$ is the Galois closure of $M$.
\end{itemize}
For $v \in \Sigma'$ let $L_v=F_v$ and $\Omega_v=X(F_v)$, which is non-empty
by construction.  Then $(X, \Sigma \cup \Sigma', \{L_v\}, \{\Omega_v\})$ is
a Skolem datum.  By Proposition~\ref{prop:mb} we get an extension $F'/F$ which
splits over each $L_v$ and a point $x \in X(F')$ such that for each embedding
$F' \to L_v$ the image of $x$ lands in $\Omega_v$.  As $F'$ splits over
$L_v=F_v$ for $v \in \Sigma'$ it follows that $F'$ is linearly disjoint from
$M$.
\end{proof}

\subsection
The following proposition is our main application of the theorem of
Moret-Bailly.

\begin{proposition}
\label{prop:potav}
Let $F$ and $K$ be totally real number fields.  Choose the following:
\begin{itemize}
\item A finite extension $M/F$.
\item Primes $\mf{p}$ and $\mf{l}$ of $K$ over distinct odd rational primes
$p$ and $\ell$.
\item Representations $\ol{\rho}_{\mf{p}}:G_F \to \GL_2(k_{\mf{p}})$
and $\ol{\rho}_{\mf{l}}:G_F \to \GL_2(k_{\mf{l}})$ with cyclotomic determinant.
\item Type functions $t_{\mf{p}}:\Sigma_p  \to \{A,B,C\}$ and $t_{\mf{l}}:
\Sigma_{\ell} \to \{A,B,C\}$.
\end{itemize}
We can then find a finite, totally real, Galois extension $F'/F$ linearly
disjoint from $M$ and a $\GL_2(K)$-type abelian variety $A/F'$ such that
$A[\mf{p}] \cong \ol{\rho}_{\mf{p}} \vert_{G_{F'}}$ and $T_{\mf{p}}A$ has
type $t_{\mf{p}} \vert_{F'}$, and similarly with $\mf{l}$ in place of
$\mf{p}$.
\end{proposition}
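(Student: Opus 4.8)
The plan is to construct a moduli space $X$ whose points parametrize (polarized) abelian varieties with $\GL_2(K)$-action together with isomorphisms of the $\mf{p}$- and $\mf{l}$-torsion to the prescribed residual representations, and then apply the theorem of Moret-Bailly in the strengthened form given by Proposition~\ref{prop:mb2} to produce a point over a totally real extension $F'$ linearly disjoint from $M$. To set this up, one should exhibit a single abelian variety (over $\ol{F}$, or over some finite extension) with $\GL_2(K)$-action whose $\mf{p}$-torsion and $\mf{l}$-torsion realize $\ol{\rho}_{\mf p}$ and $\ol{\rho}_{\mf l}$ respectively; the natural source is a suitable Hilbert modular variety (or a twist of one), since the $\mf p$-torsion of the universal abelian variety over a Hilbert modular variety has large monodromy and so every residual representation with cyclotomic determinant arises geometrically. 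The scheme $X$ will be a component of a fiber product of two such Hilbert modular varieties (one for $\mf p$, one for $\mf l$), cut out by the conditions that the two torsion structures are the given ones and that the polarization degree is fixed; it is geometrically connected after passing to a suitable finite extension, and smooth away from bad primes. Following Taylor \cite{Taylor3} and \cite{HSBT}, one trims $X$ to make it geometrically connected, separated, smooth, and of finite type over a finite extension of $F$, which we rename $F$.

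Next I would set up the Skolem datum. Take $\Sigma$ to be the union of $\Sigma_p$, $\Sigma_\ell$, the archimedean places, and the finitely many places where $\ol{\rho}_{\mf p}$, $\ol{\rho}_{\mf l}$, or the geometry of $X$ behaves badly. At each archimedean place $v$ let $L_v=F_v=\R$ and let $\Omega_v=X(\R)$, which must be shown non-empty --- this is where total reality enters, and it is arranged by working with Hilbert modular varieties whose real points are non-empty (the CM points, or Taylor's explicit construction, provide this). At each $v\in\Sigma_p$ (resp.\ $\Sigma_\ell$) let $L_v/F_v$ be a finite extension large enough that there is a point of $X(L_v)$ whose associated $p$-adic (resp.\ $\ell$-adic) Tate module has type $t_{\mf p}(v)$ (resp.\ $t_{\mf l}(v)$) and lifts the prescribed residual representation over $G_{L_v}$; such points exist because the relevant local deformation rings --- Proposition~\ref{prop:pring} for $v\mid p$ and Proposition~\ref{prop:lring} for the auxiliary prime --- have $\ol{\Q}_p$-points, and the local geometry of the moduli space surjects onto these deformation spaces. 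Let $\Omega_v$ be a small enough Galois-stable open neighborhood of such a point that every point in it has the same type and still reduces to $\ol{\rho}$ over $G_{L_v}$. At the remaining bad places $v\in\Sigma$ set $L_v=F_v$ and $\Omega_v=X(F_v)$, which is non-empty by the same smoothness-plus-Weil-conjectures argument used in Proposition~\ref{prop:mb2}.

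Finally, apply Proposition~\ref{prop:mb2} to get a finite Galois extension $F'/F$, linearly disjoint from $M$ and splitting over each $L_v$, together with a point $x\in X(F')$ landing in $\Omega_v$ for every embedding $F'\hookrightarrow L_v$. Since $F'$ splits over each real $L_v=\R$, the field $F'$ is totally real. The point $x$ corresponds to a $\GL_2(K)$-type abelian variety $A/F'$; by construction $A[\mf p]\cong\ol{\rho}_{\mf p}\vert_{G_{F'}}$ and $A[\mf l]\cong\ol{\rho}_{\mf l}\vert_{G_{F'}}$, and because $x\in\Omega_v$ for each $v\mid p$ the Tate module $T_{\mf p}A$ is locally of type $t_{\mf p}(v)$, hence of type $t_{\mf p}\vert_{F'}$, and similarly for $\mf l$. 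The main obstacle I expect is the construction and connectedness of the moduli scheme $X$: one must realize \emph{both} prescribed residual representations simultaneously on a single geometrically connected moduli space (using that the mod-$\mf p$ and mod-$\mf l$ monodromy of the appropriate Hilbert modular varieties is full and that $p\ne\ell$ makes the two conditions independent), and one must verify that $X$ has the requisite real points and local points of the prescribed $p$-adic and $\ell$-adic types --- the latter amounting to comparing the local structure of the moduli space with the local deformation rings of \S\ref{s:local}. Everything downstream of a good $X$ is a routine, if delicate, assembly of the Skolem datum exactly as in \cite{Taylor3}.
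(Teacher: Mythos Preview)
Your overall strategy---build a Hilbert--Blumenthal moduli scheme $X$ with level structure realizing both residual representations, then feed it into Moret--Bailly via Proposition~\ref{prop:mb2}---is exactly the paper's approach. But several of your execution steps diverge from the paper in ways that create genuine gaps.

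\textbf{Representability.} The paper adjoins an auxiliary prime $\mf{r}$ of $K$ (over some $r\neq p,\ell$) chosen so that $\ker(\GL_2(\ms{O}_K)\to\GL_2(k_{\mf r}))$ is torsion-free, and adds a level-$\mf{r}$ structure to the moduli problem. This kills automorphisms and guarantees $X$ is a scheme rather than a stack. You omit this; without it your $X$ may not exist as a scheme and Moret--Bailly does not apply.

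\textbf{Geometric connectedness.} The moduli space $X$ is \emph{not} a fiber product of two Hilbert modular varieties; it is a single Hilbert--Blumenthal moduli space over $F$ with simultaneous level structure at $\mf p,\mf l,\mf r$, twisted so that the torsion is identified with the vector-space schemes $V_w$ attached to the $\ol\rho_w$. The paper shows directly, by comparing with Rapoport's component computation for $\ms{M}^{\ms{D}^{-1}}_n$, that $X$ is geometrically connected \emph{over the original $F$}. Your maneuver of ``passing to a finite extension of $F$, which we rename $F$'' is both unnecessary and dangerous: the proposition demands $F'/F$ Galois and linearly disjoint from $M$ for the \emph{given} $F$, and replacing $F$ by an uncontrolled finite extension can destroy that.

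\textbf{Local points of prescribed type.} Your route through the deformation rings of \S\ref{s:local} is misdirected: Propositions~\ref{prop:pring} and~\ref{prop:lring} concern the trivial residual representation, not $\ol\rho_{\mf p}\vert_{G_{F_v}}$, and the asserted ``surjection of the local moduli space onto the deformation ring'' is neither stated nor needed. The paper's construction is much more concrete: for $v\in\Sigma_p$, take $\ol\Omega_v\subset X(\ol F_v)$ to be the locus of abelian varieties with good ordinary reduction (type $A$), good non-ordinary reduction (type $B$), or bad reduction (type $C$), and let $L_v$ be any finite Galois extension of $F_v$ over which this locus has a point. These are manifestly Galois-stable open conditions, and non-emptiness is immediate. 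The same recipe works at $v\in\Sigma_\ell$. No comparison with deformation rings is required.

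In short: right idea, but you should (i) add the auxiliary prime $\mf r$, (ii) prove connectedness over $F$ itself via Rapoport rather than base-changing, and (iii) replace the deformation-ring argument for the $\Omega_v$ by the reduction-type description.
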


\begin{proof}
Pick a prime $\mf{r}$ of $K$ above a prime $r \ne p, \ell$ such that the
kernel of the reduction map $\GL_2(\ms{O}_K) \to \GL_2(k_{\mf{r}})$ is
torsion-free.  Pick a representation $\ol{\rho}_{\mf{r}}:G_F \to
\GL_2(k_{\mf{r}})$ with cyclotomic determinant, \eg, $\ol{\rho}_r=\ol{\chi}_r
\oplus 1$.  Let $V_w$ be the vector space schemes over $F$
corresponding to $\ol{\rho}_w$ for $w \in \{\mf{p}, \mf{l}, \mf{r}\}$.
For each $w$, fix an isomorphism $a_w:\bigwedge^2{V_w} \to k_w(1)$;
this is the same as fixing an isomorphism of $V_w$ with its Cartier dual.
Let $X$ be the scheme over $F$ classifying tuples $(A, i, \{\alpha_w\}_w)$
where:
\begin{itemize}
\item $A$ is a $\GL_2(K)$-type abelian variety, that is, an abelian variety
of dimension $2[K:\Q]$ together with an embedding of $\ms{O}_K$ into $\End(A)$.
\item $i:\ms{P}(A) \to \ms{D}^{-1}$ is an isomorphism of $\ms{O}_K$-modules
equipped with a notion of positivity.  Here $\ms{P}(A)$ is the set of
isogenies $A \to A^{\vee}$, its set of positive elements being the
polarizations; $\ms{D}^{-1}$ is the inverse different of $K$, its set of
positive elements being the totally real elements it contains.
\item $\alpha_w:A[w] \to V_w$ is an isomorphism of $\ms{O}$-modules under
which the $\ms{O}$-linear Weil pairing on the source coincides with the fixed
isomorphisms $a_w$, for each $w \in \{\mf{p}, \mf{l}, \mf{r}\}$.
\end{itemize}
Note that $X$ exists as a scheme (rather than as a stack) because the
kernel of $\GL_2(\ms{O}_K) \to \GL_2(\ms{O}_K/\mf{p} \mf{l} \mf{r})$ is
torsion-free.  For more details on the definition (which
will not be important to us), and the proof that $X$ exists, see
\cite[\S 1]{Rapoport}, \cite[\S 2]{Taylor3} or \cite[\S 4]{Taylor}.

We now sketch an argument to show that $X$ is smooth and geometrically
connected.  To begin with, there is a similar moduli problem
one can consider, which we will denote by $X'$, consisting of tuples $(A, i,
\{\alpha_w\}_w)$ where $A$ and $i$ are as above but $\alpha_w$
is an $\ms{O}$-linear isomorphism $A[w] \to k_w^2$.
We have not given all the details of the definition of the space $X'$, but it
is meant to be the same as the space $\ms{M}^{\ms{D}^{-1}}_n$ of
\cite[\S 1]{Rapoport}, with $n=\mf{p} \mf{l} \mf{r}$.
(Note that Rapoport only considers the case where $n$ is an integer, but the
arguments apply to the case of ideals of $\ms{O}_K$ as well).
The spaces $X$ and $X'$ are nearly twisted forms of each other, but not quite.
To elaborate on this, fix an embedding $F \to \C$ and trivializations of
each $(V_w)_{\C}$.  We then have a natural map
$X_{\C} \to X'_{\C}$.  This map is not surjective as in $X'$ the
only condition on $\alpha_w$ is that it be $\ms{O}$-linear while
in $X$ it must be $\ms{O}$-linear and also take the Weil pairing to the
given pairing $a_w$.  This is the only failure of surjectivity, however, and
one sees that $X_{\C} \to X'_{\C}$ identifies the former with the locus in the
latter where the
$\alpha_w$ take the Weil pairing to the $a_w$.  Using the computation of
the connected components of $X'_{\C}$ given in \cite[Theorem~1.28 (ii)]
{Rapoport}, ones thus sees that $X_{\C}$ is a component of $X'_{\C}$.  It is
therefore smooth since $X'_{\C}$ is (see the discussion of
\cite[\S 1]{Rapoport}, in particular \cite[Theorem~1.20]{Rapoport}).  We have
thus shown that $X/F$ is smooth an geometrically connected.

Let $\Sigma=\Sigma_p \cup \Sigma_{\ell} \cup \Sigma_{\infty}$.
We now define a finite Galois extension $L_v/F_v$ and a Galois stable open 
subset $\Omega_v$ of $X(L_v)$ for each $v \in \Sigma$, as follows:
\begin{itemize}
\item Say $v \in \Sigma_p$.  If $t_{\mf{p}}(v)$ equals $A$ (resp.\ $B$)
let $\ol{\Omega}_v$ be the subset of $X(\ol{F}_v)$ consisting of
those abelian varieties over $\ol{F}_v$ which have good reduction
and whose reduction is ordinary (resp.\ non-ordinary) at $\mf{p}$.  If
$t_{\mf{p}}(v)$ equals $C$ then take $\ol{\Omega}_v$ to be the subset of
$X(\ol{F}_v)$ consisting of those abelian varieties which have bad
reduction.  Let $L_v/F_v$ be any finite Galois extension for which
$\ol{\Omega}_v \cap X(L_v)$ is non-empty and take $\Omega_v$ to be this
intersection.
\item For $v \in \Sigma_{\ell}$ we define $L_v$ and $\Omega_v$ in exactly
the same manner.
\item For $v \in \Sigma_{\infty}$ we take $L_v=F_v$ and $\Omega_v=X(L_v)$.
One easily sees that $\Omega_v$ is non-empty.
\end{itemize}
We now have a Skolem datum $(X, \Sigma, \{L_v\}, \{\Omega_v\})$.
Proposition~\ref{prop:mb2} thus gives a finite Galois extension $F'/F$ linearly
disjoint
from $M$ and a point $x \in X(F')$ corresponding to a $\GL_2(K)$-type abelian
variety $A/F'$ satisfying the conclusions of the theorem.
\end{proof}

\subsection
We need one more result before proving Theorem~\ref{thm:pmod1}.

\begin{proposition}
\label{prop:univmod}
Let $F$ be a totally real field and $\ell$ an odd prime.  Then there
exists a finitely ramified, odd, weight two representation $\rho:G_F \to
\GL_2(\ol{\Q}_{\ell})$ and a finite Galois extension $M/F$ such that for any
finite totally real extension $F'/F$ linearly disjoint from $M$
the representation $\ol{\rho} \vert_{G_{F'}}$ satisfies (A1) and (A2) (with
$p$ changed to $\ell$) and $\rho \vert_{G_{F'}}$ comes from a cuspidal
parallel weight two Hilbert eigenform.
\end{proposition}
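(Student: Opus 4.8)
The plan is to take $\rho$ to be the $\ell$-adic Tate module $\rho_{E,\ell}$ of a suitably chosen elliptic curve $E/F$, so that the whole statement reduces to producing a \emph{single} elliptic curve over $F$ which is modular and whose mod $\ell$ (and mod $3$) torsion has sufficiently large image. First I would dispose of the ``for all $F'$'' quantifier. If $E$ is modular over $F$, say $\rho_{E,\ell}\cong\rho_g$ with $g$ a cuspidal parallel weight two Hilbert eigenform over $F$, then for every totally real $F'/F$ the base change $g_{F'}$ is again such a form and $\rho_{E,\ell}\vert_{G_{F'}}\cong\rho_{g_{F'}}$ (and $g_{F'}$ is cuspidal because $\ol\rho_{E,\ell}\vert_{G_{F'(\zeta_\ell)}}$ will be irreducible). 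Moreover, if $M$ is taken to contain $F(\zeta_\ell)$ together with the subfield of $\ol{F}$ cut out by $\ker\ol\rho_{E,\ell}$, then for $F'/F$ linearly disjoint from $M$ one has $\ol\rho_{E,\ell}(G_{F'})=\ol\rho_{E,\ell}(G_F)$ and $[F'(\zeta_\ell):F']=[F(\zeta_\ell):F]$, so (A1) and (A2) for $\ol\rho\vert_{G_{F'}}$ follow from (A1) and (A2) for $\ol\rho$. Thus it suffices to find one modular elliptic curve $E/F$ with $\ol\rho_{E,\ell}$ satisfying (A1) and (A2).

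Next I would construct $E$ by a Hilbert irreducibility argument over the Hilbertian field $F$. Since $F$ is totally real, $\sqrt{-3}\notin F$, so $\ol\chi_3$ is surjective and a suitably general elliptic curve over $F$ has $\ol\rho_{E,3}(G_F)=\GL_2(\F_3)$; similarly a general curve has $\ol\rho_{E,\ell}(G_F)$ equal to the whole group $\{g\in\GL_2(\F_\ell):\det g\in\ol\chi_\ell(G_F)\}$. I would choose $E/F$ realizing both maximal images simultaneously (working over $F(\zeta_{3\ell})$ and descending) and, by weak approximation, arrange in addition that $E$ has good ordinary reduction at every place of $F$ above $3$. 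Then $\ol\rho_{E,\ell}$ restricted to $G_{F(\zeta_\ell)}$ has image $\mathrm{SL}_2(\F_\ell)$, which acts absolutely irreducibly, so (A1) holds; and (A2) holds automatically, because if $[F(\zeta_5):F]=4$ the projective image of $\ol\rho_{E,5}$ is $\PGL_2(\F_5)$ and the condition $[F(\zeta_5):F]=4$ is exactly what is demanded, whereas if $[F(\zeta_5):F]<4$ the projective image is $\mathrm{PSL}_2(\F_5)$, so the hypothesis of (A2) is not met.

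It remains to show $E$ is modular over $F$. The projective image of $\ol\rho_{E,3}$ is $\PGL_2(\F_3)\cong S_4$, which is solvable, so by Langlands--Tunnell $\ol\rho_{E,3}$ is modular over $F$; by the standard weight-lowering argument it equals $\ol\rho_f$ for a parallel weight two Hilbert eigenform $f$ over $F$, and since $E$ has good ordinary reduction above $3$ we may take $f$ ordinary above $3$, so that $t_{\rho_{E,3}}\vert_{\Sigma_p}\approx t_f\vert_{\Sigma_p}$ for $p=3$. Since $\ol\rho_{E,3}$ satisfies (A1) of \S\ref{s:reqt} (full mod $3$ image) and (A2) is vacuous at the prime $3$, Theorem~\ref{thm:mlt} applied with $p=3$ gives $\rho_{E,3}\cong\rho_g$ for a Hilbert eigenform $g$ over $F$; if the weight-lowering or type matching requires a solvable totally real base change $F_1/F$ (chosen linearly disjoint from the fields above) one obtains the conclusion over $F_1$ and descends to $F$ by Langlands' solvable descent, using that $\rho_{E,3}$ is defined over $F$ and residually irreducible. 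Hence $E$ is modular over $F$, so $\rho_{E,\ell}$ comes from a cuspidal parallel weight two Hilbert eigenform over $F$, and we take $\rho=\rho_{E,\ell}$ and $M=F(\zeta_\ell)\cdot\ol{F}^{\ker\ol\rho_{E,\ell}}$.

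The main obstacle is precisely this last step --- the modularity of $E$ over $F$, and within it the passage from the modularity of $\ol\rho_{E,3}$ to a weight two companion form $f$ of the correct type above $3$. This is where one must invoke Langlands--Tunnell together with the weight part of Serre's conjecture for Hilbert modular forms (or, equivalently for our purposes, the level- and type-switching results of \S\ref{s:modlift}), possibly after a solvable base change; everything else --- the Hilbert irreducibility step, the image computations, and the descent --- is routine.
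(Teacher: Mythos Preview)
Your approach is genuinely different from the paper's, and there is a real gap in your reduction step.  The paper does not use an elliptic curve at all: it takes $\rho=\Ind_{FH}^F(\psi)$ for a suitable algebraic Hecke character $\psi$ of $G_{FH}$, where $H/\Q$ is imaginary quadratic with $\ell$ split in $H$.  Modularity of $\rho$ is then \emph{free}---it is automorphic induction from $\GL_1$---and the projective image of $\ol\rho$ is dihedral, hence solvable, so (A2) never bites.  This sidesteps entirely the Langlands--Tunnell input and the weight-one-to-weight-two passage you correctly flag as the main obstacle.

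The gap in your argument is the sentence ``for every totally real $F'/F$ the base change $g_{F'}$ is again such a form.''  Base change for $\GL_2$ is known only along solvable extensions, whereas $F'/F$ here is an \emph{arbitrary} finite totally real extension; you cannot dispose of the ``for all $F'$'' quantifier by proving modularity once over $F$.  The fix is to run your Langlands--Tunnell $+$ Theorem~\ref{thm:mlt} argument directly over each $F'$ (the mod $3$ and mod $\ell$ images and the reduction type above $3$ persist once $F'$ is linearly disjoint from the relevant torsion fields), but then the delicate weight-two/ordinary-type step must also be redone over every $F'$.  This is precisely why the paper's CM construction is cleaner: since $\rho\vert_{G_{F'}}=\Ind_{F'H}^{F'}(\psi\vert_{G_{F'H}})$ is again an induction of a character, modularity over $F'$ is again immediate, with no lifting theorem and no weight-shifting needed at all.
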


Before proving the proposition we require a lemma.

\begin{lemma}
Let $H/\Q$ be an imaginary quadratic extension
in which $\ell$ splits.  Let $\mf{l}^+$ and $\mf{l}^-$ be the two
primes above $\ell$.  We can then find a finitely ramified character
$\psi_0:G_H \to \ol{\Q}_{\ell}^{\times}$ such that $\psi_0 \vert_{I_{H,
\mf{l}^+}}=\chi_{\ell}$ and $\psi_0 \vert_{I_{H, \mf{l}^-}}=1$.
\end{lemma}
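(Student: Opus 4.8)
The plan is to produce $\psi_0$ directly by global class field theory, prescribing its inertia at $\mf{l}^+$, leaving it unramified at $\mf{l}^-$, and permitting ramification at a single auxiliary prime in order to kill the one global obstruction. Since $\ell$ splits in $H$ we have $H_{\mf{l}^+}=H_{\mf{l}^-}=\Q_\ell$, and under local class field theory (with the normalization of \S\ref{s:reqt}, under which uniformizers correspond to arithmetic Frobenii) the character $\chi_\ell\vert_{I_{H,\mf{l}^+}}$ corresponds to an explicit \emph{infinite order} character $\theta^+:\ms{O}_{H_{\mf{l}^+}}^\times=\Z_\ell^\times\to\ol{\Q}_\ell^\times$, namely $u\mapsto u^{-1}$; this $\theta^+$ is what I want for the restriction of $\psi_0$ to $\ms{O}_{H_{\mf{l}^+}}^\times$. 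First I would fix, by Chebotarev, an auxiliary prime $\mf{q}$ of $H$ over a rational prime $q$ with $q\nmid 2\ell\cdot\mathrm{disc}(H)\cdot\#\mu_H$ and with $\bN{\mf{q}}\equiv 1\pmod{\#\mu_H}$, so that $\mu_H$ injects into the cyclic group $k_{\mf{q}}^\times$. Put $S=\{\mf{l}^+,\mf{q}\}$. A finitely ramified character of $G_H$ unramified outside $S$ is the same as a continuous character of $\Gamma:=\Gal(H^{\mathrm{ab}}_S/H)$, and since $\mf{l}^-\notin S$ any such character is automatically unramified at $\mf{l}^-$; so the only thing to arrange is $\psi_0\vert_{I_{H,\mf{l}^+}}=\chi_\ell\vert_{I_{H,\mf{l}^+}}$, i.e.\ that the restriction of $\psi_0$ to $\ms{O}_{H_{\mf{l}^+}}^\times$ be $\theta^+$.

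The key point is the identification of the obstruction to prescribing the restriction of $\psi_0$ to the local unit groups $J:=\ms{O}_{H_{\mf{l}^+}}^\times\times\ms{O}_{H_{\mf{q}}}^\times$ (the inertia directions at $S$). Because the archimedean place of $H$ is complex, its connected component is absorbed in $\Gamma$, and one checks directly that the kernel of the natural map $J\to\Gamma$ is exactly the (finite) image of the global units $\ms{O}_H^\times=\mu_H$, embedded in $J$ via the two local inclusions; moreover $\mathrm{im}(J)$ is \emph{open} in $\Gamma$, with quotient the class group $\Cl(H)$. Consequently, a prescribed pair $(\epsilon_{\mf{l}^+},\epsilon_{\mf{q}})$ of characters of $J$ is the restriction of a (necessarily continuous) character of $\Gamma$ precisely when $\epsilon_{\mf{l}^+}\epsilon_{\mf{q}}$ is trivial on the diagonal $\mu_H\subset J$, the extension to all of $\Gamma$ being automatic since $\ol{\Q}_\ell^\times$ is divisible. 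So I would take $\epsilon_{\mf{l}^+}=\theta^+$; its restriction to $\mu_H\hookrightarrow\Z_\ell^\times$ --- an injection, since $\ell$ unramified in $H$ forces $\ell\nmid\#\mu_H$ (otherwise $\zeta_\ell\in H$, which would ramify $\ell$) --- is a faithful character of $\mu_H$, and I would then let $\epsilon_{\mf{q}}$ be the inverse character on the image of $\mu_H$ in $k_{\mf{q}}^\times$, extended to $\ms{O}_{H_{\mf{q}}}^\times\twoheadrightarrow k_{\mf{q}}^\times$ --- possible because $k_{\mf{q}}^\times$ is cyclic and $\ol{\Q}_\ell^\times$ is divisible. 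The resulting $\psi_0$ is unramified outside $\{\mf{l}^+,\mf{q}\}$, hence finitely ramified; it satisfies $\psi_0\vert_{I_{H,\mf{l}^+}}=\chi_\ell\vert_{I_{H,\mf{l}^+}}$ by the choice of $\epsilon_{\mf{l}^+}$ and local class field theory, and $\psi_0\vert_{I_{H,\mf{l}^-}}=1$.

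The hard part is handling the genuinely $\ell$-adic (infinite order) local component $\theta^+$ at $\mf{l}^+$: one cannot run the argument with finite ray class groups, and must work with $\Gamma$ (equivalently with $\A_H^\times/H^\times$) directly. In particular, it is essential that conditions are imposed on $\psi_0$ only along the \emph{unit} subgroups at $\mf{l}^+$ and $\mf{q}$: then the $S$-units of positive rank, which have nontrivial valuation somewhere in $S$, never enter $J$, and the obstruction really does reduce to the finite group $\mu_H$. (Alternatively, the lemma follows from the classical existence for $H$ of an algebraic Hecke character of weight-one $\infty$-type with conductor prime to $\ell$: its $\ell$-adic avatar is crystalline above $\ell$ with Hodge--Tate weights $-1$ and $0$ at $\mf{l}^+$ and $\mf{l}^-$, hence an unramified twist of $\chi_\ell$ at $\mf{l}^+$ and unramified at $\mf{l}^-$; one then only fixes the sign of the $\infty$-type and notes that the cases $H=\Q(i),\Q(\zeta_3)$, with larger $\mu_H$, cause no trouble as $\ell$ remains unramified in $H$.)
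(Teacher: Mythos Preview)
Your proof is correct and follows essentially the same route as the paper: introduce an auxiliary prime to absorb the global-unit obstruction, and then extend to the full idele class quotient using the divisibility of $\ol{\Q}_\ell^\times$. The only organizational difference is that the paper shrinks the level at the auxiliary prime (choosing a compact open $U_{\mf{r}}\subset\ms{O}_{H,\mf{r}}^\times$ with $U_{\mf{r}}\cap\ms{O}_H^\times=1$) so that the map from the local units at $\mf{l}^{\pm}$ into the idele class quotient is already injective, whereas you keep the full unit group at $\mf{q}$, identify the kernel as the diagonal $\mu_H$, and cancel it by hand via your choice of $\epsilon_{\mf{q}}$.
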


\begin{proof}
Let $\mf{r}$ be a prime of $H$ over a prime $r \ne \ell$ and put
$S=\{ \mf{l}^+, \mf{l}^-, \mf{r}\}$.  Let $U_{\mf{r}}$ be a compact open
subgroup of $\ms{O}_{H, \mf{r}}^{\times}$ such that $U_{\mf{r}} \cap
\ms{O}_H^{\times}=1$.  The map
\begin{displaymath}
\ms{O}_{H, \mf{l}^+}^{\times} \times \ms{O}_{H, \mf{l}^-}^{\times} \to
\A_H^{\times}/(H^{\times} H_{\infty}^{\times} U_{\mf{r}} \prod_{v \not \in S}
\ms{O}_{H, v}^{\times}).
\end{displaymath}
is injective and has open image.  It follows that any character of the
left group valued in $\ol{\Q}_{\ell}^{\times}$ extends to a character of
the right group, since $\ol{\Q}_{\ell}^{\times}$ is injective.
The result now follows from class field theory.
\end{proof}

We now return to the proof of the proposition.

\begin{proof}[Proof of Proposition~\ref{prop:univmod}]
Let $H/\Q$ be an imaginary quadratic field in
which $\ell$ splits completely.  Let $\mf{l}^+$ and $\mf{l}^-$ be the two
places of $H$ above $\ell$.  For each place $v$ of $F$ above $\ell$ label
the two places of $FH$ above $v$ corresponding to $\mf{l}^+$ and $\mf{l}^-$
as $v^+$ and $v^-$.  Let $\psi_0:G_H \to \ol{\Q}_{\ell}^{\times}$
be the character produced by the previous lemma.  We can multiply $\psi_0
\vert_{G_{FH}}$ by a finite order character of $G_{FH}$ to obtain a character
$\psi:G_{FH} \to \ol{\Q}_{\ell}^{\times}$ satisfying the following two
conditions:
\begin{itemize}
\item For each place $v$ of $F$ over $\ell$ we have $\psi \vert_{I_{FH, v^+}}
=\chi_{\ell}$ and $\psi \vert_{I_{FH, v^-}}=1$.
\item Let $\psi'$ be the $\Gal(FH/F)$ conjugate of $\psi$.  Then
$\ol{\psi} \vert_{G_{FH(\zeta_{\ell})}} \ne \ol{\psi}'
\vert_{G_{FH(\zeta_{\ell})}}$.
\end{itemize}
We put $\rho=\Ind_{FH}^F(\psi)$ and let $M$ be the Galois closure over
$F$ of the field determined by $\ker(\ol{\rho} \vert_{F(\zeta_{\ell})})$.

We now show that $\rho$ has the requisite properties.  It is
clear that $\rho$ is finitely ramified.  We can think of $\rho$ as
given by $\ol{\Q}_{\ell}[G_F] \otimes_{\ol{\Q}_{\ell}[G_{FH}]}
\ol{\Q}_{\ell}(\psi)$.  As such, if $g$ is any element of $G_F$ which does
not belong to $G_{FH}$ then $e_1=1 \otimes 1$ and $e_2=g \otimes 1$
form a basis for $\rho$.  In this basis, we have
\begin{equation}
\label{eq:mat1}
\rho(h)=\mat{\psi(h)}{}{}{\psi'(h)}, \qquad
\rho(g)=\mat{}{\psi(g^2)}{1}{}
\end{equation}
where $h \in G_{FH}$.  As any complex conjugation in $G_F$ can be taken to be
$g$, and these elements square to the identity, \eqref{eq:mat1}
shows that $\rho$ is odd.  We now check that $\rho$ has
weight two.  It suffices to check this after a finite extension, so we
may as well check that $\rho \vert_{G_{FH}}$ has weight two.  If $h$ is an
element of $I_{FH, v^+}$ then $g^{-1}hg$ belongs to $I_{H, v^-}$ and
conversely.  Thus \eqref{eq:mat1} shows that $\rho \vert_{I_{FH, v^+}}$ and
$\rho \vert_{I_{FH, v^-}}$ are isomorphic to $\chi_{\ell} \oplus 1$, and so
$\rho$ has weight two.

We now prove the statements about $\ol{\rho}$.  Choose an element $g$ of
$G_{F(\zeta_{\ell})} \subset G_F$ which
does not belong to $G_{FH}$, which is possible since $H$ is disjoint from
$F(\zeta_{\ell})$,
and let $e_1$, $e_2$ be a basis for $\rho$ as above.  The matrices
in \eqref{eq:mat1} belong to $\GL_2(\ol{\Z}_{\ell})$ and thus give an
integral model for $\rho$.  We define $\ol{\rho}$ to be the reduction of
this integral model.  From the first matrix in \eqref{eq:mat1} and our
second condition on $\psi$ we see that $\ol{\rho}(G_{FH(\zeta_{\ell})})$ is
contained in the group of diagonal matrices but not in the group of scalar
matrices.  On the other hand, we have $\ol{\rho}(g)=\mat{}{\ast}{1}{}$.  These
two conditions show that $\ol{\rho}(G_{F(\zeta_{\ell})})$ is an irreducible
subgroup of $\GL_2(\ol{\F}_{\ell})$.  Thus $\ol{\rho}$ satisfies (A1).  The
image of $\ol{\rho}$ is solvable and so (A2) is satisfied.

Now, since $\rho$ is the induction of a character it comes from an automorphic
form.  As $\rho$ is odd, weight two and absolutely irreducible, this
automorphic form is a cuspidal, parallel weight two Hilbert modular form.  Thus
$\rho$ is modular.

Finally, if $F'/F$ is any extension linearly disjoint from $M$ then
$\rho \vert_{G_{F'}}=\Ind_{F'H}^{F'}(\psi)$.  As $\psi \vert_{G_{HF'}}$
has the same two properties that we imposed on $\psi$, the above arguments
show that $\rho \vert_{G_{F'}}$ is modular and that $\ol{\rho} \vert_{G_{F'}}$
satisfies (A1) and (A2).  This completes the proof.
\end{proof}

\subsection
We can now complete proof of Theorem~\ref{thm:pmod1}.  Let $\ol{\rho}$, $\psi$,
$M_1=M$ and $t$ be given as in the statement of the theorem.  Pick an odd
prime $\ell \ne p$ and let $\rho'$ and $M_2$ be the representation and
extension given by Proposition~\ref{prop:univmod}.  Let $M'$ be the Galois
closure over $F$ of
the compositum of $M_1$ and $M_2$.  Write $\det{\rho'}=\chi_{\ell} \psi'$.
Pick a totally real field $K$ and primes $\mf{p}$ and $\mf{l}$ above $p$ and
$\ell$ such that the images of $\ol{\rho}$ and $\ol{\rho}'$ are contained in
$\GL_2(k_{\mf{p}})$ and $\GL_2(k_{\mf{l}})$.  Apply Corollary~\ref{cor:sqr} to
find a finite, totally real extension $F_1/F$, linearly disjoint from $M'$,
such that $\psi$ and $\psi'$ are squares when restricted to $G_{F_1}$; say
$\psi \vert_{G_{F_1}}=\psi_1^2$ and $\psi' \vert_{G_{F_1}}=(\psi'_1)^2$.
Put $\ol{\rho}_{\mf{p}}=\psi_1^{-1} \cdot (\ol{\rho} \vert_{G_{F_1}})$ and
$\ol{\rho}_{\mf{l}}=(\psi'_1)^{-1} \cdot (\ol{\rho}' \vert_{G_{F_1}})$.  Let
$t_{\mf{p}}=t$ and let $t_{\mf{l}}=t_{\rho'}$.

We now apply Proposition~\ref{prop:potav} (with the $M$ there being our
$M'$).  Let $F_2/F_1$ and $A/F_2$ be the resulting field and abelian variety.
Let $F''/F_2$ be any finite, totally real, extension linearly disjoint from
$M'$.  We have $A[\mf{l}] \vert_{G_{F''}}=(\psi_1')^{-1} \cdot \ol{\rho}'
\vert_{G_{F''}}$, which
satisfies (A1) and (A2).  Furthermore, $(T_{\mf{l}} A) \vert_{G_{F''}}$ and
$(\psi_1')^{-1} \cdot \rho' \vert_{G_{F''}}$ are each finitely ramified, odd,
weight two lifts of
$A[\mf{l}] \vert_{G_{F''}}$ which have the same type at each place of $F''$
over $\ell$ and $(\psi_1')^{-1} \cdot \rho' \vert_{G_{F''}}$ is modular.
We conclude from Theorem~\ref{thm:mlt} that $(T_{\mf{l}} A) \vert_{G_{F''}}$
is modular, that is, of the form $\rho_{f', \mf{l}'}$ for some parallel weight
two Hilbert eigenform $f'$ over $F''$ and some prime $\mf{l}'$ of its
coefficient field.  Since $(T_{\mf{l}} A) \vert_{G_{F''}}$
is absolutely irreducible, $f$ must be cuspidal.  By compatibility, we
see that $(T_{\mf{p}} A)_{G_{F''}}$ is isomorphic to $\rho_{f', \mf{p}'}$.
Letting $f=\psi_1 \cdot f'$, we find $\ol{\rho} \vert_{G_{F''}}=
\ol{\rho}_{f, \mf{p}'}$ and $\det{\rho_{f, \mf{p}'}}=\chi_p \psi
\vert_{G_{F''}}$.  Finally, since $\rho_{f, \mf{p}'}$ is off from
$(T_{\mf{p}} A) \vert_{G_{F''}}$ by a finite order character, the two have
the same type at all places over $p$.  Since the type of the Tate module
above $p$ is prescribed by $t$ by the way we chose $A$, so is the type of
$\rho_{f, \mf{p}'}$.  Taking $F'$ to be the Galois closure of $F_2$ over
$F$ finishes the proof.

\subsection
We now prove Theorem~\ref{thm:pmod2}.  Let $\rho$ and $M=M_1$ be given as in
the statement of the theorem.  Let $t$ be a definite type function on
$\Sigma_p$ with $t \approx t_{\rho} \vert_{\Sigma_p}$
Apply Theorem~\ref{thm:pmod1} to $\ol{\rho}$, $M$ and $t$.  Let $F'$
and $M_2$ be the resulting extensions.  Let $M'$ be the Galois closure
over $F$ of
the compositum of $M_1$, $M_2$ and the field determined by $\ker{\ol{\rho}}$.
Let $F''/F'$ be a finite, totally real extension
linearly disjoint from $M'$.  Let $f$ be a parallel weight two Hilbert
modular form over $F''$ with coefficients in $\ol{\Q}_p$ such that $\ol{\rho}
\vert_{G_{F''}}=\ol{\rho}_f$ and the type of $\rho_f$ at places above $p$
is given by $t$.  The representation $\ol{\rho} \vert_{G_{F''}}$ still
satisfies (A1) and (A2).  We can now apply
Theorem~\ref{thm:mlt} to conclude that $\rho \vert_{G_{F''}}$ is modular.
This completes the proof.

\subsection
We have now proved Theorem~\ref{mainthm}.  We will prove
Corollary~\ref{maincor} in \S \ref{s:conseq}.  We remark that
Proposition~\ref{prop:type} now applies and shows that $t_{\rho}$ is
definite for any odd, finitely ramified, weight two $\rho$ satisfying
(A1) and (A2).

\section{Finiteness of deformation rings}
\label{s:defring}

\subsection\label{ss:defring}
Let $F$ be a totally real field.  Fix an odd representation $\ol{\rho}:G_F \to
\GL_2(k)$ satisfying (A1) and (A2).  Also fix a finite order character
$\psi:G_F \to \ms{O}^{\times}$ such that $\det{\ol{\rho}}=\ol{\psi} \cdot
\ol{\chi}_p$.
Let $\Sigma$ be a finite set of primes of $F$, including all those above $p$
and those at which $\ol{\rho}$ or $\psi$ ramify.  For $v \in \Sigma$ let
$R^{\Box}_v$
denote the universal ring classifying framed deformations of $\ol{\rho}
\vert_{G_{F_v}}$ with determinant $\psi \chi_p$.  Choose a definite type
function $t$ defined on $\Sigma$ and for each $v \in \Sigma$ a quotient
$R^{\Box, \dag}_v$ of $R^{\Box}_v$.  We require the following:
\begin{enumerate}
\item $R^{\Box, \dag}_v$ is non-zero, reduced and $\ms{O}$-flat of dimension
$[F_v:\Q_p]+4$ (resp.\ 4) for $v \in \Sigma_p$ (resp.\ $v \not \in \Sigma_p$).
\item For each $v \in \Sigma$ there exists a finite Galois extension $L_v/F_v$
such that given an extension $E'/E$ and a point $R^{\Box, \dag}_v \to E'$
corresponding to a representation $\rho$ we have that $\rho \vert_{L_v}$ is
admissible of type $t(v)$.
\end{enumerate}
As usual, we will write $\Sigma_{\ast}$ for the inverse image of $\ast$
under $t$.

We now define the following deformation rings:
\begin{itemize}
\item We let $R^{\Box, \loc}$ be the tensor product of the $R^{\Box}_v$ for
$v \in \Sigma$.
\item We let $R^{\Box, \loc, \dag}$ be the tensor product of the $R^{\Box,
\dag}_v$ for $v \in \Sigma$.
\item We let $R^{\Box}_{\Sigma}$ be the universal ring of $\ol{\rho}$
classifying deformations of $\ol{\rho}$ with determinant $\psi \chi_p$ which
are unramified away from $\Sigma$ together with a framing at the places in
$\Sigma$.
\item We put $R^{\Box, \dag}=R^{\Box, \loc, \dag} \otimes_{R^{\Box, \loc}}
R^{\Box}_{\Sigma}$.
\item We let $R_{\Sigma}$ be the universal ring classifying deformations of
$\ol{\rho}$ with determinant $\psi \chi_p$ unramified away from $\Sigma$.
\item We let $R^{\dag}$ be the quotient of $R_{\Sigma}$ analogous to
$R^{\Box, \dag}$.
\end{itemize}
The main result of \S \ref{s:defring} is the following:

\begin{theorem}
\label{thm:defring}
The ring $R^{\dag}$ is finite over $\ms{O}$ of non-zero rank.  In particular,
the set $\Hom(R^{\dag}, \ol{\Q}_p)$ is finite and non-empty.
\end{theorem}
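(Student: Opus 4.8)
The plan is to follow the two-step strategy of Khare--Wintenberger, as described in the introduction. The two halves are: (i) a purely Galois-cohomological argument showing that $R^{\dag}$ has Krull dimension at least one, and (ii) a modularity/patching argument showing that $R^{\dag}$ is in fact module-finite over $\ms{O}$. Combining these gives that $R^{\dag}$ is finite over $\ms{O}$ of non-zero rank, and then $\Hom(R^{\dag}, \ol{\Q}_p) = \Hom(R^{\dag}[1/p], \ol{\Q}_p)$ is finite (module-finiteness) and non-empty (non-zero rank). Since all of this should be insensitive to the framing, I will work with $R^{\dag}$ directly but pass freely to $R^{\Box, \dag} = R^{\Box, \loc, \dag} \otimes_{R^{\Box, \loc}} R^{\Box}_{\Sigma}$ (which is formally smooth over $R^{\dag}$ of the appropriate relative dimension, since $\ol{\rho}$ is absolutely irreducible and so has trivial centralizer in the relevant sense), because the dimension count is cleanest there.

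For step (i): I would bound below $\dim R^{\Box, \dag}$ by comparing the global framed deformation ring to the semilocal one. The standard presentation (Kisin, following Mazur) writes $R^{\Box, \dag}$ as a quotient of a power series ring over $R^{\Box, \loc, \dag}$ in $h^1_{\Sigma}(\ad^0 \ol\rho)$ variables by $h^2_{\Sigma}(\ad^0 \ol\rho)$ relations, where the cohomology is computed with the Selmer conditions coming from the local quotients $R^{\Box, \dag}_v$. The key input is the global Euler characteristic formula together with the fact, guaranteed by hypothesis (1) on the $R^{\Box,\dag}_v$ (the relative dimensions $[F_v:\Q_p]+4$ at $p$ and $4$ away from $p$ are exactly ``one more than expected''), that each local condition is smooth of the maximal permitted dimension. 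A Poitou--Tate / Wiles formula computation then shows that the number of relations is bounded by the number of generators minus one, i.e. $\dim R^{\Box, \dag} \geq 1 + \dim R^{\Box, \loc, \dag}$, which after removing the framing gives $\dim R^{\dag} \geq 1$; oddness of $\ol\rho$ (and $p$ odd) is what kills the potentially obstructing $H^0$ of the dual at the infinite places. So $R^{\dag}$ is $\ms{O}$-flat of relative dimension $\geq 1$, hence has a point in $\ol{\Q}_p$ once we know finiteness — but more immediately it is non-zero and infinite over $\ms{O}$ a priori.

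For step (ii): take any $\ol{\Q}_p$-point — equivalently any characteristic-zero point of $R^{\dag}$ exists because $R^{\dag}$ is $\ms{O}$-flat and non-zero, so $R^{\dag}[1/p] \neq 0$ and has a closed point with residue field finite over $E$. This point gives a finitely ramified, odd, weight two lift $\rho$ of $\ol\rho$ with $\det\rho = \psi\chi_p$ and with type function $t$ at the places of $\Sigma$. Now apply the potential modularity theorem (Theorem~\ref{thm:pmod2}), whose hypotheses (A1), (A2) are in force: there is a finite totally real $F'/F$ over which $\rho$ becomes modular, and in fact one has the freedom (from the precise statement) to arrange $F'$ linearly disjoint from any prescribed $M$ — in particular disjoint from the field cut out by $\ol\rho$ and from $F(\zeta_p)$, so (A1), (A2) persist over $F'$. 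Over $F'$, after a further solvable base change to put ourselves in the situation of the $R = \T$ theorem (Theorem~\ref{thm:ReqT}), the deformation ring $R_{\mc{D}^{\circ}}$ of the corresponding datum is finite over $\ms{O}$. The deformation ring $R^{\dag}$, restricted along $G_{F'} \hookrightarrow G_F$, maps to (a ring closely related to) this $R_{\mc{D}^{\circ}}$ after matching up types — the matching of types at $p$ is exactly why we demanded $t$ be definite, and away from $p$ the local conditions $R^{\Box,\dag}_v$ were chosen (hypothesis (2)) to be ``admissible of type $t(v)$ over $L_v$,'' which is what the datum records. Base change for deformation rings (restriction is a finite morphism of local $\ms{O}$-algebras when the fixed field of $\ol\rho$ is contained in $F'$, by the usual argument with $H^0$) then forces $R^{\dag}$ itself to be finite over $\ms{O}$. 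The main obstacle is this last matching: one must check that restriction to $G_{F'}$ carries the global type-$t$ condition defining $R^{\dag}$ into the type-$t|_{F'}$ condition defining the modular deformation ring, which at $p$ uses the compatibility of the crystalline/ordinary conditions with base change (Kisin's local results) and away from $p$ uses that the prescribed local quotients become the standard minimal ones after the base change built into Theorem~\ref{thm:pmod2}; carrying this out cleanly — possibly by enlarging $\Sigma$ and choosing the extension so that $\ol\rho$ is trivial at all the relevant places, as in \S\ref{s:reqt} — is the technical heart of the argument.
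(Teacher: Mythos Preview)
Your step~(i) is fine and matches the paper's Proposition~\ref{prop:def-ring-dim}: the Kisin presentation together with the hypothesis on the dimensions of the $R^{\Box,\dag}_v$ gives $\dim R^{\dag}\ge 1$.  But note carefully what this dimension bound does \emph{not} give: it does not give $\ms{O}$-flatness, and it does not give the existence of a $\ol{\Q}_p$-point.  A complete local $\ms{O}$-algebra of Krull dimension~$\ge 1$ could perfectly well be a power series ring over $k$, with $R^{\dag}[1/p]=0$.  So your sentence ``$R^{\dag}$ is $\ms{O}$-flat of relative dimension $\ge 1$'' is unjustified, and the opening move of step~(ii) --- ``take any $\ol{\Q}_p$-point \ldots\ exists because $R^{\dag}$ is $\ms{O}$-flat and non-zero'' --- is circular.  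The existence of such a point is exactly the ``non-zero rank'' conclusion of the theorem.

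The paper avoids this circularity by applying potential modularity not to a characteristic-zero lift $\rho$ (Theorem~\ref{thm:pmod2}, which you invoke) but to the \emph{residual} representation $\ol{\rho}$ (Theorem~\ref{thm:pmod1}).  That theorem takes only $\ol{\rho}$, the character $\psi$, and the type function $t$ on $\Sigma_p$ as input, and produces a totally real $F'/F$ and a weight-two Hilbert eigenform $f$ over $F'$ with $\ol{\rho}_f\cong\ol{\rho}\vert_{G_{F'}}$ and $t_f\vert_{\Sigma_p}=t\vert_{F'}$.  No lift of $\ol{\rho}$ is required.  After a further solvable base change one is in the situation of Theorem~\ref{thm:ReqT}, whose finiteness conclusion, combined with the finiteness of the restriction map on deformation rings (the lemma in \S6.3), gives finiteness of $R^{\dag}$ over $\ms{O}$.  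Only \emph{then} does the dimension bound $\dim R^{\dag}\ge 1$ force non-zero rank and hence a $\ol{\Q}_p$-point.  The rest of your step~(ii) --- matching types after base change, and the finiteness of restriction --- is on the right track, but you must enter that argument with $\ol{\rho}$ and Theorem~\ref{thm:pmod1}, not with a lift $\rho$ you do not yet possess.
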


\subsection
We begin the proof of Theorem~\ref{thm:defring} with the following.

\begin{proposition}
\label{prop:def-ring-dim}
The ring $R^{\dag}$ has dimension at least one.
\end{proposition}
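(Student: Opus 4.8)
The strategy is the standard one, going back to \cite{KhareWintenberger}: bound $\dim R^{\dag}$ below by a difference of dual Selmer groups, read off the presentation of $R^{\dag}$ by generators and relations, and evaluate that difference with the Greenberg--Wiles formula. Put $M=\ad^{\circ}\ol{\rho}$; since $p$ is odd the trace form identifies $M$ with its Cartier dual, so $M^{*}(1)\cong M(1)=\ad^{\circ}\ol{\rho}(1)$. For $v\in\Sigma$ let $L_{v}\subseteq H^{1}(G_{F_{v}},M)$ be the tangent space of the local deformation condition cut out by $R^{\Box,\dag}_{v}$ (equivalently, the image in $H^{1}(G_{F_{v}},M)$ of the mod $p$ tangent space of $R^{\Box,\dag}_{v}$, after removing the framing directions); together with the unramified condition at finite $v\notin\Sigma$ and the automatically trivial condition at $v\mid\infty$, these form a Selmer system $\mc{L}$, with dual system $\mc{L}^{\perp}$.

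The first step is the lower bound coming from the presentation. As $\ol{\rho}$ is absolutely irreducible the deformation ring $R_{\Sigma}$ exists, and $R^{\dag}$ has a presentation $\ms{O}\llbracket x_{1},\dots,x_{g}\rrbracket/(f_{1},\dots,f_{r})$ with $g=\dim_{k}H^{1}_{\mc{L}}(G_{F,\Sigma},M)$ and, by Poitou--Tate duality, $r\le\dim_{k}H^{1}_{\mc{L}^{\perp}}(G_{F,\Sigma},M(1))$; this last inequality uses that each $R^{\Box,\dag}_{v}$ is $\ms{O}$-flat of the stated dimension and formally smooth after inverting $p$, so that no further relations are forced by the local $H^{2}$'s. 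Since $\dim\ms{O}\llbracket x_{1},\dots,x_{g}\rrbracket=g+1$,
\[
\dim R^{\dag}\;\ge\;1+\dim_{k}H^{1}_{\mc{L}}(G_{F,\Sigma},M)-\dim_{k}H^{1}_{\mc{L}^{\perp}}(G_{F,\Sigma},M(1)).
\]
(Equivalently one may run this argument for the framed ring $R^{\Box,\dag}=R^{\Box,\loc,\dag}\otimes_{R^{\Box,\loc}}R^{\Box}_{\Sigma}$, exactly as in the proof of Proposition~\ref{prop:twexists}, and then remove the $4\#\Sigma-1$ framing variables.)

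The second step evaluates the Selmer difference via the Greenberg--Wiles formula:
\begin{multline*}
\dim_{k}H^{1}_{\mc{L}}(G_{F,\Sigma},M)-\dim_{k}H^{1}_{\mc{L}^{\perp}}(G_{F,\Sigma},M(1))\\
=\dim_{k}H^{0}(G_{F,\Sigma},M)-\dim_{k}H^{0}(G_{F,\Sigma},M(1))+\sum_{v}\bigl(\dim_{k}L_{v}-\dim_{k}H^{0}(G_{F_{v}},M)\bigr),
\end{multline*}
the sum over all places (the summand vanishing at finite $v\notin\Sigma$). Here $H^{0}(G_{F,\Sigma},M)=0$ because $\ol{\rho}$ is absolutely irreducible and $p$ is odd, and $H^{0}(G_{F,\Sigma},M(1))=0$ because it injects into $H^{0}(G_{F(\zeta_{p})},M)=0$ by hypothesis (A1). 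At a real place $v$, oddness together with $p\ne2$ forces $\ol{\rho}(c_{v})\sim\mat{1}{}{}{-1}$, so $\dim_{k}H^{0}(G_{F_{v}},M)=1$ and $H^{1}(G_{F_{v}},M)=0$; hence each of the $[F:\Q]$ real places contributes $-1$. For $v\in\Sigma$ one bounds $\dim_{k}L_{v}$ from below using the hypotheses on $R^{\Box,\dag}_{v}$: subtracting the framing dimension $4-\dim_{k}H^{0}(G_{F_{v}},\ad\ol{\rho})$ and a further $1$ for the $\ms{O}$-direction, and using $H^{0}(G_{F_{v}},\ad\ol{\rho})=H^{0}(G_{F_{v}},M)\oplus k$, one gets $\dim_{k}L_{v}-\dim_{k}H^{0}(G_{F_{v}},M)\ge[F_{v}:\Q_{p}]$ for $v\mid p$ and $\ge0$ for $v\in\Sigma$ with $v\nmid p$. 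The places above $p$ thus contribute at least $[F:\Q]$, which cancels the archimedean $-[F:\Q]$, so the Selmer difference is $\ge0$ and therefore $\dim R^{\dag}\ge1$.

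The step I expect to be the main obstacle is the relations bound $r\le\dim_{k}H^{1}_{\mc{L}^{\perp}}(G_{F,\Sigma},M(1))$ in the presentation of $R^{\dag}$: this is the Poitou--Tate input, and it is exactly here that one needs the local conditions to be well-behaved (flat over $\ms{O}$ of the stated dimension, formally smooth after inverting $p$) so that the local $H^{2}$'s contribute nothing. Everything else is bookkeeping: the Greenberg--Wiles formula, local Euler characteristics, and the structure of $\ad^{\circ}\ol{\rho}$ as a module under complex conjugation. Note that the computation gives exactly $\dim R^{\dag}\ge1$ with no slack, reflecting that $R^{\dag}$ is expected to have relative dimension $0$ over $\ms{O}$.
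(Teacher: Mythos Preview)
Your parenthetical about running the argument for the framed ring is the right idea and is exactly what the paper does; but it is not ``equivalent'' to the Selmer argument you give as your main line, and that main line has a real gap.

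The problem is the relations bound $r\le\dim_k H^1_{\mc{L}^\perp}(G_{F,\Sigma},M(1))$. You justify it by invoking that each $R^{\Box,\dag}_v$ is ``formally smooth after inverting $p$'', but \S\ref{ss:defring} assumes no such thing: the local rings are only required to be nonzero, reduced, $\ms{O}$-flat, and of the stated Krull dimension. (The rings of \S\ref{s:local} happen to be generically smooth, but the rings of \S\ref{s:problems}, to which Theorem~\ref{thm:defring} is applied, are not known to be.) And even if generic smoothness were available it would not help: the obstruction theory controlling relations in a presentation of $R^{\dag}$ lives at the closed point, over $k$. When $R^{\Box,\dag}_v$ is not formally smooth over $\ms{O}$, its mod-$p$ tangent space can be strictly larger than its relative dimension; this inflates your $L_v$ and hence your generator count $g$, but the extra local relations needed to cut $R^{\Box,\dag}_v$ down to the right dimension must show up as relations in $R^{\dag}$ as well, and your bound $r\le\dim_k H^1_{\mc{L}^\perp}$ (which shrinks as $L_v$ grows) does not see them.

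The paper avoids this by postponing the local $\dag$-conditions until after the presentation is obtained. Using (A1) to get $H^0(G_{F,S},(\ad^\circ\ol\rho)^*(1))=0$, one invokes \cite[Proposition~4.1.5]{Kisin3} to write
\[
R^{\Box}_\Sigma \;=\; R^{\Box,\loc}\lbb x_1,\dots,x_{r+n-1}\rbb/(f_1,\dots,f_{r+s}),
\]
a presentation \emph{over the unrestricted local ring} $R^{\Box,\loc}$, with $n=\#\Sigma$ and $s=\sum_{v\mid\infty}\dim_k H^0(G_{F_v},\ad^\circ\ol\rho)=[F:\Q]$ by oddness. Tensoring over $R^{\Box,\loc}$ with $R^{\Box,\loc,\dag}$ then uses only the Krull dimension of the latter, giving $\dim R^{\Box,\dag}\ge ([F:\Q]+3n+1)+(r+n-1)-(r+s)=4n$, and stripping the $4n-1$ framing variables yields $\dim R^{\dag}\ge 1$. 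All the Poitou--Tate/Greenberg--Wiles content is packaged inside Kisin's presentation result, applied where there are no local conditions to worry about.
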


\begin{proof}
Since $\ol{\rho}$ satisfies condition (A1) we have $H^0(G_{F, S},
(\ad^{\circ}{\ol{\rho}})^*(1))=0$.  We can therefore apply
\cite[Proposition~4.1.5]{Kisin3} to conclude that there is an isomorphism
\begin{displaymath}
R^{\Box}_{\Sigma}=R^{\Box, \loc} \lbb x_1, \ldots, x_{r+n-1} \rbb/(f_1, \ldots,
f_{r+s})
\end{displaymath}
where $s=\sum_{v \mid \infty} \dim_k H^0(G_{F_v}, \ad^{\circ}{\ol{\rho}})$,
$n$ is the cardinality of $\Sigma$ and $r$ is some non-negative integer.  By
tensoring this isomorphism over $R^{\Box, \loc}$ with $R^{\Box, \loc, \dag}$,
we find
\begin{displaymath}
R^{\Box, \dag}=R^{\Box, \loc, \dag} \lbb x_1, \ldots, x_{r+n-1} \rbb/
(f_1, \ldots, f_{r+s})
\end{displaymath}
Now, the assumption that $\ol{\rho}$ is odd gives $s=[F:\Q]$.  The ring
$R^{\Box, \loc, \dag}$ has dimension $d+1$ where
\begin{displaymath}
d=\sum_{v \in \Sigma_p}([F_v:\Q_p]+3)+\sum_{v \in \Sigma \setminus \Sigma_p} 3
=[F:\Q]+3n
\end{displaymath}
We thus find
\begin{displaymath}
\dim{R^{\Box, \dag}} \ge ([F:\Q]+3n+1)+(r+n-1)-(r+s)=4n
\end{displaymath}
As $R^{\dag} \to R^{\Box, \dag}$ is formally smooth of relative dimension
$4n-1$ we conclude $\dim{R^{\dag}} \ge 1$.
\end{proof}

\subsection
In light of Proposition~\ref{prop:def-ring-dim}, to prove
Theorem~\ref{thm:defring} it suffices to show that $R^{\dag}$ is finite
over $\ms{O}$.  To do this we will need to use the following lemma:

\begin{lemma}
Let $F'/F$ be a finite extension for which $\ol{\rho} \vert_{G_{F'}}$ is
absolutely irreducible.  Let $\ol{R}_{\Sigma}$ be the universal ring
classifying deformations of $\ol{\rho} \vert_{G_{F'}}$ unramified outside of
the primes above $\Sigma$.  Then the map $\ol{R}_{\Sigma} \to R_{\Sigma}$ is
finite.
\end{lemma}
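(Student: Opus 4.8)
The plan is to prove the stronger statement that $R_{\Sigma}$ is a finitely generated module over $\ol{R}_{\Sigma}$, by exhibiting $R_{\Sigma}$ inside a visibly finitely generated $\ol{R}_{\Sigma}$-submodule of itself. Throughout I take both rings to classify deformations with the fixed determinant $\psi\chi_p$ (for $\ol{R}_{\Sigma}$ one restricts $\psi\chi_p$ to $G_{F'}$), which is the setting of \S\ref{ss:defring}. Since $\ol{\rho}$ and $\ol{\rho}\vert_{G_{F'}}$ are absolutely irreducible, there are genuine universal deformations $\rho^{\mathrm{univ}}:G_F\to\GL_2(R_{\Sigma})$ and $\ol{\rho}^{\mathrm{univ}}:G_{F'}\to\GL_2(\ol{R}_{\Sigma})$. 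The map $\ol{R}_{\Sigma}\to R_{\Sigma}$ classifies $\rho^{\mathrm{univ}}\vert_{G_{F'}}$, so after conjugating $\rho^{\mathrm{univ}}$ by an element of $1+\mathrm{M}_2(\mf{m}_{R_{\Sigma}})$ we may assume $\rho^{\mathrm{univ}}\vert_{G_{F'}}$ is the pushforward of $\ol{\rho}^{\mathrm{univ}}$; in particular it takes values in $\GL_2(\bar A)$, where $\bar A\subseteq R_{\Sigma}$ is the image of $\ol{R}_{\Sigma}$ (I work with $\bar A$ because the structure map need not be injective, and a module is finite over $\ol{R}_{\Sigma}$ iff it is finite over $\bar A$).

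Two standard facts drive the argument. First, $R_{\Sigma}$ is topologically generated over $\ms{O}$ by the traces $\{\tr\rho^{\mathrm{univ}}(g):g\in G_F\}$, as for any absolutely irreducible residual representation (see \cite{DDT}). Second, since $\ol{\rho}\vert_{G_{F'}}$ is absolutely irreducible, $\ol{\rho}(G_{F'})$ spans $\mathrm{M}_2(k)$ over $k$, so one can pick $h_1,\dots,h_4\in G_{F'}$ with $\{\ol{\rho}(h_l)\}$ a $k$-basis of $\mathrm{M}_2(k)$; then $\{\rho^{\mathrm{univ}}(h_l)\}$ spans $\mathrm{M}_2(\bar A)$ over $\bar A$. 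Fix coset representatives $G_F=\bigsqcup_{i=1}^{m}g_iG_{F'}$ with $g_1=1$, $m=[F':F]$. For any $g=g_ih$ with $h\in G_{F'}$ we get $\rho^{\mathrm{univ}}(g)=\rho^{\mathrm{univ}}(g_i)\rho^{\mathrm{univ}}(h)\in\rho^{\mathrm{univ}}(g_i)\mathrm{M}_2(\bar A)=\sum_{l}\bar A\,\rho^{\mathrm{univ}}(g_ih_l)$, hence $\tr\rho^{\mathrm{univ}}(g)$ lies in the finitely generated $\bar A$-submodule $\mf{M}:=\bar A\cdot 1+\sum_{i,l}\bar A\,\tr\rho^{\mathrm{univ}}(g_ih_l)$ of $R_{\Sigma}$.

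It remains to see $\mf{M}=R_{\Sigma}$. The submodule $\mf{M}$ is a subring: the $2\times 2$ identity $\tr(X)\tr(Y)=\tr(XY)+\det(Y)\,\tr(XY^{-1})$, together with $\det\rho^{\mathrm{univ}}=\psi\chi_p$ valued in $\ms{O}^{\times}\subseteq\bar A$, shows that the product of any two traces of $\rho^{\mathrm{univ}}$ is an $\bar A$-linear combination of traces, so $\mf{M}\cdot\mf{M}\subseteq\mf{M}$. Moreover $\mf{M}$, being a finitely generated module over the complete Noetherian local ring $\ol{R}_{\Sigma}$, is $\mf{m}_{\ol{R}_{\Sigma}}$-adically complete, hence closed in the (Hausdorff) ring $R_{\Sigma}$. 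Thus $\mf{M}$ is a closed subring of $R_{\Sigma}$ containing a topologically generating set, so $\mf{M}=R_{\Sigma}$; therefore $R_{\Sigma}$ is finite over $\ol{R}_{\Sigma}$.

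The main obstacle is choosing the right route, not any single hard computation. The reflex is to invoke the completed Nakayama lemma and reduce to showing that $R_{\Sigma}\otimes_{\ol{R}_{\Sigma}}k$ — which classifies deformations of $\ol{\rho}$ whose restriction to $G_{F'}$ is trivial — is Artinian. This is easy when $p\nmid[F':F]$ (restriction is injective on $H^1(G_F,\ad^{\circ}\ol{\rho})$) and also works cleanly when $F'/F$ is Galois (the relevant deformation differs from the constant one by a character of the finite group $G_F/G_{F'}$, pinned down by the determinant condition), but in the general non-Galois case it seems to genuinely require extra input: the relevant nonabelian cocycles, although inflated from the finite quotient of $G_F$ by the normal core of $G_{F'}$, take values in the centralizer of $\ol{\rho}$ restricted to that core, which need not be just the scalars. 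The trace argument above avoids this entirely and uses nothing beyond absolute irreducibility of $\ol{\rho}\vert_{G_{F'}}$; the only points needing care are the determinant bookkeeping and the routine check that conjugating $\rho^{\mathrm{univ}}$ changes nothing.
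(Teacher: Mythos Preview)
Your proof is correct and shares the paper's core strategy: use that $R_\Sigma$ is topologically generated over $\ms{O}$ by the traces of $\rho^{\mathrm{univ}}$, then control those traces via $2\times 2$ identities exploiting the fixed determinant $\psi\chi_p\in\ms{O}^\times\subseteq\bar A$. The executions differ. The paper argues integrality one element at a time: for each $g\in G_F$ it takes $n$ with $g^n\in G_{F'}$ and observes (Newton's identities) that $\tr\rho^{\mathrm{univ}}(g)$ satisfies a monic polynomial over $\ms{O}[\tr\rho^{\mathrm{univ}}(g^{nk}):k\ge 1]\subseteq\bar A$; finiteness then follows from integrality plus topological finite generation. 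You instead exhibit a single finitely generated $\bar A$-module $\mf{M}$ containing every trace at once, using coset representatives for $G_F/G_{F'}$ together with a matrix basis of $\mathrm{M}_2(\bar A)$ extracted from the absolute irreducibility of $\ol\rho\vert_{G_{F'}}$, and then close $\mf{M}$ under products via the trace identity. Your route is more explicit about the module structure and makes visible use of irreducibility on the $F'$ side; the paper's is terser and purely about a single matrix at a time. One small point worth a word: the step ``$\mf{M}$ is $\mf{m}_{\ol R_\Sigma}$-adically complete, hence closed in $R_\Sigma$'' silently identifies the $\mf{m}_{\bar A}$-adic topology on $\mf{M}$ with the subspace topology inherited from $R_\Sigma$; this is fine (Chevalley's lemma applied to the finitely generated $\bar A$-module $\mf{M}$, using $\bigcap_n(\mf{M}\cap\mf{m}_{R_\Sigma}^n)=0$, gives the comparison), and the paper's sketch glosses over the analogous passage from integrality to module-finiteness in the topological setting as well.
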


\begin{proof}
We sketch a proof.  As the rings involved are topologically finitely
generated, it suffices to show that the map is integral.  Deformation rings
are generated by the traces of elements of the group under the universal
representation, so the result thus follows from
the following statement:  if $\rho:G \to \GL_2(R)$ is a representation
and $g$ belongs to $G$ then $\tr{\rho(g)}$ satisfies a monic polynomial
with coefficients in the subring of $R$ generated by $\tr{\rho(g^{nk})}$,
for any fixed $n>1$ and varying $k$.  We leave this to the reader.
\end{proof}

\subsection
We now show the finiteness of $R^{\dag}$.  Apply Theorem~\ref{thm:pmod1}
to produce a finite, totally real extension $F'/F$ and a form $f$ over
$F'$ such that $\ol{\rho} \vert_{G_{F'}}$ satisfies (A1) and (A2),
$\ol{\rho}_f \cong  \ol{\rho} \vert_{G_{F'}}$ and
$t_f \vert_{\Sigma_{F', p}}=(t \vert_{F'}) \vert_{\Sigma_{F', p}}$.
By replacing $F'$ with a finite, totally real, pre-solvable extension,
and applying the same procedure as in the proof of Theorem~\ref{thm:mlt2}
(together with Proposition~\ref{prop:lev}) we may assume the above as well
as the the following:
\begin{itemize}
\item If $v'$ is a prime of $F'$ lying over a prime $v$ of $F$ belonging to
$\Sigma$ then $F'_{v'}$ contains $L_v$.
\item $\ol{\rho} \vert_{G_{F'}}$ is everywhere unramified.
\item $\ol{\rho} \vert_{G_{F'}}$ is trivial at every place above $\Sigma$.
\item $\rho_f$ is admissible of type $t(v)$ at all places $v$ above $\Sigma$.
\item $\psi \vert_{G_{F'}}=\psi_1^2$ and $\det{\rho_f}=\psi_2^2 \chi_p$
with $\psi_i$ unramified.
\item The set of primes of $F'$ lying over $\Sigma_C$ has even cardinality.
\item $F'$ has even degree over $\Q$.
\item $k$ contains the eigenvalues of the image of $\ol{\rho}$.
\end{itemize}
Replace $f$ by $\psi_2^{-1} \cdot f$ and let $\ol{\rho}_1$ be
$\psi_1^{-1} \cdot \ol{\rho} \vert_{G_{F'}}$.
We now define an deformation datum $\mc{D}^{\circ}=(t', \Sigma^{\ram},
\Sigma^{\aux})$ over $F'$ (with respect to $\ol{\rho}_1$) by taking
$\Sigma^{\ram}$ to be the set of primes
lying over $\Sigma \setminus \Sigma_p$, $t'$ to be the restriction of $t$ to
$F'$, $\Sigma^{\aux}$ to be $\{w\}$ where $w$ is any sufficiently large prime
satisfying the necessary conditions (see, for example, the proof of
Theorem~\ref{thm:mlt2}).  We use the notation of
\S \ref{ss:defrings} with an overline to indicate
the relevant deformation rings.  The above conditions fulfill the hypotheses
of Theorem~\ref{thm:ReqT}, so we conclude that $\ol{R}_{\mc{D}^{\circ}}$
is finite over $\ms{O}$.  Note that tensoring
by $\psi_1$ gives a bijection between deformations of $\ol{\rho}_1$ and
deformations of $\ol{\rho} \vert_{G_{F'}}$, and this bijection preserves any
quality which we care about.  In particular, it furnishes a natural map
$\ol{R}_{\Sigma(\mc{D}^{\circ})} \to R_{\Sigma}$ even though the first
ring classifies deformations with determinant $\chi_p$ while the second
classifies those with determinant $\psi \chi_p$.  The following lemma
completes the proof of Theorem~\ref{thm:defring}.

\begin{lemma}
The map $\ol{R}_{\Sigma(\mc{D}^{\circ})} \to R_{\Sigma} \to
R^{\dag}$ factors through $\ol{R}_{\mc{D}^{\circ}}$.
\end{lemma}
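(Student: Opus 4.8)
The plan is to reduce the assertion to a family of purely local statements --- one for each prime $v'$ of $F'$ lying over $\Sigma$ --- and then to observe that the universal deformation carried by $R^{\dag}$ already satisfies the required local type conditions there, by the very way $R^{\dag}$ was defined. Let $\rho^{\dag}\colon G_F\to\GL_2(R^{\dag})$ be the universal deformation; the composite $\ol{R}_{\Sigma(\mc{D}^{\circ})}\to R_{\Sigma}\to R^{\dag}$ classifies the deformation $\psi_1^{-1}\cdot(\rho^{\dag}\vert_{G_{F'}})$ of $\ol{\rho}_1$. Recall from \S\ref{ss:defrings} that $\ol{R}_{\mc{D}^{\circ}}$ is the quotient of $\ol{R}_{\Sigma(\mc{D}^{\circ})}$ characterized as follows: a map $\ol{R}_{\Sigma(\mc{D}^{\circ})}\to C$ factors through $\ol{R}_{\mc{D}^{\circ}}$ if and only if, for every prime $v'$ of $F'$ over $\Sigma$, the induced map on framed local deformation rings $R^{\Box}_{v'}\to C^{\Box}$ (obtained by restricting the universal deformation to $G_{F'_{v'}}$ and pushing it forward, framed) factors through the ``admissible of type $t'(v')$'' quotient $R^{\Box}_{\mc{D}^{\circ},v'}$ of Proposition~\ref{prop:pring} (for $v'\mid p$) or Proposition~\ref{prop:lring} (for $v'\nmid p$). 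Taking $C=R^{\dag}$, so $C^{\Box}=R^{\Box,\dag}$, and using the presentation $R^{\Box,\dag}=R^{\Box,\loc,\dag}\otimes_{R^{\Box,\loc}}R^{\Box}_{\Sigma}$ to factor $R^{\Box}_{v'}\to R^{\Box,\dag}$ as $R^{\Box}_{v'}\to R^{\Box,\dag}_{v}\to R^{\Box,\dag}$ --- the first arrow being ``restrict to $G_{F'_{v'}}$, then twist by $\psi_1^{-1}$'' --- the whole assertion comes down to the local statement: \emph{for each $v'$ lying over $v\in\Sigma$, the map $R^{\Box}_{v'}\to R^{\Box,\dag}_{v}$ factors through $R^{\Box}_{\mc{D}^{\circ},v'}$.} (The usual comparison of the unframed quotient $\ol{R}_{\mc{D}^{\circ}}$ with its framed version, and the matching of framings along $\ol{R}_{\Sigma(\mc{D}^{\circ})}\to R_{\Sigma}\to R^{\dag}$, I would treat as routine; note too that the primes in $\Sigma^{\aux}$ carry no local type condition and so play no role in this reduction.)

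For the local statement I would argue on $\ol{\Q}_p$-points. Hypothesis~(1) of \S\ref{ss:defring} says $R^{\Box,\dag}_{v}$ is reduced and $\ms{O}$-flat, so it embeds into the product of its $\ol{\Q}_p$-points, and it suffices to check that for each $y\colon R^{\Box,\dag}_{v}\to\ol{\Q}_p$ the composite $R^{\Box}_{v'}\to R^{\Box,\dag}_{v}\xrightarrow{\,y\,}\ol{\Q}_p$ factors through $R^{\Box}_{\mc{D}^{\circ},v'}$. By part~(4) of Proposition~\ref{prop:pring} or Proposition~\ref{prop:lring}, this amounts to showing that the representation it classifies is admissible of type $t'(v')=t(v)$. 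Write $\rho_y$ for the deformation of $\ol{\rho}\vert_{G_{F_v}}$ attached to $y$. Hypothesis~(2) of \S\ref{ss:defring} provides a finite Galois extension $L_v/F_v$ with $\rho_y\vert_{G_{L_v}}$ admissible of type $t(v)$, and the first of the conditions secured on $F'$ in the proof of Theorem~\ref{thm:defring} is precisely that $F'_{v'}\supseteq L_v$. Since each of the types $A$, $B$, $C$ is preserved under restriction to a finite extension (the defining short exact sequences restrict, $\chi_p$ restricts to $\chi_p$, and a crystalline representation that is (non-)ordinary stays so after base change), $\rho_y\vert_{G_{F'_{v'}}}$ is already admissible of type $t(v)$. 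Finally, the representation classified by our composite is the twist $\psi_1^{-1}\cdot(\rho_y\vert_{G_{F'_{v'}}})$, and this twist is trivial on $G_{F'_{v'}}$: both $\rho_y\vert_{G_{F'_{v'}}}$ and its twist have cyclotomic determinant --- the former since it is admissible of type $t(v)$, the latter because $\psi\vert_{G_{F'}}=\psi_1^{2}$ makes the determinant equal to $\chi_p$ on the nose --- whence $\psi_1^{2}\vert_{G_{F'_{v'}}}=1$, and since $\ol{\rho}_1\vert_{G_{F'_{v'}}}$ is trivial (as arranged) and $p\ne 2$ we get $\psi_1\vert_{G_{F'_{v'}}}=1$. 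So the classified representation is just $\rho_y\vert_{G_{F'_{v'}}}$, admissible of type $t(v)$, as wanted. This proves the local statement, hence the lemma; and then the image of $\ol{R}_{\Sigma(\mc{D}^{\circ})}$ in $R^{\dag}$, being a quotient of the $\ms{O}$-finite ring $\ol{R}_{\mc{D}^{\circ}}$, is finite over $\ms{O}$ --- which is what is needed to complete the proof of Theorem~\ref{thm:defring}.

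I expect the main obstacle to be not any single deep point but the care demanded by the reduction of the first paragraph: putting $\ol{R}_{\mc{D}^{\circ}}$ into the form used above, transporting framings consistently along $\ol{R}_{\Sigma(\mc{D}^{\circ})}\to R_{\Sigma}\to R^{\dag}$, and keeping straight the two determinant normalizations ($\psi\chi_p$ over $F$ versus $\chi_p$ for $\ol{\rho}_1$ --- the reason $\psi$ was first replaced by an unramified square root $\psi_1$). The genuinely essential ingredient is short: it is the inclusion $F'_{v'}\supseteq L_v$, which upgrades the merely \emph{potential} type condition that $R^{\dag}$ encodes via hypothesis~(2) to the \emph{on-the-nose} condition over $F'_{v'}$ that $\ol{R}_{\mc{D}^{\circ}}$ encodes; once that is in place the factorization is forced. (One could instead avoid passing to points, using that $R^{\Box}_{\mc{D}^{\circ},v'}$ is an $\ms{O}$-flat domain with formally smooth generic fibre, so that $\ker(R^{\Box}_{v'}\to R^{\Box}_{\mc{D}^{\circ},v'})$ is the intersection of the kernels of its type-$t'(v')$ characteristic-zero points; the point-wise formulation seems cleaner.)
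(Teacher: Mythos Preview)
Your proposal is correct and follows essentially the same route as the paper: reduce to framed deformation rings, then to the purely local statement, then use that $R^{\Box,\dag}_v$ is reduced and $\ms{O}$-flat to reduce to checking on characteristic-zero points, and finally verify the type condition place by place using the inclusion $F'_{v'}\supseteq L_v$. The paper's proof is much terser (it just says ``this can easily be checked place by place'') and in particular glosses over the $\psi_1$-twist, which you handle explicitly; your argument that $\psi_1\vert_{G_{F'_{v'}}}=1$ via $\psi_1^2=1$ together with $\ol{\psi}_1\equiv 1$ (implicit in (A8) for $\ol{\rho}_1$) and $p\ne 2$ is a reasonable way to justify what the paper sweeps under ``this bijection preserves any quality which we care about.''
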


\begin{proof}
It suffices to prove the result on framed deformation rings.  For this,
it suffices to show that the map
\begin{displaymath}
\ol{R}^{\Box, \loc} \to R^{\Box, \loc} \to R^{\Box, \loc, \dag}
\end{displaymath}
factors through $\ol{R}^{\Box, \loc}_{\mc{D}^{\circ}}$.  Since
$R^{\Box, \loc, \dag}$ is reduced and flat over $\ms{O}$ it suffices to show
that if $E'/E$ is a finite extension and $R^{\Box, \loc, \dag} \to E'$ a point
then the resulting map $\ol{R}^{\Box, \loc} \to E'$ factors through
$\ol{R}^{\Box, \loc}_{\mc{D}^{\circ}}$.  This can easily be checked
place by place.
\end{proof}

\section{Lifting problems}
\label{s:problems}

\subsection
Let $F_v/\Q_{\ell}$ be a finite extension ($\ell=p$ allowed) and let $E$ be
any extension of $\Q_p$.  An \emph{inertial type} for $F_v$ is a representation
$\tau_v:I_{F_v} \to \GL_2(E)$ with open kernel.  We say that a representation
$\rho_v:G_{F_v} \to \GL_2(E)$ (assumed to be de Rham if $\ell=p$) has
\emph{inertial type} $\tau_v$ if the Weil-Deligne representation associated to
$\rho_v$ is isomorphic to $\tau_v$ when restricted to inertia.  The ``inertial
type'' knows nothing about the monodromy operator of the Weil-Deligne
representation.  For example, in the $\ell \ne p$ case ``inertial type'' does
not distinguish between unramified representations and representations with
unipotent inertia.  Do not confuse ``inertial type'' and ``type'':  they
are very different.  In fact, by Proposition~\ref{prop:type}, they are
complementary (at least in global situations):  ``type'' determines the data
of the monodromy operator, exactly what ``inertial type'' forgets.

\subsection
We now apply Theorem~\ref{thm:defring} to study certain lifting problems.
Let $\ol{\rho}:G_F \to \GL_2(\ol{\F}_p)$ be a given representation.  A
\emph{lifting problem} is a tuple $\ms{P}=(\Sigma, \psi, t, \{\tau_v\}_{v
\in \Sigma})$ consisting of:
\begin{itemize}
\item A finite set $\Sigma$ of finite places of $F$, including all those at
which $\ol{\rho}$ ramifies and all those over $p$.
\item A finite order character $\psi:G_F \to \ol{\Q}_p^{\times}$, unramified
outside $\Sigma$, such that $\det{\ol{\rho}}=\ol{\psi} \cdot \ol{\chi}_p$.
\item A definite type function $t$ defined on $\Sigma$.
\item For each $v \in \Sigma$ an inertial type $\tau_v$.
\end{itemize}
A \emph{solution} to a lifting problem $\ms{P}$ is a representation $\rho:G_F
\to \GL_2(\ol{\Q}_p)$ satisfying the following:
\begin{itemize}
\item $\rho$ is a weight two lift of $\ol{\rho}$.
\item $\det{\rho}=\psi \chi_p$.
\item $\rho$ is unramified outside of $\Sigma$.
\item $\rho \vert_{G_{F_v}}$ has type $t(v)$ for $v \in \Sigma$.
\item $\rho \vert_{G_{F_v}}$ has inertial type $\tau_v$ for $v \in \Sigma$.
\end{itemize}
A \emph{local solution} to a lifting problem $\ms{P}$ is a family
$\{\rho_v\}_{v \in \Sigma}$ consisting of representations $\rho_v:G_{F_v} \to
\GL_2(\ol{\Q}_p)$ which satisfy the following:
\begin{itemize}
\item The reduction of some stable lattice in $\rho_v$ is isomorphic to
$\ol{\rho} \vert_{G_{F_v}}$.
\item $\det{\rho_v}=\psi \chi_p \vert_{G_{F_v}}$.
\item For $v$ over $p$ the representation $\rho_v$ has weight two.
\item $\rho_v$ has definite type $t(v)$.
\item $\rho_v$ has inertial type $\tau_v$.
\end{itemize}
Note that we require $\rho_v$ to have \emph{definite type} in the above
that is, it cannot have type $A/C$ or $AB/C$.  The main result of
\S \ref{s:problems} is the following theorem.

\begin{theorem}
\label{thm:lift}
Let $\ol{\rho}:G_F \to \GL_2(\ol{\F}_p)$ be an odd representation satisfying
(A1) and (A2) and let $\ms{P}$ be a lifting problem.  Then there are only
finitely many solutions to $\ms{P}$.  A solution exists if and only if
a local solution exists.
\end{theorem}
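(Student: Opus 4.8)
The plan is to exhibit the set of solutions of $\ms{P}$ as the set $\Hom(R^{\dag},\ol{\Q}_p)$ for a global deformation ring $R^{\dag}$ of exactly the shape studied in \S\ref{ss:defring}, and then invoke Theorem~\ref{thm:defring}. The first task is to produce, for each $v\in\Sigma$, a quotient $R^{\Box,\dag}_v$ of the framed deformation ring $R^{\Box}_v$ of $\ol{\rho}|_{G_{F_v}}$ with determinant $\psi\chi_p$ such that, for every finite extension $E'/E$, a map $R^{\Box}_v\to E'$ factors through $R^{\Box,\dag}_v$ precisely when the corresponding representation is weight two (if $v\mid p$), has inertial type $\tau_v$, and has \emph{definite} type $t(v)$. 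For $v\mid p$ these are Kisin's potentially semistable framed deformation rings of fixed Hodge--Tate and inertial type \cite{Kisin4}, cut down further by the conditions distinguishing the $p$-types $A$, $B$ and $C$; for $v\nmid p$ they are the analogous rings of Gee \cite{Gee2}. What I need from these references is that $R^{\Box,\dag}_v$ is reduced, $\ms{O}$-flat, and of relative dimension $[F_v:\Q_p]+3$ (resp.\ $3$) over $\ms{O}$, together with a finite Galois extension $L_v/F_v$ as in condition (2) of \S\ref{ss:defring}; a handful of degenerate configurations not covered there must be handled directly, in the spirit of Propositions~\ref{prop:pring} and~\ref{prop:lring}.

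The role of the hypothesis is now clear: a local solution $\{\rho_v\}_{v\in\Sigma}$ is exactly what forces every $R^{\Box,\dag}_v$ to be non-zero, since $\rho_v$ has determinant $\psi\chi_p|_{G_{F_v}}$, reduces on a stable lattice to $\ol{\rho}|_{G_{F_v}}$, has definite type $t(v)$ and inertial type $\tau_v$, hence defines a $\ol{\Q}_p$-point of $R^{\Box,\dag}_v$. With the $R^{\Box,\dag}_v$ known to satisfy conditions (1) and (2) of \S\ref{ss:defring}, the data $(\ol{\rho},\psi,\Sigma,t,\{R^{\Box,\dag}_v\})$ is of the form to which Theorem~\ref{thm:defring} applies, and that theorem yields that $R^{\dag}$ is finite over $\ms{O}$ of non-zero rank; hence $\Hom(R^{\dag},\ol{\Q}_p)$ is finite and non-empty.

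It remains to identify $\Hom(R^{\dag},\ol{\Q}_p)$ with the set of solutions of $\ms{P}$. Since $R^{\dag}$ represents deformations of $\ol{\rho}$ with determinant $\psi\chi_p$, unramified outside $\Sigma$, whose restriction to each $G_{F_v}$ with $v\in\Sigma$ factors through $R^{\Box,\dag}_v$, a $\ol{\Q}_p$-point of $R^{\dag}$ is visibly a solution. Conversely, given any solution $\rho$, I must check it defines such a point; the only gap is that the definition of a solution requires only $t_{\rho|_{G_{F_v}}}\approx t(v)$, while a point of $R^{\dag}$ has $\rho|_{G_{F_v}}$ of \emph{definite} type $t(v)$. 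But $\rho$ is a finitely ramified, odd (since $\ol{\psi}$, and hence $\psi$, is trivial on complex conjugations), weight two representation with $\ol{\rho}$ satisfying (A1) and (A2), so Theorem~\ref{thm:pmod2} shows $\rho$ is potentially modular, and Proposition~\ref{prop:type} then forces $t_{\rho|_{G_{F_v}}}$ to be definite, hence equal to $t(v)$. Thus the solutions of $\ms{P}$ coincide with the elements of $\Hom(R^{\dag},\ol{\Q}_p)$, which gives both the finiteness of the solution set and the equivalence between solvability and local solvability.

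The main obstacle is the very first step: verifying that the local deformation rings cutting out a fixed inertial type together with a fixed definite type are reduced, $\ms{O}$-flat, and of the expected dimension. For $v\mid p$ this rests on Kisin's theory of potentially semistable deformation rings together with a precise analysis of their ordinary loci and of the semistable-but-not-crystalline (type $C$) locus; for $v\nmid p$ it is a statement about framed deformations with prescribed action of inertia and prescribed monodromy operator. The cited works cover the generic configurations, but the degenerate ones --- above all the case in which $\ol{\rho}|_{G_{F_v}}$ is trivial and $\tau_v$ is trivial --- require separate, if elementary, arguments, and organizing all of these cases uniformly is where most of the work lies.
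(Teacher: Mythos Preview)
Your proposal is correct and follows essentially the same approach as the paper: build local deformation rings $R^{\Box,\dag}_v = R^{\Box,\psi_v,\tau_v,t(v)}_v$ (the paper's Propositions~\ref{prop:lring2} and~\ref{prop:pring2}, resting on \cite{Kisin4} and \cite{Gee2} plus the elementary cases you flag), use the local solution to guarantee they are non-zero, feed them into Theorem~\ref{thm:defring}, and invoke potential modularity together with Proposition~\ref{prop:type} to see that every solution has \emph{definite} type and hence lies in $\Hom(R^{\dag},\ol{\Q}_p)$. One small imprecision: the characterization of $R^{\Box,\dag}_v$ is not a clean ``if and only if'' with definite type --- points of $R^{\Box,\dag}_v$ are only guaranteed to have type $t(v)$ in the $\approx$ sense, while the converse (factoring through) requires definite type --- but your later handling of the identification already accounts for this asymmetry correctly.
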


To prove this theorem, we will use Theorem~\ref{thm:defring}.  The work lies
in showing
that the necessary local deformation rings exist.  This has basically
been done already by Kisin and Gee, though we will need some variants on their
work.  We establish the necessary results in the next two sections.  After
proving Theorem~\ref{thm:lift} we apply it to give a proof of
Theorem~\ref{mainthm2}.

\subsection
We now establish some results about local deformation rings in the unequal
characteristic case.  Let
$F_v/\Q_{\ell}$ be a finite extension with $\ell \ne p$, let $\ol{\rho}_v:
G_{F_v} \to \GL_2(k)$ be a representation and let $\psi_v:G_{F_v} \to
\GL_2(\ms{O})$ be a finite order character with $\det{\ol{\rho}_v}=\ol{\psi}_v
\cdot \ol{\chi}_p$.  Let $R^{\Box, \psi_v}_v$ denote the universal ring
classifying framed deformations of $\ol{\rho}_v$ with determinant $\psi_v
\chi_p$.  The main result we need is the following:

\begin{proposition}
\label{prop:lring2}
Let $\tau_v$ be an inertial type for $F_v$ and let $\ast$ be $AB$ or $C$.
There exists a quotient $R_v^{\Box, \psi_v, \tau_v, \ast}$ of $R_v^{\Box,
\psi_v}$ with the following properties:
\begin{enumerate}
\item $R_v^{\Box, \psi_v, \tau_v, \ast}$ is $\ms{O}$-flat, reduced and all of
its components have dimension 4.
\item Let $x:R_v^{\Box, \psi_v} \to E'$ be a map corresponding to a
representation $\rho$.  If $x$ factors through $R^{\Box, \psi_v, \tau_v,
\ast}_v$ then $\rho$ has type $\ast$ and inertial type $\tau_v$.  Conversely,
if $\rho$ has definite type $\ast$ and inertial type $\tau_v$ then $x$ factors
through $R^{\Box, \tau_v, \ast}_v$.
\end{enumerate}
The ring $R^{\Box, \psi_v, \tau_v, \ast}_v$ may be zero.
\end{proposition}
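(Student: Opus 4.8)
The plan is to construct $R_v^{\Box, \psi_v, \tau_v, \ast}$ as the Zariski closure in $\Spec R_v^{\Box, \psi_v}$ of the locus of characteristic zero points satisfying the prescribed local conditions, and then to identify this closure with a union of irreducible components of known deformation rings. The case $\ast = AB$ (unramified deformations with fixed determinant and inertial type) is essentially elementary: unramified deformations of $\ol{\rho}_v$ form a formally smooth locus, and fixing the inertial type $\tau_v$ simply selects those components on which inertia acts through $\tau_v$ on the nose — so one either gets a formally smooth quotient of the expected dimension $4$ (three for the framing, one for the unramified twist) or the zero ring, according to whether $\tau_v$ is compatible with $\ol{\rho}_v$ being unramified after the fixed-determinant twist. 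The substantive case is $\ast = C$, where one must impose the condition that the representation become, after some finite extension, an extension of the trivial character by $\chi_p$ — this is precisely the ``special'' (Steinberg-type) condition at $\ell \ne p$.

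For the $\ast = C$ case I would proceed as follows. First reduce to the situation handled earlier in the paper: after the finite Galois extension $L_v/F_v$ over which $\ol{\rho}_v$ becomes trivial and the fixed inertial twist is realized, the relevant deformation ring is (a framed version of) the ring $R^{\Box, C}$ of Proposition~\ref{prop:lring}, which is known to be $\ms{O}$-flat, a domain after any finite extension of $E$, of relative dimension $3$, and formally smooth after inverting $p$. One then needs to descend this back down to $F_v$: a deformation of $\ol{\rho}_v$ over $F_v$ has definite type $C$ and inertial type $\tau_v$ if and only if its restriction to $G_{L_v}$ lands in the type-$C$ locus \emph{and} the inertial action is $\tau_v$. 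The ring $R_v^{\Box, \psi_v, \tau_v, C}$ is then defined to be the scheme-theoretic image in $\Spec R_v^{\Box, \psi_v}$ of the closed subscheme of $\Spec R_v^{\Box, \psi_v}$ cut out by pulling back (via restriction to $G_{L_v}$) the type-$C$ condition and intersecting with the inertial-type-$\tau_v$ condition. Property (1) — reducedness, $\ms{O}$-flatness, and equidimensionality of dimension $4$ — should follow because the source is $\ms{O}$-flat with generically smooth (after inverting $p$) irreducible components of the right dimension, and the scheme-theoretic image of such a scheme inherits these properties; the dimension count is the expected $[F_v:\Q_p]$-free $3 + 1 = 4$ (the ``$[F_v:\Q_p]$'' vanishes because $\ell \ne p$, matching the dimension in Proposition~\ref{prop:lring}). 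Property (2) — that $E'$-points of $R_v^{\Box, \psi_v, \tau_v, C}$ are exactly the representations of type $C$ and inertial type $\tau_v$, with the definite-type version being exactly those that factor through — follows from the analogous statement in Proposition~\ref{prop:lring}(4) together with the rigidity of inertial types in characteristic zero (the inertial type is locally constant on the generic fiber, so it is determined on each component).

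The main obstacle I anticipate is the compatibility of the two impositions — fixing the inertial type $\tau_v$ and imposing the definite type-$C$ condition — and correctly matching the dimension. One must check that on the generic fiber these two conditions cut out a locally closed subset whose components all have dimension $4$ rather than dropping: the inertial type is a discrete invariant so it selects components without lowering dimension, but one has to be careful that the type-$C$ locus really is pure of the expected dimension after this selection, and that the scheme-theoretic image does not pick up embedded components or fail to be reduced. Here one should lean on the $\ell \ne p$ structure: the representations in question are Weil--Deligne representations with nilpotent monodromy, and the local Galois cohomology computations controlling the deformation theory (as in the proof of Proposition~\ref{prop:lring} and \cite[Corollary~2.6.7]{Kisin}) are exact enough to pin down the dimension on each component. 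A secondary technical point is verifying that allowing $R_v^{\Box, \psi_v, \tau_v, \ast}$ to be zero genuinely covers all cases where no local solution exists — but this is automatic from the construction as a scheme-theoretic image of a possibly-empty closed subscheme, and requires no separate argument.
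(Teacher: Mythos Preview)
Your overall strategy---restrict to a finite extension $F_v'$ where $\ol{\rho}_v$, $\psi_v$, and $\tau_v$ all become trivial, use the component decomposition available there, and pull back---is exactly what the paper does. But you are missing the key technical ingredient that makes the paper's argument go through cleanly: Gee's result (\cite[Proposition~3.1.3]{Gee2}, recorded in the paper as Lemma~\ref{lem:lring2a}) that there already exists a quotient $R_v^{\Box, \psi_v, \tau_v}$ of $R_v^{\Box, \psi_v}$ parametrizing deformations of inertial type $\tau_v$, and that this ring is $\ms{O}$-flat, reduced, with all components of dimension~4. Once one has this, the paper simply takes $R_v^{\Box, \psi_v, \tau_v, \ast}$ to be the union of those irreducible components of $\Spec R_v^{\Box, \psi_v, \tau_v}$ that map into the $\ast$-component over $F_v'$ (where Lemma~\ref{lem:lring2b} shows there are exactly two components, $X_{AB}$ and $X_C$). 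Property~(1) is then automatic, since a union of components of a reduced, $\ms{O}$-flat, equidimensional ring inherits all three properties.

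Your construction instead tries to impose the type-$\ast$ condition first (by pulling back from $F_v'$) and the inertial-type condition second, and then asserts that property~(1) ``should follow because the source is $\ms{O}$-flat with generically smooth irreducible components of the right dimension.'' This is precisely the step that requires work: you have not established that the intersection of these two conditions is equidimensional of dimension~4, nor that it is reduced or $\ms{O}$-flat. Gee's theorem is doing real work here---it controls the dimension of the inertial-type locus via a tangent-space computation that is not elementary---and without it your dimension count is unjustified. Separately, your treatment of the $\ast = AB$ case is confused: you describe it as ``unramified deformations,'' but a representation of type $AB$ with inertial type $\tau_v$ is unramified only after the extension killing $\tau_v$; over $F_v$ itself inertia acts through $\tau_v$, which may be nontrivial, so the locus is not simply the formally smooth unramified deformation space.
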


The subtlety in (2) above is that if $\rho$ has \emph{indefinite
type} (\ie, type $AB/C$) then we cannot conclude that $x$ factors through
$R^{\Box, \psi_v, \tau_v, \ast}_v$.  We need a few lemmas before proving the
proposition.

\begin{lemma}
\label{lem:lring2a}
Let $\tau_v$ be an inertial type for $F_v$.  There exists a quotient
$R^{\Box, \psi_v, \tau_v}_v$ of $R^{\Box, \psi_v}_v$ with the following
properties:
\begin{enumerate}
\item $R^{\Box, \psi_v, \tau_v}_v$ is $\ms{O}$-flat, reduced and all of its
components have dimension 4.
\item Let $x:R^{\Box, \psi_v}_v \to E'$ be a map corresponding to a
representation $\rho$.  Then $x$ factors through $R^{\Box, \psi_v, \tau_v}_v$
if and only if $\rho$ has inertial type $\tau_v$.
\end{enumerate}
The ring $R^{\Box, \psi_v, \tau_v}_v$ may be zero.
\end{lemma}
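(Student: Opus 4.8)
The plan is to realise $R^{\Box,\psi_v,\tau_v}_v$ as the maximal reduced $\ms{O}$-flat quotient of $R^{\Box,\psi_v}_v$ supported on the locus of the generic fibre where the inertial type equals $\tau_v$. Set $X_v=\Spec R^{\Box,\psi_v}_v[1/p]$; a point of $X_v$ valued in a finite extension $E'/E$ is a representation $\rho:G_{F_v}\to\GL_2(E')$ lifting $\ol{\rho}_v$ with determinant $\psi_v\chi_p$, and by Grothendieck's $\ell$-adic monodromy theorem each such $\rho$ has a well-defined associated inertial type $\tau_\rho\cong\rho|_{I_{F_v}}^{\mathrm{ss}}$. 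The essential step is to prove that $\rho\mapsto\tau_\rho$ is locally constant on $X_v$; granting this, one lets $X_v[\tau_v]$ be the resulting clopen union of components, equips it with its reduced structure, and defines $R^{\Box,\psi_v,\tau_v}_v$ to be the image of $R^{\Box,\psi_v}_v$ in the coordinate ring of $X_v[\tau_v]^{\mathrm{red}}$.

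For the local constancy the point is that, since $\ell\ne p$, the inertial type is a rigid invariant. Because $P_v$ is pro-$\ell$ while $\ker(\GL_2(\ms{O}_{E'})\to\GL_2(\ol{\F}_p))$ is pro-$p$, any continuous homomorphism from $P_v$ into the latter is trivial; hence $\rho|_{P_v}$ factors through the finite quotient $P_v\twoheadrightarrow\ol{\rho}_v(P_v)$, whose order is prime to $p$, so $\rho|_{P_v}$ is conjugate to the canonical (Teichm\"uller) lift of $\ol{\rho}_v|_{P_v}$ and in particular is the same for every point of $X_v$. Moreover $\rho|_{I_{F_v}}$ is quasi-unipotent and the eigenvalues of the image of a topological generator of tame inertia are roots of unity of order dividing $q^2-1$. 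Consequently there are only finitely many possible inertial types $\tau_\rho$, so for every $g\in I_{F_v}$ the regular function $\tr\rho^{\mathrm{univ}}(g)$ on $X_v$ takes only finitely many values at closed points and is therefore constant on each irreducible component (a morphism from an irreducible reduced scheme to $\A^1$ with finite image is constant); since these traces determine the semisimple representation $\tau_\rho$ up to isomorphism, the inertial type is constant on each irreducible component of $X_v$. As two components carrying distinct inertial types must be disjoint --- a point in the intersection would be a single representation with two inertial types --- and there are finitely many of them, $X_v=\bigsqcup_\tau X_v[\tau]$ with each $X_v[\tau]$ clopen, as required.

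The remaining properties are then essentially formal. By construction $R^{\Box,\psi_v,\tau_v}_v$ is reduced and, as a subring of an $E$-algebra, is $\ms{O}$-torsion-free, hence $\ms{O}$-flat; it is the zero ring precisely when no irreducible component of $X_v$ has inertial type $\tau_v$ (in particular when $X_v=\emptyset$). A map $R^{\Box,\psi_v}_v\to E'$ factors through $R^{\Box,\psi_v,\tau_v}_v$ if and only if the corresponding point of $\Spec R^{\Box,\psi_v}_v$ lies in $X_v[\tau_v]$, that is, if and only if the associated $\rho$ has inertial type $\tau_v$; this is property~(2). For the dimension statement one invokes the standard fact that for $\ell\ne p$ the ring $R^{\Box,\psi_v}_v$ is $\ms{O}$-flat of relative dimension $3$ with equidimensional generic fibre; since $X_v[\tau_v]$ is a union of irreducible components of $X_v$, every irreducible component of $R^{\Box,\psi_v,\tau_v}_v$ has dimension $4$.

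The construction is soft; the real content, and what I expect to be the main obstacle, is the rigidity input behind the local constancy together with the equidimensionality of the $\ell\ne p$ deformation ring. Both are part of the theory of local deformation rings in the unequal characteristic case developed by Kisin \cite{Kisin4} and Gee \cite{Gee2}, and the only extra work is to treat a few degenerate configurations of $\ol{\rho}_v$ (such as a non-split self-extension of a character, or a sum of two equal characters) that are not explicitly covered there, which can be handled by direct computation.
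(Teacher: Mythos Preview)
The paper's own proof is a one-line citation to \cite[Proposition~3.1.3]{Gee2}; your sketch is essentially a correct reconstruction of that argument, so there is no difference of approach to discuss.

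Two small imprecisions are worth flagging. First, the assertion that $R^{\Box,\psi_v}_v$ itself is $\ms{O}$-flat of relative dimension $3$ is neither needed nor clearly true in general; what you actually use, and what is the substantive input from \cite{Gee2}, is that the \emph{generic fibre} $R^{\Box,\psi_v}_v[1/p]$ is equidimensional of dimension $3$. Flatness of your quotient then follows for free because you realise it as a subring of an $E$-algebra. Second, your justification that only finitely many inertial types occur (``eigenvalues of a topological generator of tame inertia are roots of unity of order dividing $q^2-1$'') is garbled; the clean statement is that the inertial type factors through a fixed finite quotient of $I_{F_v}$ determined by $\ol{\rho}_v$, since the action of wild inertia and of the prime-to-$p$ part of tame inertia lift rigidly from $\ol{\rho}_v$, while the $\Z_p$-part of tame inertia acts unipotently by Grothendieck's monodromy theorem and so contributes nothing to the Weil--Deligne inertial type. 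With these adjustments your argument goes through and matches the cited reference.
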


\begin{proof}
This is exactly \cite[Proposition~3.1.3]{Gee2}.
\end{proof}

We now compute the components of the ring $R_v^{\Box, \psi_v, \tau_v}$ in
certain cases.

\begin{lemma}
\label{lem:lring2b}
Assume that $\ol{\rho}$ is trivial, that $F_v$ contains the $p$th roots of
unity, let $\tau_v$ be the trivial inertial type and let $\psi_v$ be the
trivial character.  The ring $R^{\Box, \psi_v, \tau_v}_v$ then has two
components, corresponding to admissible deformations of types $AB$ and $C$.
\end{lemma}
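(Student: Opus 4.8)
The plan is to show that $R := R^{\Box,\psi_v,\tau_v}_v$, which by Lemma~\ref{lem:lring2a} is $\ms{O}$-flat, reduced, equidimensional of dimension $4$, has exactly two irreducible components, and then to identify these components with the rings $R^{\Box,AB}_v$ and $R^{\Box,C}_v$ supplied by Proposition~\ref{prop:lring}. Since $F_v$ contains the $p$th roots of unity, $\ol{\chi}_p$ is trivial, so $\psi_v\chi_p$ is a deformation of the trivial determinant; since $\ol{\rho}_v$ is trivial and $\tau_v$ is trivial, a deformation parametrized by $R$ is simply a lift of the trivial representation with determinant $\chi_p$ which is tamely ramified (in fact whose restriction to inertia is unramified, because the inertial type is trivial and $\ell \ne p$). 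Concretely, such a lift over a finite extension $E'/E$ is determined by a pair of commuting matrices (the images of Frobenius and of a topological generator of tame inertia) satisfying the tame relation, with the inertia generator unipotent and the determinant condition imposed; this is exactly the local picture analyzed in \cite{DDT} and \cite{Kisin}.

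First I would recall the structure of $R[1/p]$: a closed point corresponds to a representation $\rho$ of $G_{F_v}$ that is unramified or has unipotent (nontrivial) inertia, with determinant $\chi_p$. The representations with unipotent nontrivial inertia form (the generic fiber of) the type-$C$ locus: these are exactly the nonsplit extensions of $E'$ by $E'(\chi_p)$, equivalently the deformations admissible of type $C$ by the definition in \S\ref{s:local}. The unramified representations with determinant $\chi_p$ form the type-$AB$ locus. So on the generic fiber we already see (at most) two pieces. The point is to show each is the generic fiber of an honest component of $R$ (not a union of several) and that these are the components of Proposition~\ref{prop:lring}, i.e. $R^{\Box,AB}_v$ and $R^{\Box,C}_v$. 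For the type-$AB$ piece this is easy: the unramified locus is cut out by requiring inertia to act trivially, giving a quotient that is smooth over $\ms{O}$ of relative dimension $3$ (framing variables only), hence a domain, and it manifestly satisfies the characterizing property of $R^{\Box,AB}_v$ in Proposition~\ref{prop:lring}; by uniqueness in that proposition, it \emph{is} $R^{\Box,AB}_v$, which has dimension $4$, so it is a full component of $R$. For the type-$C$ piece I would invoke Proposition~\ref{prop:lring} directly: $R^{\Box,C}_v$ is $\ms{O}$-flat of relative dimension $3$, a domain, with $R^{\Box,C}_v[1/p]$ formally smooth, and its points are precisely the type-$C$ deformations, which all have inertial type $\tau_v$ (trivial). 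Hence $R^{\Box,C}_v$ is a quotient of $R$, is a domain of dimension $4$, and is therefore a component of $R$. It remains to see these are \emph{all} the components. Since $R$ is reduced and equidimensional of dimension $4$, and $R[1/p]$ is (as one checks from the $\ell\ne p$ local deformation theory, or from the defining quotients) the disjoint union of the $AB$ and $C$ generic fibres — every characteristic-zero point is either unramified or has nontrivial unipotent inertia, and these two conditions are mutually exclusive and each carves out one of the two domains above — the components of $R$ correspond bijectively to the components of $R[1/p]$, giving exactly the two claimed.

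The main obstacle I expect is the last clause: checking that $R[1/p]$ has no components beyond the two obvious ones, i.e. that every $\ol{\Q}_p$-point of $R$ lies on the $AB$ or $C$ locus and that these loci are each irreducible rather than each splitting further. The irreducibility of each piece is handled by quoting Proposition~\ref{prop:lring} (domain, formally smooth after inverting $p$) for the $C$ piece and by the explicit smooth description for the $AB$ piece. The exhaustiveness — that a tamely ramified lift of the trivial representation with determinant $\chi_p$ and trivial inertial type is, up to the framing, either unramified or a type-$C$ extension — follows from the elementary classification of two-dimensional tamely ramified $\ell$-adic representations (inertia acts through a unipotent, and the determinant condition forces the two inertia eigenvalues to be $1$; if inertia is nontrivial the representation is forced into the shape $\left(\begin{smallmatrix}\chi_p & \ast \\ 0 & 1\end{smallmatrix}\right)$ with $\ast$ ramified, hence type $C$). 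I would spell this classification out as the one genuinely new computation and otherwise cite Proposition~\ref{prop:lring} and Lemma~\ref{lem:lring2a}.
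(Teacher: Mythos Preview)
Your approach is correct and essentially the same as the paper's: both arguments show that $R^{\Box,AB}_v$ and $R^{\Box,C}_v$ from Proposition~\ref{prop:lring} are $4$-dimensional domains mapping into $R^{\Box,\psi_v,\tau_v}_v$, hence components, and then verify exhaustiveness by observing that any $E'$-point (a semi-stable lift with determinant $\chi_p$) is admissible of type $AB$ or $C$. One small slip: your parenthetical ``in fact whose restriction to inertia is unramified'' is not right---trivial inertial type means inertia acts \emph{unipotently}, not trivially---but you correct this immediately in the next paragraph, so the argument is unaffected.
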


\begin{proof}
Let $\rho$ be a deformation of $\ol{\rho}$ to an extension $E'/E$.
One easily sees
\begin{displaymath}
\textrm{$\rho$ is admissible} \iff \textrm{$\rho$ is semi-stable with
determinant $\chi_p$.}
\end{displaymath}
Here ``admissible'' means admissible of type $AB$ or $C$ and ``semi-stable''
means inertia acts unipotently.  The direction $\implies$ shows that if $x$
is an $E'$ point of $R_v^{\Box, \psi_v}$ which factors through one of the rings
$R_v^{\Box, \ast}$ of Proposition~\ref{prop:lring} then $x$ factors through
$R_v^{\Box, \psi_v, \tau_v}$.  We thus have a closed immersion
\begin{displaymath}
\Spec(R_v^{\Box, \ast}) \to \Spec(R_v^{\Box, \psi_v, \tau_v}).
\end{displaymath}
Proposition~\ref{prop:lring} shows that $R_v^{\Box, \ast}$ is integral and
four dimensional and so $R_v^{\Box, \ast}$ must be a component of
$R_v^{\Box, \psi_v, \tau_v}$.  To show that $R_v^{\Box, AB}$ and $R_v^{\Box,
C}$ are all the components of $R_v^{\Box, \psi_v, \tau_v}$ it suffices to show
that any $E'$-point of the latter lies on one of the former.  This follows
from the implication $\impliedby$.
\end{proof}

We now prove the proposition.

\begin{proof}[Proof of Proposition~\ref{prop:lring2}]
Let $F_v'$ be the compositum of the extensions determined by $\ker{\tau_v}$,
$\ker{\ol{\rho}_v}$, $\ker{\psi_v}$ and the extension $F_v(\zeta_p)$.  Let
$\ol{R}^{\Box, \psi_v}_v$ be the universal ring classifying framed
deformations of $\ol{\rho} \vert_{G_{F'}}$ with determinant $\psi_v \chi_p
=\chi_p$ and let $\ol{R}^{\Box, \psi_v, \tau_v}_v$ be quotient
considered in the previous lemma.  We have a natural map
$\Spec(R^{\Box, \psi_v, \tau_v}_v) \to \Spec(\ol{R}^{\Box, \psi_v, \tau_v}_v)$.
By the previous lemma, we can write $\Spec(\ol{R}^{\Box, \psi_v, \tau_v}_v)
=X_{AB} \cup X_C$ where $X_{\ast}$ is a components and an $E'$-point
of $\Spec(\ol{R}^{\Box, \psi_v, \tau_v}_v)$ lies on $X_{\ast}$ if and only if
it has type $\ast$.  We can then take $\Spec(R^{\Box, \psi_v, \tau_v, \ast}_v)$
to be the union of the components of $\Spec(R^{\Box, \psi_v, \tau_v}_v)$ which
map into $X_{\ast}$.  As $R^{\Box, \psi_v, \tau_v, \ast}_v$ is a union of
components, it satisfies (1) in the statement of the proposition.  The
verification of (2) is easy and left to the reader.
\end{proof}

\subsection
We now establish some results about local deformations ring in the equal
characteristic case.  Let $F_v/\Q_p$ be a finite extension, let $\ol{\rho}_v:
G_{F_v} \to \GL_2(k)$ be a representation and let $\psi_v:G_{F_v} \to
\ms{O}^{\times}$ be a finite order character with $\det{\ol{\rho}_v}=
\ol{\psi}_v \cdot \ol{\chi}_p$.  Let $R^{\Box, \psi_v}_v$ denote the universal
ring classifying framed deformations of $\ol{\rho}_v$ with determinant $\psi_v
\chi_p$.  The main result we need is the following:

\begin{proposition}
\label{prop:pring2}
Let $\tau_v$ be an inertial type for $F_v$ and let $\ast$ be $A$, $B$ or $C$.
There exists a quotient $R^{\Box, \psi_v, \tau_v, \ast}_v$ of $R^{\Box,
\psi_v}_v$ with the following properties:
\begin{enumerate}
\item $R^{\Box, \psi_v, \tau_v, \ast}_v$ is $\ms{O}$-flat, reduced and all of
its components have dimension $[F_v:\Q_p]+4$.
\item Let $x:R_v^{\Box, \psi_v} \to E'$ be a map corresponding to a
representation $\rho$.  If $x$ factors through $R^{\Box, \psi_v, \tau_v,
\ast}_v$ then $\rho$ has type $\ast$ and inertial type $\tau_v$.  Conversely,
if $\rho$ has definite type $\ast$ and inertial type $\tau_v$ then $x$ factors
through $R^{\Box, \psi_v, \tau_v, \ast}_v$.
\end{enumerate}
The ring $R^{\Box, \psi_v, \tau_v, \ast}_v$ may be zero.
\end{proposition}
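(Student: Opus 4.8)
The plan is to transport the proof of Proposition~\ref{prop:lring2} to the equal-characteristic setting, replacing the appeal to \cite[Proposition~3.1.3]{Gee2} (i.e.\ Lemma~\ref{lem:lring2a}) by Kisin's theory of potentially semi-stable deformation rings and the appeal to Lemma~\ref{lem:lring2b} by Proposition~\ref{prop:pring}. The first step is the analogue of Lemma~\ref{lem:lring2a}: there is a quotient $R^{\Box,\psi_v,\tau_v}_v$ of $R^{\Box,\psi_v}_v$ that is $\ms{O}$-flat, reduced, equidimensional of dimension $[F_v:\Q_p]+4$, and such that $R^{\Box,\psi_v}_v\to E'$ factors through it exactly when the corresponding representation is potentially semi-stable of Hodge type $\{0,-1\}$ (at every embedding $F_v\hookrightarrow\ol{\Q}_p$) with inertial type $\tau_v$ and determinant $\psi_v\chi_p$. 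This is Kisin's construction (\cite{Kisin4}; the rings of Proposition~\ref{prop:pring} are already of this shape and carry the same dimension $[F_v:\Q_p]+3$ relative to $\ms{O}$), reducedness following from $\ms{O}$-flatness together with regularity of the generic fibre in this low-weight case. Since every weight-two representation with determinant $\psi_v\chi_p$ is de Rham, hence potentially semi-stable of Hodge type $\{0,-1\}$, this ring already contains every candidate lift.

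The second step is the analogue of Lemma~\ref{lem:lring2b}: in the \emph{trivial case} --- $\ol{\rho}_v$ trivial, $\psi_v$ trivial, $\tau_v$ trivial, $\zeta_p\in F_v$ --- the ring $R^{\Box,\psi_v,\tau_v}_v$ has precisely the three components $\Spec R^{\Box,A}_v$, $\Spec R^{\Box,B}_v$, $\Spec R^{\Box,C}_v$ of Proposition~\ref{prop:pring}. Indeed, triviality of $\tau_v$ makes every $E'$-point semi-stable over $F_v$ itself, weight two, with cyclotomic determinant, hence admissible of type $A$, $B$ or $C$; when it is not crystalline the monodromy operator is non-zero, and the residual-triviality argument already used in the paper to show $R^{\Box}_v\to\T^{\Box}_{\mc{D}}$ factors through the type-$C$ quotient (using $p\ne2$) forces it to be an extension of $E$ by $E(\chi_p)$, i.e.\ of type $C$. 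So $R^{\Box,A}_v$, $R^{\Box,B}_v$, $R^{\Box,C}_v$ are quotients of $R^{\Box,\psi_v,\tau_v}_v$; each is a domain of dimension $[F_v:\Q_p]+4$, hence a component; and since the $\ol{\Q}_p$-points of the $\ms{O}$-flat ring $R^{\Box,\psi_v,\tau_v}_v$ are dense (as in Lemma~\ref{lem:lring2b}), every component meets one of these three, so they are all of them. The three may overlap --- e.g.\ $\Spec R^{\Box,A}_v\cap\Spec R^{\Box,C}_v$ contains the type $A/C$ point $\chi_p\oplus1$ --- which does no harm.

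The third step is the descent. Put $F'_v$ equal to the compositum of $F_v(\zeta_p)$ with the fixed fields of $\ker\tau_v$, $\ker\ol{\rho}_v$, $\ker\psi_v$ (replaced by its Galois closure over $F_v$), so that over $F'_v$ everything becomes trivial; write $\ol{R}^{\Box,\psi_v,\tau_v}_v$ for the ring of the second step over $F'_v$, so that $\Spec(\ol{R}^{\Box,\psi_v,\tau_v}_v)=X_A\cup X_B\cup X_C$, where $X_\ast$ is a component on which an $E'$-point lies exactly when the representation has type $\ast$. Via the natural map $\Spec(R^{\Box,\psi_v,\tau_v}_v)\to\Spec(\ol{R}^{\Box,\psi_v,\tau_v}_v)$, define $\Spec(R^{\Box,\psi_v,\tau_v,\ast}_v)$ to be the union of those components of $\Spec(R^{\Box,\psi_v,\tau_v}_v)$ mapping into $X_\ast$; as a union of components of an $\ms{O}$-flat reduced equidimensional ring it satisfies (1). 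For (2), one direction is formal: if $x$ factors through $R^{\Box,\psi_v,\tau_v,\ast}_v$ then its image lies on $X_\ast$, so $\rho|_{G_{F'_v}}$ is admissible of type $\ast$ and $\rho$ has type $\ast$, while $\rho$ has inertial type $\tau_v$ since $x$ factors through $R^{\Box,\psi_v,\tau_v}_v$. For the converse, if $\rho$ has definite type $\ast$ and inertial type $\tau_v$, then $x$ factors through $R^{\Box,\psi_v,\tau_v}_v$ and $\rho|_{G_{F'_v}}$ is still of definite type $\ast$; because its inertial type over $F'_v$ is trivial, $\rho|_{G_{F'_v}}$ is in fact admissible of type $\ast$, so its image lies on $X_\ast$ and on no $X_{\ast'}$ with $\ast'\ne\ast$, which forces the (irreducible) image of the component of $\Spec(R^{\Box,\psi_v,\tau_v}_v)$ through $x$ into $X_\ast$; hence $x$ factors through $R^{\Box,\psi_v,\tau_v,\ast}_v$.

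The point I expect to need the most care is the claim, used in the converse above, that over $F'_v$ a representation of \emph{definite} type $\ast$ is actually admissible of type $\ast$: for $\ast=C$ this is the monodromy/residual-triviality argument again (a residually trivial, semi-stable but non-crystalline, weight-two, cyclotomic-determinant representation is an extension of $E$ by $E(\chi_p)$, using $p\ne2$), and for $\ast=A,B$ one needs that a potentially crystalline semi-stable representation is crystalline (the monodromy operator does not change under finite base change) and that ordinariness of a crystalline weight-two representation is insensitive to the extension $F'_v/F_v$. These are standard, but they are exactly the inputs that explain why (2) must assume definite type: the $A/C$ ambiguity is only forced to resolve after restriction to $F'_v$, and a representation of indefinite type need not land in a single $X_\ast$. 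I would also verify that Kisin's dimension computation in this low-weight case indeed gives $[F_v:\Q_p]+4$, consistently with Proposition~\ref{prop:pring}.
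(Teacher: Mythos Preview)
Your proposal is correct and follows essentially the same approach as the paper: invoke Kisin's potentially semi-stable deformation rings (the paper cites \cite[Theorem~3.3.4]{Kisin4}) for the analogue of Lemma~\ref{lem:lring2a}, use Proposition~\ref{prop:pring} to identify the three components in the trivial case for the analogue of Lemma~\ref{lem:lring2b}, and then descend exactly as in the proof of Proposition~\ref{prop:lring2}. Your write-up is in fact more detailed than the paper's own proof, which simply asserts that the argument ``follows exactly as the proof of Proposition~\ref{prop:lring2}''; your explicit discussion of the residual-triviality/$p\ne 2$ argument in the type-$C$ case and of why the converse in (2) requires definite type fills in precisely the points the paper leaves implicit.
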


The proof follows the same basic plan as that of Proposition~\ref{prop:lring2}
and so we omit some of the details.  We begin with the analogue of
Lemma~\ref{lem:lring2a}.

\begin{lemma}
Let $\tau_v$ be an inertial type for $F_v$.  There exists a quotient
$R^{\Box, \psi_v, \tau_v}_v$ of $R^{\Box, \psi_v}_v$ with the following
properties:
\begin{enumerate}
\item $R^{\Box, \psi_v, \tau_v}_v$ is $\ms{O}$-flat, reduced and all of its
components have dimension $[F_v:\Q_p]+4$.
\item Let $x:R^{\Box, \psi_v}_v \to E'$ be a map corresponding to a
representation $\rho$.  Then $x$ factors through $R^{\Box, \psi_v, \tau_v}_v$
if and only if $\rho$ is weight two and has inertial type $\tau_v$.
\end{enumerate}
The ring $R^{\Box, \psi_v, \tau_v}_v$ may be zero.
\end{lemma}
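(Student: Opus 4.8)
The plan is to take $R^{\Box, \psi_v, \tau_v}_v$ to be the \emph{potentially semi-stable} framed deformation ring of $\ol{\rho}_v$ with fixed determinant $\psi_v \chi_p$, with $p$-adic Hodge type given by the Hodge--Tate weights $\{0,-1\}$ at every embedding $F_v \hookrightarrow \ol{\Q}_p$, and with inertial type $\tau_v$. Such a ring is constructed by Kisin \cite{Kisin4}: it is a quotient of $R^{\Box, \psi_v}_v$, it is $\ms{O}$-flat, and for any finite extension $E'/E$ a morphism $R^{\Box, \psi_v}_v \to E'$ factors through it precisely when the associated representation $\rho$ is potentially semi-stable with Hodge--Tate weights $\{0,-1\}$ at each embedding and with inertial type $\tau_v$. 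Kisin also proves that the generic fibre $R^{\Box, \psi_v, \tau_v}_v[1/p]$ is regular (the Hodge type being regular) and equidimensional of dimension $[F_v:\Q_p]+3$. These are the substantive inputs, and they will do most of the work.

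The first thing to verify is property (2), which I would obtain by unwinding the definition of ``weight two'' from \S \ref{ss:wt2}. An $E'$-point of $R^{\Box, \psi_v}_v$ automatically has determinant $\psi_v \chi_p$, and since $\psi_v$ is of finite order while $\chi_p$ has Hodge--Tate weight $-1$, a de Rham lift with non-positive Hodge--Tate weights and this determinant must have weights $\{0,-1\}$ at every embedding; so for such a point ``$\rho$ is weight two'' is equivalent to ``$\rho$ is de Rham with Hodge--Tate weights $0$ and $-1$ at each embedding''. By the $p$-adic monodromy theorem a de Rham representation is potentially semi-stable, so this is in turn equivalent to ``$\rho$ is potentially semi-stable with Hodge--Tate weights $\{0,-1\}$ at each embedding''. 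Hence ``$\rho$ is weight two and has inertial type $\tau_v$'' is exactly the condition carving out $R^{\Box, \psi_v, \tau_v}_v$ among $E'$-points, which is (2).

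For property (1) I would argue as follows. The ring is $\ms{O}$-flat by construction, hence embeds into its generic fibre $R^{\Box, \psi_v, \tau_v}_v[1/p]$; since the latter is reduced (being regular), so is the former. For the dimension statement, if the ring is nonzero then $\ms{O}$-flatness forces every minimal prime to avoid $p$, so it corresponds to a minimal prime of the generic fibre, where the associated component has dimension $[F_v:\Q_p]+3$ by Kisin's equidimensionality result; restoring the parameter $p$ gives dimension $[F_v:\Q_p]+4$. (If the ring is zero there is nothing to prove.)

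The part I expect to require genuine care, rather than quotation, is handling the cases not literally covered by \cite{Kisin4} --- essentially those in which, after an unramified twist, $\ol{\rho}_v$ becomes trivial and $F_v$ is small, where the generic regularity and equidimensionality assertions are not available off the shelf. In those cases the plan is to fall back on the explicit rings of Proposition~\ref{prop:pring} together with the functor $D^C$ of its proof: these already exhibit the (potentially) crystalline and Steinberg-type components of the right dimension, and combining them with the elementary remark that every potentially semi-stable lift of the prescribed Hodge and inertial type is built from one of these pieces identifies $R^{\Box, \psi_v, \tau_v}_v$ by hand. This mirrors the role played by Lemma~\ref{lem:lring2a} in the proof of Proposition~\ref{prop:lring2}, and the subsequent component analysis (the analogue of Lemma~\ref{lem:lring2b}) will then separate the type-$A$, type-$B$ and type-$C$ loci.
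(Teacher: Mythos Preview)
Your approach is the paper's approach: both simply invoke \cite[Theorem~3.3.4]{Kisin4}, and your unpacking of why ``weight two with inertial type $\tau_v$'' coincides with Kisin's potentially semi-stable condition (fixed Hodge type $\{0,-1\}$, fixed inertial type, fixed determinant) is correct and useful.

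Two corrections, one minor and one more substantive. First, the claim that the generic fibre is \emph{regular} is too strong. In the semi-stable (as opposed to crystalline) case the components can meet: for instance, when $\ol{\rho}_v$ is trivial and $\tau_v$ is trivial, the representation $\chi_p \oplus 1$ is admissible of both type $A$ and type $C$, so the corresponding closed point lies on two irreducible components of $\Spec R^{\Box,\psi_v,\tau_v}_v[1/p]$ and the local ring there is not a domain. Kisin proves reducedness of the generic fibre directly (via generic smoothness and excellence), not by global regularity, so your conclusion survives but the justification needs this adjustment. Second, and more importantly, your final paragraph should be deleted: \cite[Theorem~3.3.4]{Kisin4} carries no hypothesis on $\ol{\rho}_v$ or on the size of $F_v$, so there are no exceptional cases to handle by hand here. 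The explicit rings of Proposition~\ref{prop:pring} are invoked in the paper only at the \emph{next} step (the component count analogous to Lemma~\ref{lem:lring2b}), not for the present lemma.
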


\begin{proof}
This follows from \cite[Theorem~3.3.4]{Kisin4}.
\end{proof}

We now compute the components of $R^{\Box, \psi_v, \tau_v}_v$ in certain
cases, as in Lemma~\ref{lem:lring2b}.

\begin{lemma}
Assume that $\ol{\rho}$ is trivial, that $F_v$ contains the $p$th roots of
unity, let $\tau_v$ be the trivial inertial type and let $\psi_v$ be the
trivial character.  The ring $R^{\Box, \psi_v, \tau_v}_v$ then has three
components, corresponding to admissible deformations of types $A$, $B$ and $C$.
\end{lemma}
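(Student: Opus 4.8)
The plan is to argue exactly as in the proof of Lemma~\ref{lem:lring2b}, the $\ell\ne p$ analogue; the only ingredient needed beyond that proof is the $p$-adic Hodge theoretic description of two-dimensional semistable representations, which takes over the role played there by the elementary classification of special $\ell$-adic representations. Write $R = R^{\Box,\psi_v,\tau_v}_v$, and for $\ast\in\{A,B,C\}$ let $R^{\Box,\ast}_v$ be the ring furnished by Proposition~\ref{prop:pring}; since type $A$, $B$ and $C$ representations all have determinant $\chi_p = \psi_v\chi_p$, I regard $R^{\Box,\ast}_v$ as a quotient of $R^{\Box,\psi_v}_v$. The heart of the proof is the equivalence, for a deformation $\rho$ of the trivial representation to a finite extension $E'/E$ with $\det\rho = \chi_p$:
\[
\textrm{$\rho$ is weight two with trivial inertial type} \iff \textrm{$\rho$ is admissible of type $A$, $B$ or $C$;}
\]
granting this, the component statement is formal.

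First I would establish this equivalence. The implication ``$\Leftarrow$'' is immediate: types $A$ and $B$ are crystalline, a type $C$ representation is an extension of $E'$ by $E'(\chi_p)$ and hence semistable, and all three have Hodge-Tate weights $0,-1$ and determinant $\chi_p$; since ``weight two with trivial inertial type'' means precisely ``semistable with Hodge-Tate weights $0$ and $-1$'', this direction holds. For ``$\Rightarrow$'', let $\rho$ be a deformation of the trivial representation to $E'/E$ which is semistable with Hodge-Tate weights $0,-1$ and has $\det\rho = \chi_p$. If $\rho$ is crystalline, then, having cyclotomic determinant, it is by definition admissible of type $A$ or $B$ according as it is ordinary or not. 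If $\rho$ is semistable but not crystalline, I would use the classification of two-dimensional semistable non-crystalline representations: such a $\rho$ is, up to unramified twist, Steinberg, so it sits in an exact sequence $0 \to E'(\chi_p\alpha) \to \rho \to E'(\alpha^{-1}) \to 0$ with $\alpha$ unramified, and $\det\rho = \chi_p$ forces $\alpha^2 = 1$. Reducing this sequence modulo the maximal ideal and using that $\ol\rho$ is trivial together with $\ol\chi_p = 1$ (which holds because $F_v$ contains $\mu_p$), one gets $\ol\alpha = 1$, whence $\alpha = 1$ since $p \ne 2$; thus $\rho$ is admissible of type $C$. This appeal to the structure of two-dimensional semistable representations is the one substantive input of the proof, and is where I would be most careful; it is nonetheless entirely standard $p$-adic Hodge theory, and everything else reduces to formal manipulations with the deformation rings of Proposition~\ref{prop:pring} and the previous lemma.

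With the equivalence in hand I would conclude as in Lemma~\ref{lem:lring2b}. By Proposition~\ref{prop:pring} (including the dimension computation given there for $\ast = C$), each $R^{\Box,\ast}_v$ is a domain which is $\ms{O}$-flat of relative dimension $[F_v:\Q_p]+3$, so $\Spec R^{\Box,\ast}_v$ is integral of dimension $[F_v:\Q_p]+4$. The implication ``$\Leftarrow$'' shows that every $E'$-point of $R^{\Box,\ast}_v$ factors through $R$; since $R$ and $R^{\Box,\ast}_v$ are reduced and $\ms{O}$-flat, this forces the surjection $R^{\Box,\psi_v}_v \to R^{\Box,\ast}_v$ to factor through $R$, so $\Spec R^{\Box,\ast}_v$ is a closed subscheme of $\Spec R$; being irreducible of dimension $[F_v:\Q_p]+4$, which by the previous lemma is the common dimension of all components of $\Spec R$, it is a component. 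The three components $\Spec R^{\Box,A}_v$, $\Spec R^{\Box,B}_v$, $\Spec R^{\Box,C}_v$ are pairwise distinct, since a generic type $A$ (resp.\ $B$, $C$) representation is crystalline ordinary (resp.\ crystalline non-ordinary, semistable non-crystalline), and no representation enjoys two of these properties simultaneously. Conversely, the implication ``$\Rightarrow$'' shows that every $E'$-point of $R$ lies on $\Spec R^{\Box,A}_v \cup \Spec R^{\Box,B}_v \cup \Spec R^{\Box,C}_v$; as $R$ is reduced and $\ms{O}$-flat it is determined by the closed points of its generic fibre, so $\Spec R$ equals this union. Hence these three subschemes are exactly the irreducible components of $\Spec R$, as claimed.
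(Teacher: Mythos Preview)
Your proof is correct and follows essentially the same approach as the paper's: both reduce to showing that an $E'$-point of $R^{\Box,\psi_v,\tau_v}_v$ is a semistable, determinant-$\chi_p$ deformation of the trivial representation and hence admissible of some type, then invoke Proposition~\ref{prop:pring} to identify the three type-$\ast$ loci as the components. You have spelled out the semistable non-crystalline step (the Steinberg classification and the $\alpha^2=1$, $\ol{\alpha}=1$, $p\ne 2$ argument) and the distinctness of the components more explicitly than the paper does, but the structure is identical.
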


\begin{proof}
The proof is essentially the same as that of Lemma~\ref{lem:lring2b}.
Proposition~\ref{prop:pring} shows that the space of deformations of
$\ol{\rho}$ which are admissible of a given type form a component.
An $E'$-point of $R^{\Box, \psi_v, \tau_v}_v$ gives rise to a semi-stable
representation with determinant $\chi_p$ reducing to the trivial
representation, and any such representation is admissible.  Thus the stated
components are all of the components.
\end{proof}

The proof of Proposition~\ref{prop:pring2} now follows exactly as the
proof of Proposition~\ref{prop:lring2}.

\subsection
We now prove the Theorem~\ref{thm:lift}.  A global solution of $\ms{P}$
gives rise to a local solution, since we know that a global solution has
definite type (Theorem~\ref{mainthm} and Proposition~\ref{prop:type}).
Thus if no local solution to $\ms{P}$ exists then no solution exists.  Assume
now that $\ms{P}$ admits a local solution.
Let $E/\Q_p$ be a large finite extension and let $\ms{O}$ be its ring
of integers.  For $v \in \Sigma$ we define $R_v^{\dag}$ to be the ring
$R_v^{\Box, \psi_v, \tau_v, t(v)}$ constructed in the previous sections, where
$\psi_v=\psi \vert_{G_{F_v}}$.  These
rings are non-zero due to the existence of a local solution.  They satisfy
the conditions specified in \S \ref{ss:defring} by
Proposition~\ref{prop:lring2} and Proposition~\ref{prop:pring2}.
Let $R^{\dag}$ be the ring defined in \S \ref{ss:defring} associated to the
above local deformation rings.  We then see that any $\ol{\Q}_p$-point of
$R^{\dag}$ gives a solution to $\ms{P}$, and all solutions are of this form.
Theorem~\ref{thm:defring} gives the desired result.

\subsection
We now give a simplified version of Theorem~\ref{thm:lift}, which is easier to
apply in many circumstances.  Before doing this, we introduce some terminology.
Let $F_v/\Q_{\ell}$ be a finite extension ($\ell=p$ allowed) and let
$\ol{\rho}_v:G_{F_v} \to \GL_2(\ol{\F}_p)$ be a representation.  We say that a
definite type $\ast$ is \emph{compatible} with $\ol{\rho}_v$ if $\ol{\rho}_v$
admits a lift to $\ol{\Q}_p$ (required to be weight two if $v \mid p$) which is
definite of type $\ast$.  Similarly,
for a representation $\ol{\rho}:G_F \to \GL_2(\ol{\F}_p)$ with $F/\Q$ finite,
we may speak of the compatibility of $\ol{\rho}$ with a definite type function.
We now have the following result:

\begin{theorem}
\label{thm:lift2}
Let $\ol{\rho}:G_F \to \GL_2(\ol{\F}_p)$ be an odd representation
satisfying (A1) and (A2), let $\psi:G_F \to \ol{\Q}_p^{\times}$ be a finite
order character such that $\det{\ol{\rho}}=\ol{\psi} \cdot \ol{\chi}_p$, let
$\Sigma$ be a finite set of places including all those at which $\ol{\rho}$ or
$\psi$ ramify and those above $p$, let $\Sigma'$ be a subset of $\Sigma$ and
let $t$ be a definite type function on $\Sigma'$ compatible with $\ol{\rho}$.
Then $\ol{\rho}$ admits a weight two lift to $\ol{\Q}_p$ which is unramified
outside of $\Sigma$, has determinant $\psi \chi_p$ and has type $t$ at the
places in $\Sigma'$.
\end{theorem}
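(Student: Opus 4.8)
The plan is to deduce Theorem~\ref{thm:lift2} from Theorem~\ref{thm:lift} by choosing, for each place $v \in \Sigma$, an appropriate inertial type $\tau_v$ and, for places $v \in \Sigma \setminus \Sigma'$, an appropriate definite type as well, so that the resulting lifting problem $\ms{P}$ is locally solvable and hence (by Theorem~\ref{thm:lift}) globally solvable. The global solution is then the desired weight two lift.

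First I would fix, for each $v \in \Sigma'$, a local lift $\rho_v$ of $\ol{\rho}\vert_{G_{F_v}}$ which is weight two (for $v \mid p$), definite of type $t(v)$, and has determinant $\psi\chi_p\vert_{G_{F_v}}$; such a $\rho_v$ exists because $t(v)$ is compatible with $\ol{\rho}\vert_{G_{F_v}}$ --- but I need to be slightly careful, since ``compatible'' as defined only guarantees a weight two lift of definite type $\ast$, not one with the prescribed determinant. This is the first technical point: I would note that twisting a lift by a finite order character of $G_{F_v}$ adjusts the determinant by the square of that character and does not change the type, and that the obstruction to matching determinants is the discrepancy between $\det\rho_v$ and $\psi\chi_p\vert_{G_{F_v}}$, which is a finite order character that is trivial modulo $p$ (since both sides reduce to $\det\ol{\rho}\vert_{G_{F_v}}$); as $p$ is odd, such a character is a square, so after twisting we may assume $\det\rho_v = \psi\chi_p\vert_{G_{F_v}}$. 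I then set $\tau_v$ to be the inertial type of this $\rho_v$. For $v \in \Sigma \setminus \Sigma'$ (including, possibly, places above $p$ not in $\Sigma'$) I must still supply a definite type and an inertial type: here I would simply pick \emph{any} local lift of $\ol{\rho}\vert_{G_{F_v}}$ that is weight two with determinant $\psi\chi_p\vert_{G_{F_v}}$ --- for instance a crystalline ordinary lift for $v \mid p$, or an unramified-up-to-the-cyclotomic-twist lift for $v \nmid p$, the existence of which is standard --- record its definite type as $t(v)$ and its inertial type as $\tau_v$. The only subtlety is ensuring the chosen lift has \emph{definite} (not indefinite) type; since any given lift has a well-defined $t_{\rho_v}$, and I am free to re-choose if I happen to land on an $A/C$ or $AB/C$ point, I would argue that generic choices (or explicit ones like $\chi_p\psi_v \oplus \psi_v$ versus a non-split extension) are definite.

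Having made these choices, I define the lifting problem $\ms{P} = (\Sigma, \psi, t, \{\tau_v\}_{v \in \Sigma})$, where I have now extended $t$ to all of $\Sigma$. By construction the family $\{\rho_v\}_{v \in \Sigma}$ is a local solution: each $\rho_v$ reduces to $\ol{\rho}\vert_{G_{F_v}}$, has determinant $\psi\chi_p\vert_{G_{F_v}}$, is weight two for $v \mid p$, has definite type $t(v)$, and has inertial type $\tau_v$. Theorem~\ref{thm:lift} then produces a global solution $\rho: G_F \to \GL_2(\ol{\Q}_p)$, which is a weight two lift of $\ol{\rho}$, unramified outside $\Sigma$, with determinant $\psi\chi_p$, and of type $t(v)$ at each $v \in \Sigma$ --- in particular at the places of $\Sigma'$. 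This is exactly the assertion of the theorem.

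The main obstacle I anticipate is the bookkeeping around determinants and definite types in the preceding paragraph --- specifically, verifying that one can always choose the auxiliary local lifts at $v \in \Sigma \setminus \Sigma'$ to be definite and to have exactly the determinant $\psi\chi_p\vert_{G_{F_v}}$, and likewise adjusting the given lifts at $v \in \Sigma'$ so that their determinants match on the nose (rather than up to a finite order twist). Everything else is a formal packaging of Theorem~\ref{thm:lift}. I would phrase the determinant-matching argument once, as a short lemma: for a local field $F_v$ and a finite order character $\delta_v: G_{F_v} \to \ol{\Q}_p^\times$ with $\ol{\delta}_v = 1$, there is a finite order character $\eta_v$ with $\eta_v^2 = \delta_v$ --- immediate since $\delta_v$ factors through a procyclic (for $\ell \neq p$) or more generally through a group whose $\ell'$-part and tame part cause no $2$-divisibility obstruction, and $p \neq 2$ --- so that replacing a lift $\rho_v$ by $\eta_v^{-1}\rho_v$ corrects the determinant while preserving weight two, type and inertial type (the last because $\eta_v$ is finite order and our normalization of ``inertial type'' only sees the Weil--Deligne representation restricted to inertia up to the twist, which here is absorbed). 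With that lemma in hand the proof is a paragraph.
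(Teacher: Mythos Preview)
Your approach is the same as the paper's: package the data into a locally solvable lifting problem and invoke Theorem~\ref{thm:lift}. The determinant-fixing twist you describe is exactly what the paper does (the discrepancy $\psi\chi_p\cdot(\det\rho_v')^{-1}$ is a finite order character with trivial reduction, hence pro-$p$, hence a square since $p$ is odd).

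The one place where your write-up is genuinely incomplete is the step you yourself flag as the main obstacle: the existence of a definite-type local lift at each $v\in\Sigma\setminus\Sigma'$. Your suggested examples do not always work --- a crystalline ordinary lift at $v\mid p$ requires $\ol{\rho}\vert_{G_{F_v}}$ to be reducible, and ``$\chi_p\psi_v\oplus\psi_v$'' is only a lift of $\ol{\rho}\vert_{G_{F_v}}$ for very special residual shapes. The paper handles this by proving two dedicated local results (Propositions~\ref{prop:llift} and~\ref{prop:plift}): for any $\ol{\rho}_v$ over a local field (of residue characteristic $\ne p$ or $=p$ respectively) there exists a lift of definite type (weight two in the $v\mid p$ case). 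These require a short case analysis on $\ol{\rho}_v\vert_{I'}$ (irreducible, sum of two distinct characters, or twice one character) and, in the $v\mid p$ irreducible case, an auxiliary construction of a suitable de Rham character of a quadratic extension. Once you cite or prove those two propositions, your argument is complete and coincides with the paper's.

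One small correction: twisting by a finite order character $\eta_v$ does \emph{not} in general preserve the inertial type (it shifts it by $\eta_v\vert_{I_{F_v}}$). This does not matter for the proof, since you correctly set $\tau_v$ to be the inertial type of the \emph{post-twist} lift; just drop the parenthetical claim that the twist preserves inertial type.
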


To prove Theorem~\ref{thm:lift2} we need to know that every residual
representation is compatible with some type.  We will prove this (and in
fact more precise results) in the following two sections.  We will then
return to the proof of Theorem~\ref{thm:lift2}.

\subsection
Let $F_v/\Q_{\ell}$ be a finite extension with $\ell \ne p$.
The purpose of this section is to prove the following proposition:

\begin{proposition}
\label{prop:llift}
Given a representation $\ol{\rho}_v:G_{F_v} \to \GL_2(\ol{\F}_p)$ there
exists a representation $\rho_v:G_{F_v} \to \GL_2(\ol{\Q}_p)$ which lifts
$\ol{\rho}_v$, has the same conductor as $\ol{\rho}_v$ and has definite
type.  In particular, $\ol{\rho}_v$ is compatible with some type.
\end{proposition}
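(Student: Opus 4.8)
The plan is to analyse $\ol\rho_v$ according to how inertia acts, exploiting that, since $\ell\neq p$, the image of $I_{F_v}$ under any representation to $\GL_2$ of a $p$-adic coefficient ring is finite and the image of wild inertia is an $\ell$-group. Because $\dim=2$ there are exactly two cases: either $\ol\rho_v\vert_{I_{F_v}}$ is semisimple, or it is reducible and indecomposable with a non-zero extension class that restricts non-trivially to inertia. In the first case one produces a potentially unramified lift (type $AB$), in the second a twist of the Steinberg representation (type $C$).

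\emph{Case 1: $\ol\rho_v\vert_{I_{F_v}}$ semisimple.} Here the image $H=\ol\rho_v(I_{F_v})$ is a finite group with a faithful completely reducible two-dimensional $k$-representation which is an extension of a cyclic group by an $\ell$-group. Since $p$ is odd, such an $H$ has cyclic Sylow-$p$ subgroups of order dividing $p$, and in fact has order prime to $p$ except for a handful of solvable exceptional subgroups when $p=3$ (essentially $\mathrm{SL}_2(\F_3)$); in every case the given $k$-representation of $H$ lifts to a faithful $\ms{O}$-representation after enlarging $E$ (lifting of idempotents for prime-to-$p$ groups, a direct Brauer-character check in the exceptional cases). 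Composing $I_{F_v}\twoheadrightarrow H$ with such a lift yields an inertial type $\tau_v$ lifting $\ol\rho_v\vert_{I_{F_v}}$ through the \emph{same} finite quotient, hence with the same Swan conductor and the same space of inertial invariants, i.e.\ the same conductor as $\ol\rho_v$. One extends $\tau_v$ to a lift $\rho_v$ of $\ol\rho_v$ by choosing a matrix lifting $\ol\rho_v(\Frob_v)$ and its (compatible) normalising action, using the remaining freedom to arrange $\det\rho_v=\chi_p\cdot(\text{finite order})$ and to keep the eigenvalues of $\rho_v(\Frob_v)$ generic. Then $\rho_v\vert_{I_{F_v}}$ has finite image, so $\rho_v$ is potentially unramified, i.e.\ of type $AB$; the genericity of the Frobenius eigenvalues prevents $\rho_v$ from also acquiring type $C$, so it is of definite type $AB$.

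\emph{Case 2: $\ol\rho_v\vert_{I_{F_v}}$ not semisimple.} A short computation with the action of a Frobenius lift on $H^1(I_{F_v},-)$ shows that, after twisting $\ol\rho_v$ by a character (and correspondingly twisting the lift we will produce, which does not affect whether conductors agree), we may assume $\ol\rho_v\cong \mat{\ol\chi_p}{c_0}{}{1}\otimes\ol\psi$ where $c_0\in H^1(G_{F_v},k(\ol\chi_p))$ restricts non-trivially to inertia; in particular $\ol\rho_v$ has conductor exponent one away from the conductor of $\ol\psi$. Lift $\ol\psi$ to a finite order character $\psi$ of the same conductor. The reduction map $H^1(G_{F_v},\ms{O}(\chi_p))\to H^1(G_{F_v},k(\ol\chi_p))$ is surjective, since $H^2(G_{F_v},\ms{O}(\chi_p))$ is $\ms{O}$-torsion free by local Tate duality; moreover any $\ms{O}$-torsion in $H^1(G_{F_v},\ms{O}(\chi_p))$ — which occurs only when $\bN v\equiv 1\pmod p$ — is the image of the connecting map out of $H^0(G_{F_v},k)$, namely the class of $g\mapsto(\chi_p(g)-1)/\varpi$, which is \emph{unramified} because $\chi_p$ is unramified at $v$. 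Hence $c_0$, being ramified, lifts to a class $c\in H^1(G_{F_v},\ms{O}(\chi_p))$ which is non-torsion, so non-zero in $H^1(G_{F_v},E(\chi_p))$. Put $\rho_v=\mat{\chi_p}{c}{}{1}\otimes\psi$. Then $\rho_v$ lifts $\ol\rho_v$, has determinant $\chi_p\psi^2$, and — since $c\neq 0$ — has non-zero Weil–Deligne monodromy operator, so it is of definite type $C$ (and is never potentially unramified); since $c\vert_{I_{F_v}}\neq 0$ it has the same conductor as $\ol\rho_v$.

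In both cases $\rho_v$ lifts $\ol\rho_v$, has the same conductor, and is of definite type, so $\ol\rho_v$ is compatible with the type $t_{\rho_v}$. I expect the delicate point to be the cohomological bookkeeping in Case~2 when $\bN v\equiv 1\pmod p$: there $\dim_k H^1(G_{F_v},k(\ol\chi_p))=2$ while $\dim_E H^1(G_{F_v},E(\chi_p))=1$, so one must genuinely argue that the ramified class $c_0$ survives passage to characteristic zero — which is exactly the rôle of the torsion/Bockstein remark above — together with the matching of conductors and the verification that the lift is definite rather than of indefinite type. A lesser nuisance is isolating and treating by hand the exceptional finite groups appearing in Case~1 when $p=3$.
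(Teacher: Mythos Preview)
Your two-case split (inertia semisimple vs.\ not) is a perfectly valid reorganisation, and your Case~2 is essentially the paper's own argument for its ``Case~3 with $\ol\rho(N)\neq 1$'': the reduction to an extension of $1$ by $\ol\chi_p$ up to twist, the surjectivity of $H^1(G_{F_v},\ms{O}(\chi_p))\to H^1(G_{F_v},k(\ol\chi_p))$ via Kummer theory, and the observation that the torsion classes are unramified so that the ramified $c_0$ lifts to a non-torsion $c$, are all exactly what the paper does. Your conductor bookkeeping there is fine.

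The gap is in Case~1, at the sentence ``One extends $\tau_v$ to a lift $\rho_v$ of $\ol\rho_v$ by choosing a matrix lifting $\ol\rho_v(\Frob_v)$ and its (compatible) normalising action.'' Conjugation by Frobenius induces an automorphism $\sigma$ of $H=\ol\rho_v(I_{F_v})$, and to find a Frobenius lift $\Phi$ with $\Phi\,\tau_v(g)\,\Phi^{-1}=\tau_v(\phi g\phi^{-1})$ you need $\tau_v\circ\sigma\cong\tau_v$ as representations of $H$. When $|H|$ is prime to $p$ the lift of the faithful $k$-representation of $H$ is unique, so this is automatic; but in the exceptional cases you flag (e.g.\ $H\cong\mathrm{SL}_2(\F_3)$ for $p=3$, $\ell=2$) there are several lifts (here $V,\,V\otimes\chi,\,V\otimes\chi^2$ for $\chi$ a cubic character), and $\sigma$ may permute them. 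You would need to argue that a $\sigma$-fixed lift exists --- which is true in this example since $\Out(\mathrm{SL}_2(\F_3))$ has order $2$ acting on a $3$-element set, but you have not said this, and ``a direct Brauer-character check'' addresses only the existence of a lift of the $H$-representation, not its Frobenius-equivariance.

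The paper avoids this issue by subdividing your Case~1 according to $\ol\rho\vert_{I'}$, where $I'$ is the prime-to-$p$ part of inertia. When $\ol\rho\vert_{I'}$ is irreducible (which is exactly when your $\ol\rho\vert_I$ is irreducible), it observes that $(\ad^\circ\ol\rho)^{I'}=0$, whence $H^2(G_{F_v},\ad^\circ\ol\rho)=0$ by Hochschild--Serre, so lifting is unobstructed and one never has to match a Frobenius by hand. When $\ol\rho\vert_{I'}$ is a sum of characters, it writes down explicit diagonal or induced lifts. Either patch your extension step (show a $\sigma$-stable lift always exists), or in the irreducible-on-inertia subcase invoke the $H^2$-vanishing instead.
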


\begin{proof}
Write $\ol{\rho}$ in place of $\ol{\rho}_v$.
Let $G=G_{F_v}$, let $I=I_{F_v}$ be the inertia subgroup and let $I^w$ be the
wild inertia subgroup.  Let $I'$ be the closed normal subgroup of $G$
containing $I^w$ and whose image in $G/I^w$ is the prime-to-$p$ part of
tame inertia.
The representation $\ol{\rho} \vert_{I'}$ is semi-simple since $I'$ is
prime-to-$p$.  We consider three cases:
(1) $\ol{\rho} \vert_{I'}$ is irreducible; (2) $\ol{\rho} \vert_{I'}=
\ol{\alpha} \oplus \ol{\beta}$ with $\ol{\alpha} \ne \ol{\beta}$; and (3)
$\ol{\rho} \vert_{I'}= \ol{\alpha} \oplus \ol{\alpha}$.  We first show that
a definite type lift exists in each of these cases and then refine this
result to produce a definite type lift with equal conductor.

{\it Case 1.} As $I'$ has no mod $p$ cohomology, the Hochschild-Serre spectral
sequence gives $H^2(G, \ad^{\circ}{\ol{\rho}})=H^2(G/I',
(\ad^{\circ}{\ol{\rho}})^{I'})$.  The latter group vanishes since
$(\ad^{\circ}{\ol{\rho}})^{I'}=0$.  Thus there is no obstruction to
deforming $\ol{\rho}$ and so we can lift it to  $\ol{\Q}_p$.  Furthermore, we
find a lift whose determinant is any prescribed lift of the determinant of
$\ol{\rho}$.  We can thus find a lift $\rho$ whose determinant is a finite order
character times the cyclotomic character.  Since $\rho$ is also irreducible
it follows that $\rho$ is definite of type $AB$.

{\it Case 2.} The group $G$ must permute the characters $\ol{\alpha}$ and
$\ol{\beta}$.
First assume that $G$ permutes them trivially.  Then both extend to characters
of $G$ and $\ol{\rho}=\ol{\alpha} \oplus \ol{\beta}$.  Pick characters
$\alpha$, $\beta$, $\gamma$, $\delta$ of $G$ taking values in
$\ol{\Q}_p^{\times}$ as follows:  $\alpha$ and $\beta$ are finite order and
lift $\ol{\alpha}$ and $\ol{\beta}$; $\gamma$ is unramified of infinite order
and with trivial reduction; and $\delta$ is a finite order character reducing
to the cyclotomic character.  Then
$\rho=(\alpha \gamma \delta^{-1} \chi_p) \oplus (\beta \gamma^{-1})$ is a lift
of $\ol{\rho}$ which is definite of type $AB$.

Now assume that $G$ permutes $\alpha$ and $\beta$ non-trivially.  Let $H$ be
the subgroup of $G$ which stabilizes $\ol{\alpha}$ and $\ol{\beta}$.  Then
$\ol{\alpha}$ and $\ol{\beta}$ extend to characters of $H$ and $\ol{\rho}$ is
the induction of either of them from $H$ to $G$.  Let $\alpha:H \to
\ol{\Q}_p^{\times}$ be a finite order lift of $\ol{\alpha}$ and let
$\gamma:G \to \ol{\Q}_p^{\times}$ be a character which reduces to the trivial
character and is such that $\gamma^2$ is a finite order character times the
cyclotomic character.  Then $\rho=\Ind_H^G(\alpha \gamma)$ is a definite type
$AB$ representation lifting $\ol{\rho}$.  (It does not have type $C$ since it
is irreducible.)

{\it Case 3.} The character $\ol{\alpha}$ is fixed by $G$ and thus extends to
$G$.  We can thus twist $\ol{\rho}$ by $\ol{\alpha}^{-1}$ and assume that
$\ol{\rho}(I')=1$.  We can therefore regard $\ol{\rho}$ as a representation
of $G/I'$.  This group is topologically generated by $F$ and $N$ subject to
the relation $FNF^{-1}=N^q$, where $q$ is the cardinality of the residue field
of $F_v$, and the topological condition that $N$ is pro-$p$.  If
$\ol{\rho}(N)=1$ then $\ol{\rho}$ is unramified and it is easy to produce
a definite lift.  Thus assume $N \ne 0$.  We can then pick a basis so that
\begin{displaymath}
\ol{\rho}(N)=\mat{1}{\ast}{}{1}
\end{displaymath}
It follows from the relation $FNF^{-1}=N^q$ that
\begin{displaymath}
\ol{\rho}(F)=\mat{qa}{\ast}{}{a}.
\end{displaymath}
We can twist $\ol{\rho}$ by an unramified character so that $a=1$.  We then
find that $\ol{\rho}$ is a non-trivial extension of 1 by $\ol{\chi}_p$.  Say
that $\ol{\rho}$ is defined over the finite field $k$ and let $\ms{O}$ be the
Witt vectors of $k$.  Consider the map
\begin{displaymath}
H^1(G_{F_v}, \ms{O}(\chi_p)) \to H^1(G_{F_v}, k(\chi_p)).
\end{displaymath}
It is surjective by Kummer theory; in fact, every element in the
target is the image of a non-torsion element of the source.  The representation
$\ol{\rho}$ is represented by a class $\ol{c}$ in the target.  Let $c$ be
a non-torsion class in the source lifting $\ol{c}$.  Then the image of
$c$ in $H^1(G_{F_v}, E(\chi_p))$ is non-zero and corresponds to a definite
type $C$ lift of $\ol{\rho}$.

{\it Controlling the conductor.} We now show how one can refine the above
results to produce definite type lifts of $\ol{\rho}$ with the same conductor
as $\ol{\rho}$.  Observe that if $\rho$ is a lift of $\ol{\rho}$ then
\begin{displaymath}
f(\ol{\rho})-f(\rho)=\dim(\ol{\rho}^I) - \dim(\rho^I) \ge 0
\end{displaymath}
where $f$ is the exponent of the conductor.  Now, if $\dim(\ol{\rho}^I)=2$
then $\ol{\rho}$ is unramified.  It is not hard to produce a definite
type $AB$ lift $\rho$ which is unramified, which shows that one can find
a definite type lift of $\ol{\rho}$ with the same conductor.

Now consider the case where $\dim(\ol{\rho}^I)=1$.  We then have that
$\ol{\rho}^{I'}$ is either one or two dimensional.  First consider the
case where $\dim(\ol{\rho}^{I'})=1$.  Then $\ol{\rho} \vert_{I'}=\ol{\alpha}
\oplus 1$ where $\ol{\alpha}$ is non-trivial.  As in Case 2 we find that
$\ol{\alpha}$ extends to $G$ and $\ol{\rho}=\ol{\alpha} \oplus 1$.  The
definite type $AB$ lift $\rho$ produced in Case 2 (using $\beta=1$) has
$\dim(\ol{\rho}^I)=1$
and thus has the same conductor as $\ol{\rho}$.  Now consider the case where
$\dim(\ol{\rho}^{I'})=2$ so that $\ol{\rho}(I')=1$.  We then find that
$\ol{\rho} \vert_I$ is a non-trivial extension of 1 by $\ol{\chi}_p$.  The
definite type $C$ lift $\rho$ produced in Case 3 above is such that $\rho
\vert_I$ is a non-trivial extension of 1 by $\chi_p$.  We thus find
$\dim(\rho^I)=1$ which shows that $\rho$ and $\ol{\rho}$ have the same
conductor.  This completes the proof.
\end{proof}

\subsection
Now let $F_v/\Q_p$ be a finite extension.
The purpose of this section is to prove the following proposition:

\begin{proposition}
\label{prop:plift}
Given a representation $\ol{\rho}_v:G_{F_v} \to \GL_2(\ol{\F}_p)$ there
exists a representation $\rho_v:G_{F_v} \to \GL_2(\ol{\Q}_p)$ which lifts
$\ol{\rho}_v$, has weight two and has definite type.  In particular,
$\ol{\rho}_v$ is compatible with some type.
\end{proposition}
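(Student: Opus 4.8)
The plan is to lift $\ol{\rho}_v$ over a finite extension where it simplifies, produce a weight two definite-type lift there, and then induce back down to $F_v$, mirroring the strategy of the $\ell \ne p$ case (Proposition~\ref{prop:llift}). First I would reduce to a manageable list of cases by passing to the splitting field of the projectivization: let $F_v'/F_v$ be a finite extension such that $\ol{\rho}_v \vert_{G_{F_v'}}$ is either (1) irreducible, (2) a sum of two distinct characters $\ol{\alpha} \oplus \ol{\beta}$, or (3) a sum $\ol{\alpha} \oplus \ol{\alpha}$ (after a twist, trivial up to an extension of $1$ by $\ol{\chi}_p$, or trivial). In the split cases, since $\ol{\alpha}, \ol{\beta}$ can be chosen to descend compatibly, I would build the lift on $F_v$ itself or on the index-two subgroup stabilizing the characters.

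In Case 1, I would lift $\ol{\rho}_v$ to a crystalline representation using Fontaine--Laffaille theory (or the existence of crystalline lifts of prescribed Hodge--Tate weights, e.g. \cite{Kisin4}): choose Hodge--Tate weights $0$ and $-1$ at each embedding, obtain a weight two crystalline lift $\rho_v$, and twist by an unramified character of infinite order so that $\rho_v$ is non-ordinary — a generic such twist moves the Frobenius eigenvalue out of the ordinary locus, giving definite type $B$. In Case 2 with distinct characters extending to $G_{F_v}$, I would write $\ol{\rho}_v = \ol{\alpha} \oplus \ol{\beta}$ and take $\rho_v = (\alpha \gamma \chi_p) \oplus (\beta \gamma^{-1})$ for suitable finite-order lifts $\alpha, \beta$ of $\ol{\alpha}, \ol{\beta}$ and an unramified $\gamma$ of infinite order with trivial reduction; adjusting by a finite-order correction to the determinant makes this weight two with cyclotomic-type determinant, and choosing $\gamma$ generically keeps it from being ordinary, so it is definite type $A$ or $B$ (one must check the two Frobenius eigenvalues are distinct and generic, giving a definite type). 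If the characters are swapped by $G_{F_v}$, I would instead take $\rho_v = \Ind_H^{G_{F_v}}(\alpha\gamma)$ where $H$ is the index-two stabilizer, $\alpha$ a finite-order lift of $\ol{\alpha}$, and $\gamma$ chosen so $\gamma^2$ is finite-order times cyclotomic and restricts appropriately to have the correct Hodge--Tate weights; irreducibility of the induction forces definite type $A$ or $B$, not $C$.

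In Case 3, after twisting I may assume $\ol{\rho}_v \vert_{I'}$ is trivial; if $\ol{\rho}_v$ is unramified a trivial-inertia crystalline lift gives definite type $A$ (or $B$ after an unramified twist), and if $\ol{\rho}_v$ is a non-split extension of $1$ by $\ol{\chi}_p$ I would use the surjectivity of $H^1(G_{F_v}, \ms{O}(\chi_p)) \to H^1(G_{F_v}, k(\chi_p))$ — in which every target class lifts to a non-torsion class — to produce a lift whose class in $H^1(G_{F_v}, E(\chi_p))$ is nonzero, hence a definite type $C$ weight two lift. The main obstacle, and the part requiring the most care, will be ensuring the lift obtained over $F_v'$ descends to a lift of $\ol{\rho}_v$ over $F_v$ with the same weight two condition and still of definite type: one must check that the induction $\Ind_{F_v'}^{F_v}$ of a weight two representation is weight two (this uses that Hodge--Tate weights behave additively under induction and the extension can be arranged unramified or at least tame so the Hodge--Tate weights are unchanged), and that definiteness — which concerns the monodromy/ordinarity — is preserved, which is where the generic choice of the auxiliary unramified twist $\gamma$ does the work. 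I expect the clean way to handle descent is to arrange at the outset that $F_v'/F_v$ is chosen so that the relevant local objects (characters, the extension class) are already defined over $F_v$ up to the allowed ambiguity, reducing all the real content to the three cases above carried out directly over $F_v$.
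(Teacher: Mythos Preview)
Your Case~3 argument (lifting the extension class via surjectivity of $H^1(G_{F_v},\ms{O}(\chi_p))\to H^1(G_{F_v},k(\chi_p))$ to get a definite type~$C$ lift) matches the paper, and your instinct to use induction in the swapped-character subcase is correct. But there are two genuine gaps.

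First, in your irreducible Case~1 you invoke Fontaine--Laffaille theory to produce a crystalline lift with Hodge--Tate weights $\{0,-1\}$. This does not apply: $F_v$ is an arbitrary finite extension of $\Q_p$, possibly ramified, and Fontaine--Laffaille requires $F_v/\Q_p$ unramified. Citing \cite{Kisin4} does not fix this either, since those results describe the generic fibre of a potentially semistable deformation ring but do not by themselves show it has $\ol{\Q}_p$-points. The paper avoids deformation theory here entirely. It observes that when $\ol{\rho}_v$ is irreducible, wild inertia (being pro-$p$) acts trivially, so $\ol{\rho}_v\vert_{I^t}=\ol{\alpha}\oplus\ol{\beta}$ with $\ol{\alpha}\ne\ol{\beta}$ swapped by $G_{F_v}$; hence $\ol{\rho}_v=\Ind_H^{G_{F_v}}(\ol{\alpha})$ for the index-two stabilizer $H$. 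The key missing ingredient is Lemma~\ref{lem:plift}, which builds via local class field theory a character $\psi:H\to\ol{\Q}_p^{\times}$ such that $\psi$ and its conjugate $\psi'$ are both de Rham with non-positive Hodge--Tate weights and $\psi\psi'$ is finite order times cyclotomic. Then $\rho_v=\Ind_H^{G_{F_v}}(\gamma\psi)$, with $\gamma$ a finite-order correction, is weight two and irreducible, hence definite type~$B$.

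Second, in your split Case~2 you take $\rho_v=(\alpha\gamma\chi_p)\oplus(\beta\gamma^{-1})$ and claim a generic unramified $\gamma$ ``keeps it from being ordinary.'' This is false: a direct sum of characters with Hodge--Tate weights $0$ and $-1$ is \emph{always} ordinary, so no unramified twist produces type~$B$. What the infinite-order $\gamma$ actually does is prevent type~$C$ (since $\gamma$ never trivializes over a finite extension), yielding definite type~$A$. The paper's treatment of the reducible case is cleaner and handles non-split extensions too: twist so $\ol{\rho}_v$ is an extension of $1$ by $\ol{\alpha}\,\ol{\chi}_p$, then show $H^1(G_{F_v},\ms{O}(\gamma^2\alpha\chi_p))\to H^1(G_{F_v},k(\ol{\alpha}\,\ol{\chi}_p))$ is surjective (via local duality and $H^0(G_{F_v},k(\ol{\alpha}^{-1}))=0$ when $\ol{\alpha}\ne 1$), lift the class, and twist by $\gamma^{-1}$. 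No auxiliary extension $F_v'$ and no descent step is needed anywhere: the case division is simply reducible versus irreducible over $F_v$ itself.
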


We first need a lemma.

\begin{lemma}
\label{lem:plift}
Let $F_v'$ be a quadratic extension of $F_v$.  The
there exists a character $\psi:G_{F_v'} \to \ol{\Q}_p^{\times}$ with
the following two properties:  (1) $\psi$ and $\psi'$ are de Rham with
non-positive Hodge-Tate weights; and (2) $\psi \cdot \psi'$ is a finite order
character times the cyclotomic character.  Here $\psi'$ is the conjugate of
$\psi$ by $\Gal(F_v'/F_v)$.
\end{lemma}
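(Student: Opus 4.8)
The plan is to build $\psi$ on the character group of $(F_v')^\times$ via local class field theory, using the arithmetic normalization of the reciprocity map $\mathrm{rec}_{F_v'}\colon (F_v')^\times\to G_{F_v'}^{\mathrm{ab}}$ (uniformizers $\mapsto$ arithmetic Frobenii, as elsewhere in the paper). Under this dictionary a continuous character $\chi\colon(F_v')^\times\to\ol\Q_p^\times$ corresponds to a character $\psi$ of $G_{F_v'}$; the character $\psi$ is de Rham (equivalently Hodge--Tate, for a rank one representation) precisely when $\chi$ agrees on some open subgroup of $\ms O_{F_v'}^\times$ with $u\mapsto\prod_\tau\tau(u)^{m_\tau}$ for integers $m_\tau$ indexed by the embeddings $\tau\colon F_v'\hookrightarrow\ol\Q_p$, and the $m_\tau$ are then the Hodge--Tate weights, normalized so that $\chi_p$ --- which corresponds to $\epsilon:=\epsilon_{\Q_p}\circ N_{F_v'/\Q_p}$ with $\epsilon_{\Q_p}(u)=u^{-1}$ on $\Z_p^\times$ --- has all weights $-1$. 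Moreover $\psi'=\psi^\sigma$ corresponds to $\chi\circ\sigma$, so $\psi\psi'$ corresponds to $\chi\cdot(\chi\circ\sigma)=\chi\circ N_{F_v'/F_v}$, while $\chi_p|_{G_{F_v'}}$ corresponds to $\epsilon$.

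Now $\Gal(F_v'/F_v)$ acts freely on the $2[F_v:\Q_p]$ embeddings of $F_v'$, in $[F_v:\Q_p]$ orbits of size two. I pick a representative $\tau$ from each orbit and set $m_\tau=-1$, $m_{\tau\sigma}=0$; thus every $m_\tau\in\{0,-1\}$ is non-positive and $m_\tau+m_{\tau\sigma}=-1$ for every embedding. Let $\chi_0\colon\ms O_{F_v'}^\times\to\ol\Q_p^\times$ be $u\mapsto\prod_\tau\tau(u)^{m_\tau}$. The crucial bookkeeping point is that the two embeddings in an orbit restrict to the same embedding of $F_v$, so for $u\in\ms O_{F_v}^\times$ one has $\chi_0(u)=\prod_{\iota\colon F_v\hookrightarrow\ol\Q_p}\iota(u)^{-1}=N_{F_v/\Q_p}(u)^{-1}$; hence for $u\in\ms O_{F_v'}^\times$ we get $\chi_0(N_{F_v'/F_v}(u))=N_{F_v/\Q_p}(N_{F_v'/F_v}(u))^{-1}=N_{F_v'/\Q_p}(u)^{-1}=\epsilon(u)$. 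Therefore any extension $\chi$ of $\chi_0$ to $(F_v')^\times$ yields a de Rham $\psi$ with non-positive Hodge--Tate weights --- and then so is $\psi'$, since its Hodge--Tate weight at $\tau$ is $m_{\tau\sigma}\in\{0,-1\}$ --- and the character $(\psi\psi')\cdot\chi_p^{-1}|_{G_{F_v'}}\leftrightarrow(\chi\circ N_{F_v'/F_v})\cdot\epsilon^{-1}$ is unramified.

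It remains to kill this unramified character, which depends only on the value of $\chi$ on a chosen uniformizer $\varpi'$ of $F_v'$. Writing $N_{F_v'/F_v}(\varpi')=(\varpi')^2 w$ with $w\in\ms O_{F_v'}^\times$ (the $\varpi'$-adic valuation of $N_{F_v'/F_v}(\varpi')$ is $2$ in both the ramified and the unramified case, since $\sigma$ preserves the valuation of $F_v'$), the value of $(\chi\circ N_{F_v'/F_v})\cdot\epsilon^{-1}$ on $\varpi'$ is $\chi(\varpi')^2\,\chi_0(w)\,\epsilon(\varpi')^{-1}$. Since $\ol\Q_p^\times$ is divisible I may choose $\chi(\varpi')$ to be a square root of $\chi_0(w)^{-1}\epsilon(\varpi')$, which makes this unramified character trivial; then $\chi\circ N_{F_v'/F_v}=\epsilon$, i.e.\ $\psi\psi'=\chi_p|_{G_{F_v'}}$, which in particular is a finite order character times the cyclotomic character. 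This produces the desired $\psi$.

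There is no real obstacle beyond keeping normalizations straight: functoriality of $\mathrm{rec}$ under restriction to $G_{F_v'}$ (it becomes composition with the norm), the sign conventions relating algebraic characters of $(F_v')^\times$ to de Rham characters of $G_{F_v'}$, and the fact that the chosen $m_\tau$ sum to $-1$ over each Galois orbit of embeddings; everything else is formal. Alternatively one can run the same argument starting from a Lubin--Tate character $\chi_{\mathrm{LT}}$ of $G_{F_v'}$: a suitable product over orbit representatives of the embeddings of $\chi_{\mathrm{LT}}$ has non-positive Hodge--Tate weights, its product with the $\sigma$-conjugate is $N_{F_v'/\Q_p}\circ\chi_{\mathrm{LT}}$, which differs from $\chi_p|_{G_{F_v'}}$ by an unramified character, and one corrects by an unramified twist of $G_{F_v'}$ whose square is that character (possible by divisibility, since unramified characters of $G_{F_v'}$ are $\sigma$-invariant).
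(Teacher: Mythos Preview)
Your proof is correct and follows essentially the same approach as the paper's. Both arguments pick a set of representatives for the $\Gal(F_v'/F_v)$-orbits on the embeddings $F_v'\hookrightarrow\ol{\Q}_p$, use those representatives to build a de Rham character whose product with its $\sigma$-conjugate agrees with $\chi_p$ on inertia, and then correct by an unramified twist (equivalently, a choice of value at a uniformizer) using divisibility of $\ol{\Q}_p^\times$. Your version is a bit more explicit in tracking the class field theory dictionary and in arranging $\psi\psi'=\chi_p$ on the nose, whereas the paper phrases things in terms of the Weil-group characters $\gamma_i$ attached to the embeddings and leaves the final unramified adjustment as a one-line remark; but the underlying idea is identical.
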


\begin{proof}
Let $S$ be the set of all embeddings of $F_v'$ into $\ol{\Q}_p$.  Each
element of $i \in S$ yields, via class field theory, a character
$\gamma_i:W_{F_v'} \to \ol{\Q}_p^{\times}$ where $W_{F_v'}$ is the Weil
group of $F_v'$.  These characters are de Rham with
non-positive Hodge-Tate weights.  Pick a subset $S_0$ of $S$ such that $S$ is
the disjoint union of $S_0$ and $\sigma S_0$, where $\sigma$ is the non-trivial
element of $\Gal(F_v'/F_v)$.  Let $\phi$ be the product of the $\gamma_i$
with $i \in S_0$.  The conjugate $\phi'$ is then the product of the $\gamma_i$
with $i \in \sigma S_0$.  The characters $\phi$ and $\phi'$ are de Rham with
non-positive Hodge-Tate weights since each $\gamma_i$ is.  Furthermore, $\phi
\cdot \phi'$ corresponds via class field theory to
the composite $(F_v')^{\times} \to \Q_p^{\times} \to \ol{\Q}_p^{\times}$,
where the first map is the norm map and the second is the canonical inclusion.
We thus see that $\phi \cdot \phi'$ agrees with the cyclotomic character on
inertia.  It follows that we can take $\psi$ to be $\phi$ times an
appropriate unramified character.
\end{proof}

We now return to the proof of the proposition.

\begin{proof}[Proof of Proposition~\ref{prop:plift}]
Write $G$ in place of $G_{F_v}$.  We first consider the case where $\ol{\rho}
=\ol{\rho}_v$ is reducible.  Say that $\ol{\rho}$ is defined over the finite
field $k$ and let $\ms{O}$ be the Witt vectors of $k$.  By twisting we can
assume $\ol{\rho}$ has the form
\begin{displaymath}
\mat{\ol{\alpha} \cdot \ol{\chi}_p}{\ast}{}{1}.
\end{displaymath}
Thus $\ol{\rho}$ corresponds to a class $\ol{c}$ of $H^1(G, k(\ol{\alpha} \cdot
\ol{\chi}_p))$.  If $\ol{\alpha}=1$ then, as was the case in the proof of 
Proposition~\ref{prop:llift} we can find a non-torsion class $c$ in
$H^1(G, \ms{O}(\chi_p))$ lifting the class $\ol{c}$.  The representation
corresponding to $c$ is then a definite type $C$ lift of $\ol{\rho}$.  Now
say $\ol{\alpha} \ne 1$.  Let $\alpha$ be a finite order lift of $\ol{\alpha}$
and let $\gamma:G \to \ms{O}^{\times}$ be an infinite order unramified
character reducing to the trivial character.
We have an exact sequence
\begin{displaymath}
0 \to (\ms{O}/\mf{m})(\gamma^2 \alpha) \to (\ms{O}/\mf{m}^{n+1})(\gamma^2
\alpha) \to (\ms{O}/\mf{m}^n)(\gamma^2 \alpha) \to 0
\end{displaymath}
which yields
\begin{displaymath}
H^1(G, (\ms{O}/\mf{m}^{n+1})(\gamma^2 \alpha \chi_p)) \to
H^1(G, (\ms{O}/\mf{m}^n)(\gamma^2 \alpha \chi_p)) \to
H^2(G, (\ms{O}/\mf{m})(\gamma^2 \alpha \chi_p)).
\end{displaymath}
Now, $(\ms{O}/\mf{m})(\gamma^2 \alpha \chi_p)$ is just $k(\ol{\alpha}
\cdot \ol{\chi}_p)$.
The group $H^2(G, k(\ol{\alpha} \cdot \ol{\chi}_p))$ is dual to $H^0(G,
k(\ol{\alpha}^{-1}))$ which vanishes since $\ol{\alpha} \ne 1$.  We thus see
that the map on $H^1$ groups above is surjective.  It follows that the map
\begin{displaymath}
H^1(G, \ms{O}(\gamma^2 \alpha \chi_p)) \to H^1(G, k(\ol{\alpha} \cdot
\ol{\chi}_p))
\end{displaymath}
is surjective.  Let $c$ be any class lifting $\ol{c}$ and let $\rho'$ be
the corresponding representation.   Then $\rho=\gamma^{-1} \otimes \rho'$
is a definite type $A$ lift of $\ol{\rho}$.

We now consider the case where $\ol{\rho}$ is irreducible.  Let $I^w$ be
the wild inertia subgroup of $G$.  As $I^w$ is pro-$p$ the space of
invariants $\ol{\rho}^{I^w}$ is non-zero.  Since $I^w$ is normal, this
space is stable under $G$.  It follows that it must be all of
$\ol{\rho}$.  We thus see that $\ol{\rho}(I^w)=1$, and so $\ol{\rho}$ may
be regarded as a representation of $G/I^w$.  As the tame inertia group $I^t$
is prime-to-$p$, we have $\ol{\rho} \vert_{I^t}=\ol{\alpha} \oplus \ol{\beta}$.
If $\ol{\alpha}=\ol{\beta}$ then $\ol{\alpha}$ extends to $G$ and after
twisting by $\ol{\alpha}^{-1}$ the
representation $\ol{\rho}$ would be unramified, and therefore not irreducible.
Thus $\ol{\alpha} \ne \ol{\beta}$.  Now, $G$ permutes the two
characters $\ol{\alpha}$ and $\ol{\beta}$.  If it permuted them trivially
then each would extend to all of $G$ and $\ol{\rho}$ would be reducible.
We thus see that $G$ permutes $\ol{\alpha}$ and $\ol{\beta}$ non-trivially.
Let $H$ be the subgroup of $G$ which fixes $\ol{\alpha}$.
The character $\ol{\alpha}$ extends to $H$ and $\ol{\rho}$ is the induction
of $\ol{\alpha}$ from $H$ to $G$.  Let $\psi$ be the character of $G_{F_v'}
\to \ol{\Q}_p^{\times}$ afforded by Lemma~\ref{lem:plift}.  Let $\gamma:
G_{F_v'} \to \ol{\Q}_p^{\times}$ be a finite order character lifting the
residual character $\ol{\alpha} \cdot \ol{\psi}^{-1}$.  Put $\rho=\Ind_H^G(
\gamma \psi)$.  It is clear that $\rho$ is a lift of $\ol{\rho}$.  Now,
$\rho \vert_H=(\gamma \psi) \oplus (\gamma' \psi')$ which shows that
$\rho$ is de Rham with non-positive Hodge-Tate weights.  As $\det{\rho}$
is a finite order character times the cyclotomic character, it follows
that $\rho$ is weight two.  Since $\ol{\rho}$ is irreducible, so is $\rho$,
which shows that $\rho$ is definite of type $B$.
\end{proof}

\begin{remark}
Proposition~\ref{prop:llift} and Proposition~\ref{prop:plift} show that any
residual representation is compatible with some type.  Note, however, that
it is not always possible to find a lift of a specified type.  For instance,
residual representations which are irreducible will not
admit lifts of type $C$.  When $v \nmid p$ it is not difficult to determine
exactly which types a given residual representation are compatible with.
When $v \mid p$ we have not worked this out completely.
\end{remark}

\subsection
We now prove Theorem~\ref{thm:lift2}.  By Proposition~\ref{prop:llift} and
Proposition~\ref{prop:plift} we can extend $t$ to a type function on all
of $\Sigma$ which is compatible with $\ol{\rho}$.  We may thus assume
$\Sigma=\Sigma'$.  For each $v \in \Sigma$ let $\rho_v'$ be a lift of
$\ol{\rho} \vert_{G_{F_v}}$ which is weight two for $v \mid p$ and has definite
type $t(v)$.  The existence of these lifts is assured by the compatibility of
$\ol{\rho}$ and $t$.  The character $\psi \chi_p \cdot (\det{\rho_v'})^{-1}$
is finite and pro-$p$.  Let
$\rho_v$ be the twist of $\rho_v'$ by the square root of this character.
Then $\rho_v$ is a lift of $\ol{\rho} \vert_{G_{F_v}}$, has weight two for
$v \mid p$, has definite type $t(v)$ and also has determinant $\psi \chi_p
\vert_{G_{F_v}}$.  Let $\tau_v$ be the
inertial type of $\rho_v$.  We have thus produced a locally solvable lifting
problem $\ms{P}=(\Sigma, \psi, t, \{\tau_v\})$.  Theorem~\ref{thm:lift} shows
that $\ms{P}$ has a solution, which gives the required lift.

\subsection
We close \S \ref{s:problems} by establishing the first statement of
Theorem~\ref{mainthm2}.  Precisely, we prove the following (see also
Remark~\ref{rem:noA1}):

\begin{proposition}
\label{prop:minlift}
Let $\ol{\rho}:G_F \to \GL_2(\ol{\F}_p)$ be an irreducible odd representation
satisfying (A1) and (A2).  Then there is a finitely ramified, weight two lift
$\rho:G_F \to \GL_2(\ol{\Q}_p)$ of $\ol{\rho}$ such that the prime-to-$p$
conductors of $\rho$ and $\ol{\rho}$ agree.
\end{proposition}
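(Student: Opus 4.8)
The plan is to deduce Proposition~\ref{prop:minlift} directly from Theorem~\ref{thm:lift2} by feeding it the ``right'' local data. First I would choose a finite order character $\psi:G_F \to \ol{\Q}_p^{\times}$ with $\det{\ol{\rho}}=\ol{\psi} \cdot \ol{\chi}_p$; this exists because $\det{\ol{\rho}} \cdot \ol{\chi}_p^{-1}$ has finite order and such characters always lift. Next, for each place $v$ at which $\ol{\rho}$ ramifies but $v \nmid p$, I would invoke Proposition~\ref{prop:llift} to obtain a lift $\rho_v$ of $\ol{\rho}\vert_{G_{F_v}}$ which has definite type and, crucially, the \emph{same conductor} as $\ol{\rho}\vert_{G_{F_v}}$; let $t(v)$ be its definite type. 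For places $v \mid p$ I would use Proposition~\ref{prop:plift} to get a weight two definite-type lift and let $t(v)$ be the corresponding type. For unramified places away from $p$ there is nothing to impose. This produces a definite type function $t$ on the set $\Sigma$ consisting of $\Sigma_p$ together with the ramification locus of $\ol{\rho}$ and of $\psi$, and by construction $\ol{\rho}$ is compatible with $t$.

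Now I would apply Theorem~\ref{thm:lift2} with this $\Sigma$, this $\psi$, $\Sigma'=\Sigma$, and this $t$: it yields a weight two lift $\rho:G_F \to \GL_2(\ol{\Q}_p)$, unramified outside $\Sigma$, with determinant $\psi\chi_p$, and of type $t(v)$ at each $v \in \Sigma$. Since $\Sigma$ is finite, $\rho$ is finitely ramified. It remains to check the conductor statement. At each $v \nmid p$ outside the ramification locus of $\ol{\rho}$, the representation $\ol{\rho}\vert_{G_{F_v}}$ is unramified, and I must argue $\rho\vert_{G_{F_v}}$ is unramified too; at the remaining finite $v \nmid p$, I must argue that the conductor exponent of $\rho\vert_{G_{F_v}}$ equals that of $\ol{\rho}\vert_{G_{F_v}}$.

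The main obstacle is precisely this conductor comparison, because Theorem~\ref{thm:lift2} only records the \emph{type} at $v \in \Sigma'$, not the inertial type, so a priori the lift it produces need not have the same conductor as the local lift $\rho_v$ I used to define $t(v)$. To handle this I would instead route through Theorem~\ref{thm:lift} rather than its simplified corollary: exactly as in the proof of Theorem~\ref{thm:lift2}, I twist each $\rho_v$ so that $\det{\rho_v}=\psi\chi_p\vert_{G_{F_v}}$ (the twisting character is finite order and pro-$p$, hence does not change the conductor since $\ol{\rho}$ is, well, unchanged modulo $p$ --- more carefully, one checks the conductor is preserved because the twist is by a ramified character only if forced, and one may arrange it unramified away from where $\ol\rho$ already ramifies by absorbing into $\psi$), record its inertial type $\tau_v$, and form the lifting problem $\ms{P}=(\Sigma,\psi,t,\{\tau_v\})$. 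A solution $\rho$ then has $\rho\vert_{G_{F_v}}$ of inertial type $\tau_v$ for every $v \in \Sigma$. Using the standard formula $f(\rho\vert_{G_{F_v}})=f(\ol{\rho}\vert_{G_{F_v}})+\big(\dim \ol{\rho}^{I_{F_v}}-\dim\rho^{I_{F_v}}\big)$ from the proof of Proposition~\ref{prop:llift}, together with the fact that $\rho_v$ (and hence $\rho\vert_{G_{F_v}}$, which has the same inertial type and the same monodromy-free/monodromy behavior dictated by its definite type) was chosen with $\dim\rho_v^{I}=\dim\ol{\rho}^{I}$, one concludes the prime-to-$p$ conductor exponents match place by place, and at unramified-for-$\ol\rho$ places the trivial inertial type forces $\rho$ unramified there as well. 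This gives the prime-to-$p$ conductor equality and completes the proof.
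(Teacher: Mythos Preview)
Your approach is essentially the paper's: use Propositions~\ref{prop:llift} and~\ref{prop:plift} to build local lifts of definite type (and, away from $p$, of the correct conductor), twist to fix the determinant, record the resulting types and inertial types as a lifting problem, and solve via Theorem~\ref{thm:lift}; you are right that Theorem~\ref{thm:lift2} alone does not control inertial types and that one must pass through Theorem~\ref{thm:lift}.

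One point to tighten: take $\psi$ to be the Teichm\"uller lift of $\det\ol\rho\cdot\ol\chi_p^{-1}$, as the paper does. With an arbitrary finite-order lift, $\psi$ may ramify at a prime $v\nmid p$ where $\ol\rho$ is unramified; such a $v$ then lies in your $\Sigma$, and any solution $\rho$ has $\det\rho=\psi\chi_p$ ramified at $v$, forcing $\rho$ itself to ramify there and contradicting your claim that ``the trivial inertial type forces $\rho$ unramified there.'' With the Teichm\"uller choice, $\Sigma\setminus\Sigma_p$ is exactly the ramification locus of $\ol\rho$ and this problem does not arise. The paper's endgame is also slightly cleaner than your dimension-formula argument: once $\rho\vert_{G_{F_v}}$ and $\rho_v$ share both type and inertial type, one has $\rho\vert_{I_{F_v}}\cong\rho_v\vert_{I_{F_v}}$ directly, hence equal conductors.
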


\begin{proof}
Write $\det{\ol{\rho}}=\ol{\psi} \cdot \ol{\chi}_p$ and let $\psi$ be the
Teichm\"uller lift of $\ol{\psi}$.  Let $\Sigma$ be the union of
the set of primes above $p$ and the set of primes at which $\ol{\rho}$
ramifies.  For each $v \nmid p$ in $\Sigma$ use Proposition~\ref{prop:llift}
to produce a definite type lift $\rho'_v:G_{F_v} \to \GL_2(\ol{\Q}_p)$ of
$\ol{\rho} \vert_{G_{F_v}}$ such that $\rho_v'$ and $\ol{\rho}_v$ have the
same conductor.  Let $\rho_v$ be the twist of $\rho'_v$ by the square root of
$\psi \chi_p \cdot (\det{\rho'_v})^{-1}$, so that $\rho_v$ has determinant
$\psi \chi_p$.  One easily sees that $\rho_v$ has the same conductor as
$\rho'_v$.  For $v \mid p$ use Proposition~\ref{prop:plift} to produce a
definite type weight two lift $\rho_v'$ of $\ol{\rho} \vert_{G_{F_v}}$ and let
$\rho_v$ be an appropriate twist with determinant $\psi \chi_p$.  For $v \in
\Sigma$ let $t(v)$ be the type of $\rho_v$ and let $\tau_v$ be the inertial
type of $\rho_v$.  We have thus defined a lifting problem $\ms{P}=(\Sigma,
\psi, t, \{\tau_v\})$ which is locally solvable.  Let $\rho$ be a solution to
$\ol{\rho}$.  For $v \nmid p$ in $\Sigma$ the representations $\rho
\vert_{G_{F_v}}$ and $\rho_v$ have the same type and inertial type and so
$\rho \vert_{I_{F_v}} \cong \rho_v \vert_{I_{F_v}}$.  This shows that $\rho$
has the same prime-to-$p$ conductor as $\ol{\rho}$.
\end{proof}

\section{Two additional results}
\label{s:additional}

\subsection
We now give two additional results, building on the main theorems we
have proved.  The first could be
called ``solvable descent for mod $p$ Hilbert eigenforms.''  Recall
that Langlands' base change implies that if $\rho$ is a $p$-adic representation
of $G_F$ and $F'/F$ is a solvable extension then $\rho$ is modular if and
only if $\rho \vert_{G_{F'}}$ is.  If $\ol{\rho}$ is a residual representation
of $G_F$ then the modularity of $\ol{\rho}$ clearly implies that of
$\ol{\rho} \vert_{G_{F'}}$ by the previous sentence.  However, the other
direction --- descent --- is not so clear.  Khare \cite{Khare} has established
some results in the case $F=\Q$.  We prove the following:

\begin{proposition}
\label{prop:descent}
Let $\ol{\rho}:G_F \to \GL_2(\ol{\F}_p)$ be a representation satisfying
(A1) and (A2) and let $t:\Sigma_p \to \{A,B,C\}$ be a type function compatible
with $\ol{\rho}$.  Assume there exists
a finite, solvable, totally real extension $F'/F$ and a parallel weight two
Hilbert eigenform $f$ over $F'$ with coefficients in $\ol{\Q}_p$ such that
$\ol{\rho} \vert_{G_{F'}}$ still satisfies (A1) and (A2) and we have
$\ol{\rho} \vert_{G_{F'}} \cong \ol{\rho}_f$ and $t_f \vert_{\Sigma_p}=
t \vert_{F'}$.
Then there exists a parallel weight two Hilbert eigenform $g$ over $F$ with
coefficients in $\ol{\Q}_p$ such that $\ol{\rho} \cong \ol{\rho}_g$
and $t_g=t$.
\end{proposition}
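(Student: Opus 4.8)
The plan is to combine the lifting machinery of \S\ref{s:problems} with the modularity lifting theorem of \S\ref{s:modlift} and the original descent at the $p$-adic level provided by solvable base change. The key point is that while base change is not reversible for mod $p$ forms directly, it \emph{is} reversible for $p$-adic forms, so we should first produce a well-chosen characteristic-zero lift of $\ol{\rho}$ over $F$ and then apply Theorem~\ref{thm:mlt} and Langlands' base change to that lift.

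First I would apply Theorem~\ref{thm:lift2} (or directly the lifting problem formalism of Theorem~\ref{thm:lift}) to $\ol{\rho}$ over $F$: taking $\psi$ to be the Teichm\"uller lift of $\ol{\psi}$, where $\det{\ol{\rho}}=\ol{\psi}\cdot\ol{\chi}_p$, taking $\Sigma$ to be the set of primes above $p$ together with the ramified primes of $\ol{\rho}$, and $\Sigma'=\Sigma_p$ with the given type function $t$, one obtains a finitely ramified, odd, weight two lift $\rho:G_F \to \GL_2(\ol{\Q}_p)$ of $\ol{\rho}$ with $\det{\rho}=\psi\chi_p$ and with $t_\rho\vert_{\Sigma_p}=t$. (The compatibility hypothesis on $t$ is exactly what is needed to run Theorem~\ref{thm:lift2}, and by Propositions~\ref{prop:llift} and~\ref{prop:plift} $\ol{\rho}$ is compatible with some type at the auxiliary places, so the lifting problem is locally solvable.) Since $\ol{\rho}$ satisfies (A1) and (A2) over $F$, so does $\ol{\rho}$ as the residual representation of $\rho$.

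Next I would show $\rho$ is modular. Restrict $\rho$ to $G_{F'}$. Then $\ol{\rho}\vert_{G_{F'}}\cong\ol{\rho}_f$, $\ol{\rho}\vert_{G_{F'}}$ still satisfies (A1) and (A2), and $t_{\rho\vert_{G_{F'}}}\vert_{\Sigma_{F',p}}=t\vert_{F'}=t_f\vert_{\Sigma_p}$, so $t_{\rho}\vert_{\Sigma_p}\approx t_f\vert_{\Sigma_p}$ after restriction. Hence Theorem~\ref{thm:mlt} applies to $\rho\vert_{G_{F'}}$ and $f$, giving a Hilbert eigenform over $F'$ whose associated Galois representation is $\rho\vert_{G_{F'}}$; that is, $\rho\vert_{G_{F'}}$ is modular. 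Since $F'/F$ is solvable and totally real, Langlands' base change now gives that $\rho$ itself is modular, say $\rho\cong\rho_g$ for a parallel weight two Hilbert eigenform $g$ over $F$ with coefficients in $\ol{\Q}_p$. Then $\ol{\rho}\cong\ol{\rho}_g$, and $t_g=t_\rho\vert_{\Sigma_p}=t$, which is the desired conclusion; note that $g$ is cuspidal because $\rho$ is irreducible (as $\ol\rho$ is, by (A1)).

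The main obstacle is arranging that the characteristic-zero lift $\rho$ produced over $F$ has a residual representation satisfying (A1) and (A2) \emph{and} a type function at $p$ exactly matching $t$, so that after base change to $F'$ the hypotheses of Theorem~\ref{thm:mlt} are literally met rather than merely up to $\approx$ — but this is automatic from our construction since $t$ is definite, so $t_\rho\vert_{\Sigma_p}=t$ on the nose and $t\vert_{F'}=t_f\vert_{\Sigma_p}$ is an equality of definite type functions. A secondary subtlety is checking that the nebentypus of $g$ can be matched up with that of the original data; since we only need the conclusion about $\ol{\rho}_g$ and $t_g$, and $\rho$ was chosen with $\det{\rho}=\psi\chi_p$, there is nothing further to arrange.
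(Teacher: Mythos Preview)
Your proposal is correct and follows essentially the same approach as the paper: use Theorem~\ref{thm:lift2} to produce a weight two lift $\rho$ of $\ol{\rho}$ with $t_\rho\vert_{\Sigma_p}=t$, apply Theorem~\ref{thm:mlt} over $F'$ to get modularity of $\rho\vert_{G_{F'}}$, and then descend via Langlands' base change. The only minor remark is that the equality $t_\rho\vert_{\Sigma_p}=t$ (rather than mere $\approx$) uses not just that $t$ is definite but also that $t_\rho$ is definite, which follows from Proposition~\ref{prop:type} once potential modularity of $\rho$ is known from \S\ref{s:pmod}.
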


\begin{proof}
By Theorem~\ref{thm:lift2} we can find a lift $\rho$ of $\ol{\rho}$
such that $t_{\rho} \vert_{\Sigma_p}=t$.  Then $\rho \vert_{G_{F'}}$ and
$\rho_f$ have the same type above $p$ and isomorphic reduction and so
Theorem~\ref{thm:mlt} implies the modularity
of $\rho \vert_{G_{F'}}$.  Langlands' base change now implies the modularity
of $\rho$ and thus of $\ol{\rho}$.
\end{proof}

\subsection
Our second result is a strengthening of our potential modularity theorem.

\begin{proposition}
\label{prop:pmod3}
Let $S$ be a finite set of places of $F$.  Then in 
Theorem~\ref{thm:pmod2} one can assume that the extension $F'/F$ splits at
all places in $S$.  The same conclusion holds in Theorem~\ref{thm:pmod1}
under the additional hypothesis that for $v \in S \cap \Sigma_p$ the
type $t(v)$ is compatible with $\ol{\rho} \vert_{G_{F_v}}$.
\end{proposition}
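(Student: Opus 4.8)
The plan is to re-run the arguments of \S\ref{s:pmod} with a strengthened form of Moret--Bailly's theorem in which the field $F'$ is additionally required to split completely at every prime of $S$, together with an appeal to the descent theorem for mod $p$ eigenforms (Proposition~\ref{prop:descent}) at the primes of $S$ away from $p$. Since the conclusions of Theorems~\ref{thm:pmod1} and~\ref{thm:pmod2} are obtained through a single application of Proposition~\ref{prop:potav} (which rests on Proposition~\ref{prop:mb2}), it suffices to upgrade that proposition so that its output $F'$ is split at $S$; the remainder of the proofs of~\S\ref{s:pmod} then goes through verbatim.

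To do this, in the proof of Proposition~\ref{prop:potav} one enlarges the set $\Sigma$ occurring in the Skolem datum to $\Sigma \cup S$, taking $L_v = F_v$ and $\Omega_v = X(F_v)$ for $v \in S$. Because Moret--Bailly's theorem produces an extension $F'/F$ that splits over each $L_v$, the resulting $F'$ is then split completely at every place of $S$. One checks that $S$ may be taken disjoint from the auxiliary set $\Sigma'$ introduced in the proof of Proposition~\ref{prop:mb2} (that set consists of large primes), so linear disjointness from $M$ is unaffected; and since $S$ is finite, the auxiliary rational primes $\ell$ and $r$ appearing in~\S\ref{s:pmod} may be chosen to avoid the residue characteristics of $S$, so no place of $S$ lies over $\ell$ or $r$. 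The one point requiring work is that $(X, \Sigma \cup S, \{L_v\}, \{\Omega_v\})$ really is a Skolem datum, i.e.\ that $X(F_v) \neq \emptyset$ for each $v \in S$.

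For $v \in S$ with $v \nmid p$ one must produce a $\GL_2(K)$-type abelian variety over $F_v$ realizing the prescribed $\mf p\mf l\mf r$-level structure. There is substantial freedom here: the totally real field $K$, the prime $\mf r$, and the representation $\ol\rho' = \ol\rho_{\mf l}$ furnished by Proposition~\ref{prop:univmod} may all be adjusted (the last up to a finite-order twist, so in particular $\ol\rho'\vert_{G_{F_v}}$ may be taken unramified), so it is enough to realize $\ol\rho\vert_{G_{F_v}}$ --- which has cyclotomic determinant --- inside the $\mf p$-torsion of some such abelian variety over $F_v$, and for $K$ large this can be arranged. When $\ol\rho\vert_{G_{F_v}}$ cannot be so realized over $F_v$ itself, one instead takes $L_v$ to be a finite solvable totally real extension of $F_v$ over which it can, builds the Skolem datum with this $L_v$, and removes the resulting ramification at the end via Proposition~\ref{prop:descent}; this is the role of the descent theorem. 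For $v \in S \cap \Sigma_p$ in the setting of Theorem~\ref{thm:pmod2}, the eventual Hilbert eigenform has the same type at $v$ as $\rho$, and $\rho$ itself witnesses the realizability of that type, so one obtains a point of $X(F_v)$ using the moduli-theoretic input of \cite{Rapoport}; in the setting of Theorem~\ref{thm:pmod1} the hypothesis that $t(v)$ be compatible with $\ol\rho\vert_{G_{F_v}}$ serves precisely the same purpose, guaranteeing a weight-two lift of $\ol\rho\vert_{G_{F_v}}$ of definite type $t(v)$ to feed into the local moduli problem.

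With the local points in place the strengthened Skolem datum is legitimate, so Moret--Bailly supplies a finite totally real Galois extension $F'/F$ linearly disjoint from the required $M'$ and split at every place of $S$, and rerunning the proofs of Theorems~\ref{thm:pmod1} and~\ref{thm:pmod2} over this $F'$ gives the proposition. The main obstacle is the verification in the preceding paragraph: unlike the auxiliary primes used in the proof of Proposition~\ref{prop:mb2}, the places of $S$ are fixed in advance and may be small, or highly ramified for $\ol\rho$, so good-reduction and Lang--Weil arguments do not apply directly and one must genuinely exploit both the flexibility in the moduli datum and the descent theorem.
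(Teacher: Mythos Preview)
Your overall plan --- strengthen the Moret--Bailly step so that the output field splits at $S$, then rerun \S\ref{s:pmod} --- is right, and you correctly identify that the crux is producing local points of the moduli space $X$ at the places of $S$.  You also correctly sense that Proposition~\ref{prop:descent} must enter.  But the mechanism you propose for combining these two ingredients does not work.

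The problem is this.  When $X(F_v)=\emptyset$ you propose to take $L_v$ to be a nontrivial solvable extension of $F_v$ and ``remove the resulting ramification'' by descent.  But Moret--Bailly then only gives $F'$ with $F'\otimes_F L_v$ split, i.e.\ each $F'_w$ embeds in $L_v$; it does \emph{not} force $F'_w=F_v$.  So your $F'$ is simply not split at $v$, and Proposition~\ref{prop:descent} --- which descends modularity along a \emph{global} solvable extension --- cannot repair this: there is no global solvable extension in sight whose removal would leave a field split at $v$.  A related issue: for $v\in S\cap\Sigma_p$ you assert that the existence of a weight-two lift of the right type yields a point of $X(F_v)$, but $X$ parametrizes abelian varieties, and there is no direct passage from a local Galois representation to an abelian variety over $F_v$.

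The paper's fix is a Weil--restriction trick.  First choose a \emph{global} finite solvable totally real $F_1/F$ so that $X(F_{1,w})\neq\emptyset$ for every $w$ over $S$ (and so that $F_{1,w}$ contains $L_v$ for $v\in\Sigma$).  Form $X'=\Res^{F_1}_F(X_{F_1})$; then $X'(F_v)=\prod_{w\mid v}X(F_{1,w})\neq\emptyset$ for $v\in S$, and $X'$ is still smooth and geometrically connected.  Now apply Moret--Bailly to $X'$ over $F$ with $L'_v=F_v$ at $S$: this gives $F_2/F$ split at $S$ and a point of $X'(F_2)=X(F_1F_2)$, hence an abelian variety over $F_1F_2$ and modularity there.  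Finally descend along the \emph{global} solvable extension $F_1F_2/F_2$ using Proposition~\ref{prop:descent}; the resulting $F'=F_2$ is split at $S$.  This is where the compatibility hypothesis at $S\cap\Sigma_p$ is used: descent requires the type function to be compatible with $\ol\rho$.
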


To prove this we use a combination of solvable descent and the following
modification of the theorem of Moret-Bailly.

\begin{proposition}
Let $(X, \Sigma, \{L_v\}, \{\Omega_v\})$ be a Skolem datum (see \S \ref{ss:mb})
and let $S$ be a finite set of places of $F$.  One can then find a finite
solvable extension $F_1/F$ which splits over each $L_v$, a finite extension
$F_2/F$ which splits at all places in $S$ and over each $L_v$ and is linearly
disjoint from $F_1$ and a point $x \in X(F_1 F_2)$ such that for each embedding
$F_1 F_2 \to L_v$ the image of $x$ in $X(L_v)$ belongs to $\Omega_v$.
\end{proposition}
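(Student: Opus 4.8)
The plan is to reduce to the improved Moret-Bailly theorem (Proposition~\ref{prop:mb2}) after first enlarging $F$ by a solvable extension that supplies $X$ with local points at the places of $S$, and then to reorganise the field of definition of the resulting point into the required shape $F_1F_2$ by a group-theoretic descent.

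First I would produce the solvable field $F_1$. Since $X$ is smooth and geometrically connected over $F$, for each $v\in S$ there is a finite extension $K_v/F_v$ with $X(K_v)\ne\varnothing$; being an extension of local fields it is solvable, and when $v\in S\cap\Sigma$ we may take $K_v=L_v$ since $\Omega_v\ne\varnothing$. By a standard approximation argument --- realising the solvable local extensions $K_v/F_v$ inside a global solvable extension, for which one must keep an eye on the exceptional case of Grunwald--Wang since $S$ may contain places over $2$ --- there is a finite solvable Galois extension $F_1/F$ with every $v\in\Sigma\setminus S$ split completely in $F_1$ and with $(F_1)_w\cong K_v$ for each $w\mid v\in S$. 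Then $F_1$ splits over every $L_v$, and $X((F_1)_w)\ne\varnothing$ for every place $w$ of $F_1$ over $S$.

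Next I would apply Moret-Bailly. The base change $X_{F_1}$ is still smooth and geometrically connected; I form the Skolem datum over $F_1$ supported on the places over $\Sigma\cup S$, using the pair $(L_v,\Omega_v)$ at $w\mid v\in\Sigma$ (legitimate as $(F_1)_w\hookrightarrow L_v$, with $L_v/(F_1)_w$ Galois) and the pair $\big((F_1)_w,\,X((F_1)_w)\big)$ at $w\mid v\in S$. Proposition~\ref{prop:mb2}, applied over $F_1$ with the disjointness feature used to control the output, then gives a finite Galois extension $F'/F_1$ splitting over every local field of the datum --- so that every place over $S$ splits completely in $F'/F_1$ --- together with a point $x\in X(F')$ whose image under each embedding into some $L_v$ lies in $\Omega_v$.

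The last step, which I expect to be the main obstacle, is to descend the picture over $F_1$ into the form demanded by the statement: one must exhibit a finite extension $F_2/F$, linearly disjoint from $F_1$, which splits at $S$ and over each $L_v$, with $F_1F_2\supseteq F'$, so that $x$ may be viewed as a point of $X(F_1F_2)$ with the required images. Because the places of $F_1$ over $S$ split completely in $F'/F_1$, the local extension of $F'/F$ at a place above $v\in S$ is exactly the solvable extension $(F_1)_w/F_v$, already accounted for by $F_1$; what remains is the purely group-theoretic problem of splitting off from the Galois closure of $F'/F$ a complement to $\Gal(F'/F_1)$ that contains the decomposition groups above $S$ --- arranging, via the disjointness freedom in the previous step and in the spirit of the proof of Proposition~\ref{prop:mb2} (adjoining further auxiliary split places), that the relevant Galois group admits such a complement. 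Once $F_2$ is in hand, $F_1\cap F_2=F$ gives linear disjointness, the splitting conditions on $F_2$ are built in, and $x\in X(F')\subseteq X(F_1F_2)$ has the desired images; everything outside this last step is a routine adaptation of the argument for Proposition~\ref{prop:mb2}.
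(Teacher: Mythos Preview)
Your first two steps are fine and closely parallel the paper's construction of $F_1$. The trouble is the third step, which you correctly flag as ``the main obstacle'': there is no mechanism in Proposition~\ref{prop:mb2} that lets you arrange for $\Gal(\widetilde{F'}/F)$ to split as a semidirect product with a complement to the subgroup fixing $F_1$. The disjointness clause in Proposition~\ref{prop:mb2} only forces $F'$ to avoid a prescribed extension $M/F_1$; it says nothing about how $F'$ sits over $F$. Adjoining auxiliary split places, as in the proof of Proposition~\ref{prop:mb2}, pins down decomposition groups at those places but does not manufacture a global complement. So as written your argument does not close.

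The paper avoids this difficulty entirely by a different device: it takes the Weil restriction $X'=\Res^{F_1}_F X_{F_1}$, which is again smooth and geometrically connected over $F$, and observes that $X'(F_v)=\prod_{w\mid v} X(F_{1,w})$. Because $F_1$ was chosen so that $F_{1,w}=L_v$ for $w\mid v\in\Sigma$ and $X(F_{1,w})\ne\varnothing$ for $w\mid v\in S\setminus\Sigma$, one can form a Skolem datum for $X'$ \emph{over $F$} with local fields $L'_v=F_v$ for all $v\in S$. Proposition~\ref{prop:mb2} applied to $X'$ over $F$ (with $M=F_1$) then directly produces $F_2/F$ split at $S$ and linearly disjoint from $F_1$, and the identity $X'(F_2)=X(F_1F_2)$ (valid precisely because of linear disjointness) gives the point. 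No descent or group-theoretic splitting is needed.
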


\begin{proof}
We are free, of course, to enlarge $S$.  We thus assume that $S$ contains
$\Sigma$ and write $S=\Sigma \cup \Sigma'$.  Let $F_1/F$ be any
finite, solvable extension such that $F_{1, w}=L_v$ for $w$ lying over
$v \in \Sigma$ and $X(F_{1, w})$ is non-empty for $w$ lying over $v \in
\Sigma'$.  Let $X'$ be the restriction of scalars from $F_1$ to $F$ of
$X_{F_1}$.  This is smooth and geometrically connected since its base change
to $\ol{F}$ is isomorphic to a product of copies of $X_{\ol{F}}$.
We have $X'(F_v)=\prod_{w \mid v} X(F_{1, w})$ for any place $v$ of
$F$.  We define a Skolem datum $(X', S, \{L_v'\}, \{\Omega_v'\})$ by taking
$L_v'=F_v$ for $v \in S$ and taking $\Omega_v'$ be the product of the
$\Omega_v$'s for $v \in \Sigma$ and all of $X'(F_v)$ for $v \in \Sigma'$.  By
Proposition~\ref{prop:mb2} we can find an extension $F_2/F$ which is
linearly disjoint from $F_1$ and which splits at $S$ and a point $x \in
X'(F_2)$ such that for each $v \in S$ and each embedding $F_2 \to F_v$ the
image of $x$ in $X'(F_v)$ lands in $\Omega'_v$.  Since $F_2$ is linearly
disjoint from $F_1$ we have $X'(F_2)=X(F_1 F_2)$ and so $x$ can be regarded as
a point in the latter set.  The proposition easily follows.
\end{proof}

We now go back to the proof of Proposition~\ref{prop:pmod3}.

\begin{proof}[Proof of Proposition~\ref{prop:pmod3}]
We only sketch a proof.  The idea is to use the above proposition in
place of the original theorem of Moret-Bailly to get an abelian variety
$A/F_1 F_2$ with $A[\mf{p}]=\ol{\rho} \vert_{G_{F_1 F_2}}$ and
$A[\mf{l}]=\ol{\rho}' \vert_{G_{F_1 F_2}}$.  This gives the modularity of
$\ol{\rho}$ over $F_1 F_2$ as in the proof of Proposition~\ref{thm:pmod1}.
One then uses solvable descent with respect to the extension $F_1 F_2/F_2$
to conclude that $\ol{\rho}$ is modular over $F_2$.  (The field $F'$ is
then just $F_2$.)
\end{proof}

\section{Consequences of potential modularity}
\label{s:conseq}

\subsection
We now establish the four statements in Corollary~\ref{maincor}.  As remarked
in the introduction, many of these proofs are well-known or exist already in
the literature; we feel, though, that it would be useful to have them in one
place.  We begin with the first statement.

\begin{proposition}
\label{prop:conseq1}
Let $\rho:G_F \to \GL_2(\ol{\Q}_p)$ be a finitely ramified, odd, weight two
representation such that $\ol{\rho}$ satisfies (A1) and (A2).  If
$v \nmid p$ is a place of $F$ at which $\rho$ is unramified then the
eigenvalues of $\rho(\Frob_v)$ belong to $\ol{\Q} \subset \ol{\Q}_p$ and under
any embedding into $\C$ have modulus $(\bN{v})^{1/2}$.
\end{proposition}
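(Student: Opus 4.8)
The plan is to deduce this from the potential modularity theorem together with the Ramanujan conjecture for Hilbert modular forms, descending a power of Frobenius at the end. First I would apply Theorem~\ref{thm:pmod2} (with, say, $M=F$, and reading off the conclusion at $F''=F'$) to obtain a finite, totally real extension $F'/F$ and a cuspidal parallel weight two Hilbert eigenform $f$ over $F'$ with coefficients in $\ol{\Q}_p$ such that $\rho_f \cong \rho \vert_{G_{F'}}$. This trades the Galois-theoretic hypotheses (A1), (A2) and ``finitely ramified'' for a genuinely automorphic model of $\rho$ after a base change, which we then undo.

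Next I would fix a place $w$ of $F'$ over $v$, with residue degree $d$, so that $\bN{w}=(\bN{v})^d$. Since $\rho$ is unramified at $v$, the representation $\rho_f\cong\rho\vert_{G_{F'}}$ is unramified at $w$, and an arithmetic Frobenius at $w$ is carried to $\rho(\Frob_v)^d$ inside $\GL_2(\ol{\Q}_p)$ (up to conjugacy); hence if $\alpha,\beta\in\ol{\Q}_p$ are the eigenvalues of $\rho(\Frob_v)$, then $\alpha^d$ and $\beta^d$ are the eigenvalues of $\rho_f(\Frob_w)$. The only point here deserving a word of care is that $\rho_f$ is genuinely unramified at $w$ --- this is what makes the comparison of characteristic polynomials legitimate --- but that is immediate from the hypothesis that $\rho$ is unramified at $v$.

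The third step is to invoke the Ramanujan conjecture for $f$ --- the same input, due to Blasius \cite{Blasius}, that is used in the proof of Proposition~\ref{prop:type} --- which gives that the eigenvalues of $\rho_f(\Frob_w)$ lie in $\ol{\Q}\subset\ol{\Q}_p$ and, under any embedding into $\C$, have absolute value $(\bN{w})^{1/2}$. Descending from $w$ to $v$ is then formal: since $\alpha^d$ lies in the (algebraically closed) image of $\ol{\Q}$ in $\ol{\Q}_p$ and $\alpha$ is a root of $X^d-\alpha^d$, we get $\alpha\in\ol{\Q}$, and likewise $\beta\in\ol{\Q}$; and for any embedding $i:\ol{\Q}_p\to\C$ we have $|i(\alpha)|^d=|i(\alpha^d)|=(\bN{w})^{1/2}=(\bN{v})^{d/2}$, so $|i(\alpha)|=(\bN{v})^{1/2}$, and similarly for $\beta$.

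I do not anticipate any real obstacle here: the entire weight of the argument rests on the cited Ramanujan bound for Hilbert eigenforms (which we are entitled to assume), and everything else is a routine manipulation of Frobenius eigenvalues under base change. If one preferred not to cite the full Ramanujan conjecture directly, one could in many cases instead appeal to statement (4) of Corollary~\ref{maincor} and the Weil bounds for abelian varieties, but the route above is cleaner and fully general.
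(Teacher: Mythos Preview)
Your argument is correct and is essentially the same as the paper's: the paper reduces to the modular case via Theorem~\ref{thm:pmod2} and cites Blasius, merely asserting that the conclusion descends from $F'$ to $F$, while you have spelled out that descent explicitly via the $d$-th power of Frobenius. One small caution on your closing remark: appealing to statement~(4) of Corollary~\ref{maincor} as an alternative would be circular in this paper's logical order, since that statement is proved later and relies on the same potential modularity input.
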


\begin{proof}
It is easy to see that if there exists a finite extension $F'/F$ such that
$\rho \vert_{G_{F'}}$ satisfies the conclusion of the proposition then so
does $\rho$.  By Theorem~\ref{thm:pmod2} it therefore suffices to prove the
theorem when $\rho$ is associated to a Hilbert eigenform.  This has been
established by Blasius \cite{Blasius}, extending the earlier work of
Brylinski and Labesse \cite{BrylinskiLabesse}.
\end{proof}

\subsection
We now prove statement (2) of Corollary~\ref{maincor}.  We learned this
proof from a lecture given by Taylor at the Summer School on Serre's Conjecture
held at Luminy in 2007.  Taylor attributed the proof to Dieulefait; a sketch
of the argument can be found in \cite[\S 3.2]{Dieulefait}.

\begin{proposition}
Let $\rho:G_F \to \GL_2(\ol{\Q}_p)$ be a finitely ramified, odd, weight two
representation such that $\ol{\rho}$ satisfies (A1) and (A2).  Then there
exists a number field $K$, a place $v_0 \mid p$ of $K$, an embedding $K_{v_0}
\to \ol{\Q}_p$ and a compatible system $\{\rho_v:G_F \to \GL_2(K_v)\}$ indexed
by the places of $K$ such that $\rho$ is equivalent to $\rho_{v_0}
\otimes_{K_{v_0}} \ol{\Q}_p$.  Each representation $\rho_v$ is finitely
ramified, odd, weight two and absolutely irreducible.
\end{proposition}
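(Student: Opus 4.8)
The plan is to deduce the proposition from potential modularity (Theorem~\ref{thm:pmod2}) by transporting the compatible system of a Hilbert eigenform down a base change, following Dieulefait. First I would apply Theorem~\ref{thm:pmod2} with $M$ chosen large enough to contain $F(\zeta_p)$ and the field cut out by $\ol{\rho}$: this yields a finite, totally real, Galois extension $F'/F$ linearly disjoint from that field, a cuspidal parallel weight two Hilbert eigenform $f$ over $F'$ with coefficients in a number field $K$ (which I am free to enlarge), a place $v_0 \mid p$ of $K$, and an embedding $K_{v_0} \hookrightarrow \ol{\Q}_p$, such that $\rho \vert_{G_{F'}} \cong \rho_{f, v_0} \otimes_{K_{v_0}} \ol{\Q}_p$. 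Since $F'$ is linearly disjoint from $F(\ol{\rho}, \zeta_p)$, condition (A1) forces $\ol{\rho} \vert_{G_{F'(\zeta_p)}}$ to remain absolutely irreducible, so $\rho \vert_{G_{F'}}$ is absolutely irreducible and $f$ is genuinely cuspidal. By the known construction of Galois representations attached to Hilbert eigenforms, $f$ determines a strictly compatible system $\{\rho_{f, v}\}_v$ of representations $\rho_{f, v} : G_{F'} \to \GL_2(K_v)$ indexed by the finite places $v$ of $K$, each of which is finitely ramified, odd, weight two, absolutely irreducible, and has determinant equal to a fixed finite order character times the cyclotomic character.

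The next step is to descend this system along $F'/F$; set $\Delta = \Gal(F'/F)$. Because $\rho_{f, v_0} \otimes \ol{\Q}_p = \rho \vert_{G_{F'}}$ is the restriction of a representation of $G_F$, it is invariant under $\Delta$-conjugation; strong multiplicity one for $\GL_2$ then makes the underlying cuspidal automorphic representation $\pi_f$ $\Delta$-invariant, whence $\rho_{f, v}^{\,\delta} \cong \rho_{f, v}$ for every $\delta \in \Delta$ and every place $v$. Form the induced system $W_v = \Ind_{G_{F'}}^{G_F} \rho_{f, v}$, a strictly compatible system over $F$ of dimension $2[F':F]$. The $\Delta$-invariance of $\rho_{f, v}$ gives each $W_v$ a right action of the group algebra $K[\Delta]$ commuting with $G_F$, a priori only up to a class $c_v \in H^2(\Delta, \ol{\Q}_\ell^\times) \cong H^2(\Delta, \mu_\infty)$ (the isomorphism holds because $\ol{\Q}_\ell^\times / \mu_\infty$ is uniquely divisible). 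One checks that $c_v$ is independent of $v$ --- it is the image of the single class measuring the obstruction to $\pi_f$ being a base change from $F$ --- and since $\rho$ is an honest extension of $\rho_{f, v_0}$ to $G_F$ we have $c_{v_0} = 0$, so $c_v = 0$ for all $v$: the right $K[\Delta]$-action is honest and uniform in $v$.

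Finally, taking the idempotent $e = |\Delta|^{-1} \sum_{\delta \in \Delta} \delta \in K[\Delta]$, which acts on each $W_v$ through this right action and commutes with $G_F$, I would set $\rho_v = e\, W_v$. Mackey's formula gives $W_{v_0} \cong \rho \otimes \ol{\Q}_p[\Delta]$, and $e$ cuts out exactly the summand $\rho$, so $\rho_{v_0} \cong \rho$; as $e \in K[\Delta]$ and the $W_v$ form a compatible system, so do the $\rho_v$. Each $\rho_v$ is two-dimensional over $K_v$ and restricts over $G_{F'}$ to $\rho_{f, v}$, hence is absolutely irreducible, weight two (de Rham with the correct Hodge-Tate weights, which are unchanged by the finite base change $F'/F$, and with determinant finite order times cyclotomic), and finitely ramified; its determinant equals $\psi$ times the cyclotomic character for a fixed finite order character $\psi$ of $G_F$ independent of $v$ (forced by compatibility of $\{\det \rho_v\}$ together with $\det \rho_{v_0} = \det \rho$), so $\rho_v$ is odd because $\rho$ is. This produces the required compatible system. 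The main obstacle is the descent bookkeeping of the second paragraph: proving that $c_v$ really is independent of $v$ --- equivalently, that the $\rho_{f, v}$ extend to $G_F$ compatibly in $v$ and with matching determinants --- which is where the identification with the automorphic base-change obstruction, and its vanishing at $v_0$, must be pinned down.
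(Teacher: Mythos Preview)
Your proposal takes a genuinely different route from the paper, and the gap you yourself flag at the end is real and, as far as is known, not closable by the method you outline.

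The paper does not attempt to descend $\rho_{f,v}$ along the (in general non-solvable) extension $F'/F$ directly.  Instead it invokes Brauer's induction theorem: writing $1=\sum_i n_i \Ind_{\Gal(F'/F_i)}^{\Gal(F'/F)}(\chi_i)$ with each $\Gal(F'/F_i)$ solvable, one can use \emph{solvable} base change to produce cuspidal Hilbert forms $f_i$ over each intermediate field $F_i$ with $\rho\vert_{G_{F_i}}\cong\rho_{f_i}$.  These give honest compatible systems $\{r_{i,v}\}_v$ over each $F_i$, and one then sets $\rho_v=\sum_i n_i\Ind_{F_i}^F(r_{i,v}\otimes\chi_i)$ as a virtual representation.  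The crux is a Grothendieck-group computation showing $(\rho_v,\rho_v)=1$ for every $v$, whence $\rho_v$ is an honest absolutely irreducible $2$-dimensional representation.  No descent obstruction ever appears.

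In your approach, by contrast, everything hinges on showing that the classes $c_v\in H^2(\Delta,\mu_\infty)$ coincide and vanish, and then --- even granting that --- on making the right $K[\Delta]$-action on $W_v$ compatible across $v$.  Neither step is available.  The identification of $c_v$ with an ``automorphic base-change obstruction'' is not a proof: non-solvable base change for $\GL_2$ over totally real fields is not known, so there is no automorphic object to compare to.  And even if every $c_v$ vanished, the right $K_v[\Delta]$-structure on $W_v$ depends on the choice of intertwiners $\rho_{f,v}^\delta\cong\rho_{f,v}$; different choices twist $eW_v$ by a character of $\Delta$, so the summand you cut out is not canonical, and there is no mechanism in your argument for aligning these choices as $v$ varies.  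The Brauer trick is precisely the known device for circumventing this: it replaces a single non-solvable descent by a virtual combination of solvable ones, each of which is handled by Langlands' theorem.
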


\begin{proof}
Apply Theorem~\ref{thm:pmod1} to produce a finite Galois totally real
extension $F'/F$ linearly disjoint from $\ker{\ol{\rho}}$ and a parallel
weight two Hilbert eigenform $f$ over $F'$ with coefficients in $\ol{\Q}_p$
such that $\rho \vert_{G_{F'}}=\rho_f$.  Let $I$ be the set of fields $F''$
which are intermediate to $F'$ and $F$ and for which $\Gal(F'/F'')$ is
solvable.  For $i \in I$ we write $F_i$ for the corresponding field.  For
each $i$ we can use solvable descent to find a parallel weight two cuspidal
Hilbert eigenform $f_i$ with coefficients in $\ol{\Q}_p$ such that $\rho
\vert_{G_{F_i}}=\rho_{f_i}$.  Let $K_i$ denote the field of coefficients of
$f_i$; note that this comes with a given embedding $K_i \to \ol{\Q}_p$.
Let $K$ be a number field which is Galois over $\Q$, into which each $K_i$
embeds and which contains
all roots of unity of order $[F':F]$.  Fix an embedding $K \to \ol{\Q}_p$ and
embeddings $K_i \to K$ such that the composite $K_i \to K \to \ol{\Q}_p$
is the given embedding.  Let $v_0$ be the place of $K$ determined by the
embedding $K \to \ol{\Q}_p$.
For each place $v$ of $K$ and each $i \in I$ we have a representation
$r_{i, v}:G_{F_i} \to \GL_2(K_v)$ associated to the Hilbert form $f_i$.
It is absolutely irreducible.
Note that after composing $r_{i, v_0}$ with the embedding $\GL_2(K_{v_0})
\to \GL_2(\ol{\Q}_p)$ we obtain $\rho \vert_{G_{F_i}}$.

By Brauer's theorem, we can write
\begin{displaymath}
1=\sum_{i \in I} n_i \Ind_{\Gal(F'/F_i)}^{\Gal(F'/F)}(\chi_i)
\end{displaymath}
where the $n_i$ are integers (possibly negative) and the $\chi_i$ are
characters of $\Gal(F'/F_i)$ valued in $K^{\times}$.  (Here we use the fact
that $K$ contains all roots of unity of order $[F':F]$.)  This equality is
taken in the Grothendieck group of representations of $\Gal(F'/F)$ over $K$.
Note that by taking the dimension of each side we find $\sum n_i
[F_i:F]=1$.

Let $v$ be a place of $K$.  For a number field $M$ write $\mc{C}_{M, v}$ for
the category of semi-simple continuous representations of $G_M$ on finite
dimensional $K_v$-vector spaces.  The category $\mc{C}_{M, v}$ is a
semi-simple abelian category.  We let $K(\mc{C}_{M, v})$ be its Grothendieck
group.  It is the free abelian category on the set of irreducible continuous
representations of $G_M$ on $K_v$-vector spaces.  We let $(,)$ be the
integer valued pairing on $K(\mc{C}_{M, v})$ given by $(A, B)=\dim_{K_v}
\Hom(A, B)$.  This is well-defined because $\mc{C}_{M, v}$ is semi-simple.
It is symmetric.  If $M'/M$ is a finite extension then we have
adjoint functors $\Ind_{M'}^M:\mc{C}_{M', v} \to \mc{C}_{M, v}$ and
$\Res^M_{M'}: \mc{C}_{M, v} \to \mc{C}_{M', v}$.  (One must check, of course,
that induction and restriction preserve semi-simplicity --- we leave this to
the reader.)  These functors induce
maps on the $K$-groups which are adjoint with respect to $(,)$.  If $M_1$
and $M_2$ are two extensions of $M$ and $r_1$ belongs to $\mc{C}_{M_1, v}$
and $r_2$ belongs to $\mc{C}_{M_2, v}$ then we have the formula
\begin{equation}
\label{eq4}
(\Ind_{M_1}^M(r_1), \Ind_{M_2}^M(r_2))=
\sum_{g \in S} (\Res^{M_1^g}_{M_1^g M_2} (r_1^g), \Res^{M_2}_{M_1^g M_2} (r_2))
\end{equation}
where $S$ is a set of representatives for $G_{M_1} \bs G_M / G_{M_2}$,
$M_1^g$ is the field determined by $g G_{M_1} g^{-1}$ and
$r_1^g$ is the representation of $g G_{M_1} g^{-1}$ given by $x \mapsto
r_1(g^{-1} x g)$.  This formula is gotten by using Frobenius reciprocity
and Mackey's formula.

Define
\begin{displaymath}
\rho_v=\sum_{i \in I} n_i \Ind_{F_i}^F(r_{i, v} \otimes \chi_i),
\end{displaymath}
which is regarded as an element of $K(\mc{C}_{F, v})$.  We now show that
each $\rho_v$ is (the class of) an absolutely irreducible two dimensional
representation.  To begin with, we have
\begin{displaymath}
\begin{split}
\rho_{v_0} \otimes_{K_{v_0}} \ol{\Q}_p
=& \sum_{i \in I} n_i \Ind_{F_i}^F((r_{i, v_0} \otimes_{K_{v_0}} \ol{\Q}_p)
\otimes_K \chi_i) \\
=& \sum_{i \in I} n_i \Ind_{F_i}^F((\rho \vert_{F_i}) \otimes_K \chi_i) \\
=& \bigg[ \sum_{i \in I} n_i \Ind_{F_i}^F(\chi_i) \bigg] \otimes_K \rho \\
=& \rho
\end{split}
\end{displaymath}
This shows that $\rho_{v_0}$ is (the class of) an absolutely irreducible
representation.

Now let $v$ be an arbitrary finite place of $K$.  We have
\begin{displaymath}
\begin{split}
(\rho_v, \rho_v)
=& \sum_{i, j \in I} n_i n_j (\Ind_{F_i}^F(r_{i, v} \otimes \chi_i),
\Ind_{F_j}^F(r_{j, v} \otimes \chi_j)) \\
=& \sum_{i, j \in I} \sum_{g \in S_{ij}} n_i n_j (\Res^{F_i^g}_{F_i^g F_j}(
(r_{i, v} \otimes \chi_i)^g), \Res^{F_j}_{F_i^g F_j}(r_{j, v} \otimes \chi_j))
\end{split}
\end{displaymath}
where we have used \eqref{eq4}.  Here $S_{ij}$ is a set of representatives
for $G_{F_1} \bs G_F / G_{F_2}$.  The representation $r_{i, v} \vert_{F'}$
is the representation coming from the cusp form $f'$ and so is absolutely
irreducible.  It follows that the restriction of $r_{i, v}$ to any subfield
of $F'$ is absolutely irreducible.  Thus the representations occurring in
the pairing in the second line above are irreducible.  It follows that the
pairing is then either 1 or 0 if the representations are isomorphic or not.  
Therefore, if let $\delta_{v, i, j, g}$ be 1 or 0 according to whether
$\Res^{F_i^g}_{F_i^g F_2}(r_{i, v} \otimes \chi_i)^g$ is isomorphic to
$\Res^{F_j}_{F_i^g F_2}(r_{j, v} \otimes \chi_j)$ then we find
\begin{displaymath}
(\rho_v, \rho_v)=\sum_{i, j \in I} \sum_{g \in S_{ij}} n_i n_j \delta_{v, i,j
,g}.
\end{displaymath}
Now, the $\{r_{i, v}\}_v$ and the $\{r_{j, v}\}_v$ form a compatible system.
It follows that $\delta_{v, i, j, g}$ is independent of $v$.  The above
formula thus gives
\begin{displaymath}
(\rho_v, \rho_v)=(\rho_{v'}, \rho_{v'})
\end{displaymath}
if $v'$ is another place of $K$.  Taking $v'=v_0$ and using that $\rho_{v_0}$
is an absolutely irreducible representation gives $(\rho_v, \rho_v)=1$.
Now, if we write $\rho_v=\sum m_i \pi_i$ where $m_i \in \Z$ and the $\pi_i$
are mutually
non-isomorphic irreducible representations then we have $(\rho_v, \rho_v)=
\sum m_i^2 (\pi_i, \pi_i)$.  Since the terms are all non-negative integers
and the sum is 1, we find $\rho_v=\pm \pi$ with $(\pi, \pi)=1$.  Thus
$\pi$ is an absolutely irreducible representation.  Now,
$\dim{\rho_v}=2$ since each $r_{i, v}$ is two dimensional and
$\sum n_i [F_i:F]=1$.  Since $\dim{\pi}$ is non-negative, we must have
$\rho_v=\pi$.  This proves that $\rho_v$ is the class of an absolutely
irreducible representation.

We must now show that $\rho_v$ is finitely ramified, odd, weight two and
compatible with $\rho_{v_0}$.  That $\rho_v$ is finitely ramified follows
immediately from the definition.  It also follows immediately from the
definition that $\rho_v \vert_{F'}$ is equivalent to $\rho_{f', v}$.  This
shows that $\rho_v$ is odd and weight two.  To show that $\rho_v$ is
compatible with $\rho_{v_0}$ look at the traces of Frobenii in $\rho_v$;
we leave the details to the reader.
\end{proof}

\begin{remark}
The compatible system constructed above is in fact \emph{strongly
compatible}.  For a discussion of this, see \cite[Theorem~6.6]{Taylor}.
\end{remark}

\subsection
We now discuss statement (3) of the corollary.  We prefer not to give a formal
proof so as to avoid giving formal definitions for $L$-functions and their
functional equations.  For details, see \cite[Corollary~2.2]{Taylor3} or
\cite[Theorem~6.6]{Taylor}:  the following sketch is simply a paraphrasing
of the arguments there.  Let $\rho:G_F \to \GL_2(\ol{\Q}_p)$ be given.
Pick an extension $F'/F$ such that $\rho \vert_{G_{F'}}$ is modular.  Using
notation as in the previous section, write
\begin{displaymath}
1=\sum_{i \in I} n_i \Ind_{F_i}^F(\chi_i)
\end{displaymath}
so that
\begin{displaymath}
\rho=\sum_{i \in I} n_i \Ind_{F_i}^F(r_i \otimes \chi_i).
\end{displaymath}
Taking $L$-functions gives
\begin{displaymath}
L(s, \rho)=\prod_{i \in I} L(r_i \otimes \chi_i, s)^{n_i}.
\end{displaymath}
Since the representations $r_i$ are modular, the $L$-functions on the right
side have meromorphic continuation and functional equations.  It follows that
the same holds for the $L$-function on the left side.  We also claimed in (3)
that $L(s, \rho)$ converges for $\re{s} > \sfrac{3}{2}$ (by which we mean,
of course, the Euler product defining $L(s, \rho)$ converges in this
region); this follows immediately from the bounds given in (1).

\subsection
We now turn to claim (4) of the corollary.  We follow
\cite[Corollary~2.4]{Taylor3} in our approach.  The precise statements we
prove are the following:

\begin{proposition}
\label{prop:astate}
Let $\rho:G_F \to \GL_2(\ol{\Q}_p)$ be a finitely ramified, odd, weight two
representation such that $\ol{\rho}$ satisfies (A1) and (A2).  Assume
furthermore that $F$ has odd degree or that there exists a place $v \nmid p$
for which $\rho \vert_{G_{F_v}}$ is indecomposable.  Then there exists a number
field $K$, an embedding $K \to \ol{\Q}_p$ and a $\GL_2(K)$-type abelian
variety $A/F$ such that $\rho$ is equivalent to $T_p A \otimes_{\ms{O}_K}
\ol{\Q}_p$.
\end{proposition}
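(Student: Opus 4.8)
The plan is to follow Taylor's strategy: realize $\rho$, after a totally real base change, as coming from a $\GL_2$-type abelian variety over a conveniently chosen intermediate field, and then descend to $F$ by restriction of scalars together with Faltings' theorems.

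First I would apply Proposition~\ref{prop:pmod3} to produce a finite totally real Galois extension $F'/F$, linearly disjoint from an auxiliary field $M'$ (enlarged as needed to contain $\ker\ol\rho$), such that $\rho|_{G_{F''}}$ is modular for every finite totally real $F''/F'$ linearly disjoint from $M'$; when $\rho|_{G_{F_v}}$ is indecomposable for some finite $v\nmid p$, I would invoke the $S$-splitting refinement of that proposition with $S=\{v\}$, so that $F'$ splits completely at $v$. Put $\Gamma=\Gal(F'/F)$. Since $\rho|_{G_{F'}}$ is absolutely irreducible (being a restriction of the irreducible $\rho$, irreducibility of $\rho$ coming from (A1)), Langlands' base change gives a cuspidal parallel weight two Hilbert eigenform $f'$ over $F'$ with $\rho|_{G_{F'}}\cong\rho_{f'}$.

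Next I would pass to a subfield $L$ with $F\subseteq L\subseteq F'$ over which the abelian variety can be built. In the case "$F$ of odd degree over $\Q$" I take $L$ to be the fixed field of a Sylow $2$-subgroup of $\Gamma$: then $[L:\Q]=[\Gamma:\mathrm{Syl}_2(\Gamma)]\cdot[F:\Q]$ is odd, while $\Gal(F'/L)$ is a $2$-group, hence solvable, so solvable descent furnishes a cuspidal parallel weight two Hilbert eigenform $f_L$ over $L$ with $\rho|_{G_L}\cong\rho_{f_L}$. In the indecomposable case I instead take $L=F'$, noting that $v$ still splits in $L$, so for $w\mid v$ the representation $\rho|_{G_{L_w}}\cong\rho|_{G_{F_v}}$ is indecomposable and $\pi_{f_L,w}$ is square-integrable. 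In either case there is a quaternion algebra $D/L$ ramified at all infinite places but one, and, when needed to make the ramification set even, additionally at $w$ (available in the second case); such a $D$ exists in all cases, and $\pi_{f_L}$ is square-integrable at each finite place where $D$ ramifies, so $f_L$ transfers to $D$ and its Jacquet--Langlands transfer occurs in $H^1$ of the associated Shimura curve $X_D/L$ (the modular curve if $L=\Q$). By the Eichler--Shimura construction together with the comparison results of Carayol and Blasius, the $f_L$-isotypic quotient of $\mathrm{Jac}(X_D)$ is a $\GL_2(K)$-type abelian variety $A_L/L$, where $K$ is the Hecke field of $f_L$, with $T_{\mf p}A_L\otimes_{\ms O_K}\ol\Q_p\cong\rho|_{G_L}$ for the place $\mf p\mid p$ of $K$ determined by the coefficient embedding.

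Finally I would descend from $L$ to $F$. The abelian variety $\mathrm{Res}_{L/F}A_L$ over $F$ has $p$-adic Tate module $\Ind_{G_L}^{G_F}(T_pA_L)$, and since $\rho$ is already defined over $F$ one has $\mathrm{Hom}_{G_F}\!\big(\rho,\Ind_{G_L}^{G_F}(\rho|_{G_L})\big)\neq 0$, so $\rho$ occurs in this Tate module. By Faltings' semisimplicity and isogeny theorems the projection onto the $\rho$-isotypic part is realized by a quasi-idempotent in $\End^0_F(\mathrm{Res}_{L/F}A_L)$, cutting out an abelian variety $A/F$ (up to isogeny) whose $p$-adic Tate module is $\rho$-isotypic; using that $A_L$ is of $\GL_2(K)$-type and $\rho|_{G_L}$ is two-dimensional one then checks that $A$ may be taken of $\GL_2(K')$-type for a suitable number field $K'$ equipped with an embedding into $\ol\Q_p$, with $\rho\cong T_pA\otimes_{\ms O_{K'}}\ol\Q_p$ — matching traces of $\Frob_v$ and invoking the compatible system of statement (2) of Corollary~\ref{maincor} to pin down the realization rather than a twist of it. I expect this last descent step to be the main obstacle: one must ensure that the $\rho$-part of $\mathrm{Res}_{L/F}A_L$ genuinely descends to an honest abelian variety over $F$ that carries exactly $\rho$ (not $\rho$ twisted by a finite character) and is again of $\GL_2$-type, which requires careful bookkeeping of endomorphisms and polarizations and the full force of Faltings' theorems.
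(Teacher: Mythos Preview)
Your approach is essentially the same as the paper's. The paper packages your two main steps as separate lemmas: Lemma~\ref{lem:av2} (modular over $L$ with $[L:\Q]$ odd or $\pi$ square-integrable at a finite place $\Rightarrow$ comes from a $\GL_2$-type abelian variety, via Jacquet--Langlands and Shimura curves) and Lemma~\ref{lem:av} (descent from $L$ to $F$ via restriction of scalars and Faltings). The case split and the choice of $L$ as the fixed field of a $2$-Sylow in the odd-degree case are identical to yours.

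Your concern about the descent step is overblown, and your suggestion to invoke compatible systems there is unnecessary. The paper's argument in Lemma~\ref{lem:av} is cleaner than you anticipate: one computes $\End_{F,K'}(\Res^L_F A_L)\otimes_{K'}\ol\Q_p \cong \End_{\ol\Q_p[G_F]}(\Ind_L^F(\rho|_{G_L}))$ via Faltings, and the key observation is that $\rho$ occurs with \emph{multiplicity one} in $\Ind_L^F(\rho|_{G_L})$ (Frobenius reciprocity plus irreducibility of $\rho|_{G_L}$). This singles out a canonical $\ol\Q_p$ direct factor of the endomorphism algebra, forcing the corresponding simple summand $K_i'$ of $\End_{F,K'}(\Res^L_F A_L)$ to be a field; the associated isogeny factor $A_i'$ is then automatically of $\GL_2(K_i')$-type with Tate module exactly $\rho$. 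There is no twist ambiguity to resolve and no appeal to compatible systems is needed.
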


\begin{proposition}
\label{prop:astors}
Let $\ol{\rho}:G_F \to \GL_2(\ol{\F}_p)$ be an irreducible odd representation.
If $p=5$ and the projective image of $\ol{\rho}$ is $\PGL_2(\F_5)$ then
assume $[F(\zeta_p):F]=4$.  Then there exists a number
field $K$, a prime $\mf{p}$ of $K$ lying above $p$ and a $\GL_2(K)$-type
abelian variety $A/F$ such that $\ol{\rho}$ is equivalent to $A[\mf{p}]$.
\end{proposition}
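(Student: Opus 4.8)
The plan is to reduce the statement to Proposition~\ref{prop:astate}, which already produces, from a finitely ramified odd weight two representation whose reduction satisfies (A1) and (A2) and which is indecomposable at some finite place not above $p$ (or over a field of odd degree), a $\GL_2(K)$-type abelian variety over $F$ realizing that representation on its $p$-adic Tate module. So the work is to build, from the residual $\ol\rho$, a suitable $p$-adic lift $\rho$. Granting $\rho \cong T_pA\otimes_{\ms O_K}\ol{\Q}_p$ with $\mf p$ the place of $K$ induced by the chosen embedding $K\hookrightarrow\ol{\Q}_p$, one finishes by reducing the $G_F$-stable $\ms O_K$-lattice $T_{\mf p}A$ modulo $\mf p$: every lattice in $\rho$ has reduction with semisimplification $\ol\rho^{\mathrm{ss}}=\ol\rho$ since $\rho$ lifts $\ol\rho$, and as $\ol\rho$ is irreducible the reduction $A[\mf p]$ is $\ol\rho$ itself (after enlarging $K$ so that $k_{\mf p}$ contains the field of definition of $\ol\rho$).

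Assume first that $\ol\rho\vert_{G_{F(\zeta_p)}}$ is irreducible, so $\ol\rho$ satisfies (A1), while (A2) is the hypothesis of the proposition. Write $\det\ol\rho=\ol\psi\cdot\ol\chi_p$ and let $\psi$ be the Teichm\"uller lift of $\ol\psi$. By Proposition~\ref{prop:plift} choose a definite type function $t$ on $\Sigma_p$ compatible with $\ol\rho$, and by Theorem~\ref{thm:lift2} (or the construction of Proposition~\ref{prop:minlift}) produce a finitely ramified, odd, weight two lift $\rho$ of $\ol\rho$ with $\det\rho=\psi\chi_p$ and $t_\rho\vert_{\Sigma_p}\approx t$. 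If $[F:\Q]$ is odd, this $\rho$ meets the hypotheses of Proposition~\ref{prop:astate} and we are done. If $[F:\Q]$ is even we must instead exhibit indecomposability at a finite place away from $p$, and the cleanest route is to realize $\ol\rho$ by a parallel weight two Hilbert eigenform $g$ over $F$ which is square integrable at some finite place $v_0\nmid p$. One obtains such a $g$ by combining (i) the potential modularity of \S\ref{s:pmod} together with its strengthening in \S\ref{s:additional} and solvable descent (Proposition~\ref{prop:descent}), yielding a Hilbert eigenform over $F$ with $\ol\rho_{g_0}\cong\ol\rho$ of the prescribed type above $p$, and (ii) the level raising lemma (Lemma~\ref{lem:raise}) applied at a place $v_0\nmid p$ chosen by Chebotarev with $\tr{\ol\rho(\Frob_{v_0})}=\pm(1+\bN{v_0})$, which produces $g$ special of conductor one at $v_0$. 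Since $[F:\Q]$ is even and $g$ is square integrable at the finite place $v_0$, the abelian variety $A_g$ attached to $g$ is defined over $F$; taking $\rho=\rho_g$ one has a weight two lift of $\ol\rho$ indecomposable at $v_0$, and one concludes as above, indeed with $A=A_g$.

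There remains the case where $\ol\rho\vert_{G_{F(\zeta_p)}}$ is reducible. Then, $\ol\rho$ being irreducible, it is dihedral: $\ol\rho\cong\Ind_L^F\ol\theta$ for a quadratic extension $L/F$ — which one checks must be CM — and a character $\ol\theta$ of $G_L$ with $\ol\theta\neq\ol\theta^\sigma$. Here one constructs $A$ directly from the theory of complex multiplication: lift $\ol\theta$ to an algebraic Hecke character $\theta$ of $L$ of parallel weight two infinity type, so that $\Ind_L^F\theta$ is de Rham with Hodge-Tate weights $0$ and $-1$ at each place above $p$ and has the correct determinant (such a $\theta$ is produced as in the lemma preceding Proposition~\ref{prop:univmod}), chosen so that its reduction at a suitable prime recovers $\ol\theta$; then $\Ind_L^F\theta$ is the $\mf p$-adic Tate module of an abelian variety over $F$ with complex multiplication by a field containing $L$, and its $\mf p$-torsion is $\ol\rho$.

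The main obstacle is the even degree case. In the odd degree case Proposition~\ref{prop:astate} applies to an arbitrary minimal lift, but in even degree the direct output of potential modularity is only an abelian variety over an auxiliary extension; to get one over $F$ one is forced to produce a Hilbert eigenform over $F$ itself that is square integrable at a finite place, and hence to pass through residual modularity over $F$ and level raising — this is exactly where the descent result of \S\ref{s:additional} and the level raising machinery of \S\ref{s:modlift} are essential. The dihedral case is routine but relies on the classical construction of CM abelian varieties from Hecke characters.
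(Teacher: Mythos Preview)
Your argument has two genuine gaps, and in both cases the paper's proof takes a different, more direct route.

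\textbf{Case (A1) holds, even degree.} You try to realise $\ol\rho$ by a Hilbert eigenform over $F$ itself, by invoking Proposition~\ref{prop:descent}. But that proposition requires a \emph{solvable} totally real extension $F'/F$ over which $\ol\rho$ is modular, whereas the potential modularity theorems (Theorems~\ref{thm:pmod1}, \ref{thm:pmod2}, Proposition~\ref{prop:pmod3}) only furnish a Galois, generally non-solvable, extension. So Proposition~\ref{prop:descent} does not apply, and you have no way to produce $g_0$ over $F$; residual modularity of $\ol\rho$ over $F$ is not established anywhere in the paper (it would amount to Serre's conjecture for $F$). The paper avoids this entirely: it does not split on the parity of $[F:\Q]$ at all. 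Instead it chooses a place $v\nmid p$ at which both $\ol\chi_p$ and $\ol\rho$ are trivial, observes that $\ol\rho\vert_{G_{F_v}}$ then admits a definite type~$C$ lift (a non-split extension of $1$ by $\chi_p$), and applies Theorem~\ref{thm:lift2} with $t(v)=C$ to get a global weight two lift $\rho$ that is type~$C$ --- hence indecomposable --- at $v$. This single $\rho$ already satisfies the hypotheses of Proposition~\ref{prop:astate} regardless of $[F:\Q]$.

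\textbf{Case (A1) fails.} You assert that the quadratic field $L$ from which $\ol\rho$ is induced ``must be CM'', and then appeal to CM abelian varieties. This is false: $L=F'$ is the unique quadratic subextension of $F(\zeta_p)/F$, and since $\Gal(F(\zeta_p)/F)$ is cyclic, $F'$ is totally real precisely when $4\mid [F(\zeta_p):F]$ (complex conjugation is then a square in the Galois group). Odd irreducible $\ol\rho$ induced from such totally real $F'$ do occur, so your CM construction does not cover all cases. The paper's argument works uniformly: take the Teichm\"uller lift $\alpha$ of $\ol\alpha$ and set $\rho_0=\Ind_{F'}^F(\alpha)$, which is automorphic (an induction) with finite image, hence corresponds to a parallel weight one Hilbert eigenform $f_0$; multiply by a form congruent to $1$ modulo $p$ (as in \cite[Lemma~1.4.2]{Wiles2}) and pass to an eigenform $f$ of parallel weight two with $\ol\rho_f\cong\ol\rho$; finally, if $[F:\Q]$ is even, transfer to the totally definite quaternion algebra and level-raise via Lemma~\ref{lem:raise} to a form special at some finite place prime to $p$, then apply Lemma~\ref{lem:av2}.
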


We prove Proposition~\ref{prop:astate} by combining the following statements:
(1) $\rho$ is potentially modular; (2) the conclusion of the proposition is
valid for modular representations (see Lemma~\ref{lem:av2} below); and (3)
the conclusion of the proposition can be checked potentially (see
Lemma~\ref{lem:av} below).  We prove
Proposition~\ref{prop:astors} by using our previous results to produce an
appropriate lift of the residual representation and then applying
Proposition~\ref{prop:astate} (although a separate argument is used when
(A1) does not hold).   Note that Proposition~\ref{prop:astors}
gives statement (2) of Theorem~\ref{mainthm2}.

In what follows, we say that a representation $\rho:G_F \to \GL_2(\ol{\Q}_p)$
(resp.\ a representation $\ol{\rho}:G_F \to \GL_2(\ol{\F}_p)$)
\emph{comes from a $\GL_2$-type abelian variety} if there is a number field
$K$, an embedding $K \to \ol{\Q}_p$ and a $\GL_2(K)$-type abelian variety
$A/F$ such that $\rho$ is equivalent to $T_pA \otimes_{\ms{O}_K} \ol{\Q}_p$
(resp.\ such that $\ol{\rho}$ is equivalent to $A[\mf{p}] \otimes_{k_{\mf{p}}}
\ol{\F}_p$, where $\mf{p}$ is the prime determined by $K \to \ol{\Q}_p$,
$k_{\mf{p}}$ is its residue field and $k_{\mf{p}} \to \ol{\F}_p$ is the
embedding determined by $K \to \ol{\Q}_p$).
The following lemma says that modular representations often satisfy this
condition:

\begin{lemma}
\label{lem:av2}
Let $F$ be a totally real field and let $f$ be a parallel weight two
cuspidal Hilbert eigenform over $F$ with coefficients in $\ol{\Q}_p$.  Assume
that $F$ has odd degree or that at some finite place $v \nmid p$
the form $f$ (or more accurately, the associated automorphic representation)
is square-integrable.  Then $\rho_f$ comes from a $\GL_2$-type abelian.
\end{lemma}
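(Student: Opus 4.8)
The plan is to realise $\rho_f$ in the cohomology of a Shimura curve, following the classical construction that attaches an abelian variety to a parallel weight two Hilbert eigenform. First I would let $K$ be the number field generated over $\Q$ by the Hecke eigenvalues of $f$, together with the embedding $K\hookrightarrow\ol{\Q}_p$ determined by the coefficients of $f$; let $\mf{p}$ be the prime of $K$ induced by this embedding. Let $\pi$ be the cuspidal automorphic representation of $\GL_2(\A_F)$ generated by $f$; since $f$ has parallel weight two, $\pi_\infty$ is the lowest (weight two) discrete series at each archimedean place of $F$. Write $d=[F:\Q]$.

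The key step is the choice of a quaternion algebra $D$ over $F$ which is split at exactly one archimedean place. If $d$ is odd I take $D$ ramified at $d-1$ of the infinite places and at no finite place; this is allowed since $d-1$ is even (when $d=1$ this simply means $D=M_2(\Q)$ and $M_U$ below is a modular curve). If $d$ is even I use the hypothesis to fix a finite place $w\nmid p$ at which $\pi_w$ is square-integrable, and take $D$ ramified exactly at $w$ and at $d-1$ of the infinite places, so that $D$ ramifies at $d$ places, again an even number. In either case $D$ is ramified at no place where $\pi$ fails to be discrete series, so the Jacquet--Langlands correspondence transfers $\pi$ to an automorphic representation $\pi'$ of $(D\otimes_F\A_F)^\times$.

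Next I would pass to the Shimura curve. Because $D$ is split at exactly one archimedean place, for a sufficiently small level subgroup $U\subset(D\otimes_F\A_F^f)^\times$ with $(\pi')^U\ne 0$ the associated Shimura curve $M_U$ is a smooth projective curve over $F$, and its Jacobian $J_U$ is an abelian variety over $F$ carrying a faithful action of the relevant Hecke algebra. Cutting out the $\pi'$-isotypic part gives an abelian variety $A=A_f$ over $F$ on which $\ms{O}_K$ acts up to isogeny; after replacing $A$ by an isogenous abelian variety I may assume $\ms{O}_K\hookrightarrow\End_F(A)$, and a dimension count (comparing $\dim A$ with $[K:\Q]$) shows that $A$ is of $\GL_2(K)$-type. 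Finally, the Eichler--Shimura congruence relation at the primes where $\pi$, $D$ and $U$ are all unramified, together with Carayol's local--global compatibility and the \v{C}ebotarev density theorem, identifies the $\pi'$-isotypic part of $H^1_{\et}((M_U)_{\ol{F}},\Q_p)$ with $\bigoplus_{\mf{q}\mid p}\rho_{f,\mf{q}}$; passing to the Tate module of $A_f$ this yields $T_p A\otimes_{\ms{O}_K}\ol{\Q}_p\cong\rho_f$, the relevant summand being the one at $\mf{p}$ determined by $\ms{O}_K\to\ol{\Q}_p$. Hence $\rho_f$ comes from the $\GL_2$-type abelian variety $A$.

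The main obstacle is exactly the existence of $D$: the argument requires a quaternion algebra over $F$ split at precisely one archimedean place, which exists if and only if $d$ is odd or there is a finite place at which $\pi$ is square-integrable --- for $d$ even with $\pi$ principal series or unramified at every finite place no such $D$ exists, and this is why the two hypotheses in the statement genuinely enter. The remaining inputs --- the canonical model of $M_U$ over $F$, its smooth projectivity, the Jacquet--Langlands transfer, and the determination of the Galois representation on $H^1_{\et}(M_U)$ --- are standard in this context (due to Shimura, Jacquet--Langlands, Carayol, Blasius--Rogawski, and others), and I would invoke them rather than reprove them. A minor point to check along the way is that a non-trivial nebentypus of $f$ is accommodated by an appropriate choice of level structure $U$ and only affects the determinant of the resulting Galois representation.
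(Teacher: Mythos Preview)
Your proposal is correct and follows essentially the same approach as the paper: transfer $f$ via Jacquet--Langlands to a quaternion algebra over $F$ split at exactly one infinite place, and then realise $\rho_f$ in the Jacobian of the associated Shimura curve. The paper's proof is a two-line citation to \cite{Hida} for this last step, whereas you have unpacked the construction (choice of $D$, Eichler--Shimura, cutting out the isotypic piece), but the argument is the same.
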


\begin{proof}
Due to the hypotheses, one can use the Jacquet-Langlands correspondence to
transfer $f$ to a quaternion algebra $B$ over $F$ which splits at precisely
one of the infinite places.  The result then follows from
\cite[Theorem~4.12]{Hida}, where $\rho_f$ is found in the Jacobian of the
Shimura curve associated to $B$.
\end{proof}

We now show that one can check if a representation comes from a $\GL_2$-type
abelian variety by passing to a finite extension.  The proof
of the following lemma comes directly from \cite[Corollary~2.4]{Taylor3}.

\begin{lemma}
\label{lem:av}
Let $F$ be a number field and let $\rho:G_F \to \GL_2(\ol{\Q}_p)$ be an
irreducible representation.  Assume there exists a finite extension $F'/F$
such that $\rho \vert_{G_{F'}}$ is irreducible and comes from a $\GL_2$-type
abelian variety.  Then $\rho$ comes from a $\GL_2$-type abelian variety.
\end{lemma}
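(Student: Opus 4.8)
The idea is to descend from $F'$ to $F$ by Weil restriction of scalars and then to carve out the correct abelian variety using Faltings' theorems on Tate modules. First I would fix a $\GL_2$-type abelian variety $A'/F'$, with coefficient field $K'$, for which $\rho\vert_{G_{F'}}$ is equivalent to $T_pA'\otimes_{\ms{O}_{K'}}\ol{\Q}_p$ via an embedding $\iota_0\colon K'\hookrightarrow\ol{\Q}_p$; using the structure theory of $\GL_2$-type abelian varieties I may also assume that $K'$ is exactly the field generated over $\Q$ by the traces $\tr\rho\vert_{G_{F'}}(\Frob_w)$. Put $B=\mathrm{Res}_{F'/F}(A')$, an abelian variety over $F$ with $T_pB\cong\Ind_{G_{F'}}^{G_F}(T_pA')$ as $\Z_p[G_F]$-modules.

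The key elementary observation is that $\rho$ occurs in $V:=T_pB\otimes_{\Z_p}\ol{\Q}_p$ with multiplicity exactly one. Indeed $T_pA'\otimes\ol{\Q}_p\cong\bigoplus_{\iota\colon K'\hookrightarrow\ol{\Q}_p}\rho'_\iota$ with $\rho'_{\iota_0}=\rho\vert_{G_{F'}}$ irreducible, so Frobenius reciprocity gives $\Hom_{G_F}(\rho,V)=\bigoplus_\iota\Hom_{G_{F'}}(\rho\vert_{G_{F'}},\rho'_\iota)$; the summand $\iota=\iota_0$ is one-dimensional by Schur's lemma, while for $\iota\neq\iota_0$ the Frobenius traces of $\rho'_\iota$ and of $\rho\vert_{G_{F'}}$ differ --- they are $\iota(a_w)$ and $\iota_0(a_w)$ for elements $a_w\in K'$ that generate $K'$ --- so that summand vanishes.

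Next I would invoke Faltings: semisimplicity of the rational Tate module, and the identification $\End^0(B)\otimes_{\Q}\Q_p\cong\End_{\Q_p[G_F]}(T_pB\otimes\Q_p)$. These match the decomposition up to isogeny $B\sim\prod_iC_i^{n_i}$ into pairwise non-isogenous simple factors with a $G_F$-decomposition $V=\bigoplus_i(T_pC_i\otimes\ol{\Q}_p)^{n_i}$ whose summands have no common constituents for distinct $i$; by multiplicity one, $\rho$ is a constituent of $T_pC\otimes\ol{\Q}_p$ for exactly one simple factor $C:=C_i$, and it occurs there with multiplicity one. It then remains to identify $C$ as the required abelian variety. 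Here $D:=\End^0(C)$ is a division algebra, as $C$ is simple, with centre a number field $L$; the identification $D\otimes_{\Q}\ol{\Q}_p\cong\End_{G_F}(T_pC\otimes\ol{\Q}_p)$ shows that $D\otimes_{\Q}\ol{\Q}_p$ is a product of $[L:\Q]$ copies of a single matrix algebra $M_e(\ol{\Q}_p)$, so every isotypic multiplicity in $T_pC\otimes\ol{\Q}_p$ equals $e$; since $\rho$ appears there with multiplicity one, $e=1$ and $D=L$ is a field. As $H_1(C,\Q)$ is an $L$-vector space, $T_pC\otimes\ol{\Q}_p$ decomposes as $\bigoplus_{\iota\colon L\hookrightarrow\ol{\Q}_p}\tau_\iota$ with all $\tau_\iota$ of a common dimension, which must equal $2$ because $\rho$ is among them. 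Hence $\dim C=[L:\Q]$, so $C$ is a $\GL_2$-type abelian variety with coefficient field $L$ (after replacing $C$ by an isogenous variety on which $\ms{O}_L$ acts), and $\rho$ is equivalent to $T_pC\otimes_{\ms{O}_L}\ol{\Q}_p$ via the embedding $L\hookrightarrow\ol{\Q}_p$ that singles out the constituent $\rho$. This is the assertion of the lemma.

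The step I expect to be the main obstacle is the multiplicity-one statement: if $\rho$ occurred in $V$ with higher multiplicity, then $D$ could be a genuine division algebra --- for instance quaternionic --- and the simple factor $C$ would fail to be of $\GL_2$-type. Securing multiplicity one amounts to knowing that the coefficient field of $A'$ may be taken to be the trace field of $\rho\vert_{G_{F'}}$, which is where the structure theory of $\GL_2$-type abelian varieties is used; alternatively, in the setting in which the lemma will be applied, one simply takes $A'$ to be the abelian variety attached to a Hilbert eigenform, whose coefficient field is by definition the Hecke field. The remaining steps are formal consequences of the behaviour of Tate modules under Weil restriction and of Faltings' semisimplicity and endomorphism theorems.
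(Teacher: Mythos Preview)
Your approach is structurally the same as the paper's --- Weil restriction followed by Faltings' theorems to isolate the correct simple isogeny factor --- but the paper makes one maneuver that you do not, and this is precisely what removes the obstacle you flag at the end.

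The paper remembers that $K'$ acts on $B=\Res_{F'/F}A'$ and works throughout with the $K'$-linear endomorphism algebra $\End^0_{K'}(B)$ rather than the full $\End^0(B)$. After tensoring with $\ol{\Q}_p$ via the given embedding $\iota_0$, Faltings' theorem identifies $\End^0_{K'}(B)\otimes_{K',\iota_0}\ol{\Q}_p$ with $\End_{\ol{\Q}_p[G_F]}(\Ind_{F'}^F(\rho\vert_{G_{F'}}))$, the endomorphisms of the induction of the single two-dimensional representation $\rho\vert_{G_{F'}}$. Now multiplicity one of $\rho$ is immediate from Frobenius reciprocity, with no hypothesis on $K'$. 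The $\rho$-isotypic projector then singles out a $K'$-simple factor $A=A_i'$ whose $K'$-linear endomorphism algebra $K=K_i'$ is a field, and $\rho\cong T_pA\otimes_{\ms{O}_K}\ol{\Q}_p$ via the resulting embedding $K\hookrightarrow\ol{\Q}_p$.

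Your version forgets the $K'$-action, so you must prove multiplicity one of $\rho$ inside the full Tate module $\bigoplus_{\iota}\Ind(\rho'_\iota)$, and this is why you need $K'$ to be the trace field. But the reduction ``we may assume $K'$ is the trace field'' is not valid in general. Take $F'=F$ and let $A'$ be a simple abelian surface with $\End^0(A')=D$ an indefinite rational quaternion algebra, and let $K'\subset D$ be a quadratic splitting field. Then $A'$ is $\GL_2(K')$-type, the trace field of each $\rho'_\iota$ is $\Q$, and both embeddings of $K'$ give the same irreducible two-dimensional representation; yet there is no elliptic curve (that is, no $\GL_2(\Q)$-type abelian variety) with this Tate module, since $A'$ is simple. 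So your reduction step fails, and with it your multiplicity-one argument. You are right that in the intended application $A'$ comes from a Hilbert eigenform and $K'$ is the Hecke field, so the issue does not arise there; but the lemma is stated in full generality.

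Two easy repairs are available. The cleaner one is to carry the $K'$-action through the Weil restriction, exactly as the paper does. Alternatively, you can drop the multiplicity-one claim and observe that if the simple factor $C$ has $\End^0(C)=D$ a division algebra of degree $e$ over its centre $L$, and $H_1(C,\Q)$ has $D$-rank $r$, then each irreducible constituent $\tau_j$ of $T_pC\otimes\ol{\Q}_p$ has $\dim\tau_j=re$; since $\rho$ is among them, $re=2$. In the case $e=2$, $r=1$ a maximal subfield $K\subset D$ has $[K:\Q]=2[L:\Q]=\dim C$, so $C$ is $\GL_2(K)$-type and $\rho$ appears as $T_pC\otimes_{\ms{O}_K,\iota}\ol{\Q}_p$ for a suitable $\iota$.
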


\begin{proof}
Pick a number field $K'$, an embedding $K' \to \ol{\Q}_p$ and a
$\GL_2(K')$-type abelian variety $A'/F'$ such that $\rho \vert_{G_{F'}}$ is
equivalent to $T_pA' \otimes_{K'} \ol{\Q}_p$.  We have
\begin{displaymath}
\End_{F, K'}(\Res^{F'}_F{A'})=\bigoplus_{i \in I} K_i'.
\end{displaymath}
Here $\End_{F, K'}$ denotes endomorphisms in the isogeny category of
$\GL_2(K')$-type abelian varieties over $F$, $\Res^{F'}_F{A'}$ denotes the
restriction of scalars of $A'$ from $F'$ to $F$, $I$ is some finite index
set and each $K_i'$ is a simple $K'$-algebra corresponding to some simple
direct factor $A_i'$ of $\Res^{F'}_F{A'}$.  Tensoring the above with
$\ol{\Q}_p$, we find
\begin{displaymath}
\End_{F, K'}(\Res^{F'}_F{A'}) \otimes_{K'} \ol{\Q}_p
=\bigoplus_{i \in I} (K_i' \otimes_{K'} \ol{\Q}_p)
\end{displaymath}
On the other hand, we have
\begin{displaymath}
\begin{split}
\End_{F, K'}(\Res^{F'}_F{A'}) \otimes_{K'} \ol{\Q}_p
=& \End_{\ol{\Q}_p[G_F]}(T_p(\Res^{F'}_F{A'}) \otimes_{\ms{O}_{K'}}
\ol{\Q}_p) \\
=& \End_{\ol{\Q}_p[G_F]}(\Ind_{F'}^F(T_pA' \otimes_{\ms{O}_{K'}} \ol{\Q}_p)) \\
=& \End_{\ol{\Q}_p[G_F]}(\Ind_{F'}^F(\rho \vert_{G_{F'}})) \\
\end{split}
\end{displaymath}
The first step above is Faltings' theorem; the other steps are straightforward.
Since $\rho \vert_{G_{F'}}$ is irreducible the induction $\Ind_{F'}^F(
\rho \vert_{G_{F'}})$ is semi-simple.  Furthermore, we have
\begin{displaymath}
\Hom_{\ol{\Q}_p[G_F]}(\Ind_{F'}^F(\rho \vert_{G_{F'}}), \rho)
=\Hom_{\ol{\Q}_p[G_{F'}]}(\rho \vert_{G_{F'}}, \rho \vert_{G_{F'}})
=\ol{\Q}_p.
\end{displaymath}
It follows that $\rho$ has multiplicity one in $\Ind_{F'}^F(\rho
\vert_{G_{F'}})$.  Thus $\End_{\ol{\Q}_p[G_F]}(\Ind_{F'}^F(\rho
\vert_{G_{F'}}))$ has a canonical $\ol{\Q}_p$ occurring inside of it as a
direct factor, namely $\rho$-isotypic projector.
We thus find a canonical
$\ol{\Q}_p$ occurring as a summand in $\End_{F, K'}(\Res^{F'}_F{A'})
\otimes_{K'} \ol{\Q}_p$.  This factor must appear as a factor of $K_i'
\otimes_{K'} \ol{\Q}_p$, for a unique $i \in I$.  The simple $K'$-algebra
$K_i'$ must therefore be a field.  We take $K=K_i'$, $A=A_i'$ and use the
canonical $\ol{\Q}_p$ factor of $K \otimes_{K'} \ol{\Q}_p$ to define our
embedding $K \to \ol{\Q}_p$.  It
follows directly that $\rho$ is equivalent to $T_pA \otimes_{\ms{O}_K}
\ol{\Q}_p$ and thus comes from an abelian variety.
\end{proof}

We now prove Proposition~\ref{prop:astate}.

\begin{proof}[Proof of Proposition~\ref{prop:astate}]
Let $\rho$ be given.  First consider the case where $F$ has odd degree.
Use Theorem~\ref{thm:pmod2} to produce a finite, totally real, Galois
extension $F''/F$ which is linearly disjoint from $\ker{\ol{\rho}}$ and a
parallel weight two Hilbert eigenform $f''$ with coefficients in $\ol{\Q}_p$
such that $\rho \vert_{G_{F''}} \cong \rho_{f''}$.  Let $G$ be the Galois
group of $F''/F$, let $H$ be its 2-Sylow subgroup and let $F'$ be the
fixed field of $H$.  The extension $F''/F'$ is Galois with group $H$ and the
group $H$ is solvable since it is a 2-group.  Thus by solvable descent,
we can find a parallel weight two Hilbert modular form $f'$ over $F'$
with coefficients in $\ol{\Q}_p$ such that $\rho \vert_{G_{F'}} \cong
\rho_{f'}$.  The field $F'$ has odd degree over $F$ and thus odd degree
over $\Q$.  Lemma~\ref{lem:av2} now shows that $\rho \vert_{G_{F'}}$ comes
from an abelian variety.  Lemma~\ref{lem:av} now gives that $\rho$ comes
from an abelian variety.

Now consider the case where $F$ has even degree and $\rho$ is indecomposable
at some finite place $v \nmid p$.  Apply Proposition~\ref{prop:pmod3} to
produce an extension $F'/F$ which is linearly disjoint from $\ker{\ol{\rho}}$
and in which $v$ splits completely and a parallel weight two Hilbert eigenform
$f'$ with coefficients in $\ol{\Q}_p$ such that $\rho \vert_{G_{F'}} \cong
\rho_{f'}$.  The form $f'$ is square integrable at $v$.  Lemma~\ref{lem:av2}
thus shows that $\rho \vert_{G_{F'}}$ comes from an abelian variety.
Lemma~\ref{lem:av} now gives that $\rho$ comes from an abelian variety.
\end{proof}

We now prove Proposition~\ref{prop:astors}.

\begin{proof}[Proof of Proposition~\ref{prop:astors}]
Let $\ol{\rho}$ be given.  First assume that $\ol{\rho}$ satisfies (A1).  Pick
a place $v \nmid p$ of $F$ such that $\ol{\chi}_p \vert_{G_{F_v}}$ and
$\ol{\rho} \vert_{G_{F_v}}$ are trivial.  One easily sees that there is a lift
of $\ol{\rho} \vert_{G_{F_v}}$ which is non-crystalline and of the form
\begin{displaymath}
\mat{\chi_p}{\ast}{}{1}
\end{displaymath}
Thus $\ol{\rho}$ is compatible with type $C$ at $v$.  Therefore, using
Theorem~\ref{thm:lift2} we can find a finitely ramified, odd, weight two
lift $\rho$ of $\ol{\rho}$ which is type $C$ (and thus indecomposable) at $v$.
Proposition~\ref{prop:astate} shows that $\rho$ comes from an abelian
variety, and so $\ol{\rho}$ does as well.

Now assume that $\ol{\rho}$ does not satisfy (A1).  One then finds $\ol{\rho}=
\Ind_{F'}^F(\ol{\alpha})$ where $F'$ is the quadratic extension of $F$
contained in $F(\zeta_p)$ and $\ol{\alpha}:G_{F'} \to \ol{\F}_p^{\times}$ is
some character.  Let $\alpha$ be the Teichm\"uller lift of $\ol{\alpha}$ and
put $\rho_0=\Ind_{F'}^F(\alpha)$.  Then $\rho_0$ is a lift of $\ol{\rho}$.
Furthermore, $\rho_0$ is modular (since it is an induction) and has Hodge-Tate
weights zero (since it has finite image).  Thus $\rho_0$
comes from a parallel weight one Hilbert eigenform $f_0$ over $F$.  Multiply
$f_0$ by a high weight modular form $g$ congruent to 1 modulo $p$ (of the sort
provided by \cite[Lemma~1.4.2]{Wiles2}) and then find a congruence between the
resulting form and a
parallel weight two eigenform $f$.  If $F$ has odd degree then $\rho_f$ comes
from an abelian variety by Lemma~\ref{lem:av2}, and so $\ol{\rho}$ does as
well.  If $F$ has even degree, transfer $f$ to the
quaternion algebra over $F$ ramified at the infinite places and use
Lemma~\ref{lem:raise} to find a congruence between $f$ and a form $f'$ which
is special at some finite place prime to $p$.  By
Lemma~\ref{lem:av2}, we find that $\rho_{f'}$ comes from an abelian variety,
and so $\ol{\rho}$ does as well.  This completes the proof.
\end{proof}

\begin{remark}
\label{rem:noA1}
Let $\ol{\rho}:G_F \to \GL_2(\ol{\F}_p)$ be an odd irreducible representation
not satisfying (A1).  The representation
$\rho_f$ constructed in the above proof gives a finitely ramified weight two
lift of $\ol{\rho}$.  In many cases, one can take the form $g$ to have level
divisible only by $p$; one can then take $f$ so that $\rho_f$ has the same
prime-to-$p$ conductor as $\ol{\rho}$.  Thus, in such cases, we can remove
the additional hypothesis from the first statement of Theorem~\ref{mainthm2}.
\end{remark}


\begin{thebibliography}{[HSBT]}

\bibitem[Bl]{Blasius}
D.~Blasius, \emph{Hilbert modular forms and the Ramanujan conjecture},
Noncommutative geometry and number theory, 35--56, Aspects Math., E37,
Vieweg, Wiesbaden, 2006.

\bibitem[BL]{BrylinskiLabesse}
J.-L.~Brylinski, J.-P.~Labesse, \emph{Cohomologie d'intersection et
fonctions $L$ de certains vari\'et\'es de Shimura},
Ann.\ Sci.\ de l'\'E.N.S.\ (4) {\bf 17} (1984) no.~3, 361--412.

\bibitem[DDT]{DDT}
H.~Darmon, F.~Diamond, R.~Taylor, ``Fermat's last theorem,'' in \emph{Current
developements in mathematics, 1995 (Cambridge, MA)}, Int.\ Press, Cambridge,
MA, 1994, 1--154.

\bibitem[Di]{Dieulefait}
L.~Dieulefait, \emph{Uniform behavior of families of Galois representations
on Siegel modular forms and the endoscopy conjecture}, Bol.\ Soc.\ Mat.\
Mexicana (3) {\bf 13} (2007), no.~2, 243--253.

\bibitem[Ge]{Gee}
T.~Gee, \emph{A modularity lifting theorem for weight two Hilbert modular
forms}, Math.\ Res.\ Lett.\ {\bf 13} (2006), 805--811.

\bibitem[Ge2]{Gee2}
T.~Gee, \emph{Automorphic lifts of prescribes types}, preprint (2008),
31 pages.

\bibitem[Hi]{Hida}
H.~Hida, \emph{On abelian varieties with complex multiplication as factors
of the Jacobians of Shimura curves}, Amer.\ J.\ Math.\ {\bf 103} (1981),
no.~4, 727--776.

\bibitem[HSBT]{HSBT}
M.~Harris, N.~Shepherd-Barron, R.~Taylor, \emph{A family of Calabi-Yau
varieties and potential automorphy}, Ann.\ of Math., to appear.

\bibitem[Kh]{Khare}
C.~Khare, \emph{Mod $p$ descent for Hilbert modular forms.},
Math.\ Res.\ Lett.\ {\bf 7} (2000), no.~4, 455--462.

\bibitem[Ki]{Kisin}
M.~Kisin, \emph{Moduli of finite flat group schemes, and modularity},
preprint, 81 pages.

\bibitem[Ki2]{Kisin2}
M.~Kisin, \emph{Modularity of 2-adic Barsotti-Tate representations},
preprint, 47 pages.

\bibitem[Ki3]{Kisin3}
M.~Kisin, \emph{Modularity of 2-dimensional Galois representations},
Current Developements in Mathematics (2005), 191--230.

\bibitem[Ki4]{Kisin4}
M.~Kisin, \emph{Potentially semi-stable deformation rings},
J.\ Amer.\ Math.\ Soc.\ {\bf 21} (2008), no.~2, 513--546.

\bibitem[KW]{KhareWintenberger}
C.~Khare and J.-P.~Wintenberger, \emph{Serre's modularity conjecture (II)},
preprint, 76 pages.

\bibitem[SW]{SkinnerWiles}
C.~Skinner and A.~Wiles, \emph{Base change and a problem of Serre},
Duke Math.\ J.\ {\bf 107} (2001) no.~1, 15--25.

\bibitem[SW2]{SkinnerWiles2}
C.~Skinner and A.~Wiles, \emph{Nearly ordinary deformations of irreducible
residual representations}, Ann.\ Fac.\ Sci.\ Toulouse Math.\ (6)
{\bf 10} (2001), no.~1, 185--215.

\bibitem[MB]{MoretBailly}
L.~Moret-Bailly, \emph{Groupes de Picard et probl\`emes de Skolem II.},
Ann.\ Sci.\ de l'\'E.N.S.\ (4) {\bf 22} (1989) no.~2, 181--194.

\bibitem[Rap]{Rapoport}
M.~Rapoport, \emph{Compactifications de l'espace de modules de
Hilbert-Blumental}, Compositio Math.\ {\bf 36} (1978), no.~3, 255--335.

\bibitem[Tay]{Taylor}
R.~Taylor, \emph{On the meromorphic continuation of degree two $L$-functions},
Documenta Math., Exta Volume: John H.~Coates' Sixtieth Birthday (2006),
729--779 (electronic).

\bibitem[Tay2]{Taylor2}
R.~Taylor, \emph{On Galois representations associated to Hilbert modular
forms}, Invent.\ Math.\ {\bf 98} (1989), no.~2, 265--280.

\bibitem[Tay3]{Taylor3}
R.~Taylor, \emph{Remarks on a conjecture of Fontaine and Mazur},
J.\ Inst.\ Math.\ Jussieu 1 (2002), no.~1, 125--143.


\bibitem[Wi2]{Wiles2}
A.~Wiles, \emph{On ordinary $\lambda$-adic representations associated to
modular forms}, Invent.\ Math.\ {\bf 94} (1988), no.~3, 529--573.

\end{thebibliography}
\end{document}